\documentclass[12pt,british,a4paper,reqno]{amsart}
\usepackage[T1]{fontenc}
	\usepackage[utf8]{inputenc}
	\usepackage[british]{babel}

	\usepackage{amsmath}
	\usepackage{amssymb}
	\usepackage{amsthm}
	\usepackage{braket}
 	\usepackage{bbm}
	\usepackage{xcolor}
    \usepackage{appendix}
	\usepackage{enumerate}
	\usepackage{enumitem}
	\usepackage{fancyhdr}
	\usepackage{geometry}
	\usepackage{graphicx}
\usepackage{wasysym}
    \usepackage[pdftex, colorlinks, linkcolor=red!75!black, citecolor=blue!85!black, urlcolor=green!66!black]{hyperref}
 \usepackage{bookmark}
	\usepackage{ifsym}
	\usepackage{indentfirst}
	\usepackage{mathrsfs}
	\usepackage{mathtools}
	\usepackage{proof}
	\usepackage{qtree}
	\usepackage{setspace}
	\usepackage{tensor}
	\usepackage{tikz}
	\usepackage{tikz-cd}
	\usepackage{tabstackengine}
    \usepackage{soul}
\usepackage{bbold}
        \usepackage[all]{xy}
\usepackage{circuitikz}
	\setstackgap{L}{.75\baselineskip}
\usetikzlibrary{chains}

\usepackage[T2A]{fontenc}
\DeclareSymbolFont{cyrletters}{T2A}{wncyr}{m}{n}
\DeclareMathSymbol{\rusI}{\mathalpha}{cyrletters}{"E8}
\DeclareMathSymbol{\RusI}{\mathalpha}{cyrletters}{"C8}
\DeclareMathSymbol{\rusEl}{\mathalpha}{cyrletters}{"EB}
\DeclareMathSymbol{\RusEl}{\mathalpha}{cyrletters}{"CB}
\DeclareMathSymbol{\RusZhe}{\mathalpha}{cyrletters}{"C6}
\DeclareMathSymbol{\RusSha}{\mathalpha}{cyrletters}{"D8}
\DeclareMathSymbol{\RusDe}{\mathalpha}{cyrletters}{"C4}

	\bookmarksetup{numbered,open}

	\renewcommand{\geq}{\geqslant}
	\renewcommand{\leq}{\leqslant}

      \newcommand{\tennsor}[3]{\tensor*[_{#1}]{#2}{_{#3}}}
      \newcommand{\tennsorp}[3]{\tensor*[_{#1}]{#2}{_{#3}^{\prime}}}

	\providecommand{\corollaryname}{Corollary}
	\providecommand{\definitionname}{Definition}
	\providecommand{\examplename}{Example}
	\providecommand{\lemmaname}{Lemma}
	\providecommand{\propositionname}{Proposition}
	\providecommand{\remarkname}{Remark}
	\providecommand{\theoremname}{Theorem}
	\providecommand{\setupname}{Setup}
	\providecommand{\notationname}{Notation}
	\providecommand{\conjecturename}{Conjecture}
	\providecommand{\questionname}{Question}
	\providecommand{\objectivename}{Objective}
	\providecommand{\aimname}{Aim}

	\theoremstyle{plain}
		\newtheorem{thm}{\protect\theoremname}[section] 
		
		\newtheorem{lem}[thm]{\protect\lemmaname}
		\newtheorem{cor}[thm]{\protect\corollaryname}

	\theoremstyle{definition}
		\newtheorem{defn}[thm]{\protect\definitionname}
		\newtheorem{example}[thm]{\protect\examplename}
		\newtheorem{setup}[thm]{\protect\setupname}
		\newtheorem{nota}[thm]{\protect\notationname}

	\theoremstyle{remark}
		\newtheorem{rem}[thm]{\protect\remarkname}
		
	\numberwithin{figure}{section}
	\numberwithin{equation}{section}

	\usetikzlibrary{matrix,arrows,decorations.pathmorphing,positioning,decorations.pathreplacing}
	\tikzset{commutative diagrams/.cd, 
		mysymbol/.style = {start anchor=center, end anchor = center, draw = none}}

	\let\amph=& 

    \newcommand{\bb}[1]{\mathbb{#1}}

	\newcommand{\cl}[1]{\mathcal{#1}}

	\newcommand{\sr}[1]{\mathscr{#1}}

	\newcommand{\ff}[1]{\mathsf{#1}}

	\newcommand{\bff}[1]{\boldsymbol{\mathsf{#1}}}

    \newcommand{\fk}[1]{\mathfrak{#1}}

		\newcommand{\Ab}{\operatorname{\mathsf{Ab}}\nolimits}

		\newcommand{\op}{\mathrm{op}}

        \newcommand{\Coeq}{\operatorname{Coeq}\nolimits}

		\newcommand{\Hom}{\operatorname{Hom}\nolimits}
		\newcommand{\End}{\operatorname{End}\nolimits}
		
		\newcommand{\rad}{\operatorname{rad}\nolimits}

		\newcommand{\cok}{\operatorname{coker}\nolimits}
		\newcommand{\im}{\operatorname{im}\nolimits}

  \newcommand{\rMod}[1]{\operatorname{\mathsf{Mod}}\nolimits\hspace{-0.5mm}\text{-}{#1}}
		\newcommand{\lMod}[1]{{#1}\text{-}\hspace{-0.5mm}\operatorname{\mathsf{Mod}}\nolimits}
     
		\newcommand{\Bimod}[1]{{#1}\operatorname{--\, \mathsf{Bimod}}\nolimits} 
        \newcommand{\Byemod}{\operatorname{\mathbb{BIMOD}}}
				\newcommand{\Rings}{\operatorname{\mathsf{Ring}}}
                \newcommand{\Comms}{\operatorname{\mathsf{Com}}}
                \newcommand{\Central}{\operatorname{\mathbb{CENTRAL}}}

		\newcommand{\Ext}{\operatorname{Ext}\nolimits}
		
   \newcommand{\iden}{\mathbb{1}}
		
		\newcommand{\Rep}[1]{\operatorname{\mathsf{Rep}}({#1})}

		\newcommand{\lmod}[1]{{#1}\operatorname{--\,\mathsf{mod}}\nolimits}

	\makeatletter
\DeclareRobustCommand{\rvdots}{%
  \vbox{
    \baselineskip4\p@\lineskiplimit\z@
    \kern-\p@
    \hbox{.}\hbox{.}\hbox{.}
  }}
\makeatother

\colorlet{daisie}{red!50!blue}
\definecolor{betterred}{RGB}{200,0,0}

\title{
Representations of infinite species 
}
\author{Raphael Bennett-Tennenhaus,  Job Daisie Rock}
\date{}

\begin{document}
\subjclass[2020]{Primary 16G20, Secondary 16E60, 18M80}
\begin{abstract}
    We consider species, consisting of a possibly infinite set of rings,  and bimodules between them. 
    Simson realised the category of representations as a functor category, which we prove is  hereditary when each of the rings is  semisimple. 
    We use purity to provide sufficient conditions, in order for a representation to decompose into indecomposables with local endomorphism rings.  
    For any  bifunctor  valued in  bimodules,  we functorially  construct species equipped with commutativity conditions.  
    This generates  examples coming from a range of topics, such as subobject lattices in abelian length categories, the field choice problem in persistent homology, and topological field theories with  defects. 
\end{abstract}

\maketitle

\setcounter{tocdepth}{1}

\section{Introduction}

\subsection{Context and state-of-the-art}

Classically, such as in work of Gabriel \cite{Gabriel-indecomposable-reps-II}, Dlab--Ringel \cite{Dlabringstructures,Dlab-Ringel-graphs-1976} and Ringel \cite{Ringelspecies}, a \emph{species} is  a finite collection of division rings and bimodules between them. 
Since then, various authors have generalised this setting, relaxing the requirement of having  division rings. 
Drozd \cite{Drozd1982THESO} considered triangular rings, 
and gave necessary and sufficient conditions for when a hereditary ring is triangular.  
Gubareni \cite{Gubareni_2024} considered finite indexed species consisting of  discrete valuation rings. 
K{\"u}lshammer \cite{KlshammerProSpecies} considered finite indexed species consisting of left-right projective bimodules between finite-dimensional algebras. 
In this article, we allow infinitely indexed species. 

For the entire article $\ff{C}$ will be a fixed set with arbitrary cardinality. 

\begin{defn}\label{defn-species}
A \emph{species} $\fk{S}=(\tennsor{}{R}{x},\tennsor{y}{A}{x})$ consists of a (unital, associative) ring  $\tennsor{}{R}{x}$ for each $x\in \ff{C}$, and an $\tennsor{}{R}{y}$-$\tennsor{}{R}{x}$-bimodule $\tennsor{y}{A}{x}$ for each $(y,x)\in \ff{C}^2$.  

An $\fk{S}$-\emph{representation}  $(\tennsor{x}{M}{},\tennsor{y}{\alpha}{x})$ consists of a left $\tennsor{}{R}{x}$-module $\tennsor{x}{M}{}$ for each $x\in \ff{C}$, and an $\tennsor{}{R}{y}$-linear map $\tennsor{y}{\alpha}{x}\colon \tennsor{y}{A}{x}\otimes_{\tennsor{}{R}{x}} \tennsor{x}{M}{}\to \tennsor{y}{M}{}$ for each $(y,x)\in \ff{C}^{2}$.

A \emph{morphism} $(\tennsor{x}{f}{})\colon (\tennsor{x}{M}{},\tennsor{y}{\alpha}{x})\to (\tennsor{x}{N}{},\tennsor{y}{\beta}{x})$ of   $\fk{S}$-representations consists of an $\tennsor{}{R}{x}$-linear map  $\tennsor{x}{f}{}\colon \tennsor{x}{M}{}\to \tennsor{x}{N}{}$  for each $x\in\ff{C}$, such that $\tennsor{y}{\beta}{x}\circ (\tennsor{}{\iden}{\tennsor{y}{A}{x}}\otimes  \tennsor{x}{f}{}) = \tennsor{y}{f}{} \circ \tennsor{y}{\alpha}{x}$ for each $(y,x)\in \ff{C}^2$. 
%
To each species $\fk{S}$ we  consider the category $\Rep{\fk{S}}$ of $\fk{S}$-representations. 
\end{defn}


\emph{Phyla} were  introduced and studied in work of  Gao--K{\"u}lshammer--Kvamme--Psaroudakis \cite{GKKP}. 
The species we consider are examples of phyla, in which the involved abelian categories are module categories, and the involved functors are given by tensoring with a bimodule. 
These authors study the \emph{monomorphism category}, and establish certain adjunctions in order to study the preservation of almost-split sequences, when they exist. 
Note that, for the setting we work in,  almost-split sequences need not exist; see for example work of Paquette \cite{Paquette2011ANT}. 
We focus instead on  the construction of projective presentations, and the existence of Krull--Remak--Schmidt--Azumaya decompositions.

\subsection{Organization and contributions}

In Section~\ref{section:species-representations} we develop  preliminaries on species and their representations. 
Simson \cite{Simson1979}  noticed that the category of representations of a species $\fk{S}$ is equivalent to the module category of the \emph{preadditive tensor category} $\sr{T}(\fk{S})$, which generalises the tensor ring of a bimodule. 
Exploiting this, we show that co/limits are computed pointwise, and provide a setting for \emph{change-of-ring} results for preadditive tensor categories and their module categories.   
Using Yonedas lemma, we then study projectives, and using ideas of Ringel \cite{Ringelspecies}, extensions. 
In doing so, we establish our first main result. 
\begin{thm}[Lemmas \ref{prop-first-term-standard-res}~and~\ref{lem-when-proj},~Corollary~\ref{cor-yay-hereditary}]\label{introthm-hereditary}
    Let $M=(\tennsor{x}{M}{},\tennsor{y}{\alpha}{x})$ be a representation of a species $\fk{S}=(\tennsor{}{R}{x},\tennsor{y}{A}{x})$ such that each  $\tennsor{x}{M}{}$ is projective over $\tennsor{}{R}{x}$ and each $\tennsor{y}{A}{x}$ is projective over $\tennsor{}{R}{x}$ and over $\tennsor{}{R}{y}$.  
    Then $M$ has projective dimension at most $1$. 
    In particular, if each $\tennsor{}{R}{x}$ is semisimple artinian, then $\Rep{\fk{S}}$ is hereditary.
\end{thm}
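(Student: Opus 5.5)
I will construct the standard two-term projective resolution, as in the classical tensor-ring / Ringel setting.

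For each $x\in\ff{C}$ and each left $\tennsor{}{R}{x}$-module $X$, let $P_x(X)$ denote the representation freely generated by $X$ at vertex $x$. Concretely, this is the left adjoint to evaluation at $x$. Under Simson's equivalence $\Rep{\fk{S}}\simeq$ modules over $\sr{T}(\fk{S})$, we have
\[
{}_zP_x(X)=\sr{T}(\fk{S})(z,x)\otimes_{\tennsor{}{R}{x}}X,
\]
where $\sr{T}(\fk{S})(z,x)$ is the direct sum, over paths $z=x_n,\dots,x_0=x$, of the tensor products $\tennsor{x_n}{A}{x_{n-1}}\otimes\cdots\otimes\tennsor{x_1}{A}{x_0}$ (with $\tennsor{}{R}{x}$ in length zero when $z=x$). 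The adjunction $\Hom(P_x(X),N)\cong\Hom_{\tennsor{}{R}{x}}(X,\tennsor{x}{N}{})$, which is Yoneda's lemma, shows that $P_x(X)$ is projective whenever $X$ is projective over $\tennsor{}{R}{x}$. Indeed, evaluation is exact because co/limits are computed pointwise.

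Now form the sequence
\[
0\to\bigoplus_{(y,x)\in\ff{C}^2}P_y(\tennsor{y}{A}{x}\otimes_{\tennsor{}{R}{x}}\tennsor{x}{M}{})\xrightarrow{\;d\;}\bigoplus_{x\in\ff{C}}P_x(\tennsor{x}{M}{})\xrightarrow{\;\varepsilon\;}M\to 0 .
\]
Here $\varepsilon$ is the counit. The map $d$ on the $(y,x)$ summand is the difference of two maps:
\begin{itemize}
\item the map adjoint to $a\otimes m\mapsto a\otimes m$, viewed as sitting in the length-one part $\tennsor{y}{A}{x}\otimes\tennsor{x}{M}{}$ of ${}_yP_x(\tennsor{x}{M}{})$;
\item the map adjoint to $\tennsor{y}{\alpha}{x}$, landing in ${}_yP_y(\tennsor{y}{M}{})$.
\end{itemize}
I check exactness pointwise at each vertex $z$, where the sequence becomes Ringel's standard sequence of $\tennsor{}{R}{z}$-modules. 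Surjectivity of $\varepsilon$ and $\varepsilon d=0$ are direct. Injectivity of $d$ and exactness in the middle follow from the usual path-length filtration argument: the first component of $d$ raises path length by one, while the second preserves it, so a leading-term argument shows $d$ is injective. The same argument shows every element of $\ker\varepsilon$ can be reduced modulo $\operatorname{im} d$ to its length-zero part, where $\varepsilon$ is the identity. This is the main obstacle, since with infinitely many vertices and unbounded path lengths the bookkeeping must be done carefully, using only finite sums inside direct sums. I expect the induction on the maximal path length occurring in an element to handle it.

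Projectivity of both terms comes from the hypotheses.
\begin{itemize}
\item The middle term is projective since each $\tennsor{x}{M}{}$ is projective over $\tennsor{}{R}{x}$.
\item For the left term, $\tennsor{y}{A}{x}\otimes_{\tennsor{}{R}{x}}\tennsor{x}{M}{}$ must be projective over $\tennsor{}{R}{y}$. Since $\tennsor{x}{M}{}$ is a summand of a free module $\tennsor{}{R}{x}^{(I)}$, this tensor product is a summand of $\tennsor{y}{A}{x}^{(I)}$, which is projective over $\tennsor{}{R}{y}$ by hypothesis.
\end{itemize}
Hence $M$ has projective dimension at most $1$.

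Projectivity over $\tennsor{}{R}{x}$ of the bimodules is what is needed to keep the tensor-product lengths well behaved. More precisely, it guarantees that the filtration quotients in the exactness argument are as expected; it may not be strictly necessary for the sketch above, but it costs nothing.

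For the final claim, if each $\tennsor{}{R}{x}$ is semisimple artinian, then every module and bimodule involved is projective on each side. So every representation has projective dimension at most $1$. Since $\Rep{\fk{S}}$ is a module category, it has enough projectives, so $\Ext^2$ vanishes identically and $\Rep{\fk{S}}$ is hereditary.
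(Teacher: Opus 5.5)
Your proposal is correct and uses the same two-term resolution as the paper (Lemma~\ref{prop-first-term-standard-res}), but you argue both key steps differently.

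\textbf{Exactness.} The paper writes down an explicit contracting homotopy $(\tennsor{}{p}{x},\tennsor{}{q}{x})$ at each vertex. Your leading-term argument works as stated, and the bookkeeping you flag is not really an obstacle. The length-raising part of $d$ is an isomorphism from the length-$n$ part of the left term onto the length-$(n+1)$ part of the middle term. This holds because $\tennsor{}{A}{\underline{v}}\cong\tennsor{}{A}{(v_{n+1},\dots,v_{1})}\otimes\tennsor{v_1}{A}{v_0}$ for every path $\underline{v}$ of length $n+1\geq 1$. Injectivity of $d$ and exactness in the middle then follow by induction on the top length, since elements of direct sums have finite support.

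\textbf{Projectivity.} The paper proves Lemma~\ref{lem-when-proj} through Ringel's description of extensions (Lemmas~\ref{lem-pointwise-ext-0-implication} and~\ref{lem-describing-ext}), exhibiting explicit preimages under $\tennsor{N}{\Lambda}{L}$. That route needs each $\tennsor{y}{A}{x}$ to be left $\tennsor{}{R}{y}$-projective even for the middle term. The reason is that the Ext description only applies once $\tennsor{x\,}{\underline{A}}{\,\ell}\otimes\tennsor{\ell}{M}{}$ is pointwise projective. Your adjunction $\Hom_{\Rep{\fk{S}}}(P_x(X),-)\cong\Hom_{\tennsor{}{R}{x}}(X,\tennsor{x}{\Pi}{}(-))$ is a tensored form of the Yoneda isomorphism in Lemma~\ref{lem-finitely-presented-trivial}, and its right adjoint is exact. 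It shows $P_x(X)$ is projective for every species and every projective $X$. So the bimodule hypothesis enters only once, for the left term. As you suspected, projectivity of $\tennsor{y}{A}{x}$ over $\tennsor{}{R}{x}$ is never needed; the paper does not use it either.

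The paper's route yields explicit formulas (a concrete pointwise splitting and concrete preimages under $\tennsor{N}{\Lambda}{L}$) and reuses machinery it develops anyway. Yours is shorter and shows precisely where each hypothesis is used.
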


Indecomposables  with local endomorphism rings are known as \emph{strongly indecomposable} representations. 
The  Krull--Remak--Schmidt--Azumaya theorem says  any decomposition into strongly indecomposable representations is unique.   
In Section~\ref{section-decomposition},  we turn our attention to the existence of such decompositions, and establish our second main result. 
%

\begin{thm}[Lemmas~\ref{lem-finiteness-cond-for-existence-strong-decompo},~\ref{lem-finiteness-cond-for-existence-strong-decompo-application},~\ref{lem-finite-ness-conds-for-botnan--crawley-boevey-argument}~and~\ref{lem-finite-length-means-nice-decomposition}]\label{introthm-decomposition}
    Let $M=(\tennsor{x}{M}{},\tennsor{y}{\alpha}{x})$ be a representation of a species $\fk{S}=(\tennsor{}{R}{x},\tennsor{y}{A}{x})$,   such that either (1) or (2) below holds. 
    \begin{enumerate}
        \item Each $\tennsor{x}{M}{}$ is artinian over $\End_{\Rep{\fk{S}}}(M)$. 
        For example, this holds if each  $\tennsor{x}{M}{}$ is artinian over a common central subring $S$ of each $\tennsor{}{R}{x}$  acting centrally on each $\tennsor{y}{A}{x}$.
        \item Each $\tennsor{x}{M}{}$ admits a bound on the length of a sequence  of non-zero $\tennsor{}{R}{x}$-submodules  whose sum is direct. 
        For example, this holds if each $\tennsor{x}{M}{}$ is finite length over $\tennsor{}{R}{x}$. 
    \end{enumerate}
    Then $M$ decomposes into a direct sum of strongly indecomposable representations of $\fk{S}$.
\end{thm}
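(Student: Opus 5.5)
The two cases call for different tools, and I would treat them separately. The idea in both is to identify $\Rep{\fk{S}}$ with the module category of Simson's preadditive tensor category $\sr{T}(\fk{S})$, since limits and colimits are computed pointwise there.

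\emph{Case (1).} I would follow the Crawley-Boevey / Botnan--Crawley-Boevey method for pointwise finite representations.
\begin{enumerate}
\item Put $E=\End_{\Rep{\fk{S}}}(M)$. Each $\tennsor{x}{M}{}$ is a left $E$-module via $e\mapsto \tennsor{x}{e}{}$.
\item Show that $M$ is pure-injective, or at least has the relevant property directly. Treat $M$ as a $\sr{T}(\fk{S})$-module that is ``$\Sigma$-pure-injective-like''. The artinian hypothesis gives the descending chain condition on $E$-submodules of each $\tennsor{x}{M}{}$; it therefore applies to the images $\im(\tennsor{x}{e}{})$ and to the pp-definable subgroups of the form $\{m : \text{some } E\text{-combination vanishes}\}$.
\item Alternatively, and this is the route I would try first, run the Botnan--Crawley-Boevey argument directly. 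Zorn's lemma gives a maximal family of strongly indecomposable summands whose sum is direct and a summand. I then need a complementary summand $N$. If $N\neq 0$, pick $x$ with $\tennsor{x}{N}{}\neq 0$ and a nonzero element, and use the descending chain condition on $E$-submodules, together with Fitting-type arguments on $\End(N)$, to produce a strongly indecomposable summand of $N$. That contradicts maximality.
\end{enumerate}
For the example in case (1), note that $S$ acts centrally on every $\tennsor{y}{A}{x}$. Multiplication by $s\in S$ is therefore an endomorphism of $M$, giving a map $S\to E$. Every $E$-submodule is then an $S$-submodule, so the artinian condition transfers from $S$ to $E$.

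\emph{Case (2).} Here I would use a boundedness argument on direct sums.
\begin{enumerate}
\item Let $d_x$ be the bound at $x$. Any decomposition $M=\bigoplus_i M_i$ gives, at each $x$, a direct sum of the nonzero $\tennsor{x}{M}{i}$. Hence at most $d_x$ of the summands are nonzero at $x$.
\item Show that $M$ is a direct sum of indecomposables. I would argue that any indecomposable summand, and any summand at all, can be refined only boundedly at each vertex. Using Zorn's lemma on compatible families of decompositions, ordered by refinement and with limits taken pointwise, obtain a decomposition into indecomposables.
\item Show that each indecomposable $L$ in this situation has local endomorphism ring. For $f\in\End(L)$, the chains $\im f^n$ and $\ker f^n$ at each vertex stabilise in the sense of direct-sum complexity; this needs a Fitting lemma. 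Since ``finite uniform dimension'' alone does not give Fitting, I would instead use that $\End(L)$ has no infinite sets of orthogonal idempotents. Combining this with the pointwise bound, a non-invertible, non-nilpotent-modulo-radical element would split $L$.
\end{enumerate}

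The main obstacle is this last step: getting locality of the endomorphism rings, and not merely indecomposability. For the finite-length example it is straightforward, since Fitting's lemma applies vertex-wise: choose $n$ uniform across the finitely many vertices where $L$ is nonzero, which is bounded because $L$ is indecomposable and supported suitably. For the general bound I would try to reduce to case (1)-style pure-injectivity arguments, showing that the bound forces $M$ to be $\Sigma$-pure-injective over $\sr{T}(\fk{S})$. Then the classical theorem applies: $\Sigma$-pure-injective modules decompose into indecomposables with local endomorphism rings.
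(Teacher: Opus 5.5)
There are genuine gaps in both cases, and in case (2) the gap cannot be closed.

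\emph{Case (2).} The step you flag as the main obstacle cannot be carried out, because under the bare bound in (2) the conclusion ``strongly indecomposable'' is false. Take $\ff{C}=\{x\}$, $\tennsor{}{R}{x}=\mathbb{Z}$, $\tennsor{x}{A}{x}=0$ and $M=\mathbb{Z}$. Any two non-zero subgroups of $\mathbb{Z}$ meet non-trivially, so the bound is $1$, and $M$ is indecomposable. However, $\End_{\Rep{\fk{S}}}(M)=\mathbb{Z}$ is not local, so $M$ is not a direct sum of strongly indecomposables. Nor is $M$ pure-injective, since indecomposable pure-injectives have local endomorphism rings, so reducing to $\Sigma$-pure-injectivity is also hopeless.

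From the bound alone, the paper's Lemma~\ref{lem-finite-ness-conds-for-botnan--crawley-boevey-argument} only gives a decomposition into \emph{indecomposables}. That argument is your step~2, Zorn's lemma on decompositions ordered by refinement. The upper bound of a chain $\Theta$ is formed by the intersections $M[\underline{L}]$ along compatible threads. Your counting observation is then used again to show these exhaust $M$: for $m\in\tennsor{x}{M}{}$, choose $Z\in\Theta$ maximising the number of non-zero components of $m$; beyond $Z$, each component stays inside a single summand.

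Locality is proved only under finite length (Lemma~\ref{lem-finite-length-means-nice-decomposition}, via Lemma~\ref{lem-strongly-indecomposable}). There your uniform-$n$ argument rests on a false claim: indecomposables can be non-zero at infinitely many vertices, e.g.\ the constant representation $k\to k\to k\to\cdots$ over a field $k$ indexed by $\mathbb{N}$. But no uniform $n$ is needed. The modules $\im(\tennsor{x}{f}{}^{i})$ and $\ker(\tennsor{x}{f}{}^{i})$ are constant for $i\geq n(x)$, so $\im(\tennsor{x}{f}{}^{n(x)})$ and $\ker(\tennsor{x}{f}{}^{n(x)})$ already form complementary subrepresentations. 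To check compatibility with $\tennsor{y}{\alpha}{x}$, use $i\geq\max\{n(x),n(y)\}$. Indecomposability then forces each $\tennsor{x}{f}{}$ to be nilpotent when $f$ is a non-unit.

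\emph{Case (1).} Your example, via the ring map $S\to E$ with $E=\End_{\Rep{\fk{S}}}(M)$, is exactly the paper's. The route you would try first, however, has two unsupported steps.
\begin{enumerate}
\item Take a chain of families of strongly indecomposable summands whose sums are summands. The union still has direct sum, but nothing shows this sum is a summand, so Zorn does not produce the complement $N$.
\item The descending chain condition over $E$ says nothing about the chains $\im(\tennsor{x}{f}{}^{n})$, which need not be $E$-submodules, so no Fitting decomposition follows.
\end{enumerate}

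Your route~2 is the paper's, but its key step is missing. Every definable subgroup $M\varepsilon$, for $\varepsilon\colon F\to G$ between finitely presented representations, is an $E$-submodule of a \emph{finite} direct sum of the $\tennsor{\ell}{M}{}$. This comes from presenting $F$ and $G$ by finite sums of the projectives $\tennsor{?\,}{\underline{A}}{\,\ell}$ and using the Yoneda isomorphism $\Hom_{\Rep{\fk{S}}}(\tennsor{?\,}{\underline{A}}{\,\ell},M)\cong\tennsor{\ell}{M}{}$ (Lemmas~\ref{lem-finitely-presented-trivial} and~\ref{lem-prereq-for-sigma-pure-inj-for-local-fin-pres}). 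These subgroups are cut out by elements of the $\tennsor{y\,}{\underline{A}}{\,x}$, not by $E$-combinations; they are $E$-submodules because $E$ acts by morphisms.

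Since finite sums of artinian $E$-modules are artinian, definable subgroups satisfy the descending chain condition. Crawley-Boevey's theorem, as invoked in Lemma~\ref{lem-finiteness-cond-for-existence-strong-decompo}, then makes $M$ $\Sigma$-pure-injective in the locally finitely presented category $\lMod{\tennsor{}{\sr{T}}{\oplus}(\fk{S})}$. Hence $M$ is a direct sum of strongly indecomposables.
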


Theorem~\ref{introthm-decomposition} above recovers well-known decomposition results, such a the decomposition theorem for persistence modules, due to Botnan--Crawley-Boevey  \cite[Theorem~1]{botnan-crawley-boevey-persistence}; see Corollary~\ref{cor-actually-understandable-decomposition}, which says that pointwise artinian functors enjoy such a decomposition.

%


In Section~\ref{section-relations} we consider ideals  $\sr{I}$ in $\sr{T}(\fk{S})$, discuss when they are compatible with the aforementioned change-of-ring results, and view the module category of $\sr{T}(\fk{S})/\sr{I}$ as a subcategory $\Rep{\fk{S},\sr{I}}$ of $\Rep{\fk{S}}$. 
Examples of ideals come from  \emph{associativity conditions},  defined by collections $\boldsymbol{c}$ of $\tennsor{}{R}{z}$-$\tennsor{}{R}{x}$ bimodule maps of the form  $\tennsor{}{c}{zyx}\colon\tennsor{z}{A}{y}\otimes \tennsor{y}{A}{x}\to\tennsor{z}{A}{x}$. 
Such conditions generalise the commutativity conditions defined in \cite{Simson1979}.

\begin{thm}[Corollaries~\ref{cor-cool-nice-cheers-ta-nice1}~and~\ref{cor-double-functors-giving-species}]
\label{introthm-double}
There exists a diagram of double functors
    \[
    \begin{tikzcd}
    &
    \Rings\arrow[dr, "\RusDe"']
    \arrow[rr, "\RusZhe"]
    &
    &
    \Byemod
    \\
    &
    &
\Central_{\Comms}^{\Rings}\arrow[ur, "\RusSha"']
    &
    \end{tikzcd}
    \]
    where $\Rings$ is the category of rings, $\Byemod$ is the double category of bimodule over rings, and $\Central_{\Comms}^{\Rings}$ is the bicategory consisting of cospans of central ring maps from commutative rings. 
    Furthermore, for any double category $\cl{D}$ there is a functor $\sr{T}(\fk{S}(-))/\sr{I}(\boldsymbol{c}(-))$ from the category of double functors $\cl{D}\to \Byemod$ to the category of small preadditive categories. 
\end{thm}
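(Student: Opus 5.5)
We build the two double functors $\RusZhe$ and $\RusDe$ explicitly, define $\RusSha$ so that the triangle commutes, and then assemble the functor to small preadditive categories from the constructions in Section~\ref{section-relations}.

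\textbf{The functor $\RusZhe$.} On objects, $\RusZhe$ sends a ring $R$ to $R$ itself. Viewing $\Rings$ as a double category with only identity vertical cells, a ring map $\varphi\colon R\to R'$ is sent to the bimodule ${}_{R'}R'_{R}$, with $R$ acting through $\varphi$. Composition is respected up to the canonical isomorphism
\[
R''\otimes_{R'}R'\cong R'',
\]
and these isomorphisms supply the coherence data of a (pseudo) double functor.

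\textbf{The functor $\RusDe$.} It sends $R$ to the cospan $Z(R)\to R\leftarrow Z(R)$. We do not use $Z(R)\to R'$ via $\varphi$ here, since that map need not be central. Instead, a ring map $\varphi$ goes to the cospan of central maps built from $\bb{Z}$, or more generally from any commutative ring mapping into both centres. The correct choice of which central commutative ring to record is dictated by the definition of $\Central_{\Comms}^{\Rings}$, and I would fix it so that the central action on the resulting bimodule is well defined.

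\textbf{The functor $\RusSha$.} It sends a cospan $S\to R\leftarrow S'$ of central maps to the bimodule obtained by inducing $R$ along these maps. Commutativity $\RusSha\circ\RusDe\cong\RusZhe$ is then checked on generators.

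\textbf{The functor to preadditive categories.} Let $F\colon\cl{D}\to\Byemod$ be a double functor. The species $\fk{S}(F)$ is indexed by the objects of $\cl{D}$. Its rings are $\tennsor{}{R}{x}=F(x)$, and its bimodules $\tennsor{y}{A}{x}$ are the direct sums of $F(h)$ over the horizontal arrows $h\colon x\to y$. The associativity condition $\boldsymbol{c}(F)$ consists of the structure maps
\[
F(k)\otimes F(h)\to F(kh),
\]
summed over all composable pairs. We then form $\sr{I}(\boldsymbol{c}(F))$ and take the quotient $\sr{T}(\fk{S}(F))/\sr{I}(\boldsymbol{c}(F))$.

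A transformation $F\Rightarrow G$ consists of vertical ring maps and globular cells. These induce a change of rings on $\sr{T}$, by the change-of-ring results of Section~\ref{section:species-representations}. They carry $\boldsymbol{c}(F)$ into $\boldsymbol{c}(G)$, and the compatibility results for ideals in Section~\ref{section-relations} then show that the ideal is preserved. Hence the change of rings descends to the quotients. Functoriality holds because each step is a composite of canonical tensor-algebra maps.

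\textbf{Main obstacle.} The hardest part is the bookkeeping of coherence. The double functors are only pseudo, so associators appear, and we must show they are compatible with the ideals, that is, that $\sr{I}(\boldsymbol{c})$ is invariant under the coherence isomorphisms. My first attempt would be to check this on the generating cells $c_{zyx}$ and use that $\sr{I}$ is generated by their images.
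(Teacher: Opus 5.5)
Your $\RusZhe$, your $\RusSha$, and your assembly of $\sr{T}(\fk{S}(-))/\sr{I}(\boldsymbol{c}(-))$ follow the paper's route through Lemmas~\ref{lem-lax-doub-func-rings-to-bimod-direct}, \ref{lem-coolguy}, \ref{small-cat-becomes-species}, \ref{lem-ring-and-bilinear-maps-induce-functor-between-tensor-cat}, \ref{lem-ring-homs-and-compatible-maps-between-bimodules-ideals} and~\ref{small-cat-becomes-species-morphisms}. That assembly builds the species from horizontal cells and $\boldsymbol{c}$ from the compositors, and it descends to quotients because a double transformation commutes with compositors.

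The gap is $\RusDe$, which you never actually define.

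\begin{itemize}
\item Objects of $\Central_{\Comms}^{\Rings}$ are commutative rings, so $R$ cannot be sent to a cospan; in the paper $\RusDe_0(R)=Z(R)$. Your cospan $Z(R)\to R\leftarrow Z(R)$ is not even the identity horizontal morphism on $Z(R)$.
\item Once $\RusDe_0$ is fixed, the legs of $\RusDe(\varphi)$ for $\varphi\colon R\to R'$ are forced to start at $Z(R')$ and $Z(R)$. So there is no freedom to use $\bb{Z}$ or ``any commutative ring mapping into both centres''. Only the apex has to be chosen, and that is exactly the step you leave open.
\item You rightly note that $R'$ fails as an apex, since $\varphi(Z(R))$ need not be central in $R'$. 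The missing idea is to shrink the apex to the centraliser $Z(\varphi)=\{s\in R'\mid s\varphi(r)=\varphi(r)s \text{ for all } r\in R\}$.
\end{itemize}

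This centraliser does everything needed:
\begin{itemize}
\item It contains both $Z(R')$ and $\varphi(Z(R))$, and both are central in it.
\item Since $Z(\iden_{R})=Z(R)$, it gives the identity cospan as the unit.
\item The compositor is the ring map $Z(\psi)\otimes_{Z(R')}Z(\varphi)\to Z(\psi\varphi)$, $a\otimes b\mapsto a\psi(b)$. It is balanced, lands in $Z(\psi\varphi)$, and is multiplicative, all because elements of $Z(\psi)$ commute with $\psi(R')$.
\end{itemize}
This is Lemma~\ref{lem-field-theory-stuff}, imported from Davydov--Kong--Runkel. The compositor is not invertible in general. For $k\to T_2(k)\hookrightarrow M_2(k)$, with $T_2(k)$ the upper triangular $2\times 2$ matrices, it is the inclusion $T_2(k)\to M_2(k)$. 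So $\RusDe$ is genuinely lax, not pseudo.

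Second, the commutativity $\RusSha\circ\RusDe\cong\RusZhe$ that you plan to check on generators is false, and no choice of $\RusDe$ can fix it. With your $\RusSha$, $\RusSha_0$ is the inclusion of commutative rings, so $\RusSha\circ\RusDe$ sends every ring to a commutative ring, whereas $\RusZhe_0$ is the identity. Concretely, $\RusSha\RusDe(\varphi)$ is the $Z(R')$-$Z(R)$-bimodule $Z(\varphi)$, while $\RusZhe(\varphi)$ is the $R'$-$R$-bimodule $R'$. Corollary~\ref{cor-double-functors-giving-species} explicitly calls the triangle possibly noncommutative, and the statement only asserts that the three double functors exist, so drop this claim.

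Your ``main obstacle'' is also not where the difficulty lies for this statement:
\begin{itemize}
\item $\sr{I}(\boldsymbol{c})$ is an ideal for any family of bilinear maps $c_{zyx}$; the proof of Lemma~\ref{lem-reps-satisfying-ass-conditions} never uses \eqref{eqn-associativity-condition}.
\item Descent to the quotients needs only \eqref{eqn-lax-double-functor-comm-conditions-4}.
\item Associator coherence \eqref{eqn-lax-double-functor-comm-conditions-1} is used only to show that $\boldsymbol{c}(\RusI)$ satisfies \eqref{eqn-associativity-condition}, which the functor to small preadditive categories does not require.
\end{itemize}
The substantive work in the first half is the centraliser construction.
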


In Section~\ref{sec-examples} we turn our attention to examples and applications.
To begin we ground our work in something familiar, showing how  Theorem~\ref{introthm-decomposition} applies to the  category  of pointwise artinian  functors valued over a commutative ring (Corollary~\ref{cor-actually-understandable-decomposition}). 
We then detail more exotic examples, recovered using Theorem~\ref{introthm-double},  and applications of Theorem~\ref{introthm-decomposition}. 

\begin{itemize}
\item[\ref{subsec:abelian-length-krause}] Abelian length categories and distributive lattices, following work of Krause \cite{HENNING-ORDERS}, give examples of infinite species of noncommutative division rings. 
\item[\ref{sec-field-choice}] Persistent homology over varying  fields, as in work of Boissonnat--Maria \cite{Boissonnatmariacomputing} and Obayashi--Yoshiwaki  \cite{ObayashiYoshiwakiFieldChoice}, gives  infinite species of rings of the form $\bb{Z}/m\bb{Z}$. 
\item[\ref{sec-thread-quivers},~\ref{sec-infinite-Dynkin}] Thread quivers, in the sense of Berg--Van Roosmalen \cite{berg-vanroosmalen-thread1}, studied later by Paquette--Rock--Yildrim   \cite{paquette-rock-yildirim-thread2}, lead to the notion of a  \emph{threaded species}, allowing for a reasonable notion of infinite species of Dynkin  types $\bb{B}$,  $\bb{C}$, $\widetilde{\bb{B}}$,  $\widetilde{\bb{C}}$ and $\widetilde{\bb{BC}}$. 
    \item[\ref{sec-top-field-theory}] Topological field theories with defects, as in work of Davydov--Kong--Runkel \cite{field-theories}, is a natural setting where infinite species of Frobenius algebras arise. 
\end{itemize}



The end of the paper includes two appendices: the first on additive tensor categories (Appendix~\ref{appendix-tensor-cat}) and the second on  double categories and  double functors (Appendix~\ref{appendix-double-cats}).

    \subsection*{Acknowledgements}
    The first author thanks Henning Krause for his talk in Bielefeld on abelian length categories and Birkhoff's representation theorem.

    \subsection*{Funding}
    The first author was supported by the Deutsche Forschungsgemeinschaft (SFB-TRR 358/1
2023- 491392403). 
The first  author is grateful for this support. 
    
	The second author is supported by FWO grant 1298325N.
	Work on this project began while the second author was supported by BOF grant 01P12621 from Universiteit Gent.
	The second author was also partially supported by the FWO grants G0F5921N (Odysseus) and G023721N, and by the KU Leuven grant iBOF/23/064.





\section{Preliminaries}
\label{section:species-representations}

In this section we study species and their representations. 
In particular, we exploit the equivalence between representations of a species and a modules over a preadditive category defined by the species (Lemma~\ref{theorem-equivalence-of-reps-of-species-and-tensor-cat}).
Near the end we show that, under mild conditions, the category of representations of a species is hereditary (Corollary~\ref{cor-yay-hereditary}).


%
     %
%
\begin{nota}
A (\emph{length}-$\ff{n}$) \emph{path} is a sequence  $\underline{x}=(x_{\ff{n}},\dots,x_{0})$ with $\ff{n}\geq 0$. 
For any $h,t\in\ff{C}$, we let $\ff{C}^{\ff{n}}(h,t)$ be the set of such paths $\underline{x}$ where $x_{\ff{n}}=h$ and $x_{0}=t$, and we define the $\tennsor{}{R}{h}$-$\tennsor{}{R}{t}$-bimodule  $\tennsor{h\,}{\underline{A}}{\,t}= \bigoplus_{\ff{n}\geq0,\,\underline{x}\in \ff{C}^{\ff{n}}(h,t)} \tennsor{}{A}{\underline{x}}$  whose summands  are defined in  cases, by
    \[
     \begin{array}{ccc}
    A_{(h)}=\tennsor{}{R}{h},
    &
    \tennsor{}{A}{(h,t)}=\tennsor{h}{A}{t}, 
    &
    \tennsor{}{A}{\underline{x}}=\tennsor{h}{A}{x_{\ff{n}-1}}\otimes_{\tennsor{}{R}{x_{\ff{n}-1}}}\,\dots \,\tennsor{}{\otimes}{\tennsor{}{R}{x_{2}}} \tennsor{x_{2}}{A}{x_{1}}\tennsor{}{\otimes}{\tennsor{}{R}{x_{1}}} \tennsor{x_{1}}{A}{t}\quad(\ff{n}\geq 2). 
    \end{array}
    \]

For $x\in\ff{C}$ and $\underline{x}\in \ff{C}^{\ff{n}}(h,t)$, define the left $\tennsor{}{R}{h}$-linear map $\tennsor{}{\alpha}{\underline{x}}\colon \tennsor{}{A}{\underline{x}}\tennsor{}{\otimes}{\tennsor{}{R}{t}}\tennsor{t}{M}{}\to \tennsor{h}{M}{}$ by
\[
  \begin{array}{ccc}
\tennsor{}{\alpha}{(x)}=\tennsor{}{\iden}{\tennsor{x}{M}{}},
&
\tennsor{}{\alpha}{(h,t)}=\tennsor{h}{\alpha}{t}, 
&
\tennsor{}{\alpha}{\underline{x}}=
\tennsor{h}{\alpha}{x_{\ff{n}-1}}(\tennsor{}{\iden}{\tennsor{}{A}{(h,x_{\ff{n}-1})}}\otimes\tennsor{x_{\ff{n}-1}}{\alpha}{x_{\ff{n}-2}})
\cdots 
(\tennsor{}{\iden}{\tennsor{}{A}{(h,\dots,x_{1})}}\otimes\tennsor{x_{1}}{\alpha}{t}). 
\end{array}
\]
Combining over $\underline{x}$ defines a map  $\tennsor{h\,}{\underline{\alpha}}{\,t}\colon \tennsor{h\,}{\underline{A}}{\,t}\tennsor{}{\otimes}{\tennsor{}{R}{t}}\tennsor{t}{M}{}\to \tennsor{h}{M}{}$. 
   %
\end{nota}
Notice the indexing for paths starts at 0 on the \emph{right}.
This choice of convention allows for easier notation regarding composition.
Note also that  $\tennsor{x\,}{\underline{A}}{\,x}$ is  a ring for each $x\in\ff{C}$.

%


\begin{lem}
    \label{lem-elementary-underline-maps}
    Let $\fk{S}=(\tennsor{}{R}{x},\tennsor{y}{A}{x})$ be a species, $h,t\in\ff{C}$,  $\underline{a}\in\tennsor{h\,}{\underline{A}}{\,t}$ and $M=(\tennsor{x}{M}{},\tennsor{y}{\alpha}{x})$ be a representation. Then  the map $\tennsor{h\,}{\underline{\alpha}}{\,t}(\underline{a}\otimes -)$ given by $m\mapsto\tennsor{h\,}{\underline{\alpha}}{\,t}(\underline{a}\otimes m)$ is $\End_{\Rep{\fk{S}}}(M)$-linear. 
\end{lem}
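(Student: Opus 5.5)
First we pin down what the statement means. The ring $E=\End_{\Rep{\fk{S}}}(M)$ acts on each $\tennsor{x}{M}{}$ by $f\cdot m=\tennsor{x}{f}{}(m)$, where $f=(\tennsor{x}{f}{})$. So the claim is that for every $f\in E$ and $m\in\tennsor{t}{M}{}$ we have
\[
\tennsor{h\,}{\underline{\alpha}}{\,t}(\underline{a}\otimes \tennsor{t}{f}{}(m))=\tennsor{h}{f}{}\bigl(\tennsor{h\,}{\underline{\alpha}}{\,t}(\underline{a}\otimes m)\bigr).
\]
Additivity in $m$ is clear. The map $\tennsor{h\,}{\underline{\alpha}}{\,t}$ is assembled from the components $\tennsor{}{\alpha}{\underline{x}}$, and any $\underline{a}$ is a finite sum of elements from the summands $\tennsor{}{A}{\underline{x}}$. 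Both sides are additive in $\underline{a}$, so it suffices to prove the identity $\tennsor{}{\alpha}{\underline{x}}\circ(\iden_{\tennsor{}{A}{\underline{x}}}\otimes\tennsor{t}{f}{})=\tennsor{h}{f}{}\circ\tennsor{}{\alpha}{\underline{x}}$ for each path $\underline{x}\in\ff{C}^{\ff{n}}(h,t)$. Evaluating this identity at $\underline{a}\otimes m$ then gives the claim.

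We prove this path identity by induction on the length $\ff{n}$. For $\ff{n}=0$ we have $\tennsor{}{\alpha}{(t)}=\iden$, and the identity is trivial. For $\ff{n}=1$ it is exactly the defining condition of a morphism in Definition~\ref{defn-species}. For $\ff{n}\geq2$, write $\underline{x}=(h,x_{\ff{n}-1},\dots,t)$ and $\underline{x}'=(x_{\ff{n}-1},\dots,t)$. Then $\tennsor{}{A}{\underline{x}}=\tennsor{h}{A}{x_{\ff{n}-1}}\otimes\tennsor{}{A}{\underline{x}'}$, and from the notation one reads off
\[
\tennsor{}{\alpha}{\underline{x}}=\tennsor{h}{\alpha}{x_{\ff{n}-1}}\circ(\iden\otimes\tennsor{}{\alpha}{\underline{x}'}).
\]
Strictly, this uses the associativity isomorphism of tensor products to regroup the iterated composite.

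The induction step is then a short chain of equalities:
\[
\tennsor{}{\alpha}{\underline{x}}(\iden\otimes\tennsor{t}{f}{})
=\tennsor{h}{\alpha}{x_{\ff{n}-1}}\bigl(\iden\otimes\tennsor{}{\alpha}{\underline{x}'}(\iden\otimes\tennsor{t}{f}{})\bigr)
=\tennsor{h}{\alpha}{x_{\ff{n}-1}}(\iden\otimes\tennsor{x_{\ff{n}-1}}{f}{})(\iden\otimes\tennsor{}{\alpha}{\underline{x}'})
=\tennsor{h}{f}{}\,\tennsor{}{\alpha}{\underline{x}}.
\]
The first equality uses functoriality of $\tennsor{h}{A}{x_{\ff{n}-1}}\otimes_{\tennsor{}{R}{x_{\ff{n}-1}}}-$. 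The second is the inductive hypothesis. The third is the length-one case.

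The only real obstacle is bookkeeping: checking that the recursive description of $\tennsor{}{\alpha}{\underline{x}}$ matches the composite written in the notation, up to the canonical associativity isomorphisms. Once this is checked, everything else is formal.
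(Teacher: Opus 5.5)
Your proof is correct and is essentially the paper's argument: reduce to a single summand $A_{\underline{x}}$ of ${}_{h}\underline{A}_{t}$, then push $f$ through the composite $\alpha_{\underline{x}}$ one factor at a time using the morphism condition $\alpha_{yx}(\iden\otimes f_{x})=f_{y}\alpha_{yx}$. The only difference is presentation: the paper writes a chain of equalities on pure tensors starting from the innermost map $\alpha_{x_{1}t}$, whereas you organise the same iteration as an induction on path length at the level of maps (in the $\ff{n}=0$ case this just uses $R_{t}$-linearity of $f_{t}$).
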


\begin{proof}
    Since $\underline{a}$ is a finite sum of pure tensors in the $\tennsor{}{R}{h}$-$\tennsor{}{R}{t}$-bimodules $\tennsor{}{A}{\underline{x}}$, it suffices to assume $\underline{a}=\tennsor{}{a}{(hx_{\ff{n}-1}}\otimes \cdots \otimes\tennsor{}{a}{x_{1}t}$ where $\ff{n}\geq 0$,  $\underline{x}\in \ff{C}^{\ff{n}}(h,t)$, and  $\tennsor{}{a}{x_{\ff{i}+1}x_{\ff{i}}}\in \tennsor{x_{\ff{i}+1}}{A}{x_{\ff{i}}}$ for $0\leq \ff{i}<\ff{n}$.

    If $\ell\in\ff{C}$, $f\in \End_{\Rep{\fk{S}}}(M)$ and $m\in \tennsor{\ell}{M}{}$ then $f\cdot m=\tennsor{\ell}{f}{}(m)$. 
    By definition, 
    \begin{gather*}
\tennsor{h}{\alpha}{x_{\ff{n}-1}}(
\cdots (
(\tennsor{}{\iden}{\tennsor{}{A}{(h,\dots,x_{1})}}\otimes\tennsor{x_{1}}{\alpha}{t})(\tennsor{}{a}{hx_{\ff{n}-1}}\otimes \cdots \tennsor{}{a}{x_{1}t}\otimes \tennsor{t}{f}{}(m)))\cdots)
\\
=
\tennsor{h}{\alpha}{x_{\ff{n}-1}}(
\cdots ((\tennsor{}{\iden}{\tennsor{}{A}{(h,\dots,x_{2})}}\otimes\tennsor{x_{2}}{\alpha}{x_{1}})
(\tennsor{}{a}{hx_{\ff{n}-1}}\otimes \cdots \otimes \tennsor{x_{1}}{\alpha}{t}(\tennsor{}{a}{x_{1}t}\otimes \tennsor{t}{f}{}(m)))\cdots)
\\
=
\tennsor{h}{\alpha}{x_{\ff{n}-1}}(
\cdots ((\tennsor{}{\iden}{\tennsor{}{A}{(h,\dots,x_{2})}}\otimes\tennsor{x_{2}}{\alpha}{x_{1}})
(\tennsor{}{a}{hx_{\ff{n}-1}}\otimes \cdots \otimes \tennsor{x_{1}}{f}{}(\tennsor{x_{1}}{\alpha}{t}(\tennsor{}{a}{x_{1}t}\otimes m))\cdots)
\\
\rvdots
\\
  =\tennsor{h}{f}{}(\tennsor{h}{\alpha}{x_{\ff{n}-1}}(\tennsor{}{a}{hx_{\ff{n}-1}}\otimes 
\tennsor{x_{\ff{n}-1}}{\alpha}{x_{\ff{n}-2}}(\tennsor{}{a}{x_{\ff{n}-1}x_{\ff{n}-2}}\otimes \cdots \otimes\tennsor{x_{2}}{\alpha}{x_{1}}( \tennsor{}{a}{x_{2}x_{1}}\otimes\tennsor{x_{1}}{\alpha}{t}(\tennsor{}{a}{x_{1}t}\otimes m))))). 
    \end{gather*}
    This means $\tennsor{}{\alpha}{\underline{x}}(\underline{a}\otimes f\cdot m))
        =f\cdot (\tennsor{}{\alpha}{\underline{x}}(\underline{a}\otimes m))$, as required. 
\end{proof}

\begin{defn}
\label{def-tensor-cat}
\cite{Simson1979} 
    The \emph{preadditive tensor category} $\sr{T}(\fk{S})$ has object set $\ff{C}$, and has morphism sets given by  $\Hom_{\sr{T}(\fk{S})}(t,h)=\tennsor{h\,}{\underline{A}}{\,t}$ for each $h,t\in\ff{C}$. 
\end{defn}

\begin{lem}
    \label{theorem-equivalence-of-reps-of-species-and-tensor-cat}
For each species $\fk{S}$  there is an additive equivalence $\Rep{\fk{S}}\to\lMod{\sr{T}(\fk{S})}$.  
\end{lem}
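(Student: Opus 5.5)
We will construct mutually quasi-inverse additive functors $\Phi\colon\Rep{\fk{S}}\to\lMod{\sr{T}(\fk{S})}$ and $\Psi$ in the other direction. Here a left $\sr{T}(\fk{S})$-module means an additive functor $F\colon\sr{T}(\fk{S})\to\Ab$. Before doing so, we must make the composition in $\sr{T}(\fk{S})$ explicit. Composition $\tennsor{z\,}{\underline{A}}{\,y}\times\tennsor{y\,}{\underline{A}}{\,x}\to\tennsor{z\,}{\underline{A}}{\,x}$ is concatenation of paths. On summands it is the canonical map
\[
\tennsor{}{A}{\underline{z}}\otimes_{\tennsor{}{R}{y}}\tennsor{}{A}{\underline{y}}\cong\tennsor{}{A}{\underline{z}\,\underline{y}},
\]
with the conventions that $\tennsor{}{A}{(y)}=\tennsor{}{R}{y}$ acts by the module structure and that $\tennsor{}{A}{(x)}=\tennsor{}{R}{x}$ supplies the identity. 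This composition is biadditive and associative because tensor products are associative.

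For $\Phi$, let $M=(\tennsor{x}{M}{},\tennsor{y}{\alpha}{x})$ be a representation. Define $\Phi(M)(x)=\tennsor{x}{M}{}$, and for $\underline{a}\in\tennsor{h\,}{\underline{A}}{\,t}$ define $\Phi(M)(\underline{a})=\tennsor{h\,}{\underline{\alpha}}{\,t}(\underline{a}\otimes -)$. Functoriality has three parts.
\begin{enumerate}
\item Identities: $\Phi(M)(1_x)=\tennsor{}{\alpha}{(x)}=\tennsor{}{\iden}{\tennsor{x}{M}{}}$.
\item Composition: the iterated formula defining $\tennsor{}{\alpha}{\underline{x}}$ gives $\tennsor{}{\alpha}{\underline{z}\,\underline{y}}(\underline{a}\otimes\underline{b}\otimes m)=\tennsor{}{\alpha}{\underline{z}}(\underline{a}\otimes\tennsor{}{\alpha}{\underline{y}}(\underline{b}\otimes m))$.
\item Length-$0$ paths: these must be handled separately, and they work because each $\tennsor{y}{\alpha}{x}$ is $\tennsor{}{R}{y}$-linear and well defined on $\otimes_{\tennsor{}{R}{x}}$.
\end{enumerate}
On a morphism $f=(\tennsor{x}{f}{})$, set $\Phi(f)_x=\tennsor{x}{f}{}$. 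Naturality with respect to all of $\sr{T}(\fk{S})$ is exactly the computation in the proof of Lemma~\ref{lem-elementary-underline-maps}, applied now to a morphism $M\to N$ rather than an endomorphism; the same argument goes through verbatim. Additivity of $\Phi$ is clear.

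For $\Psi$, let $F$ be a $\sr{T}(\fk{S})$-module. The ring $\tennsor{}{R}{x}$ embeds in $\End_{\sr{T}(\fk{S})}(x)=\tennsor{x\,}{\underline{A}}{\,x}$ as the length-$0$ summand, so $\tennsor{x}{M}{}:=F(x)$ becomes a left $\tennsor{}{R}{x}$-module. Define
\[
\tennsor{y}{\alpha}{x}(a\otimes m)=F(a)(m), \qquad a\in\tennsor{y}{A}{x}\subseteq\tennsor{y\,}{\underline{A}}{\,x}.
\]
Three checks are needed.
\begin{enumerate}
\item Well defined on $\otimes_{\tennsor{}{R}{x}}$: since $ar=a\circ r$ in $\sr{T}(\fk{S})$, we get $F(ar)=F(a)F(r)$.
\item $\tennsor{}{R}{y}$-linear: this follows in the same way from $ra=r\circ a$.
\item Biadditive: this holds because $F$ is additive.
\end{enumerate}
For a natural transformation $\eta$, set $\Psi(\eta)=(\eta_x)$. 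Naturality of $\eta$ with respect to the length-$0$ and length-$1$ morphisms gives exactly $\tennsor{}{R}{x}$-linearity and the compatibility condition on morphisms of representations.

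Finally, $\Psi\Phi=\mathrm{id}$ on the nose. For the other composite, $\Phi\Psi(F)$ and $F$ agree on objects and on morphisms of length $\le1$. Every morphism of $\sr{T}(\fk{S})$ is a finite sum of composites of length-$0$ and length-$1$ morphisms, so $\Phi\Psi(F)=F$ as well. Hence the two functors are even mutually inverse isomorphisms of categories, and in particular $\Phi$ is an additive equivalence.

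The main obstacle is the bookkeeping of the composition in $\sr{T}(\fk{S})$. The identification $\tennsor{}{A}{\underline{z}}\otimes_{\tennsor{}{R}{y}}\tennsor{}{A}{\underline{y}}\cong\tennsor{}{A}{\underline{z}\,\underline{y}}$ is clear when both paths have length at least $1$. The cases involving length-$0$ paths, where the factor is a ring rather than a bimodule, need separate treatment. I would dispatch these by the canonical isomorphisms $R\otimes_R N\cong N$ and check that they are compatible with the iterated $\alpha$'s.
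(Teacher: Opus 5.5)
Your proof is correct and uses the same functor as the paper, which also sends a representation $M$ to the module $x\mapsto \tennsor{x}{M}{}$, $\underline{a}\mapsto\tennsor{h\,}{\underline{\alpha}}{\,t}(\underline{a}\otimes -)$. The paper leaves the remaining verification to Simson's argument, via the identification $\sr{T}(\fk{S})=\tennsor{}{\sr{T}}{\cl{C}(\fk{S})}(\sr{A}_{\fk{S}})$, whereas you write out the inverse $\Psi$ and all the checks directly, which makes the lemma self-contained and even yields an isomorphism of categories.
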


\begin{proof}
The argument from the proof of \cite[Theorem 2.1]{Simson1979}  generalises directly. Associated to any small preadditive category $\cl{C}$ and any biadditive functor $\sr{A}\colon \cl{C}^{\op}\times\cl{C}\to\Ab$, Simson \cite{Simson1979} defined the \emph{additive tensor category} $\tennsor{}{\sr{T}}{\cl{C}}(\sr{A})$. For more detail see Appendix~\ref{appendix-tensor-cat}.

For a species $\fk{S}=(\tennsor{}{R}{x},\tennsor{y}{A}{x})$ we have $\sr{T}(\fk{S})=\tennsor{}{\sr{T}}{\cl{C}(\fk{S})}(\sr{A}_{\fk{S}})$ where $\cl{C}(\fk{S})$ has  object set $\ff{C}$,  $\End_{\cl{C}(\fk{S})}(x)=\tennsor{}{R}{x}$, $\Hom_{\cl{C}(\fk{S})}(x,y)=0$ if $x\neq y$,  $\sr{A}_{\fk{S}}(x,y)=\tennsor{y}{A}{x}$, and $\sr{A}_{\fk{S}}(s,r)(a)=s a r$, for all $x,y\in\ff{C}$,  $s\in \tennsor{}{R}{y}$, $ a\in \tennsor{y}{A}{x}$ and $ r\in  \tennsor{}{R}{x}$. 
Any $\fk{S}$-representation $M=(\tennsor{x}{M}{},\tennsor{y}{\alpha}{x})$ defines an additive functor  
$\sr{T}(\fk{S})\to \Ab$ 
%
%
  by $x\mapsto \tennsor{x}{M}{}$ and   $\tennsor{h\,}{\underline{A}}{\,t}\ni \underline{a}\mapsto\tennsor{h\,}{\underline{\alpha}}{\,t}(\underline{a}\otimes -)$.  
\end{proof}

If $w\in\ff{C}$ and $\fk{S}=(\tennsor{}{R}{x},\tennsor{x}{A}{y})$ is a species then  we consider the canonical functor 
\begin{equation}\label{eqn-w-Pi}
\begin{array}{ccc}
\tennsor{w}{\Pi}{}\colon\Rep{\fk{S}}\to \lMod{\tennsor{}{R}{w}},
&
(\tennsor{x}{M}{},\tennsor{y}{\alpha}{x})\mapsto \tennsor{w}{M}{}, 
&
(\tennsor{x}{f}{})\mapsto \tennsor{w}{f}{}
\end{array}
\end{equation}
 induced from the inclusion of $R_w$, considered a one-object preadditive category, into $\sr{T}(\fk{S})$. 
\begin{lem}
    \label{lemma-co-limits-in-rep-of-species-cat}
Let $\fk{S}=(\tennsor{}{R}{x},\tennsor{y}{A}{x})$ be a species. 
Let $\sr{D}\colon \cl{I}\to \Rep{\fk{S}}$ be a diagram given on objects by $i\mapsto (\tennsor{x}{M}{}(i),\tennsor{y}{\alpha}{x}(i))$ and on morphisms by $\alpha\mapsto(\tennsor{x}{f}{}(\alpha))$. 
Then the collections
\[
\begin{array}{cc}
\left[(\tennsor{x}{\ell}{}(i))\colon (\tennsor{x}{L}{},\tennsor{y}{\lambda}{x})\to (\tennsor{x}{M}{}(i),\tennsor{y}{\alpha}{x}(i))\right]_{i\in\cl{I}},
&
\left[(\tennsor{x}{c}{}(i))\colon (\tennsor{x}{M}{}(i),\tennsor{y}{\alpha}{x}(i))\to (\tennsor{x}{C}{},\tennsor{y}{\gamma}{x})\right]_{i\in\cl{I}}
\end{array}
\]
defined as follows, respectively give the 
limit and colimit of the diagram. 
\begin{enumerate}
    \item $\left[\tennsor{x}{\ell}{}(i)\colon\tennsor{x}{L}{}\to \tennsor{x}{M}{}(i)\right]_{i\in\cl{I}}$ is the limit of $\tennsor{x}{\Pi}{}\sr{D}\colon \cl{I}\to \Rep{\fk{S}}\to \lMod{\tennsor{}{R}{x}}$. 
    \item $\tennsor{y}{\lambda}{x}$ is  universal, unique such that $\tennsor{y}{\alpha}{x}(i)\circ  (\tennsor{}{\iden}{\tennsor{y}{A}{x}}\otimes  \tennsor{x}{\ell}{}(i)) = \tennsor{y}{\ell}{}(i) \circ \tennsor{y}{\lambda}{x}$ for each $i$. 
    \item $\left[\tennsor{x}{c}{}(i)\colon \tennsor{x}{M}{}(i)\to \tennsor{x}{C}{}\right]_{i\in\cl{I}}$ is the colimit of $\tennsor{x}{\Pi}{}\sr{D}\colon\cl{I}\to \Rep{\fk{S}}\to \lMod{\tennsor{}{R}{x}}$. 
    \item $\tennsor{y}{\gamma}{x}$ is  universal, unique such that $\tennsor{y}{c}{}(i) \circ \tennsor{y}{\alpha}{x}(i)
    =
    \tennsor{y}{\gamma}{x}\circ (\tennsor{}{\iden}{\tennsor{y}{A}{x}}\otimes  \tennsor{x}{c}{}(i))$ for each $i$. 
\end{enumerate}
\end{lem}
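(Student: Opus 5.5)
The plan is to argue directly in $\Rep{\fk{S}}$, treating colimits first and then dualising to limits. The case of modules over a preadditive category, via Lemma~\ref{theorem-equivalence-of-reps-of-species-and-tensor-cat}, is a useful guide. The one subtlety is that the structure maps have the form $\tennsor{y}{A}{x}\otimes_{\tennsor{}{R}{x}}(-)\to(-)$. Because tensoring is a left adjoint, it preserves colimits but not limits. For limits we will therefore pass through the hom-tensor adjunction.

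\emph{Colimits.} Take the colimit $[\tennsor{x}{c}{}(i)]_i$ of $\tennsor{x}{\Pi}{}\sr{D}$ in $\lMod{\tennsor{}{R}{x}}$. Since $\tennsor{y}{A}{x}\otimes_{\tennsor{}{R}{x}}-$ is a left adjoint, $[\tennsor{}{\iden}{}\otimes\tennsor{x}{c}{}(i)]_i$ is a colimit of $\tennsor{y}{A}{x}\otimes\tennsor{x}{\Pi}{}\sr{D}$.

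Each morphism $\alpha\colon i\to j$ in $\cl{I}$ is a morphism of representations. Hence the maps $\tennsor{y}{c}{}(i)\circ\tennsor{y}{\alpha}{x}(i)$ form a cocone on $\tennsor{y}{A}{x}\otimes\tennsor{x}{\Pi}{}\sr{D}$. This gives a unique $\tennsor{y}{\gamma}{x}$ as in (4), so each $(\tennsor{x}{c}{}(i))$ is a morphism of representations and $(C,\gamma)$ is a cocone in $\Rep{\fk{S}}$.

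For universality, let $(g(i))\colon D(i)\to N=(\tennsor{x}{N}{},\tennsor{y}{\beta}{x})$ be any cocone. Pointwise we get unique $\tennsor{x}{g}{}\colon\tennsor{x}{C}{}\to\tennsor{x}{N}{}$ with $\tennsor{x}{g}{}\,\tennsor{x}{c}{}(i)=\tennsor{x}{g}{}(i)$. To see that $(\tennsor{x}{g}{})$ commutes with the structure maps, compare $\tennsor{y}{g}{}\circ\tennsor{y}{\gamma}{x}$ and $\tennsor{y}{\beta}{x}\circ(\tennsor{}{\iden}{}\otimes\tennsor{x}{g}{})$. After precomposing with every $\tennsor{}{\iden}{}\otimes\tennsor{x}{c}{}(i)$, both become $\tennsor{y}{g}{}(i)\circ\tennsor{y}{\alpha}{x}(i)$. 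They are therefore equal, by the joint epimorphicity of a colimit cocone. Uniqueness of $(\tennsor{x}{g}{})$ is inherited from uniqueness at each $x$.

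\emph{Limits.} This is the step needing care. Use the adjunction $\Hom_{\tennsor{}{R}{y}}(\tennsor{y}{A}{x}\otimes_{\tennsor{}{R}{x}}X,Y)\cong\Hom_{\tennsor{}{R}{x}}(X,\Hom_{\tennsor{}{R}{y}}(\tennsor{y}{A}{x},Y))$. Under it, a structure map $\tennsor{y}{\alpha}{x}$ corresponds to an $\tennsor{}{R}{x}$-linear map $\tennsor{x}{M}{}\to\Hom_{\tennsor{}{R}{y}}(\tennsor{y}{A}{x},\tennsor{y}{M}{})$. The functor $\Hom_{\tennsor{}{R}{y}}(\tennsor{y}{A}{x},-)$ is a right adjoint and so preserves limits. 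Consequently the maps adjoint to $\tennsor{y}{\alpha}{x}(i)\circ(\tennsor{}{\iden}{}\otimes\tennsor{x}{\ell}{}(i))$ form a cone over $\Hom(\tennsor{y}{A}{x},\tennsor{y}{\Pi}{}\sr{D})$, whose limit is $\Hom(\tennsor{y}{A}{x},\tennsor{y}{L}{})$.

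This gives a unique map $\tennsor{x}{L}{}\to\Hom(\tennsor{y}{A}{x},\tennsor{y}{L}{})$, and its adjoint is the unique $\tennsor{y}{\lambda}{x}$ satisfying (2). Uniqueness in (2) can also be seen directly: the $\tennsor{y}{\ell}{}(i)$ are jointly monic. For existence, naturality of the adjunction bijection is what transports the compatibility equations.

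Universality of the cone $(L,\lambda)$ is dual to the colimit case. Given a cone $(g(i))\colon N\to D(i)$, the pointwise factorisations $\tennsor{x}{g}{}$ satisfy the compatibility with $\tennsor{y}{\beta}{x}$ and $\tennsor{y}{\lambda}{x}$. This follows after postcomposing with the jointly monic family $\tennsor{y}{\ell}{}(i)$.
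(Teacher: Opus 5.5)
Your proof is correct, but it takes a different route from the paper. The paper's argument is short and goes through Lemma~\ref{theorem-equivalence-of-reps-of-species-and-tensor-cat}:
\begin{itemize}
\item (co)limits in $\lMod{\sr{T}(\fk{S})}$ exist and are computed pointwise in $\Ab$;
\item the forgetful functors $\lMod{\tennsor{}{R}{x}}\to\Ab$ preserve (co)limits;
\item hence the data in (1)--(4) is sent to the (co)limit under the equivalence.
\end{itemize}
You instead verify the universal property directly inside $\Rep{\fk{S}}$. You use only that $\tennsor{y}{A}{x}\otimes_{\tennsor{}{R}{x}}-$ preserves colimits, and that limit (colimit) cones are jointly monic (epic).

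The paper's route is quicker but depends on the equivalence. It also leaves implicit why the induced action of $\tennsor{y}{A}{x}\subseteq\Hom_{\sr{T}(\fk{S})}(x,y)$ on the pointwise (co)limit is exactly $\tennsor{y}{\lambda}{x}$ (resp.\ $\tennsor{y}{\gamma}{x}$). Your argument is self-contained and makes that identification explicit.

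One simplification: the hom--tensor adjunction in your limit step is unnecessary. The maps $\tennsor{y}{\alpha}{x}(i)\circ(\tennsor{}{\iden}{\tennsor{y}{A}{x}}\otimes\tennsor{x}{\ell}{}(i))\colon\tennsor{y}{A}{x}\otimes\tennsor{x}{L}{}\to\tennsor{y}{M}{}(i)$ already form a cone over $\tennsor{y}{\Pi}{}\sr{D}$. This follows because each $(\tennsor{x}{f}{}(\alpha))$ is a morphism of representations, exactly as in your colimit step. So $\tennsor{y}{\lambda}{x}$ comes straight from the universal property of $\tennsor{y}{L}{}$. Here you map into the limit rather than out of $\tennsor{y}{A}{x}\otimes\tennsor{x}{L}{}$, so no preservation property of the tensor functor is needed. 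Your adjoint construction yields the same map, so nothing is wrong; it is just roundabout.
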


\begin{proof}
    The (co)limit of any diagram  $\cl{I}
    \to \lMod{\sr{T}(\fk{S})}$ exists, and is  computed pointwise.  
    Thus $\cl{I}$-shaped (co)limits exist in $\Rep{\fk{S}}$ by Lemma~\ref{theorem-equivalence-of-reps-of-species-and-tensor-cat}. 

    The forgetful functor $ \lMod{\tennsor{}{R}{x}}\to \Ab$ preserves (co)limits, and the image of each collection in the statement is sent to the (co)limit under the equivalence  $\Rep{\fk{S}}\to \lMod{\sr{T}(\fk{S})}$. 
\end{proof}


\begin{lem}
    \label{lem-functors-between-reps-of-species}
    Let $\fk{S}=(\tennsor{}{R}{x},\tennsor{y}{A}{x})$ and $\fk{T}=(\tennsor{}{S}{x},\tennsor{y}{B}{x})$ be species, $\tennsor{x}{\sr{G}}{}\colon \lMod{\tennsor{}{R}{x}}\to \lMod{\tennsor{}{S}{x}}$ be an additive functor for each $x\in\ff{C}$, and  ${\tennsor{y}{\Omega}{x}}\colon \tennsor{y}{B}{x}\otimes \tennsor{x}{\sr{G}}{}(-)\Rightarrow \tennsor{y}{\sr{G}}{}(\tennsor{y}{A}{x}\otimes-)$ be a natural transformation for each $(y,x)\in\ff{C}^{2}$. 
    %
%
%
    %
        Then there is an additive functor 
\[
\tennsor{}{\sr{G}}{\Omega}\colon \Rep{\fk{S}}\to \Rep{\fk{T}},
\,
(\tennsor{x}{M}{},\tennsor{y}{\alpha}{x})\mapsto (\tennsor{x}{\sr{G}}{}(\tennsor{x}{M}{}),\tennsor{y}{\sr{G}}{}(\tennsor{y}{\alpha}{x})\circ \tennsor{}{{(\tennsor{y}{\Omega}{x})}}{{\tennsor{x}{M}{}}}),
\,
(\tennsor{x}{f}{})\mapsto (\tennsor{x}{\sr{G}}{}(\tennsor{x}{f}{})).
\]
Furthermore, there are natural transformations ${\tennsor{y\,}{\underline{\Omega}}{\,x}}\colon \tennsor{y\,}{\underline{B}}{\,x}\otimes \tennsor{x}{\sr{G}}{}(-)\Rightarrow \tennsor{y}{\sr{G}}{}(\tennsor{y\,}{\underline{A}}{\,x}\otimes-)$ for each $(y,x)\in\ff{C}^{2}$ such that $\tennsor{y\,}{\underline{\omega}}{\,x} =\tennsor{y}{\sr{G}}{}(\tennsor{y\,}{\underline{\alpha}}{\,x})\circ \tennsor{}{{(\tennsor{y\,}{\underline{\Omega}}{\,x})}}{{\tennsor{x}{M}{}}}$ whenever $\tennsor{}{\sr{G}}{\Omega}(\tennsor{x}{M}{},\tennsor{y}{\alpha}{x})=(\tennsor{x}{N}{},\tennsor{y}{\omega}{x})$. 
\end{lem}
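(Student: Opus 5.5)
First I check that $\tennsor{}{\sr{G}}{\Omega}$ is well defined on objects and morphisms. For a representation $M=(\tennsor{x}{M}{},\tennsor{y}{\alpha}{x})$, the composite $\tennsor{y}{\sr{G}}{}(\tennsor{y}{\alpha}{x})\circ(\tennsor{y}{\Omega}{x})_{\tennsor{x}{M}{}}$ is a map $\tennsor{y}{B}{x}\otimes_{\tennsor{}{S}{x}}\tennsor{x}{\sr{G}}{}(\tennsor{x}{M}{})\to\tennsor{y}{\sr{G}}{}(\tennsor{y}{M}{})$ of left $\tennsor{}{S}{y}$-modules. This is because $\tennsor{y}{\Omega}{x}$ is a transformation between functors valued in $\lMod{\tennsor{}{S}{y}}$, and $\tennsor{y}{\sr{G}}{}$ sends $\tennsor{}{R}{y}$-linear maps to $\tennsor{}{S}{y}$-linear maps.

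For a morphism $(\tennsor{x}{f}{})$, I apply $\tennsor{y}{\sr{G}}{}$ to the defining square $\tennsor{y}{\beta}{x}\circ(\tennsor{}{\iden}{}\otimes\tennsor{x}{f}{})=\tennsor{y}{f}{}\circ\tennsor{y}{\alpha}{x}$. I then precompose with naturality of $\tennsor{y}{\Omega}{x}$ at $\tennsor{x}{f}{}$, namely $(\tennsor{y}{\Omega}{x})_{\tennsor{x}{N}{}}\circ(\tennsor{}{\iden}{}\otimes\tennsor{x}{\sr{G}}{}(\tennsor{x}{f}{}))=\tennsor{y}{\sr{G}}{}(\tennsor{}{\iden}{}\otimes\tennsor{x}{f}{})\circ(\tennsor{y}{\Omega}{x})_{\tennsor{x}{M}{}}$. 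Together these give the required square. Functoriality and additivity follow pointwise from those of each $\tennsor{x}{\sr{G}}{}$.

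For the second claim I build $\tennsor{y\,}{\underline{\Omega}}{\,x}$ summand by summand over paths, since $\tennsor{y\,}{\underline{B}}{\,x}\otimes-$ is a direct sum over $\underline{x}\in\ff{C}^{\ff{n}}(y,x)$ of $\tennsor{}{B}{\underline{x}}\otimes-$. The components are defined as follows.
\begin{itemize}
\item On the length-$0$ summand ($y=x$), the component is the canonical isomorphism $\tennsor{}{S}{x}\otimes\tennsor{x}{\sr{G}}{}(-)\cong\tennsor{x}{\sr{G}}{}(-)\cong\tennsor{x}{\sr{G}}{}(\tennsor{}{R}{x}\otimes-)$.
\item On a length-$1$ summand, the component is $\tennsor{y}{\Omega}{x}$.
\item On a path $\underline{x}=(x_{\ff{n}},\dots,x_0)$, the component $\tennsor{}{\Omega}{\underline{x}}$ is defined inductively as the composite
\[
\tennsor{}{B}{(x_{\ff{n}},x_{\ff{n}-1})}\otimes\tennsor{}{B}{(x_{\ff{n}-1},\dots,x_0)}\otimes\tennsor{x_0}{\sr{G}}{}(-)\xrightarrow{\iden\otimes\Omega_{(x_{\ff{n}-1},\dots,x_0)}}\tennsor{}{B}{(x_{\ff{n}},x_{\ff{n}-1})}\otimes\tennsor{x_{\ff{n}-1}}{\sr{G}}{}(\tennsor{}{A}{(x_{\ff{n}-1},\dots,x_0)}\otimes-)\xrightarrow{\Omega}\tennsor{x_{\ff{n}}}{\sr{G}}{}(\tennsor{}{A}{\underline{x}}\otimes-).
\]
The last step uses the associativity isomorphism $\tennsor{}{A}{(x_{\ff{n}},x_{\ff{n}-1})}\otimes(\tennsor{}{A}{(\dots)}\otimes-)\cong\tennsor{}{A}{\underline{x}}\otimes-$.
\end{itemize}
Each composite is natural because it is built from natural transformations, whiskerings and canonical isomorphisms.

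To combine the path components into the target $\tennsor{y}{\sr{G}}{}(\bigoplus_{\underline{x}}\tennsor{}{A}{\underline{x}}\otimes-)$, I compose with the canonical comparison map $\bigoplus\tennsor{y}{\sr{G}}{}(\tennsor{}{A}{\underline{x}}\otimes-)\to\tennsor{y}{\sr{G}}{}(\bigoplus\tennsor{}{A}{\underline{x}}\otimes-)$. This map is induced by applying $\tennsor{y}{\sr{G}}{}$ to the coproduct inclusions. It exists for any additive functor, and no preservation of infinite sums is needed because we only map out of the coproduct.

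Finally I verify $\tennsor{y\,}{\underline{\omega}}{\,x}=\tennsor{y}{\sr{G}}{}(\tennsor{y\,}{\underline{\alpha}}{\,x})\circ(\tennsor{y\,}{\underline{\Omega}}{\,x})_{\tennsor{x}{M}{}}$. By the universal property of the coproduct, it suffices to check this on each path summand, i.e.\ $\omega_{\underline{x}}=\tennsor{y}{\sr{G}}{}(\alpha_{\underline{x}})\circ(\Omega_{\underline{x}})_{\tennsor{x}{M}{}}$, by induction on $\ff{n}$. The case $\ff{n}=0$ is the unit isomorphism, and $\ff{n}=1$ is the definition of $\tennsor{}{\sr{G}}{\Omega}$.

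For the inductive step, I expand $\omega_{\underline{x}}=\omega_{x_{\ff{n}}x_{\ff{n}-1}}(\iden\otimes\omega_{(x_{\ff{n}-1},\dots)})$. I substitute the inductive hypothesis and the definition of $\omega_{x_{\ff{n}}x_{\ff{n}-1}}$. I then use naturality of $\Omega_{x_{\ff{n}}x_{\ff{n}-1}}$ at the morphism $\alpha_{(x_{\ff{n}-1},\dots,x_0)}$ to move $\tennsor{x_{\ff{n}-1}}{\sr{G}}{}(\alpha_{(\dots)})$ past $\Omega$, and functoriality of $\tennsor{x_{\ff{n}}}{\sr{G}}{}$ to combine. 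The main obstacle is this step: keeping track of the associativity and unit isomorphisms so that the recursive definition of $\alpha_{\underline{x}}$ (peeling from the left) matches that of $\Omega_{\underline{x}}$. I will choose the recursion for $\Omega_{\underline{x}}$ to peel off the leftmost factor precisely so that these two recursions agree.
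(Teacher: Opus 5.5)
Your proposal is correct and follows essentially the same route as the paper. The paper also builds $\tennsor{}{\Omega}{\underline{x}}$ as the composite of the whiskered transformations $\tennsor{x_{\ff{i}+1}}{\Omega}{x_{\ff{i}}}$, applied from the right-most edge first (which is your left-peeling recursion unwound), assembles these over paths, and proves $\tennsor{}{\omega}{\underline{x}}=\tennsor{h}{\sr{G}}{}(\tennsor{}{\alpha}{\underline{x}})\circ(\tennsor{}{\Omega}{\underline{x}})_{\tennsor{t}{M}{}}$ by induction using naturality of $\tennsor{k}{\Omega}{j}$ at the structure map; your explicit use of the canonical comparison map $\bigoplus_{\underline{x}}\tennsor{y}{\sr{G}}{}(\tennsor{}{A}{\underline{x}}\otimes-)\to\tennsor{y}{\sr{G}}{}(\tennsor{y\,}{\underline{A}}{\,x}\otimes-)$ makes precise a step the paper covers only with the phrase ``distributivity of the tensor product over direct sums''.
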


\begin{proof}
   Notice $\tennsor{}{\sr{G}}{\Omega}$ sends objects to objects.
   It is straightforward to see that the functoriality of each $\tennsor{x}{\sr{G}}{}$, and the naturality of each $\tennsor{y}{\Omega}{x}$, ensure $\tennsor{}{\sr{G}}{\Omega}$ is functorial.  
    %
    %
%
%

 %
For each $\underline{x}\in \ff{C}^{\ff{n}}(h,t)$ define ${\tennsor{}{\Omega}{\underline{x}}}\colon \tennsor{}{B}{\underline{x}}\otimes \tennsor{t}{\sr{G}}{}(-)\Rightarrow \tennsor{h}{\sr{G}}{}(\tennsor{}{A}{\underline{x}}\otimes-)$ as follows, where we use the terminology of \emph{whiskering} natural transformations with functors, in the sense of \cite{MacLane-categories-for-the-working-mathematician}: see \S II.5, \S XII.3, and in particular item (11) on page 275. 
If $\ff{n}=0$ then $\underline{x}=(x)$ and let ${\tennsor{}{\Omega}{\underline{x}}}$ be the identity  on $\tennsor{x}{\sr{G}}{}$. 
Otherwise $\ff{n}\geq 1$, and we define ${\tennsor{}{\Omega}{\underline{x}}}$ to be the composition
\[\begin{tikzcd}
	{\tennsor{}{B}{(h,\dots,t)}\otimes\tennsor{t}{\sr{G}}{} (-)} & {\tennsor{}{B}{(h,\dots,x_{1})}\otimes\tennsor{x_{1}}{\sr{G}}{} (\tennsor{x_{1}}{A}{t}\otimes -)} & \cdots & {\tennsor{x_{\ff{n}}}{\sr{G}}{} (\tennsor{}{A}{(h,\dots,t)}\otimes -).}
	\arrow[from=1-1, to=1-2]
	\arrow[from=1-2, to=1-3]
	\arrow[from=1-3, to=1-4]
\end{tikzcd}\]
Here ${\tennsor{}{B}{(h,\dots,x_{\ff{i}})}\otimes\tennsor{x_{\ff{i}}}{\sr{G}}{} (\tennsor{}{A}{(x_{\ff{i}},\dots,t)}\otimes -)}\to {\tennsor{}{B}{(h,\dots,x_{\ff{i}+1})}\otimes\tennsor{x_{\ff{i}+1}}{\sr{G}}{} (\tennsor{}{A}{(x_{\ff{i}+1},\dots,t)}\otimes -)}$ is the whiskering 
\[\begin{tikzcd}
	&&& {\lMod{\tennsor{}{S}{x_{\ff{i}}}}} &&& \\
	{\lMod{\tennsor{}{R}{t}}} && {\lMod{\tennsor{}{R}{x_{\ff{i}}}}} && {\lMod{\tennsor{}{S}{x_{\ff{i}+1}}}} && {\lMod{\tennsor{}{S}{h}}.} \\
	&&& {\lMod{\tennsor{}{R}{x_{\ff{i}+1}}}}
	\arrow["{{\tennsor{x_{\ff{i}+1}}{B}{x_{\ff{i}}}\otimes-}}", from=1-4, to=2-5]
	\arrow["{{{\tennsor{x_{\ff{i}+1}}{\Omega}{x_{\ff{i}}}}}}"{description}, Rightarrow, from=1-4, to=3-4]
	\arrow["{{{\tennsor{}{A}{(x_{\ff{i}},\dots,t)}}\otimes - }}", from=2-1, to=2-3]
	\arrow["{{\tennsor{x_{\ff{i}}}{\sr{G}}{} }}", from=2-3, to=1-4]
	\arrow["{{\tennsor{x_{\ff{i}+1}}{A}{x_{\ff{i}}}\otimes-}}"', from=2-3, to=3-4]
	\arrow["{{{\tennsor{}{B}{(h,\dots,x_{\ff{i}+1})}}\otimes - }}", from=2-5, to=2-7]
	\arrow["{{\tennsor{x_{\ff{i}+1}}{\sr{G}}{} }}"', from=3-4, to=2-5]
\end{tikzcd}\]
Define ${\tennsor{y\,}{\underline{\Omega}}{\,x}}$ by combining the transformations ${\tennsor{}{\Omega}{\underline{x}}}$, using the distributivity of the tensor product over direct sums. 
If $(k,j,i)\in\ff{C}^{3}$ then using $\tennsor{j}{\alpha}{i}$ in the naturality of $\tennsor{k}{\Omega}{j}$ gives
\begin{align*}
\tennsor{k}{\sr{G}}{}\left(\tennsor{k}{\alpha}{j}\right) \circ \tennsor{}{{\left(\tennsor{k}{\Omega}{j}\right)}}{{\tennsor{j}{M}{}}}\circ 
    \left(\tennsor{}{\iden}{\tennsor{k}{B}{j}}\otimes \left(
    \tennsor{j}{\sr{G}}{}\left(\tennsor{j}{\alpha}{i}\right)\circ 
\tennsor{}{{\left(\tennsor{j}{\Omega}{i}\right)}}{{\tennsor{i}{M}{}}}\right)\right)=
\\
        \tennsor{k}{\sr{G}}{}\left(\tennsor{k}{\alpha}{j}\right) \circ \tennsor{}{{\left(\tennsor{k}{\Omega}{j}\right)}}{{\tennsor{j}{M}{}}}\circ 
    \left(\tennsor{}{\iden}{\tennsor{k}{B}{j}}\otimes 
    \tennsor{j}{\sr{G}}{}\left(\tennsor{j}{\alpha}{i}\right)\right)\circ
    \left(\tennsor{}{\iden}{\tennsor{k}{B}{j}}\otimes 
\tennsor{}{{\left(\tennsor{j}{\Omega}{i}\right)}}{{\tennsor{i}{M}{}}}\right)
    =\\
    \tennsor{k}{\sr{G}}{}\left(\tennsor{k}{\alpha}{j}\right) \circ\tennsor{k}{\sr{G}}{}\left(\tennsor{}{\iden}{\tennsor{k}{A}{j}}\otimes\tennsor{j}{\alpha}{i}\right)\circ \tennsor{}{{\left(\tennsor{j}{\Omega}{i}\right)}}{{\tennsor{j}{A}{i}\otimes \tennsor{i}{M}{}}}\circ
    \left(\tennsor{}{\iden}{\tennsor{k}{B}{j}}\otimes 
\tennsor{}{{\left(\tennsor{j}{\Omega}{i}\right)}}{{\tennsor{i}{M}{}}}\right)=
\\
     \tennsor{k}{\sr{G}}{}\left(\tennsor{k}{\alpha}{j}\circ \left(\tennsor{}{\iden}{\tennsor{k}{A}{j}}\otimes\tennsor{j}{\alpha}{i}\right)\right)\circ \tennsor{}{{\left(\tennsor{j}{\Omega}{i}\right)}}{{\tennsor{j}{A}{i}\otimes \tennsor{i}{M}{}}}\circ
    \left(\tennsor{}{\iden}{\tennsor{k}{B}{j}}\otimes 
\tennsor{}{{\left(\tennsor{j}{\Omega}{i}\right)}}{{\tennsor{i}{M}{}}}\right).
\end{align*}
By induction, $\tennsor{}{\omega}{\underline{x}} =\tennsor{h}{\sr{G}}{}(\tennsor{}{\alpha}{\underline{x}})\circ \tennsor{}{{(\tennsor{}{\Omega}{\underline{x}})}}{{\tennsor{t}{M}{}}}$ for each $\underline{x}\in\ff{C}^{\ff{n}}(h,t)$, as required.  
\end{proof}

\begin{lem}
\label{lem-ring-and-bilinear-maps-induce-functor-between-tensor-cat}
    Let $\fk{S}=(\tennsor{}{R}{x},\tennsor{y}{A}{x})$ and $\fk{T}=(\tennsor{}{S}{x},\tennsor{y}{B}{x})$ be species,  $\tennsor{}{\rusEl}{x}\colon \tennsor{}{S}{x}\to \tennsor{}{R}{x}$ be a ring map for each $x\in\ff{C}$, and $\tennsor{y}{\RusEl}{x}\colon \tennsor{y}{B}{x}\to\tennsor{y}{A}{x}$ be an additive map for each $x,y\in\ff{C}$, such that
    \begin{equation}
    \label{eqn-ring-and-bilinear-maps-induce-functor-between-tensor-cat}
        \begin{array}{cc}
    \tennsor{y}{\RusEl}{x}(sbt)=\tennsor{}{\rusEl}{y}(s)\tennsor{y}{\RusEl}{x}(b)\tennsor{}{\rusEl}{x}(t),
    &
    (s\in \tennsor{}{S}{y},\,b\in \tennsor{y}{B}{x},\,t\in\tennsor{}{S}{x}).
    \end{array}
    \end{equation}
    Then there is an additive functor $(\rusEl,\RusEl)\colon \sr{T}(\fk{T})\to \sr{T}(\fk{S})$ which is the identity on objects. 
\end{lem}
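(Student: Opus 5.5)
The functor will be defined to be the identity on objects and, on each path summand, the tensor product of the given maps. The proof will check that this is well defined and then that it respects identities, composition and addition.

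\textbf{Definition on path summands.} Fix $h,t\in\ff{C}$ and a path $\underline{x}=(x_{\ff{n}},\dots,x_{0})\in\ff{C}^{\ff{n}}(h,t)$. We define an additive map $\tennsor{}{\RusEl}{\underline{x}}\colon\tennsor{}{B}{\underline{x}}\to\tennsor{}{A}{\underline{x}}$ in three cases.
\begin{enumerate}
\item If $\ff{n}=0$, set $\tennsor{}{\RusEl}{(h)}=\tennsor{}{\rusEl}{h}\colon\tennsor{}{S}{h}\to\tennsor{}{R}{h}$.
\item If $\ff{n}=1$, set $\tennsor{}{\RusEl}{(h,t)}=\tennsor{h}{\RusEl}{t}$.
\item If $\ff{n}\geq2$, set
\[
b_{\ff{n}}\otimes\cdots\otimes b_{1}\mapsto\tennsor{h}{\RusEl}{x_{\ff{n}-1}}(b_{\ff{n}})\otimes\cdots\otimes\tennsor{x_{1}}{\RusEl}{t}(b_{1}).
\]
\end{enumerate}

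\textbf{Well-definedness (the main step).} The only real work is showing that the formula in case (3) respects the balancing relations of the iterated tensor product. The assignment $(b_{\ff{n}},\dots,b_{1})\mapsto\bigotimes_{\ff{i}}\tennsor{x_{\ff{i}+1}}{\RusEl}{x_{\ff{i}}}(b_{\ff{i}+1})$ is multi-additive. For balancedness, take $s\in\tennsor{}{S}{x_{\ff{i}}}$. By \eqref{eqn-ring-and-bilinear-maps-induce-functor-between-tensor-cat},
\[
\RusEl(bs)\otimes\RusEl(b')=\RusEl(b)\tennsor{}{\rusEl}{x_{\ff{i}}}(s)\otimes\RusEl(b')=\RusEl(b)\otimes\tennsor{}{\rusEl}{x_{\ff{i}}}(s)\RusEl(b')=\RusEl(b)\otimes\RusEl(sb'),
\]
where the middle equality is balancing over $\tennsor{}{R}{x_{\ff{i}}}$. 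Applying this at each adjacent pair gives the required factorisation through the tensor product over the rings $\tennsor{}{S}{x_{\ff{i}}}$. The same identity, applied at the two ends, shows that $\tennsor{}{\RusEl}{\underline{x}}$ is semilinear for the outer actions: $\tennsor{}{\RusEl}{\underline{x}}(s\,\underline{b}\,s')=\tennsor{}{\rusEl}{h}(s)\,\tennsor{}{\RusEl}{\underline{x}}(\underline{b})\,\tennsor{}{\rusEl}{t}(s')$. Summing over all $\ff{n}$ and $\underline{x}$ gives an additive map $\tennsor{h\,}{\underline{\RusEl}}{\,t}\colon\tennsor{h\,}{\underline{B}}{\,t}\to\tennsor{h\,}{\underline{A}}{\,t}$. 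This is the required map $\Hom_{\sr{T}(\fk{T})}(t,h)\to\Hom_{\sr{T}(\fk{S})}(t,h)$.

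\textbf{Functoriality.} Composition in $\sr{T}(\fk{T})$ of $\tennsor{}{B}{\underline{y}}$ with $\tennsor{}{B}{\underline{x}}$, where $\underline{y}$ starts where $\underline{x}$ ends, is concatenation of tensors, i.e.\ the canonical map $\tennsor{}{B}{\underline{y}}\otimes\tennsor{}{B}{\underline{x}}\to\tennsor{}{B}{\underline{y}\underline{x}}$. There are three kinds of composite to check.
\begin{itemize}
\item Both paths of positive length: compatibility with composition is immediate from the defining formula, since both sides send a pure tensor to the concatenation of the images of its factors.
\item One path of length $0$: composition is the left or right ring action, and compatibility is exactly the semilinearity established above.
\item Both paths of length $0$: composition is multiplication in $\tennsor{}{S}{x}$, and compatibility holds because $\tennsor{}{\rusEl}{x}$ is a ring map.
\end{itemize}
Identities are preserved because $\tennsor{}{\rusEl}{x}(1)=1$, and additivity holds by construction. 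This yields the additive functor $(\rusEl,\RusEl)\colon\sr{T}(\fk{T})\to\sr{T}(\fk{S})$, which is the identity on objects.
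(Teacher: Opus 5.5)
Your proposal is correct and follows essentially the same route as the paper. On each path summand you take the tensor product of the given maps, use the compatibility condition to get balancedness over the intermediate rings $S_{x_i}$, and then sum over all paths. The differences are minor. The paper builds the map on a path summand by induction on path length, peeling off the leftmost factor and using the previously defined map on the shorter path, and it leaves the verification of composition and identities implicit. You instead define the map in one step via a multi-additive balanced map on the iterated tensor product, and you check functoriality explicitly.
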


\begin{proof}
We need to define the functor on morphisms. 
For each  $(h,t)\in\ff{C}^{2}$ and $\underline{x}\in\ff{C}^{n}(h,t)$ we define an $\tennsor{}{R}{h}$-$\tennsor{}{R}{t}$-bilinear map $\tennsor{}{\RusEl}{\underline{x}}\colon \tennsor{}{B}{\underline{x}}\to \tennsor{}{A}{\underline{x}}$ inductively on $\ff{n}\geq 0$. 
For $\ff{n}=0$ we have $\underline{x}=(x)$ and we let $\tennsor{}{\RusEl}{\underline{x}}=\tennsor{}{\rusEl}{x}$. 
Now suppose $\ff{n}\geq 1$, let $\ell=x_{\ff{n}-1}$ and assume $\tennsor{}{\RusEl}{\underline{w}}\colon \tennsor{}{B}{\underline{w}}\to \tennsor{}{A}{\underline{w}}$ has been defined, where $\underline{w}=(\ell,\dots, t)\in\ff{C}^{n-1}(\ell,t)$ is found by removing $h$ from $\underline{x}$. 

Consider the map $\tennsor{h}{B}{\ell}\times \tennsor{}{B}{\underline{w}}\to \tennsor{}{A}{\underline{x}}$ given by $(b,\underline{b})\mapsto \tennsor{h}{\RusEl}{\ell}(b)\otimes \tennsor{}{\RusEl}{\underline{w}}(\underline{b})$. 
Taking $x=\ell$ and $y=h$ in \eqref{eqn-ring-and-bilinear-maps-induce-functor-between-tensor-cat}, and using the $\tennsor{}{R}{\ell}$-$\tennsor{}{R}{t}$-bilinearity of $\tennsor{}{\RusEl}{\underline{w}}$, it follows that this map is $\tennsor{}{R}{\ell}$-balanced, left $\tennsor{}{R}{h}$-linear, and  right  $\tennsor{}{R}{t}$-linear. 
We define $\tennsor{}{\RusEl}{\underline{x}}$ by the universal property of the tensor product. 
Using the universal property of the coproduct over $\underline{x}$, the maps $\tennsor{}{\RusEl}{\underline{x}}$ combine to define a morphism $\tennsor{h\,}{\underline{\RusEl}}{\,t}\colon \tennsor{h\,}{\underline{B}}{\,t}\to \tennsor{h\,}{\underline{A}}{\,t}$, unique such that
\[
\tennsor{h\,}{\underline{\RusEl}}{\,t}(\tennsor{}{b}{hx_{\ff{n}-1}}\otimes \cdots \otimes \tennsor{}{b}{x_{1}t})=\tennsor{h}{\RusEl}{x_{\ff{n}-1}}(\tennsor{}{b}{hx_{\ff{n}-1}})\otimes \cdots \otimes \tennsor{x_{1}}{\RusEl}{t}(\tennsor{}{b}{x_{1}t})\in \tennsor{h}{A}{x_{\ff{n}-1}}\otimes \cdots \otimes \tennsor{x_{1}}{A}{t}
\]
for each pure tensor $\tennsor{}{b}{hx_{\ff{n}-1}}\otimes \cdots \otimes \tennsor{}{b}{x_{1}t}\in \tennsor{h}{B}{x_{\ff{n}-1}}\otimes \cdots \otimes \tennsor{x_{1}}{B}{t}$. 
\end{proof}

\begin{rem}
    Adopt the set up of Lemma~\ref{lem-ring-and-bilinear-maps-induce-functor-between-tensor-cat}. 
Precomposition with  $(\rusEl,\RusEl)\colon \sr{T}(\fk{T})\to \sr{T}(\fk{S})$ defines a functor $\Rep{\fk{S}}\to \Rep{\fk{T}}$. 
This can also be seen by applying Lemma~\ref{lem-functors-between-reps-of-species}. 
To see this, we construct  a  natural transformation  $ \tennsor{y}{B}{x}\otimes \rusEl^{*}_{x}(-)\Rightarrow \rusEl^{*}_{y}(\tennsor{y}{A}{x}\otimes-)$. 
    Given   $\tennsor{}{L}{x}$ in $\lMod{\tennsor{}{R}{x}}$ define $\tennsor{}{{(\tennsor{y}{\Omega}{x})}}{\tennsor{}{L}{x}}\colon \tennsor{y}{B}{x}\otimes \rusEl^{*}_{x}(\tennsor{}{L}{x})\to \rusEl^{*}_{y}(\tennsor{y}{A}{x}\otimes \tennsor{}{L}{x})$ by considering 
    %
    \[
    \begin{array}{cc}
    \tennsor{y}{B}{x}\times \rusEl^{*}_{x}(\tennsor{}{L}{x})\to \rusEl^{*}_{y}(\tennsor{y}{A}{x}\otimes \tennsor{}{L}{x}),
         & 
         (b,\ell)\mapsto \tennsor{y}{\RusEl}{x}(b)\otimes \ell. 
    \end{array}
    \]
    Since $\tennsor{y}{\RusEl}{x}(sbt)=\tennsor{}{\rusEl}{y}(s)\tennsor{y}{\RusEl}{x}(b)\tennsor{}{\rusEl}{x}(t)$, this map is $S_{y}$-linear and $S_{x}$-balanced. 
    Furthermore, the maps $\tennsor{}{{(\tennsor{y}{\Omega}{x})}}{\tennsor{}{L}{x}}$ are natural in $\tennsor{}{L}{x}$, for if $\lambda\colon \tennsor{}{L}{x}\to \tennsor{}{L}{x}'$ is a morphism in $\lMod{\tennsor{}{R}{x}}$ then 
    \begin{align*}
        \rusEl^{*}_{y}(\tennsor{y}{A}{x}\otimes\lambda)\left(\tennsor{}{{(\tennsor{y}{\Omega}{x})}}{\tennsor{}{L}{x}}(b\otimes \ell)\right)
    =
    \rusEl^{*}_{y}(\tennsor{y}{A}{x}\otimes\lambda)\left(\tennsor{y}{\RusEl}{x}(b)\otimes \ell\right)
    \\
    =
    \tennsor{y}{\RusEl}{x}(b)\otimes \lambda(\ell)
    =\tennsor{}{{(\tennsor{y}{\Omega}{x})}}{\tennsor{}{L}{x}'}(b\otimes \ell)
    =\tennsor{}{{(\tennsor{y}{\Omega}{x})}}{\tennsor{}{L}{x}'}\left(\left(\tennsor{y}{B}{x}\otimes \rusEl^{*}_{x}(\tennsor{}{L}{x}')\right)(b\otimes \ell)\right).
    \end{align*}
\end{rem}

    For a species $\fk{S}=(\tennsor{}{R}{x},\tennsor{y}{A}{x})$  the category $\Rep{\fk{S}}$ is abelian by Lemma~\ref{theorem-equivalence-of-reps-of-species-and-tensor-cat}, so we can consider $\Ext_{\Rep{\fk{S}}}\colon (\Rep{\fk{S}})^{\op}\times \Rep{\fk{S}}\to \Ab$. 
    For representations $L=(\tennsor{x}{L}{},\tennsor{y}{\alpha}{x})$ and $N=(\tennsor{x}{N}{},\tennsor{y}{\gamma}{x})$ the Baer sum defines the addition of the group  $\Ext_{\Rep{\fk{S}}}(N,L)$ of Yoneda equivalence classes $[0\to L\to M\to N\to 0]$ of short exact sequences in $\Rep{\fk{S}}$. 
\begin{nota}
    \label{defn-little-phi-of-little-omega}
    Given  $\omega=(\tennsor{}{\omega}{yx})\in \tennsor{}{\prod }{(x,y)\in\ff{C}^2}\Hom_{{\lMod{\tennsor{}{R}{y}}}}(\tennsor{y}{A}{x}\otimes \tennsor{x}{N}{},\tennsor{y}{L}{})$ we denote 
\[
\varphi(\omega)\colon\quad \begin{tikzcd}[ampersand replacement = \&, column sep = 4em]
    0\to
    (\tennsor{x}{L}{},\tennsor{y}{\alpha}{x})
    \arrow[r, "{\left(\begin{matrix}
        0 \\ \tennsor{}{\iden}{\tennsor{x}{L}{}}
    \end{matrix}\right)}"]
    \&
    \left(\tennsor{x}{N}{}\oplus\tennsor{x}{L}{},{\left(\begin{matrix}
        \tennsor{y}{\gamma}{x} & 0\\ \tennsor{}{\omega}{yx}& \tennsor{y}{\alpha}{x}
\end{matrix}\right)}\right)\arrow[r, "{\left(\begin{matrix}
        \tennsor{}{\iden}{\tennsor{x}{N}{}}& 0
    \end{matrix}\right)}"]
    \&
    (\tennsor{x}{N}{},\tennsor{y}{\gamma}{x})\to
    0.
\end{tikzcd}
\]
\end{nota}

\begin{lem}
\label{lem-pointwise-ext-0-implication}
Let $[\varphi]\in \Ext_{\Rep{\fk{S}}}(N,L)$ for representations $L=(\tennsor{x}{L}{},\tennsor{y}{\alpha}{x})$ and $N=(\tennsor{x}{N}{},\tennsor{y}{\gamma}{x})$ of a species $\fk{S}=(\tennsor{}{R}{x},\tennsor{y}{A}{x})$. 
If $\Ext_{\lMod{\tennsor{}{R}{x}}}(\tennsor{x}{N}{},\tennsor{x}{L}{})=0$ for each $x\in\ff{C}$, then $[\varphi]=[\varphi(\omega)]$ for some $\omega\in \tennsor{}{\prod }{(x,y)\in\ff{C}^2}\Hom_{{\lMod{\tennsor{}{R}{y}}}}(\tennsor{y}{A}{x}\otimes \tennsor{x}{N}{},\tennsor{y}{L}{})$. 
\end{lem}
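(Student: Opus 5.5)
Represent $[\varphi]$ by a short exact sequence $0\to L\xrightarrow{(\tennsor{x}{i}{})} M\xrightarrow{(\tennsor{x}{p}{})} N\to 0$ in $\Rep{\fk{S}}$, where $M=(\tennsor{x}{M}{},\tennsor{y}{\mu}{x})$. The plan is to split this sequence pointwise and then read off the structure maps of $M$ in matrix form, producing $\omega$ and an isomorphism of extensions with $\varphi(\omega)$.

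First, by Lemma~\ref{lemma-co-limits-in-rep-of-species-cat}, kernels and cokernels in $\Rep{\fk{S}}$ are computed pointwise. So applying $\tennsor{x}{\Pi}{}$ gives a short exact sequence $0\to \tennsor{x}{L}{}\to\tennsor{x}{M}{}\to\tennsor{x}{N}{}\to 0$ in $\lMod{\tennsor{}{R}{x}}$ for each $x\in\ff{C}$. By hypothesis $\Ext_{\lMod{\tennsor{}{R}{x}}}(\tennsor{x}{N}{},\tennsor{x}{L}{})=0$, so each of these sequences splits. Choosing a splitting for every $x$ (one choice per $x$, which uses the axiom of choice since $\ff{C}$ may be infinite), we obtain $\tennsor{}{R}{x}$-isomorphisms $\tennsor{x}{\theta}{}\colon \tennsor{x}{N}{}\oplus\tennsor{x}{L}{}\to\tennsor{x}{M}{}$ under which $\tennsor{x}{i}{}$ becomes $\left(\begin{smallmatrix}0\\ \iden\end{smallmatrix}\right)$ and $\tennsor{x}{p}{}$ becomes $(\iden\ 0)$.

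Next, transport the structure maps: put $\tennsor{y}{\nu}{x}=\tennsor{y}{\theta}{}^{-1}\circ\tennsor{y}{\mu}{x}\circ(\iden\otimes\tennsor{x}{\theta}{})$, a map $\tennsor{y}{A}{x}\otimes(\tennsor{x}{N}{}\oplus\tennsor{x}{L}{})\to\tennsor{y}{N}{}\oplus\tennsor{y}{L}{}$. Since the tensor product distributes over the finite direct sum, $\tennsor{y}{\nu}{x}$ is a $2\times2$ matrix of $\tennsor{}{R}{y}$-linear maps. We then use that $(\tennsor{x}{i}{})$ and $(\tennsor{x}{p}{})$ are morphisms of representations. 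Compatibility with $i$ forces the column on the $L$-summand to be $\left(\begin{smallmatrix}0\\ \tennsor{y}{\alpha}{x}\end{smallmatrix}\right)$. Compatibility with $p$ forces the row on the $N$-target to be $(\tennsor{y}{\gamma}{x}\ 0)$. The remaining entry is an arbitrary $\tennsor{}{R}{y}$-linear map $\tennsor{}{\omega}{yx}\colon\tennsor{y}{A}{x}\otimes\tennsor{x}{N}{}\to\tennsor{y}{L}{}$, and collecting these defines $\omega$.

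Finally, $(\tennsor{x}{\theta}{})$ is by construction a morphism of representations from the middle term of $\varphi(\omega)$ to $M$, and it is an isomorphism commuting with the inclusions of $L$ and the projections onto $N$. Hence the two sequences are Yoneda equivalent, so $[\varphi]=[\varphi(\omega)]$.

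The main obstacle is only bookkeeping: keeping the matrix conventions consistent with Notation~\ref{defn-little-phi-of-little-omega}, and identifying $\tennsor{y}{A}{x}\otimes(\tennsor{x}{N}{}\oplus\tennsor{x}{L}{})$ with $(\tennsor{y}{A}{x}\otimes\tennsor{x}{N}{})\oplus(\tennsor{y}{A}{x}\otimes\tennsor{x}{L}{})$. There are no substantive difficulties.
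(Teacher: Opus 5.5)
Your proposal is correct and takes essentially the same route as the paper: split each sequence $0\to \tennsor{x}{L}{}\to \tennsor{x}{M}{}\to \tennsor{x}{N}{}\to 0$ pointwise using the vanishing of $\Ext$, transport the structure maps of $M$ along the chosen isomorphisms, read off $\tennsor{}{\omega}{yx}$ as the lower-left matrix entry, and use the family of isomorphisms as the Yoneda equivalence. You also check explicitly that compatibility with the projection forces the top row to be $(\tennsor{y}{\gamma}{x}\ \ 0)$, which the paper leaves implicit; the paper only verifies the $L$-column.
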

\begin{proof}
    Let $\varphi\colon 0\to L\to M\to N\to 0$ be exact in $\Rep{\fk{S}}$, defined by $M=(\tennsor{x}{M}{},\tennsor{y}{\beta}{x})$, and morphisms $f=(\tennsor{x}{f}{})\colon L\to M$ and $g=(\tennsor{x}{g}{})\colon M\to N$. 
    By Lemma~\ref{lemma-co-limits-in-rep-of-species-cat}, that $\varphi$ is exact says each $0\to \tennsor{x}{L}{}\to \tennsor{x}{M}{}\to \tennsor{x}{N}{}\to 0$ is exact, and so split exact by assumption. 

    So there exist $\tennsor{}{R}{x}$-module isomorphisms $\tennsor{}{\theta}{x}\colon \tennsor{x}{N}{}\oplus\tennsor{x}{L}{}\to \tennsor{x}{M}{}$, with  inverses $\tennsor{}{\psi}{x}$, such that $\tennsor{}{\psi}{x}\circ \tennsor{x}{f}{}=\tennsor{x}{\iota}{L}$, the canonical inclusion, and $\tennsor{x}{g}{}\circ \tennsor{}{\psi}{x}=\tennsor{x}{\pi}{N}$, the canonical projection. 

Let $K=(\tennsor{x}{N}{}\oplus\tennsor{x}{L}{},\tennsor{y}{\delta}{x})$ where $\tennsor{y}{\delta}{x}=\tennsor{}{\psi}{y}\circ \tennsor{y}{\beta}{x}\circ (\tennsor{}{\iden}{{\tennsor{y}{A}{x}}}\otimes \tennsor{}{\theta}{x})$.   
    By construction the collection $\theta=(\tennsor{}{\theta}{x})\colon M\to K$ is an (iso)morphism in $\Rep{\fk{S}}$, and 
    \[
    \tennsor{y}{\delta}{x}\circ (\tennsor{}{\iden}{{\tennsor{y}{A}{x}}}\otimes \tennsor{x}{\iota}{L})=\tennsor{}{\psi}{y}\circ\tennsor{y}{\beta}{x} \circ (\tennsor{}{\iden}{{\tennsor{y}{A}{x}}}\otimes \tennsor{x}{f}{})
    =\tennsor{}{\psi}{y}\circ \tennsor{y}{f}{}\circ \tennsor{y}{\alpha}{x}
    =
    \tennsor{y}{\iota}{L}\circ\tennsor{y}{\alpha}{x}. 
    \]
    Let $\omega = (\tennsor{}{\omega}{yx})$ where $\tennsor{}{\omega}{yx}=\tennsor{y}{\pi}{L}\circ \tennsor{y}{\delta}{x}\circ (\tennsor{}{\iden}{{\tennsor{y}{A}{x}}}\otimes \tennsor{x}{\iota}{N}))$. 
    It follows that $\varphi(\omega)$ has the form $0\to L\to K\to N\to 0$ and that $\theta$ induces a Yoneda equivalence between  $\varphi$ and $\varphi(\omega)$. 
\end{proof}

Following a result from work of Ringel \cite{Ringelspecies} we can describe $\Ext_{\Rep{\fk{S}}}(N,L)$  explicitly. 

\begin{nota}
    For representations $L=(\tennsor{x}{L}{},\tennsor{y}{\alpha}{x}),N=(\tennsor{x}{N}{},\tennsor{y}{\gamma}{x})$  of $\fk{S}=(\tennsor{}{R}{x},\tennsor{y}{A}{x})$ let
   \begin{equation}
       \label{eqn-ext-map}
       \begin{aligned}
            \tennsor{N}{\Lambda}{L} \colon \tennsor{}{\prod }{x\in\ff{C}}\Hom_{{\lMod{\tennsor{}{R}{x}}}}(\tennsor{x}{N}{},\tennsor{x}{L}{})\to \tennsor{}{\prod }{(x,y)\in\ff{C}^2}\Hom_{{\lMod{\tennsor{}{R}{y}}}}(\tennsor{y}{A}{x}\otimes \tennsor{x}{N}{},\tennsor{y}{L}{}),
\\
\left(\tennsor{}{\theta}{x}\right)
\longmapsto 
\left[a\otimes n\mapsto 
\tennsor{y}{\alpha}{x}(a\otimes \tennsor{}{\theta}{x}(n) )-\tennsor{}{\theta}{y} (\tennsor{y}{\gamma}{x}(a\otimes n))\right].
       \end{aligned}
   \end{equation}
\end{nota}

\begin{lem}
    \label{lem-describing-ext}
 If $L$ and $N$ are representations of a species $\fk{S}$ then 
 \[
 \begin{array}{cc}
  \ker(\tennsor{N}{\Lambda}{L})=\Hom_{\Rep{\fk{S}}}(N,L),
  &
  \cok(\tennsor{N}{\Lambda}{L})\hookrightarrow \Ext_{\Rep{\fk{S}}}(N,L). 
 \end{array}
 \]
 Furthermore, if $\Ext_{\lMod{\tennsor{}{R}{x}}}(\tennsor{x}{N}{},\tennsor{x}{L}{})=0$ for each $x\in\ff{C}$, then the embedding is onto.  
\end{lem}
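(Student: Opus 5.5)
The plan is to prove the three assertions in order: the kernel description, then the embedding of the cokernel via the map $\omega\mapsto[\varphi(\omega)]$ from Notation~\ref{defn-little-phi-of-little-omega}, and finally surjectivity from Lemma~\ref{lem-pointwise-ext-0-implication}.

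\emph{Kernel.} A family $\theta=(\tennsor{}{\theta}{x})$ lies in $\ker(\tennsor{N}{\Lambda}{L})$ exactly when
\[
\tennsor{y}{\alpha}{x}\circ(\tennsor{}{\iden}{\tennsor{y}{A}{x}}\otimes\tennsor{}{\theta}{x})=\tennsor{}{\theta}{y}\circ\tennsor{y}{\gamma}{x}
\]
on pure tensors, and hence everywhere. This is precisely the condition in Definition~\ref{defn-species} for $\theta$ to be a morphism $N\to L$, so the kernel is $\Hom_{\Rep{\fk{S}}}(N,L)$ on the nose.

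\emph{The map to $\Ext$.} Define $\Phi\colon\prod_{(x,y)}\Hom_{\lMod{\tennsor{}{R}{y}}}(\tennsor{y}{A}{x}\otimes\tennsor{x}{N}{},\tennsor{y}{L}{})\to\Ext_{\Rep{\fk{S}}}(N,L)$ by $\omega\mapsto[\varphi(\omega)]$. First check that $\varphi(\omega)$ is a short exact sequence of representations: the middle term is a representation, the two matrix maps commute with the structure maps by the triangular form, and exactness holds pointwise, which suffices by Lemma~\ref{lemma-co-limits-in-rep-of-species-cat}.

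Next show $\Phi$ is additive. Compute the Baer sum of $\varphi(\omega)$ and $\varphi(\omega')$: the pullback along the diagonal of $N$ and the pushout along the codiagonal of $L$ can be taken pointwise on $\tennsor{x}{N}{}\oplus\tennsor{x}{L}{}$ (again by Lemma~\ref{lemma-co-limits-in-rep-of-species-cat}). The resulting structure map has lower-left entry $\omega+\omega'$.

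\emph{Kernel of $\Phi$.} The key step is $\ker\Phi=\im(\tennsor{N}{\Lambda}{L})$.
\begin{enumerate}
\item If $\omega=\tennsor{N}{\Lambda}{L}(\theta)$, use the pointwise automorphism $\begin{pmatrix}1&0\\ \theta&1\end{pmatrix}$ of $\tennsor{x}{N}{}\oplus\tennsor{x}{L}{}$. It commutes with the inclusion of $L$ and the projection to $N$. Multiplying out the $2\times2$ matrices shows it is an isomorphism from $\varphi(0)$ onto $\varphi(\omega)$, up to the sign convention for $\theta$; if the sign comes out reversed, use $-\theta$ instead. Hence $[\varphi(\omega)]=[\varphi(0)]=0$.
\item Conversely, suppose $\varphi(\omega)$ splits in $\Rep{\fk{S}}$. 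A section of the projection has the form $\begin{pmatrix}1\\ \theta\end{pmatrix}$ pointwise. The condition that it is a morphism of representations reads $\omega+\alpha(1\otimes\theta)=\theta\gamma$, i.e.\ $\omega=\tennsor{N}{\Lambda}{L}(-\theta)$.
\end{enumerate}
Together these give the embedding $\cok(\tennsor{N}{\Lambda}{L})\hookrightarrow\Ext_{\Rep{\fk{S}}}(N,L)$. The step needing the most care is this identification, together with the Baer sum computation, mainly for consistency of signs and matrix conventions.

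\emph{Surjectivity.} If $\Ext_{\lMod{\tennsor{}{R}{x}}}(\tennsor{x}{N}{},\tennsor{x}{L}{})=0$ for all $x$, then Lemma~\ref{lem-pointwise-ext-0-implication} says every class equals $[\varphi(\omega)]$ for some $\omega$. So $\Phi$, and hence the embedding, is onto.
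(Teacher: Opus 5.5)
Your proposal is correct and follows essentially the same route as the paper. You read the kernel off the definition of morphisms, show that $\omega\mapsto[\varphi(\omega)]$ has kernel exactly $\im(\tennsor{N}{\Lambda}{L})$ via the unitriangular matrices $\begin{pmatrix}1&0\\ \theta&1\end{pmatrix}$ (your splitting section is just the paper's isomorphism of middle terms in another guise), and get surjectivity from Lemma~\ref{lem-pointwise-ext-0-implication}. Your Baer-sum check that this map is additive is a step the paper leaves implicit; it is what makes the induced map on the cokernel a well-defined injective homomorphism.
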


\begin{proof}
For completeness we repeat details from the proof of \cite[\S2.1,~p.277,~Lemma]{Ringelspecies}. 
By construction, $\ker(\tennsor{N}{\Lambda}{L})=\Hom_{\Rep{\fk{S}}}(N,L)$.
Let $L=(\tennsor{x}{L}{},\tennsor{y}{\alpha}{x})$, $N=(\tennsor{x}{N}{},\tennsor{y}{\beta}{x})$, and  
\[
\begin{array}{c}
\tennsor{N}{\Phi}{L}\colon \tennsor{}{\prod }{x,y\in\ff{C}}\Hom_{{\lMod{\tennsor{}{R}{y}}}}(\tennsor{y}{A}{x}\otimes \tennsor{x}{N}{},\tennsor{y}{L}{})\to \Ext_{\Rep{\fk{S}}}(N,L),\quad \omega=\left(\tennsor{}{\omega}{yx}\right)\mapsto [\varphi(\omega)]. 
\end{array}
\]
as in Lemma~\ref{lem-pointwise-ext-0-implication}. 
If $\tennsor{}{\omega}{yx}=0$ for each $x,y\in\ff{C}$ then $\varphi(\omega)$ is the canonical split exact sequence $0\to L\to N\oplus L\to N\to 0$, so $\tennsor{N}{\Phi}{L}$ is well-defined. 
Conversley, suppose $[\varphi(\omega)]=0$. 
By definition of Yoneda equivalence, there exists an isomorphism 
\[
f=(\tennsor{x}{f}{})=\left(\left(\begin{matrix}
        \tennsor{}{\mu}{x} & \tennsor{}{\lambda}{x} \\\tennsor{}{\theta}{x} & \tennsor{}{\eta}{x}
\end{matrix}\right)
\right) \colon\left(\tennsor{x}{N}{}\oplus\tennsor{x}{L}{},{\left(\begin{matrix}
        \tennsor{y}{\gamma}{x} & 0\\ \tennsor{}{\omega}{yx}& \tennsor{y}{\alpha}{x}
\end{matrix}\right)}\right)\to \left(\tennsor{x}{N}{}\oplus\tennsor{x}{L}{},{\left(\begin{matrix}
        \tennsor{y}{\gamma}{x} & 0\\  0& \tennsor{y}{\alpha}{x}
\end{matrix}\right)}\right),
\]
  such that, by Lemma~\ref{lemma-co-limits-in-rep-of-species-cat}, and in the notation of the proof of Lemma~\ref{lem-pointwise-ext-0-implication}, for each $x\in\ff{C}$ we have $\tennsor{x}{f}{}\circ\tennsor{x}{\iota}{N} =\tennsor{x}{\iota}{N}$ and $\tennsor{x}{\pi}{L}= \tennsor{x}{\pi}{L}\circ\tennsor{x}{f}{}$, meaning $\tennsor{}{\mu}{x}=\tennsor{}{\iden}{\tennsor{x}{N}{}}$, $\tennsor{}{\lambda}{x}=0$ and $\tennsor{}{\eta}{x}=\tennsor{}{\iden}{\tennsor{x}{L}{}}$.   

    Furthermore, as $f$ is a homomorphism of representations, 
 $\tennsor{N}{\Lambda}{L}((\tennsor{}{\theta}{x}))=\omega$. 
    A similar argument shows that $[\varphi(\omega)]=0$ for any $\omega\in\im(\tennsor{N}{\Lambda}{L})$.  
   Now apply Lemma~\ref{lem-pointwise-ext-0-implication}.  
\end{proof}

Let $\fk{S}=(\tennsor{}{R}{x},\tennsor{y}{A}{x})$ be a species.
Using that tensor products and direct sums commute, we can and do define and denote the canonical inclusions by
\begin{align*}
    \tennsor{yx}{\varsigma}{\ell}\colon \tennsor{y}{A}{x}\otimes \tennsor{x\,}{\underline{A}}{\,\ell}
\cong \left(\bigoplus_{\ff{n}\geq 0,\,\underline{u}\in \ff{C}^{\ff{n}}(x,\ell)} \tennsor{y}{A}{x}
\otimes \tennsor{}{A}{\underline{u}}\right)
= \left(\bigoplus_{\ff{n}>0,\,\underline{v}=(y,x,\dots,\ell)\in \ff{C}^{\ff{n}}(y,\ell)} \tennsor{}{A}{\underline{v}}\right)
\subseteq 
\tennsor{x\,}{\underline{A}}{\,\ell}. 
\end{align*}

For $\ell\in \ff{C}$, and for  each left $\tennsor{}{R}{\ell}$-module $X$, we let 
\[
 \begin{array}{cc}
\tennsor{?\,}{\underline{A}}{\,\ell}=(\tennsor{x\,}{\underline{A}}{\,\ell},\tennsor{yx}{\varsigma}{\ell}),
&
\tennsor{?\,}{\underline{A}}{\,\ell}\otimes X=(\tennsor{x}{\underline{A}}{\,\ell}\otimes X,\tennsor{yx}{\varsigma}{\ell}\otimes \tennsor{}{\iden}{X}).
\end{array}
\]

\begin{lem}
\label{lem-when-proj}
Let $\ell\in\ff{C}$ and $\fk{S}=(\tennsor{}{R}{x},\tennsor{y}{
A
}{x})$ be a species such that each left $\tennsor{}{R}{y}$-module $\tennsor{y}{
A
}{x}$ is   projective. 
 If $\tennsor{}{P}{\ell}$ is a projective left $\tennsor{}{R}{\ell}$-module then  $\tennsor{?\,}{\underline{A}}{\,\ell}\otimes \tennsor{}{P}{\ell}$ is projective in $\Rep{\fk{S}}$.
\end{lem}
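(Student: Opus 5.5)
The natural route is the Yoneda lemma for the preadditive category $\sr{T}(\fk{S})$, transported along the equivalence of Lemma~\ref{theorem-equivalence-of-reps-of-species-and-tensor-cat}. Under that equivalence, the representation $\tennsor{?\,}{\underline{A}}{\,\ell}$ corresponds to the representable functor $\Hom_{\sr{T}(\fk{S})}(\ell,-)$. Indeed, its value at $x$ is $\tennsor{x\,}{\underline{A}}{\,\ell}$. Moreover, the structure maps $\tennsor{yx}{\varsigma}{\ell}$, iterated along paths, are exactly left composition in $\sr{T}(\fk{S})$, i.e.\ concatenation of tensors of paths. Likewise $\tennsor{?\,}{\underline{A}}{\,\ell}\otimes X$ corresponds to the functor $x\mapsto \Hom_{\sr{T}(\fk{S})}(\ell,x)\otimes_{\tennsor{}{R}{\ell}} X$. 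Concretely, I will establish a natural isomorphism
\[
\Hom_{\Rep{\fk{S}}}(\tennsor{?\,}{\underline{A}}{\,\ell}\otimes X, M)\cong \Hom_{\lMod{\tennsor{}{R}{\ell}}}(X,\tennsor{\ell}{\Pi}{}(M)),
\]
so that $\tennsor{?\,}{\underline{A}}{\,\ell}\otimes-$ is left adjoint to the exact functor $\tennsor{\ell}{\Pi}{}$ from \eqref{eqn-w-Pi}.

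To build the bijection, send a morphism $(\tennsor{x}{f}{})$ to its restriction to the length-$0$ summand $\tennsor{}{R}{\ell}\otimes X\cong X$ at $x=\ell$. Conversely, send $g\colon X\to\tennsor{\ell}{M}{}$ to the family
\[
\underline{a}\otimes m\mapsto \tennsor{x\,}{\underline{\alpha}}{\,\ell}(\underline{a}\otimes g(m)).
\]
This family commutes with the structure maps because $\tennsor{}{\alpha}{\underline{x}}$ is defined by iterating the $\tennsor{y}{\alpha}{x}$ along the path, which is compatible with $\varsigma$. Injectivity follows since every element of $\tennsor{}{A}{\underline{x}}\otimes X$ is reached from the length-$0$ part by applying structure maps. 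Naturality in $M$ is routine, in the spirit of Lemma~\ref{lem-elementary-underline-maps}.

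Projectivity then follows formally. A left adjoint of an exact functor preserves projectives, since $\Hom(P,\tennsor{\ell}{\Pi}{}(-))$ is exact whenever $P$ is projective. Alternatively, one can argue directly: given an epimorphism $M\to N$ in $\Rep{\fk{S}}$, it is pointwise surjective by Lemma~\ref{lemma-co-limits-in-rep-of-species-cat}, so a map $P_\ell\to\tennsor{\ell}{N}{}$ lifts to $\tennsor{\ell}{M}{}$ by projectivity of $P_\ell$. The adjunction transports this lift to a lift of the original morphism.

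The main obstacle is the role of the hypothesis on $\tennsor{y}{A}{x}$. The adjunction argument seems to give projectivity with no hypothesis at all, since tensoring is only used to form the module, not to preserve exactness. So I would check carefully whether the statement also requires that $\tennsor{?\,}{\underline{A}}{\,\ell}\otimes P_\ell$ be pointwise projective, which is what Theorem~\ref{introthm-hereditary} later needs. That pointwise projectivity is where the hypothesis enters. Each $\tennsor{}{A}{\underline{x}}\otimes_{\tennsor{}{R}{\ell}} P_\ell$ is projective over $\tennsor{}{R}{h}$ by induction on path length: if $Q$ is a projective left $\tennsor{}{R}{x}$-module, then $\tennsor{y}{A}{x}\otimes Q$ is a summand of $\tennsor{y}{A}{x}^{(I)}$, hence projective over $\tennsor{}{R}{y}$. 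Direct sums then preserve projectivity. I would include this as a supplementary remark.
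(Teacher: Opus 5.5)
Your proof is correct, but it follows a genuinely different route from the paper's.

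\textbf{The paper's route.} The paper argues through extensions. It uses the hypothesis on $\tennsor{y}{A}{x}$ to see that each $\tennsor{x}{N}{}=\tennsor{x\,}{\underline{A}}{\,\ell}\otimes\tennsor{}{P}{\ell}$ is a projective $\tennsor{}{R}{x}$-module. Hence $\Ext_{\lMod{\tennsor{}{R}{x}}}(\tennsor{x}{N}{},\tennsor{x}{L}{})=0$, and Lemma~\ref{lem-describing-ext} (Ringel's description) identifies $\Ext_{\Rep{\fk{S}}}(N,L)$ with $\cok(\tennsor{N}{\Lambda}{L})$. It then proves $\tennsor{N}{\Lambda}{L}$ is onto by hand. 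For an arbitrary family $(\tennsor{}{\omega}{yx})$ it writes down an explicit preimage $(\tennsor{}{\theta}{x})$, defined path by path as a sum of iterated $\underline{\alpha}$'s applied to values of $\omega$.

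\textbf{Your route.} You use the relative Yoneda adjunction $\Hom_{\Rep{\fk{S}}}(\tennsor{?\,}{\underline{A}}{\,\ell}\otimes X,M)\cong\Hom_{\lMod{\tennsor{}{R}{\ell}}}(X,\tennsor{\ell}{M}{})$, together with exactness of $\tennsor{\ell}{\Pi}{}$ from Lemma~\ref{lemma-co-limits-in-rep-of-species-cat}. Your verification of the bijection is sound:
\begin{itemize}
\item the assignment $\underline{a}\otimes m\mapsto\underline{\alpha}(\underline{a}\otimes g(m))$ is well defined because $g$ is $\tennsor{}{R}{\ell}$-linear;
\item it commutes with the structure maps by the recursive definition of $\tennsor{}{\alpha}{\underline{x}}$;
\item a morphism out of $\tennsor{?\,}{\underline{A}}{\,\ell}\otimes X$ is determined by its restriction to $\tennsor{}{R}{\ell}\otimes X$.
\end{itemize}

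\textbf{Comparison.} Your argument is shorter and avoids the explicit formula. It also shows, as you suspected, that the lemma holds with no hypothesis on the bimodules at all. This is the same mechanism the paper itself relies on in the proof of Lemma~\ref{lem-prereq-for-sigma-pure-inj-for-local-fin-pres}, where projectivity of $\tennsor{?\,}{\underline{A}}{\,\ell}$ is read off from Yoneda via Lemma~\ref{lem-finitely-presented-trivial}. The paper's route, in exchange, gives concrete control of $\Ext$ through the maps $\tennsor{N}{\Lambda}{L}$.

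\textbf{A correction to your last paragraph.} Corollary~\ref{cor-yay-hereditary} does not need the outputs $P$ and $Q$ to be pointwise projective. The hypothesis on $\tennsor{j}{A}{k}$ is needed there for a different reason. For the summands of $Q$ in Lemma~\ref{prop-first-term-standard-res}, the lemma is applied to the input $\tennsor{j}{A}{k}\otimes\tennsor{k}{M}{}$, and this must be a projective $\tennsor{}{R}{j}$-module. Inside the paper's proof of the lemma, pointwise projectivity of the output is used only to invoke Lemma~\ref{lem-describing-ext}.
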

\begin{proof}
Observe, for any rings $S$ and $T$, any $S$-$T$-bimodule $A$ and any projective left $T$-module $V$, that if $A$ is  projective  as a left $S$-module, then so is the tensor project  $A\otimes V$.  
%
%

It follows by induction that, for any path $\underline{w}=(w_{\ff{n}},\dots,w_{0})\in\ff{C}^{\ff{n}}(x,\ell)$, the left $\tennsor{}{R}{x}$-module $\tennsor{}{A}{\underline{w}}\otimes \tennsor{}{P}{\ell}$ is projective. 
Let $N=\tennsor{?\,}{\underline{A}}{\,\ell}\otimes \tennsor{}{P}{\ell}$. 
It is necessary and sufficient to prove that $\Ext_{\Rep{\fk{S}}}(N,L)=0$ for any representation $L$. 
As observed, the $\tennsor{}{R}{x}$-module $\tennsor{x}{N}{}=\tennsor{x\,}{\underline{A}}{\,\ell}\otimes \tennsor{}{P}{\ell}$, a coproduct of modules of the form $\tennsor{}{A}{\underline{w}}\otimes \tennsor{}{P}{\ell}$, is  projective. 
So $\Ext_{\lMod{\tennsor{}{R}{x}}}(\tennsor{x}{N}{},\tennsor{x}{L}{})=0$ for each $x\in\ff{C}$. 
    By Lemma~\ref{lem-describing-ext} it suffices to prove that the map $\tennsor{N}{\Lambda}{L} $ from \eqref{eqn-ext-map} is onto. 

    Suppose we have $\tennsor{}{\omega}{yx}\in \Hom_{{\lMod{\tennsor{}{R}{y}}}}(\tennsor{y}{A}{x}\otimes \tennsor{x}{N}{},\tennsor{y}{L}{})$ for each $x,y\in\ff{C}$. 
    If $x\in \ff{C}$ define an   $\tennsor{}{R}{x}$-linear map $\tennsor{}{\theta}{x}\colon \tennsor{x}{N}{}\to \tennsor{x}{L}{}$ by the universal property of the coproduct over $\underline{v}=(\tennsor{}{v}{\ff{n}},\dots,\tennsor{}{v}{0})\in \ff{C}^{\ff{n}}(x,\ell)$. 
    In case $\ff{n}=0$, $\underline{v}=(\ell)$, and we let $\tennsor{}{{(\tennsor{}{\theta}{x})}}{(\ell)}=0$. 
    Suppose  $\ff{n}\geq 1$. 
    Define 
    $\tennsor{}{{(\tennsor{}{\theta}{x})}}{\underline{v}}\colon \tennsor{}{A}{\underline{v}}\otimes P\to \tennsor{x}{L}{}$ as follows. 
    Given $\underline{c}=\tennsor{}{c}{{\tennsor{}{xv}{\ff{n}-1}}}
    \otimes
    \cdots
    \otimes \tennsor{}{c}{{\tennsor{}{v}{1}\ell}}\in  \tennsor{}{A}{\underline{v}}$ let $\tennsor{}{\underline{c}}{\,\ff{i}\geq }=\tennsor{}{c}{{\tennsor{}{v}{\ff{i}}\tennsor{}{v}{\ff{i}-1}}}\otimes
    \cdots
    \otimes \tennsor{}{c}{{\tennsor{}{v}{1}\ell}}$ when $1\leq \ff{i}\leq \ff{n}$ and let $\tennsor{}{\underline{c}}{\,\geq \ff{i}}=\tennsor{}{c}{{\tennsor{}{xv}{\ff{n}-1}}}
    \otimes
    \cdots
                \otimes \tennsor{}{c}{{\tennsor{}{v}{\ff{i}+1}\tennsor{}{v}{\ff{i}}}}$ when $0\leq \ff{i}\leq \ff{n}-1$. 
                So $\tennsor{}{\underline{c}}{\,\geq 0}=\underline{c}=\tennsor{}{\underline{c}}{\,\ff{n}\geq }$. 
                Let $\tennsor{}{\underline{c}}{\,0\geq }=\tennsor{}{1}{\ell}$ and $\tennsor{}{\underline{c}}{\,\geq\ff{n} }=\tennsor{}{1}{x}$.
                In this notation, and for each  $p\in \tennsor{}{P}{\ell}$, we let
    \[
    \tennsor{}{{(\tennsor{}{\theta}{x})}}{\underline{v}}(\underline{c}\otimes m)=\tennsor{}{\omega}{{\tennsor{}{xv}{\ff{n}-1}}}(\tennsor{}{c}{{\tennsor{}{xv}{\ff{n}-1}}}\otimes \tennsor{}{\underline{c}}{\,\ff{n}-1\geq }\otimes p)+
    \sum^{\ff{n}-1}_{\ff{i}=1}
     \tennsor{x\,}{\underline{\alpha}}{{\tennsor{}{v}{\,\ff{i}}}}(\tennsor{}{\underline{c}}{\,\geq \ff{i}} \otimes \tennsor{}{\omega}{{\tennsor{}{v}{\ff{i}}\tennsor{}{v}{\ff{i}-1}}}(\tennsor{}{c}{{\tennsor{}{v}{\ff{i}}\tennsor{}{v}{\ff{i}-1}}}\otimes \tennsor{}{\underline{c}}{\,\ff{i}-1\geq }\otimes p)).
    \]
We claim, for each $a\in\tennsor{y}{A}{x}$, that $\tennsor{}{\omega}{yx}(a\otimes \underline{c}\otimes p)=\tennsor{}{\theta}{y}(a\otimes \underline{c}\otimes p)-\tennsor{y}{\alpha}{x}(a\otimes \tennsor{}{\theta}{x}(\underline{c}\otimes p))$. 
On the one hand, adjusting the expression above, by replacing $\underline{c}$ with $\underline{b}=a\otimes \underline{c}$, yields
  \[
    \tennsor{}{\omega}{yx}(a\otimes \tennsor{}{\underline{c}}{}\otimes p)+
    \sum^{\ff{n}}_{\ff{i}=1}
     \tennsor{y\,}{\underline{\alpha}}{{\tennsor{}{v}{\,\ff{i}}}}(a\otimes\tennsor{}{\underline{c}}{\,\geq \ff{i}} \otimes \tennsor{}{\omega}{{\tennsor{}{v}{\ff{i}}\tennsor{}{v}{\ff{i}-1}}}(\tennsor{}{c}{{\tennsor{}{v}{\ff{i}}\tennsor{}{v}{\ff{i}-1}}}\otimes \tennsor{}{\underline{c}}{\,\ff{i}-1\geq }\otimes p)).
    \]
On the other hand, tensoring the expression on the left by $a$, and applying $\tennsor{y}{\alpha}{x}$, yields
\begin{align*}
    \tennsor{y}{\alpha}{x}(a\otimes \tennsor{}{\omega}{{\tennsor{}{xv}{\ff{n}}}}(\tennsor{}{c}{{\tennsor{}{xv}{\ff{n}-1}}}\otimes \tennsor{}{\underline{c}}{\,\ff{n}-1\geq }\otimes p))+
    \sum^{\ff{n}-1}_{\ff{i}=1}
     \tennsor{y}{\alpha}{x}(a\otimes\tennsor{x\,}{\underline{\alpha}}{{\tennsor{}{v}{\,\ff{i}}}}(\tennsor{}{\underline{c}}{\,\geq \ff{i}} \otimes \tennsor{}{\omega}{{\tennsor{}{v}{\ff{i}}\tennsor{}{v}{\ff{i}-1}}}(\tennsor{}{c}{{\tennsor{}{v}{\ff{i}}\tennsor{}{v}{\ff{i}-1}}}\otimes \tennsor{}{\underline{c}}{\,\ff{i}-1\geq }\otimes p))).
\end{align*}
The claim follows by noting that, for each $\ff{i}\leq \ff{n}-1$, we have
\begin{align*}
    \tennsor{y}{\alpha}{x}(a\otimes \tennsor{}{\omega}{{\tennsor{}{xv}{\ff{n}-1}}}(\tennsor{}{c}{{\tennsor{}{xv}{\ff{n}-1}}}\otimes \tennsor{}{\underline{c}}{\,\ff{n}-1\geq }\otimes p))=\tennsor{y\,}{\alpha}{{\tennsor{}{v}{\,\ff{n}}}}(\tennsor{}{\underline{b}}{\,\geq \ff{n}} \otimes \tennsor{}{\omega}{{\tennsor{}{v}{\ff{n}}\tennsor{}{v}{\ff{n}-1}}}(\tennsor{}{b}{{\tennsor{}{v}{\ff{n}}\tennsor{}{v}{\ff{n}-1}}}\otimes \tennsor{}{\underline{b}}{\,\ff{n}-1\geq }\otimes p)) ,
    \\
    \tennsor{y}{\alpha}{x}(a\otimes\tennsor{x\,}{\underline{\alpha}}{{\tennsor{}{v}{\,\ff{i}}}}(\tennsor{}{\underline{c}}{\,\geq \ff{i}} \otimes \tennsor{}{\omega}{{\tennsor{}{v}{\ff{i}}\tennsor{}{v}{\ff{i}-1}}}(\tennsor{}{c}{{\tennsor{}{v}{\ff{i}}\tennsor{}{v}{\ff{i}-1}}}\otimes \tennsor{}{\underline{c}}{\,\ff{i}-1\geq }\otimes p)))= \tennsor{y\,}{\underline{\alpha}}{{\tennsor{}{v}{\,\ff{i}}}}(\tennsor{}{\underline{b}}{\,\geq \ff{i}} \otimes \tennsor{}{\omega}{{\tennsor{}{v}{\ff{i}}\tennsor{}{v}{\ff{i}-1}}}(\tennsor{}{b}{{\tennsor{}{v}{\ff{i}}\tennsor{}{v}{\ff{i}-1}}}\otimes \tennsor{}{\underline{b}}{\,\ff{i}-1\geq }\otimes p))).
\end{align*} 
It follows, from the claim, that $\tennsor{N}{\Lambda}{L}((\tennsor{}{\theta}{x}))=(\tennsor{}{\omega}{yx})$, completing the proof.  
\end{proof}

\begin{lem}
\label{prop-first-term-standard-res}
    Any representation $M$ of a species $\fk{S}$ admits an exact sequence   
    \[
    \begin{tikzcd}
        0\arrow[r]
        &
        \tennsor{}{{\bigoplus}}{j,k\in\ff{C}}\kern 1mm \tennsor{?\,}{\underline{A}}{\,j}\otimes\tennsor{j}{A}{k}\otimes  \tennsor{k}{M}{}
        \arrow[r]
        &
        \tennsor{}{\bigoplus}{\ell\in\ff{C}}\kern .5mm \tennsor{?\,}{\underline{A}}{\,\ell}\otimes \tennsor{\ell}{M}{}
        \arrow[r]
        &
        M\arrow[r]
        &
        0.
    \end{tikzcd}
    \]
\end{lem}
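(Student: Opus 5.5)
This is the standard resolution for tensor categories, so I would write down the two maps explicitly and check exactness pointwise, using Lemma~\ref{lemma-co-limits-in-rep-of-species-cat}: a sequence in $\Rep{\fk{S}}$ is exact if and only if it is exact after applying each $\tennsor{x}{\Pi}{}$.

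\textbf{The maps.} Set $P_1=\bigoplus_{j,k}\tennsor{?\,}{\underline{A}}{\,j}\otimes\tennsor{j}{A}{k}\otimes\tennsor{k}{M}{}$ and $P_0=\bigoplus_{\ell}\tennsor{?\,}{\underline{A}}{\,\ell}\otimes\tennsor{\ell}{M}{}$.

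\begin{enumerate}
\item The map $\varepsilon\colon P_0\to M$ is given at vertex $x$ by $\tennsor{x\,}{\underline{\alpha}}{\,\ell}$ on the $\ell$-summand, that is $\underline{a}\otimes m\mapsto \tennsor{x\,}{\underline{\alpha}}{\,\ell}(\underline{a}\otimes m)$. It commutes with the structure maps $\tennsor{yx}{\varsigma}{\ell}\otimes\iden$, because $\tennsor{}{\alpha}{(y,x,\dots,\ell)}=\tennsor{y}{\alpha}{x}\circ(\iden\otimes\tennsor{}{\alpha}{(x,\dots,\ell)})$ by definition.
\item The map $\partial\colon P_1\to P_0$ is given on the $(j,k)$-summand at vertex $x$ by
\[
\underline{a}\otimes b\otimes m\mapsto (\underline{a}\otimes b)\otimes m-\underline{a}\otimes \tennsor{j}{\alpha}{k}(b\otimes m).
\]
The first term lands in the $k$-summand via $\tennsor{x\,}{\underline{A}}{\,j}\otimes\tennsor{j}{A}{k}\subseteq\tennsor{x\,}{\underline{A}}{\,k}$. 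The second lands in the $j$-summand. Both terms are visibly compatible with left multiplication by the $\tennsor{yx}{\varsigma}{}$, so $\partial$ is a morphism of representations.
\end{enumerate}

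\textbf{Easy exactness.} $\varepsilon\partial=0$ follows from the same recursion for $\tennsor{}{\alpha}{\underline{x}}$. Surjectivity of $\varepsilon$ holds since the length-$0$ summand $\tennsor{}{R}{x}\otimes\tennsor{x}{M}{}\cong\tennsor{x}{M}{}$ maps identically.

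\textbf{Injectivity of $\partial$.} Grade everything by path length $\ff{n}$: put $\tennsor{}{A}{\underline{v}}\otimes\tennsor{j}{A}{k}\otimes\tennsor{k}{M}{}$ in degree $\ff{n}+1$ and $\tennsor{}{A}{\underline{w}}\otimes\tennsor{\ell}{M}{}$ in degree $\ff{n}$. Then $\partial$ is the sum of two parts:
\begin{itemize}
\item a degree-preserving part $\partial_+$, which is the identity isomorphism $\tennsor{}{A}{\underline{v}}\otimes\tennsor{j}{A}{k}\otimes\tennsor{k}{M}{}\cong\tennsor{}{A}{(\underline{v},k)}\otimes\tennsor{k}{M}{}$;
\item a degree-lowering part $\partial_-$.
\end{itemize}
Given a nonzero element of $P_1$, look at its top-degree component. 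Its image under $\partial$ has top-degree component equal to the image of that component under $\partial_+$. Summed over $(j,k)$ and paths, $\partial_+$ is injective: distinct triples (path ending at $j$, arrow $j\leftarrow k$) give distinct paths of length $\geq1$. So $\partial$ is injective.

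\textbf{Exactness in the middle.} I would show that every element of $P_0$ is congruent modulo $\im\partial$ to an element of degree $0$. This is by descending induction on the top degree: any component in degree $\ff{n}\geq 1$ lies in $\partial_+(P_1)$, and subtracting $\partial$ of its preimage lowers the top degree. On degree $0$, $\varepsilon$ restricts to the isomorphism $\bigoplus_\ell \tennsor{}{A}{(\ell)}\otimes\tennsor{\ell}{M}{}\to$ (pointwise) $\tennsor{x}{M}{}$, where only $\ell=x$ contributes at vertex $x$. Hence $\ker\varepsilon\subseteq\im\partial$.

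The main obstacle is bookkeeping rather than ideas. One point is to verify that tensor products over the intermediate rings commute with the direct sums, so that the identification $\tennsor{x\,}{\underline{A}}{\,j}\otimes\tennsor{j}{A}{k}\hookrightarrow\tennsor{x\,}{\underline{A}}{\,k}$ onto the paths whose last arrow is $j\leftarrow k$ is a genuine summand inclusion. The other is to check that $\partial$ respects the representation structure.
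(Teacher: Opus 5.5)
Your proof is correct, but it proves exactness differently from the paper. The paper uses the same maps (its $f$ and $g$ are your $\varepsilon$ and $\partial$) and the same reduction to vertexwise exactness via Lemma~\ref{lemma-co-limits-in-rep-of-species-cat}. It then writes down an explicit contracting homotopy at each vertex $x$:
\begin{itemize}
\item $p_x(m)=1_x\otimes m$;
\item a map $q_x$, defined on each path by a sum over its arrows, which keeps the segment of the path up to and including that arrow and applies $\underline{\alpha}$ to the segment below it.
\end{itemize}
It then checks $f_xp_x=1$, $q_xg_x=1$ and $g_xq_x+p_xf_x=1$ by telescoping computations. You instead grade by path length and observe that $\partial=\partial_+-\partial_-$, where $\partial_+$ is an isomorphism from the $x$-component of $P_1$ onto the positive-degree part of the $x$-component of $P_0$. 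This holds because every path of positive length splits uniquely as a shorter path followed by its last arrow. A leading-term argument then gives injectivity, and descending induction on degree gives $\ker\varepsilon\subseteq\im\partial$.

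Your route avoids defining new maps on tensor products, and their implicit well-definedness checks, as well as the index-heavy verifications. It also shows transparently why the sequence is exact: $\partial$ is a unitriangular perturbation of the obvious isomorphism. The paper's route directly exhibits each vertexwise sequence as split exact over $R_x$, so it stays exact after applying any additive functor vertexwise. You can recover that splitting too. The composite $\partial_+^{-1}\pi_{>0}\partial$ equals $1-N$, where $N=\partial_+^{-1}\pi_{>0}\partial_-$ lowers degree and is therefore locally nilpotent. Hence $\pi_{>0}\circ\partial$ is bijective, and $\im\partial$ is an $R_x$-complement to the degree-zero summand $\cong\tennsor{x}{M}{}$.
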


    \begin{proof}
Let $M=(\tennsor{x}{M}{},\tennsor{y}{\alpha}{x})$, $P=  (\tennsor{x}{P}{},\tennsor{y}{\beta}{x})$ where $P= \tennsor{}{\bigoplus}{\ell\in\ff{C}}\tennsor{?\,}{\underline{A}}{\,\ell}\otimes \tennsor{\ell}{M}{}$, and $Q=  (\tennsor{x}{Q}{},\tennsor{y}{\gamma}{x})$ where $Q=\tennsor{}{{\bigoplus}}{j,k\in\ff{C}}\tennsor{?\,}{\underline{A}}{\,j}\otimes\tennsor{j}{A}{k}\otimes  \tennsor{k}{M}{}$. 
For each $\ell$ note that $\tennsor{?\,}{\underline{\alpha}}{\,\ell}=(\tennsor{x\,}{\underline{\alpha}}{\,\ell})$ defines a morphism $\tennsor{?\,}{\underline{A}}{\,\ell}\otimes \tennsor{\ell}{M}{}\to M$ in $\Rep{\fk{S}}$.
%
    %
    Define $f\colon P\to M$ unique  such that $f\circ \tennsor{}{\iota}{\ell}=\tennsor{?\,}{\underline{\alpha}}{\,\ell}$ where 
    $\tennsor{}{\iota}{\ell}\colon\tennsor{?\,}{\underline{A}}{\,\ell}\otimes \tennsor{\ell}{M}{}\to P$ is the canonical monomorphism. 
    Consider the canonical morphisms
    \begin{align*}
         \tennsor{h}{\varsigma}{p t}\colon \tennsor{h\,}{\underline{A}}{\,p}\otimes\tennsor{p}{A}{t}
\cong {\bigoplus}_{\ff{n}\geq 0,\,\underline{u}\in \ff{C}^{\ff{n}}(h,p)}  \tennsor{}{A}{\underline{u}}\otimes \tennsor{p}{A}{t}
={\bigoplus}_{\ff{n}>0,\,\underline{v}=(h,\dots,p,t)\in \ff{C}^{\ff{n}}(h,t)} \tennsor{}{A}{\underline{v}}
\subseteq 
\tennsor{h\,}{\underline{A}}{\,t}. 
    \end{align*}

    Define $g=(\tennsor{x}{g}{})\colon Q\to P$ by, for each $\underline{a}\in\tennsor{x\,}{\underline{A}}{\,\ell}$, $b\in \tennsor{j}{A}{k}$ and $m\in \tennsor{k}{M}{}$, setting
\[
\tennsor{x}{g}{}(\underline{a}\otimes b\otimes m) =\tennsor{x}{\varsigma}{jk}(\underline{a}\otimes b)\otimes m-\underline{a}\otimes \tennsor{j}{\alpha}{k}(b\otimes m)\in \left(\tennsor{x\,}{\underline{A}}{\,j}\otimes \tennsor{j}{M}{}\right)\oplus \left(\tennsor{x\,}{\underline{A}}{\,k}\otimes \tennsor{k}{M}{}\right). 
\]
In this notation, and for each $c\in\tennsor{y}{A}{x}$, we have
\begin{align*}
    \tennsor{y}{\beta}{x}((\tennsor{}{\iden}{\tennsor{y}{A}{x}}\otimes \tennsor{x}{g}{})(c\otimes \underline{a}\otimes b\otimes m))
     =
    \tennsor{y}{\beta}{x}(c\otimes (\tennsor{x}{\varsigma}{jk}(\underline{a}\otimes b)\otimes m
     -
     \underline{a}\otimes \tennsor{j}{\alpha}{k}(b\otimes m)))
     \\
       =
    (\tennsor{yx}{\varsigma}{k}\otimes \tennsor{}{\iden}{{\tennsor{k}{M}{}}})(c\otimes \tennsor{x}{\varsigma}{jk}(\underline{a}\otimes b)\otimes m)
     -
     (\tennsor{yx}{\varsigma}{j}\otimes \tennsor{}{\iden}{{\tennsor{j}{M}{}}})(c\otimes\underline{a}\otimes \tennsor{j}{\alpha}{k}(b\otimes m))
     \\
    =\tennsor{y}{g}{}(\tennsor{yx}{\varsigma}{j}(c\otimes \underline{a})\otimes b\otimes m)=
    \tennsor{y}{g}{}(\tennsor{y}{\gamma}{x}(c\otimes \underline{a}\otimes b\otimes m)).
    \end{align*}
    Thus $g$ is a morphism of representations. 
Notice $f\circ g=0$. 
To see that $0\to Q\to P\to M\to 0$ is exact, it is enough to see that each $0\to \tennsor{x}{Q}{}\to \tennsor{x}{P}{}\to \tennsor{x}{M}{}\to 0$  is exact, by Lemma~\ref{lemma-co-limits-in-rep-of-species-cat}. 
It suffice to prove that each such complex of $\tennsor{}{R}{x}$-modules is  contractible, meaning the identity map is null homotopic; see for example \cite[Exercise~1.4.3]{Weibel-introduction-to-homological-algebra}.

Define $\tennsor{}{p}{x}\colon \tennsor{x}{M}{}\to \tennsor{x}{P}{}$ by $\tennsor{}{p}{x}(m)=\tennsor{}{1}{x}\otimes m\in \tennsor{}{R}{x}\otimes \tennsor{x}{M}{}\subseteq \tennsor{x\,}{\underline{A}}{\,x}\otimes \tennsor{x}{M}{}$ for each $m\in \tennsor{x}{M}{}$. 
For each $\ell\in\ff{C}$ and each $\underline{w}=(\tennsor{}{w}{\ff{n}},\dots,\tennsor{}{w}{0})\in\ff{C}^{\ff{n}}(x,\ell)$, we define  an $\tennsor{}{R}{x}$-module map $\tennsor{\underline{w}}{q}{x}\colon \tennsor{}{A}{\underline{w}}\otimes \tennsor{\ell}{M}{}\to \tennsor{x}{Q}{}$. 
If $\ff{n}=0$, so if $\ell=x$ and $\underline{w}=(x)$, let $\tennsor{\underline{w}}{q}{x}=0$. 
If  $\ff{n}\geq 1$, let
\[
\tennsor{\underline{w}}{q}{x}(\underline{a}\otimes m)
=
 \sum_{\ff{i}=0}^{\ff{n}-1}
(\tennsor{}{a}{w_{\ff{n}}w_{\ff{n}-1}}\otimes \cdots \otimes \tennsor{}{a}{w_{\ff{i}+2}w_{\ff{i}+1}})\otimes \tennsor{}{a}{w_{\ff{i}+1}w_{\ff{i}}}\otimes \tennsor{w_{\ff{i}}\,}{\underline{\alpha}}{\,\ell}(\tennsor{}{a}{w_{\ff{i}}w_{\ff{i}-1}}\otimes \cdots \otimes \tennsor{}{a}{w_{1}w_{0}}\otimes m)
\]
for each $\underline{a}=\tennsor{}{a}{w_{\ff{n}}w_{\ff{n}-1}}\otimes \cdots \otimes \tennsor{}{a}{w_{1}w_{0}}\in\tennsor{}{A}{\underline{w}}$ and each $m\in \tennsor{\ell}{M}{}$.
By combining the maps $\tennsor{\underline{w}}{q}{x}$ as $\underline{w}$ runs through the paths, we define an $\tennsor{}{R}{x}$-linear map $\tennsor{}{q}{x}\colon \tennsor{x}{P}{}\to \tennsor{x}{Q}{}$. 
Since $\tennsor{x\,}{\underline{\alpha}}{\,x}(\tennsor{}{1}{x}\otimes m)=m$ we have that $\tennsor{x}{f}{}\circ \tennsor{}{p}{x}=\tennsor{}{\iden}{\tennsor{x}{M}{}}$. 
    We claim $\tennsor{}{q}{x}\circ \tennsor{x}{g}{}=\tennsor{}{\iden}{\tennsor{x}{Q}{}}$. 
    %
    %
    Suppose $\underline{a}\in\tennsor{}{A}{\underline{w}}$ $\underline{w}=(\tennsor{}{w}{\ff{n}},\dots,\tennsor{}{w}{0})\in\ff{C}^{\ff{n}}(x,j)$.
    %
When $\ff{n}=0$ the claim is straightforward.
So, assume $\ff{n}\geq  1$, and write $\underline{a}=\tennsor{}{a}{w_{\ff{n}}w_{\ff{n}-1}}\otimes \cdots \otimes \tennsor{}{a}{w_{1}w_{0}}$. 
    Let $k=w_{-1}$ and $b= \tennsor{}{a}{w_{0}w_{-1}}$. 
    In this notation, 
    \begin{align*}
       \tennsor{}{q}{x}(\tennsor{x}{g}{}(\underline{a}\otimes b\otimes m))= \tennsor{}{q}{x}(\underline{a}\otimes b\otimes m)
        -\tennsor{}{q}{x}(\underline{a}\otimes \tennsor{j}{\alpha}{k}(b\otimes m))
        \\
        = \sum_{\ff{i}=-1}^{\ff{n}-1}
(\tennsor{}{a}{w_{\ff{n}}w_{\ff{n}-1}}\otimes \cdots \otimes \tennsor{}{a}{w_{\ff{i}+2}w_{\ff{i}+1}})\otimes \tennsor{}{a}{w_{\ff{i}+1}w_{\ff{i}}}\otimes \tennsor{w_{\ff{i}}\,}{\underline{\alpha}}{\,k}(\tennsor{}{a}{w_{\ff{i}}w_{\ff{i}-1}}\otimes \cdots \otimes \tennsor{}{a}{w_{0}w_{-1}}\otimes m)
\\
        - \sum_{\ff{i}=0}^{\ff{n}-1}
(\tennsor{}{a}{w_{\ff{n}}w_{\ff{n}-1}}\otimes \cdots \otimes \tennsor{}{a}{w_{\ff{i}+2}w_{\ff{i}+1}})\otimes \tennsor{}{a}{w_{\ff{i}+1}w_{\ff{i}}}\otimes \tennsor{w_{\ff{i}}\,}{\underline{\alpha}}{\,j}(\tennsor{}{a}{w_{\ff{i}}w_{\ff{i}-1}}\otimes \cdots \otimes \tennsor{}{a}{w_{1}w_{0}}\otimes \tennsor{j}{\alpha}{k}(b\otimes m))
 \\
=(\tennsor{}{a}{w_{\ff{n}}w_{\ff{n}-1}}\otimes \cdots \otimes \tennsor{}{a}{w_{1}w_{0}})\otimes \tennsor{}{a}{w_{0}w_{-1}}\otimes m=\underline{a}\otimes b\otimes m.
    \end{align*}
This proves the claim. 
Similarly, it follows that $\tennsor{x}{g}{}(\tennsor{\underline{w}}{q}{x}(\underline{a}\otimes m))$ is equal to
    \begin{align*}
 \sum_{\ff{i}=0}^{\ff{n}-1}
\tennsor{x}{\varsigma}{w_{\ff{i}+1}w_{\ff{i}}}((\tennsor{}{a}{w_{\ff{n}}w_{\ff{n}-1}}\otimes \cdots \otimes \tennsor{}{a}{w_{\ff{i}+2}w_{\ff{i}+1}})\otimes \tennsor{}{a}{w_{\ff{i}+1}w_{\ff{i}}})\otimes \tennsor{w_{\ff{i}}\,}{\underline{\alpha}}{\,\ell}(\tennsor{}{a}{w_{\ff{i}}w_{\ff{i}-1}}\otimes \cdots \otimes \tennsor{}{a}{w_{1}w_{0}}\otimes m)
\\
-
 \sum_{\ff{i}=0}^{\ff{n}-1}
(\tennsor{}{a}{w_{\ff{n}}w_{\ff{n}-1}}\otimes \cdots \otimes \tennsor{}{a}{w_{\ff{i}+2}w_{\ff{i}+1}})\otimes \tennsor{w_{\ff{i}+1}}{\alpha}{w_{\ff{i}}}(\tennsor{}{a}{w_{\ff{i}+1}w_{\ff{i}}}\otimes \tennsor{w_{\ff{i}}\,}{\underline{\alpha}}{\,\ell}(\tennsor{}{a}{w_{\ff{i}}w_{\ff{i}-1}}\otimes \cdots \otimes \tennsor{}{a}{w_{1}w_{0}}\otimes m))
\\
=
\underline{a}\otimes m +
 \sum_{\ff{i}=1}^{\ff{n}-1}
\tennsor{}{a}{w_{\ff{n}}w_{\ff{n}-1}}\otimes \cdots \otimes \tennsor{}{a}{w_{\ff{i}+2}w_{\ff{i}+1}}\otimes \tennsor{}{a}{w_{\ff{i}+1}w_{\ff{i}}}\otimes \tennsor{w_{\ff{i}}\,}{\underline{\alpha}}{\,\ell}(\tennsor{}{a}{w_{\ff{i}}w_{\ff{i}-1}}\otimes \cdots \otimes \tennsor{}{a}{w_{1}w_{0}}\otimes m)
\\
-\tennsor{x\,}{\underline{\alpha}}{\,\ell}(\underline{a}\otimes m)-
 \sum_{\ff{i}=0}^{\ff{n}-2}
 (\tennsor{}{a}{w_{\ff{n}}w_{\ff{n}-1}}\otimes \cdots \otimes \tennsor{}{a}{w_{\ff{i}+2}w_{\ff{i}+1}})\otimes  \tennsor{w_{\ff{i}+1}\,}{\underline{\alpha}}{\,\ell}(\tennsor{}{a}{w_{\ff{i}+1}w_{\ff{i}}}\otimes  \cdots \otimes \tennsor{}{a}{w_{1}w_{0}}\otimes m).
    \end{align*}
Thus  $\tennsor{x}{g}{}\circ \tennsor{}{q}{x}+\tennsor{}{p}{x}\circ \tennsor{x}{f}{}=\tennsor{}{\iden}{\tennsor{x}{P}{}}$, giving the required homotopy.
    \end{proof}
\begin{cor}\label{cor-yay-hereditary}
    If $\fk{S}=(\tennsor{}{R}{x},\tennsor{y}{A}{x})$ is a species such that each ring 
    $\tennsor{}{R}{x} $ is semisimple artinian (such as a field or division ring), then the category $\Rep{\fk{S}}$ is hereditary.
\end{cor}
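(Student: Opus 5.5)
The plan is to combine Lemma~\ref{prop-first-term-standard-res} with Lemma~\ref{lem-when-proj}. Fix an arbitrary representation $M=(\tennsor{x}{M}{},\tennsor{y}{\alpha}{x})$ of $\fk{S}$. Lemma~\ref{prop-first-term-standard-res} provides a short exact sequence
\[
0\to \tennsor{}{{\bigoplus}}{j,k\in\ff{C}}\kern 1mm \tennsor{?\,}{\underline{A}}{\,j}\otimes\tennsor{j}{A}{k}\otimes  \tennsor{k}{M}{}\to \tennsor{}{\bigoplus}{\ell\in\ff{C}}\kern .5mm \tennsor{?\,}{\underline{A}}{\,\ell}\otimes \tennsor{\ell}{M}{}\to M\to 0
\]
in $\Rep{\fk{S}}$. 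It therefore suffices to show that both left-hand terms are projective. Then every object has projective dimension at most $1$, so $\Ext^{2}_{\Rep{\fk{S}}}$ vanishes identically and $\Rep{\fk{S}}$ is hereditary.

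Since each $\tennsor{}{R}{x}$ is semisimple artinian, every left or right $\tennsor{}{R}{x}$-module is projective. In particular:
\begin{itemize}
\item each $\tennsor{y}{A}{x}$ is projective as a left $\tennsor{}{R}{y}$-module, which is the hypothesis of Lemma~\ref{lem-when-proj};
\item each $\tennsor{\ell}{M}{}$ is a projective left $\tennsor{}{R}{\ell}$-module;
\item each $\tennsor{j}{A}{k}\otimes_{\tennsor{}{R}{k}}\tennsor{k}{M}{}$ is a projective left $\tennsor{}{R}{j}$-module.
\end{itemize}
Applying Lemma~\ref{lem-when-proj} with $\tennsor{}{P}{\ell}=\tennsor{\ell}{M}{}$ shows each $\tennsor{?\,}{\underline{A}}{\,\ell}\otimes\tennsor{\ell}{M}{}$ is projective. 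Applying it with $\ell=j$ and $\tennsor{}{P}{j}=\tennsor{j}{A}{k}\otimes\tennsor{k}{M}{}$ shows each $\tennsor{?\,}{\underline{A}}{\,j}\otimes\tennsor{j}{A}{k}\otimes\tennsor{k}{M}{}$ is projective. Here one should note that the associativity isomorphism identifies this object with the representation $\tennsor{?\,}{\underline{A}}{\,j}\otimes \tennsor{}{P}{j}$ as defined before Lemma~\ref{lem-when-proj}, with structure maps $\tennsor{yx}{\varsigma}{j}\otimes\iden$.

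Arbitrary direct sums of projectives are projective in the abelian category $\Rep{\fk{S}}$. Coproducts exist and are computed pointwise by Lemma~\ref{lemma-co-limits-in-rep-of-species-cat}, and $\Hom(\bigoplus P_i,-)=\prod\Hom(P_i,-)$ is exact when each factor is. Hence the sequence above is a projective resolution of $M$ of length at most $1$.

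The main obstacle is essentially bookkeeping: checking that the representation structure on the left-hand term of Lemma~\ref{prop-first-term-standard-res} really matches the form $\tennsor{?\,}{\underline{A}}{\,j}\otimes P$ used in Lemma~\ref{lem-when-proj}, so that the lemma applies verbatim. Beyond that the argument is formal. The more general first claim of Theorem~\ref{introthm-hereditary} follows by the same argument, using projectivity of $\tennsor{y}{A}{x}$ on both sides to make the modules $\tennsor{j}{A}{k}\otimes\tennsor{k}{M}{}$ projective.
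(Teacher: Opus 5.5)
Your proposal is correct and follows the paper's own argument. You take the length-one resolution $0\to Q\to P\to M\to 0$ from Lemma~\ref{prop-first-term-standard-res}, use semisimplicity to make every $\tennsor{y}{A}{x}$, $\tennsor{\ell}{M}{}$ and $\tennsor{j}{A}{k}\otimes\tennsor{k}{M}{}$ projective, and apply Lemma~\ref{lem-when-proj} to each summand, also supplying the details the paper leaves implicit (closure of projectives under coproducts, and that the summands of $Q$ are of the form $\tennsor{?\,}{\underline{A}}{\,j}\otimes\tennsor{}{P}{j}$).
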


\begin{proof}
Consider the exact sequence from Proposition~\ref{prop-first-term-standard-res}. 
The semisimplicity assumption means each $\tennsor{y}{A}{x}$ is projective on either side, and also that, for each representation $M=(\tennsor{x}{M}{},\tennsor{y}{\beta}{x})$ the module $\tennsor{x}{M}{}$ is projective. 
By Proposition~\ref{prop-first-term-standard-res} there is a resolution of $M$ of the form $0\to Q\to P\to M\to 0$. 
By Lemma~\ref{lem-when-proj}, both $P$ and $Q$ are projective. 
\end{proof}

Objects in  the \emph{additive closure} $\tennsor{}{\sr{T}}{\oplus}(\fk{S})$ of  $\sr{T}(\fk{S})$ are  paths $\underline{x}$.
Morphisms in $\tennsor{}{\sr{T}}{\oplus}(\fk{S})$ are matrices of morphisms in $\sr{T}(\fk{S})$, and composition is   matrix multiplication,
so
\[
\begin{array}{cc}
\Hom_{\tennsor{}{\sr{T}}{\oplus}(\fk{S})}(\underline{w},\underline{x})=\{(\tennsor{}{f}{ij})\mid \tennsor{}{f}{ij}\in \Hom_{\sr{T}(\fk{S})}(\tennsor{}{w}{j},\tennsor{}{x}{i})\},
&
(\tennsor{}{f}{ij})\circ(\tennsor{}{g}{ij})=(\tennsor{}{\sum}{k}\tennsor{}{f}{ik}\circ\tennsor{}{g}{kj}). 
\end{array}
\]

Recall $\tennsor{w}{\Pi}{}$ from \eqref{eqn-w-Pi}.
\begin{lem}
\label{lem-finitely-presented-trivial}
    Let $\fk{S}=(\tennsor{}{R}{x},\tennsor{y}{A}{x})$ be a species.  Then there is an additive equivalence 
    \[
    \lMod{\tennsor{}{\sr{T}}{\oplus}(\fk{S})}\to \Rep{\fk{S}}
    \]
    such that each $\tennsor{?\,}{\underline{A}}{\,\ell}$ is the image of $\Hom_{\tennsor{}{\sr{T}}{\oplus}(\fk{S})}((\ell),-)$, and  there is a natural isomorphism   \[
        \begin{array}{cc}
        \Hom_{\Rep{\fk{S}}}(\tennsor{?\,}{\underline{A}}{\,\ell},-)\to \tennsor{\ell}{\Pi}{},
        &
        \Hom_{\Rep{\fk{S}}}(\tennsor{?\,}{\underline{A}}{\,\ell},M)\ni f=(\tennsor{x}{f}{})\mapsto \tennsor{\ell}{f}{}(\tennsor{}{1}{\ell})\in\tennsor{\ell}{M}{},
         \end{array}
        \]  
\end{lem}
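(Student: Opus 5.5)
The plan is to combine Lemma~\ref{theorem-equivalence-of-reps-of-species-and-tensor-cat} with the standard fact that a small preadditive category and its additive closure have equivalent module categories, and then to read off the images of representable functors.

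First, restriction along the fully faithful inclusion $\sr{T}(\fk{S})\to\tennsor{}{\sr{T}}{\oplus}(\fk{S})$, sending $x$ to the length-$0$ path $(x)$, defines a functor $\lMod{\tennsor{}{\sr{T}}{\oplus}(\fk{S})}\to\lMod{\sr{T}(\fk{S})}$. This functor is an equivalence. A quasi-inverse extends an additive functor $F$ to a tuple $\underline{x}=(x_{\ff{n}},\dots,x_0)$ by $F(\underline{x})=\bigoplus_{i}F(x_i)$, and it sends a matrix $(f_{ij})$ to the matrix $(F(f_{ij}))$. Any additive functor on $\tennsor{}{\sr{T}}{\oplus}(\fk{S})$ is forced to preserve the finite biproducts $\underline{x}\cong\bigoplus_i(x_i)$, so the two composites are naturally isomorphic to the identities. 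Composing this with the inverse of the equivalence in Lemma~\ref{theorem-equivalence-of-reps-of-species-and-tensor-cat} gives the additive equivalence $\lMod{\tennsor{}{\sr{T}}{\oplus}(\fk{S})}\to\Rep{\fk{S}}$.

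Next I identify the image of $\Hom_{\tennsor{}{\sr{T}}{\oplus}(\fk{S})}((\ell),-)$. Restricted to $\sr{T}(\fk{S})$, it is the functor $x\mapsto\Hom_{\sr{T}(\fk{S})}(\ell,x)=\tennsor{x\,}{\underline{A}}{\,\ell}$, and a morphism $\underline{b}\in\tennsor{h\,}{\underline{A}}{\,t}$ acts by left composition, that is, by concatenation of paths. To see which representation this corresponds to, restrict to the generating pieces $\tennsor{y}{A}{x}\subseteq\tennsor{y\,}{\underline{A}}{\,x}$. There $a$ acts by $\underline{c}\mapsto a\otimes\underline{c}$, which is exactly the structure map $\tennsor{yx}{\varsigma}{\ell}$. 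So the corresponding representation is $(\tennsor{x\,}{\underline{A}}{\,\ell},\tennsor{yx}{\varsigma}{\ell})=\tennsor{?\,}{\underline{A}}{\,\ell}$.

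One point needs care here. The functor built from a representation in Lemma~\ref{theorem-equivalence-of-reps-of-species-and-tensor-cat} uses the maps $\tennsor{h\,}{\underline{\alpha}}{\,t}$, and for this representation these must be iterated concatenation. That holds by induction on path length, using the definition of $\tennsor{}{\alpha}{\underline{x}}$ as a composite of the $\tennsor{y}{\alpha}{x}$. I expect this bookkeeping to be the main obstacle, although it is routine.

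Finally, the natural isomorphism is Yoneda's lemma transported through the equivalence. We have
\[
\Hom_{\Rep{\fk{S}}}(\tennsor{?\,}{\underline{A}}{\,\ell},M)\cong\Hom(\Hom_{\sr{T}(\fk{S})}(\ell,-),F_M)\cong F_M(\ell)=\tennsor{\ell}{M}{},
\]
and the Yoneda bijection is evaluation at the identity $\tennsor{}{1}{\ell}\in\tennsor{}{R}{\ell}\subseteq\tennsor{\ell\,}{\underline{A}}{\,\ell}$, namely $f\mapsto\tennsor{\ell}{f}{}(\tennsor{}{1}{\ell})$.

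For concreteness I would also give the inverse directly. It sends $m\in\tennsor{\ell}{M}{}$ to the morphism $(\tennsor{x\,}{\underline{\alpha}}{\,\ell}(-\otimes m))_x$. This is a morphism of representations by the associativity of the $\underline{\alpha}$ maps, and it is the same map used as $\tennsor{?\,}{\underline{\alpha}}{\,\ell}$ in Lemma~\ref{prop-first-term-standard-res}. One composite is the identity because $\tennsor{\ell\,}{\underline{\alpha}}{\,\ell}(\tennsor{}{1}{\ell}\otimes m)=m$. For the other, a morphism $f$ is determined by $\tennsor{\ell}{f}{}(\tennsor{}{1}{\ell})$, since $\tennsor{?\,}{\underline{A}}{\,\ell}$ is generated by $\tennsor{}{1}{\ell}$ under the structure maps. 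Naturality in $M$ is immediate, because a morphism $g\colon M\to M'$ acts componentwise.
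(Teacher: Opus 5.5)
Your proposal is correct and follows the paper's route: the additive-closure equivalence $\lMod{\sr{T}(\fk{S})}\simeq\lMod{\tennsor{}{\sr{T}}{\oplus}(\fk{S})}$, composed with Lemma~\ref{theorem-equivalence-of-reps-of-species-and-tensor-cat}, followed by Yoneda's lemma for the natural isomorphism. The only difference is that you fill in details the paper leaves as ``by construction'': the explicit quasi-inverse, the check that the iterated structure maps of $\tennsor{?\,}{\underline{A}}{\,\ell}$ are concatenation, and the explicit inverse $m\mapsto(\tennsor{x\,}{\underline{\alpha}}{\,\ell}(-\otimes m))_{x}$.
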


\begin{proof}
Any additive functor leaving $\sr{T}(\fk{S})$ factors uniquely (up to natural isomorphism) through the canonical functor $ \sr{T}(\fk{S})\to \tennsor{}{\sr{T}}{\oplus}(\fk{S})$ taking  $x\in\ff{C}$ to the length-$0$ sequence $(x)$, and taking $\underline{a}\in \Hom_{\tennsor{}{\sr{T}}{\oplus}(\fk{S})}((w),(x))$ to the $1\times 1$ matrix $(\underline{a})$. 
%
%
This gives an additive  equivalence $\lMod{\sr{T}(\fk{S})}\to \lMod{\tennsor{}{\sr{T}}{\oplus}(\fk{S})}$; see \cite[p.~198,~Exercise~6(a)]{MacLane-categories-for-the-working-mathematician}. 

Combined with Lemma~\ref{theorem-equivalence-of-reps-of-species-and-tensor-cat} this gives the required equivalence. 
By construction, each $\tennsor{?\,}{\underline{A}}{\,\ell}$ is the image of $\Hom_{\tennsor{}{\sr{T}}{\oplus}(\fk{S})}((\ell),-)$. 
The final statement is Yoneda's Lemma.   
\end{proof}

\section{Existence and uniqueness of strong decompositions}
\label{section-decomposition}

An object $M$ of an additive category $\cl{A}$ is \emph{strongly indecomposable} if $\End_{\cl{A}}(X)$ is local. 

\begin{lem}
    \label{lem-uniqueness-of-decompositions}
    Let $\fk{S}=(\tennsor{}{R}{x},\tennsor{y}{A}{x})$ be a species. 
    If $\tennsor{}{\bigoplus}{i\in I}\tennsor{}{M}{i}\cong \tennsor{}{\bigoplus}{j\in J}\tennsor{}{N}{i}$ for sets $I$ and $J$ where each $\tennsor{}{M}{i}$ and each $\tennsor{}{N}{j}$ are strongly indecomposable representations of $\fk{S}$, then there is a bijection $I\to J$ such that $\tennsor{}{M}{i}\cong \tennsor{}{N}{j}$ whenever $i\mapsto j$. 
\end{lem}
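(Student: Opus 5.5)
The plan is to reduce the statement to the classical Krull--Remak--Schmidt--Azumaya theorem, which holds in any additive category with arbitrary direct sums in which idempotents split and the relevant sums behave well. The cleanest route is via Grothendieck categories. By Lemma~\ref{theorem-equivalence-of-reps-of-species-and-tensor-cat}, $\Rep{\fk{S}}$ is additively equivalent to $\lMod{\sr{T}(\fk{S})}$. This is a module category over a small preadditive category, hence a Grothendieck abelian category; it has the generators $\tennsor{?\,}{\underline{A}}{\,\ell}$ by Lemma~\ref{lem-finitely-presented-trivial}. Direct sums there are computed pointwise by Lemma~\ref{lemma-co-limits-in-rep-of-species-cat}, so coproducts are exact, and being strongly indecomposable is preserved under equivalence. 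It therefore suffices to prove the statement in $\lMod{\sr{T}(\fk{S})}$.

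In that setting I will invoke Azumaya's theorem in its general form for Grothendieck categories (or more generally for additive categories with split idempotents where each object with local endomorphism ring is ``small enough''). For example, I would cite Walker--Warfield or the version in Facchini's book, stated for Grothendieck categories. If a self-contained argument is preferred, I will follow Azumaya's original proof.

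First, I show that each strongly indecomposable $M_i$ has the exchange property relative to the decomposition $\bigoplus_{j}N_j$. Given the isomorphism $\phi\colon\bigoplus_i M_i\to\bigoplus_j N_j$, write $\iota_i,\pi_i$ and $\iota'_j,\pi'_j$ for the structure maps. For fixed $i$, the identity $1_{M_i}=\sum_j \pi_i\phi^{-1}\iota'_j\pi'_j\phi\iota_i$ holds. Locally the sum is finite, and this is the delicate point. Since $\End(M_i)$ is local, some term $\pi_i\phi^{-1}\iota'_j\pi'_j\phi\iota_i$ is invertible, and then $M_i\cong N_j$ because $N_j$ is indecomposable. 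To handle the infinite sum I pass to pointwise elements. The standard trick in Grothendieck categories is to show that the set of $j$ for which this composite is non-invertible corresponds to a proper ideal. In that case the images of these $j$ lie in the Jacobson radical, and the sum is controlled because, for any finitely generated subobject, only finitely many $j$ contribute.

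Second, I run the counting argument. For each isomorphism class $[X]$ of strongly indecomposables, I compare $I_X=\{i: M_i\cong X\}$ and $J_X=\{j:N_j\cong X\}$. When these sets are finite, the invariant $\dim_{D} \bigl(\Hom(X,-)/\rad\bigr)$, with $D=\End(X)/\rad\End(X)$, evaluated on both sides computes $|I_X|=|J_X|$. When they are infinite, a cardinality argument works: each $M_i$ is countably generated relative to the $N_j$, each $M_i$ meets only finitely or countably many $N_j$ nontrivially, and the counts then match. Assembling bijections $I_X\to J_X$ over all classes $X$ gives the required bijection $I\to J$.

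The main obstacle is the exchange step with infinitely many summands. It relies on the fact that in a Grothendieck (indeed module) category, the canonical map from a direct sum to the product, together with local finiteness on elements, lets one argue as for modules. Rather than redo this, I would first try to cite the module-theoretic Krull--Remak--Schmidt--Azumaya theorem for Grothendieck categories directly and verify its hypotheses via Lemmas~\ref{theorem-equivalence-of-reps-of-species-and-tensor-cat} and~\ref{lemma-co-limits-in-rep-of-species-cat}.
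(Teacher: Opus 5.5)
Your proposal is correct and follows the paper's proof: pass along the equivalence $\Rep{\fk{S}}\simeq\lMod{\sr{T}(\fk{S})}$, where coproducts are computed pointwise, note that this is a Grothendieck category, and invoke the Krull--Remak--Schmidt--Azumaya theorem there (the paper cites Bucur--Deleanu, Theorem~6.44).

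Your optional self-contained sketch is looser than the cited result. For instance, $\Hom(X,-)$ need not commute with infinite direct sums. Also, the infinite count really rests on each nonzero element having finite support, so for each $i$ only finitely many $j$ make $\pi'_j\phi\iota_i$ an isomorphism. Since you ultimately rely on the citation, this does not affect the argument.
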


\begin{proof}
    The uniqueness of decompositions into strongly indecomposable objects, in an ambient  Grothendieck abelian category such as $\lMod{\sr{T}(\fk{S})}$, is well-known as the Krull--Remak--Schmidt--Azumaya theorem; see for example \cite[Theorem~6.44]{bucur-deleanu-book}. 
    Hence the result follows from Lemma~\ref{theorem-equivalence-of-reps-of-species-and-tensor-cat} and Lemma~\ref{lemma-co-limits-in-rep-of-species-cat}. 
\end{proof}


\begin{defn}
    
An object $M$ of an additive category $\cl{A}$ with all directed colimits is called \emph{finitely presented} if  $\Hom_{\cl{A}}(M,-)\colon \cl{A}\to \Ab$ preserves directed colimits. 
One calls $\cl{A}$ \emph{locally finitely presented} if the subcategory of finitely presented objects is skeletally small, and if every object is a directed colimit of finitely presented objects. 
The \emph{definable subgroup} of an object  $M$ of \emph{sort $F$, given by} a morphism $\varepsilon\colon F\to G$  with $F$ and $G$ finitely presented, is  
\[
M\varepsilon=\im(\Hom_{\cl{A}}(\varepsilon,M))=\{ \zeta\varepsilon \in \Hom_{\cl{A}}(F,M) \mid \zeta  \in \Hom_{\cl{A}}(G,M)\}.
\]
\end{defn}

\begin{lem}
\label{lem-prereq-for-sigma-pure-inj-for-local-fin-pres}
Let $F$, $G$ and $M=(\tennsor{x}{M}{},\tennsor{y}{\alpha}{x})$ be representations of a species $\fk{S}=(\tennsor{}{R}{x},\tennsor{y}{A}{x})$ with $F$ and $G$ finitely presented. 
For each $\varepsilon\in\Hom_{\Rep{\fk{S}}}(F,G)$ there exist $\End_{\Rep{\fk{S}}}(M)$-linear maps $\Gamma$ and $\Delta$ such that $M\varepsilon= \im(\tennsor{}{\Gamma\vert}{\ker(\Delta)})$.
Moreover, $\Gamma$ and $\Delta$ are of the form
\begin{align*}
    (\tennsor{y(j)\,}{{\underline{\alpha}}}{\,x(i)}(\tennsor{j\,}{\underline{a}}{\,i}\otimes - ))\colon \tennsor{x(1)}{M}{}\oplus \cdots \oplus \tennsor{x(\ff{m})}{M}{}\to \tennsor{y(1)}{M}{}\oplus \cdots \oplus \tennsor{y(\ff{n})}{M}{},
\end{align*}
where $x(i),y(j)\in\ff{C}$ and $\tennsor{j\,}{\underline{a}}{\,i}\in \tennsor{y(j)\,}{\underline{A}}{\,x(i)}$ for each $i$ and $j$. 
\end{lem}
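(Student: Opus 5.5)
We use the presentation theory of $\Rep{\fk{S}}$ via Lemma~\ref{lem-finitely-presented-trivial}. Under the equivalence $\lMod{\tennsor{}{\sr{T}}{\oplus}(\fk{S})}\to\Rep{\fk{S}}$, the representables correspond to the representations $\tennsor{?\,}{\underline{A}}{\,\ell}$. In a module category over a small preadditive category, the finitely presented objects are exactly the cokernels of maps between finite direct sums of representables. So we may write $F$ and $G$ as cokernels
\[
\textstyle\bigoplus_{k}\tennsor{?\,}{\underline{A}}{\,u(k)}\xrightarrow{\ \phi\ }\bigoplus_{i}\tennsor{?\,}{\underline{A}}{\,x(i)}\to F\to 0,\qquad \bigoplus_{k'}\tennsor{?\,}{\underline{A}}{\,v(k')}\xrightarrow{\ \psi\ }\bigoplus_{j}\tennsor{?\,}{\underline{A}}{\,y(j)}\to G\to 0,
\]
with all sums finite. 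By Yoneda, every morphism between finite sums of representables is a matrix of elements $\underline{a}\in\tennsor{h\,}{\underline{A}}{\,t}=\Hom_{\sr{T}(\fk{S})}(t,h)$, acting by precomposition. Using the natural isomorphism $\Hom_{\Rep{\fk{S}}}(\tennsor{?\,}{\underline{A}}{\,\ell},M)\cong\tennsor{\ell}{M}{}$, precomposition with such a matrix becomes a map between finite sums of modules $\tennsor{x}{M}{}$ with entries $\tennsor{h\,}{\underline{\alpha}}{\,t}(\underline{a}\otimes-)$. Lemma~\ref{lem-elementary-underline-maps} shows each of these is $\End_{\Rep{\fk{S}}}(M)$-linear, because the Yoneda isomorphism is natural in $M$.

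\textbf{Step 1.} Lift $\varepsilon$ through projectivity of the representables. Since $\bigoplus_i\tennsor{?\,}{\underline{A}}{\,x(i)}$ is projective (it is representable), the composite to $G$ lifts to $\tilde\varepsilon\colon\bigoplus_i\tennsor{?\,}{\underline{A}}{\,x(i)}\to\bigoplus_j\tennsor{?\,}{\underline{A}}{\,y(j)}$. Left exactness of $\Hom(-,M)$ then identifies
\[
\Hom(G,M)\cong\ker\bigl(\Hom(\psi,M)\bigr)\subseteq \textstyle\bigoplus_j\tennsor{y(j)}{M}{},\qquad \Hom(F,M)\hookrightarrow\textstyle\bigoplus_i\tennsor{x(i)}{M}{}.
\]
Under these identifications, $\Hom(\varepsilon,M)$ is the restriction of $\Hom(\tilde\varepsilon,M)$.

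\textbf{Step 2.} Set $\Delta=\Hom(\psi,M)$ and $\Gamma=\Hom(\tilde\varepsilon,M)$. Both are matrices of maps $\tennsor{y\,}{\underline{\alpha}}{\,x}(\underline{a}\otimes-)$, which gives the stated form. Then $M\varepsilon$, viewed inside $\Hom(F,M)\subseteq\bigoplus_i\tennsor{x(i)}{M}{}$, equals $\im(\Gamma|_{\ker\Delta})$.

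Note that in the statement the domain of $\Gamma$ is $\bigoplus\tennsor{y}{M}{}$ and its target is $\bigoplus\tennsor{x}{M}{}$, so the indexing is transposed relative to the displayed form. The displayed form is generic, so this causes no problem.

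\textbf{Main obstacle.} The real content is the identification of finitely presented representations with cokernels of maps between finite sums of representables. Directed colimits are computed pointwise by Lemma~\ref{lemma-co-limits-in-rep-of-species-cat}, and each $\tennsor{?\,}{\underline{A}}{\,\ell}$ is finitely presented because $\Hom(\tennsor{?\,}{\underline{A}}{\,\ell},-)\cong\tennsor{\ell}{\Pi}{}$ commutes with them. The converse direction, that every finitely presented object has such a presentation, is the standard fact about module categories over small preadditive categories. I would cite it rather than reprove it: write $F$ as a directed colimit of such cokernels, and note that the identity of $F$ factors through one of them, so $F$ is a summand of a finitely presented cokernel. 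Idempotents split, and a summand of a finitely presented module is again finitely presented, so $F$ is itself such a cokernel.
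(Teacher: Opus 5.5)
Your proposal is correct and is essentially the paper's proof. You present $F$ and $G$ as cokernels of maps between finite sums of the representables $\tennsor{?\,}{\underline{A}}{\,\ell}$; the paper cites Crawley-Boevey for this fact, as you propose to. You then lift $\varepsilon$ by projectivity, apply $\Hom_{\Rep{\fk{S}}}(-,M)$ together with Yoneda, and take $\Delta$ and $\Gamma$ to be the maps induced by the relation map of $G$ and by the lift. The paper's diagram also lifts $\varepsilon$ to the relation terms, but as your argument shows, that extra lift is not needed.
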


\begin{proof}
By \cite[Theorem,~p.~1645]{CrawleyBoeveyLocallyFP} $\lMod{\tennsor{}{\sr{T}}{\oplus}(\fk{S})}$ is locally finitely presented, and finitely presented objects   are cokernels of morphisms between representables. 
By Lemma~\ref{theorem-equivalence-of-reps-of-species-and-tensor-cat} this means any finitely presented representation  is a cokernel of a  morphism of the form
        \[
        \begin{array}{cc}
        \bigoplus_{i=1}^{\ff{d}}\tennsor{?\,}{\underline{A}}{\,x(i)}\to \bigoplus_{j=1}^{\ff{e}}\tennsor{?\,}{\underline{A}}{\,y(j)},
        &
        (x(1),\dots,x(\ff{m}),y(1),\dots,y(\ff{n})\in\ff{C}).
        \end{array}
        \]
Applying this separately to $F$ and then to $G$, we obtain the exact rows in the diagram  
\[
\begin{tikzcd}
\bigoplus_{i=1}^{\ff{m}}\tennsor{?\,}{\underline{A}}{\,s(i)}\arrow[r]\arrow[d]
&
\bigoplus_{j=1}^{\ff{n}}\tennsor{?\,}{\underline{A}}{\,t(j)}\arrow[d, "{\gamma}"]\arrow[r]
&
F\arrow[d, "{\varepsilon}"]\arrow[r] & 0\arrow[d]
\\
\bigoplus_{k=1}^{\ff{p}}\tennsor{?\,}{\underline{A}}{\,u(k)}\arrow[r, "{\delta}"']
&
\bigoplus_{\ell=1}^{\ff{q}}\tennsor{?\,}{\underline{A}}{\,v(\ell)}\arrow[r]
&
G\arrow[r] & 0
\end{tikzcd}
\]
 in $\Rep{\fk{S}}$. 
The first statement in Lemma~\ref{lem-finitely-presented-trivial} says each $\tennsor{?\,}{\underline{A}}{\,\ell}$ is projective, giving the vertical maps such that this diagram commutes. 
Furthermore,  applying $\Hom_{\Rep{\fk{S}}}(-,M)$ gives a commutative diagram in $\lMod{\End_{\Rep{\fk{S}}}(M)}$ with exact rows of the form
    \[
\begin{tikzcd}
\tennsor{s(1)}{M}{}\oplus \cdots \oplus \tennsor{s(\ff{m})}{M}{}
&
\tennsor{t(1)}{M}{}\oplus \cdots \oplus \tennsor{t(\ff{n})}{M}{}\arrow[l]
&
\ker(\eta)\arrow[l]
&
0\arrow[l]
\\
\Hom_{}(\bigoplus_{i=1}^{\ff{m}}\tennsor{?\,}{\underline{A}}{\,s(i)},M)\arrow[u, "{\cong\,\,}"]
&
\Hom_{}(\bigoplus_{j=1}^{\ff{n}}\tennsor{?\,}{\underline{A}}{\,t(j)},M)\arrow[l]\arrow[u, "{\cong\,\,}"]
&
\Hom_{}(F,M)\arrow[l]\arrow[u, "{\cong\,\,}"]
&
0\arrow[l]\arrow[u]
\\
\Hom_{}(\bigoplus_{k=1}^{\ff{p}}\tennsor{?\,}{\underline{A}}{\,u(k)},M)\arrow[u]
&
\Hom_{}(\bigoplus_{\ell=1}^{\ff{q}}\tennsor{?\,}{\underline{A}}{\,v(\ell)},M)\arrow[l, "\,\Delta"']\arrow[u, "\Gamma\,"]
&
\Hom_{}(G,M)\arrow[l]\arrow[u]
&
0\arrow[l]\arrow[u]
\\
\tennsor{u(1)}{M}{}\oplus \cdots \oplus \tennsor{u(\ff{p})}{M}{}\arrow[u, "{\cong\,\,}"]
&
\tennsor{v(1)}{M}{}\oplus \cdots \oplus \tennsor{v(\ff{q})}{M}{}\arrow[l]\arrow[u, "{\cong\,\,}"]
&
\ker(\mu)\arrow[l]\arrow[u, "{\cong\,\,}"]
&
0\arrow[l]\arrow[u]
\end{tikzcd}
\]
%
%
%
By Lemma~\ref{lem-finitely-presented-trivial} any  morphism $\tennsor{?\,}{\underline{A}}{\,y}\to  \tennsor{?\,}{\underline{A}}{\,x}$ has the form $\underline{a}\otimes -\colon \underline{b}\mapsto \underline{a}\otimes \underline{b}$ for unique $\underline{a}\in \tennsor{y\,}{\underline{A}}{\,x}$, and so   the components of the maps $\Gamma$ and $\Delta$ are described by maps of the form 
\[
\begin{array}{cc}
\tennsor{t(j)\,}{{\underline{\alpha}}}{\,v(\ell)}(\tennsor{j\,}{\underline{c}}{\,\ell}\otimes - )\colon \tennsor{v(\ell)}{M}{}\to \tennsor{t(j)}{M}{}, 
&
\tennsor{u(k)\,}{{\underline{\alpha}}}{\,v(\ell)}(\tennsor{k\,}{\underline{d}}{\,\ell}\otimes - )\colon \tennsor{v(\ell)}{M}{}\to \tennsor{u(k)}{M}{}.  
\end{array}
\]
where $\gamma(\tennsor{}{1}{t(j)})=\tennsor{}{\sum}{\ell}\tennsor{j\,}{\underline{c}}{\,\ell}$ and $\delta(\tennsor{}{1}{u(k)})=\tennsor{}{\sum}{\ell}\tennsor{k\,}{\underline{d}}{\,\ell}$. 
\end{proof}

\begin{lem}
\label{lem-finiteness-cond-for-existence-strong-decompo}
    Let $M=(\tennsor{x}{M}{},\tennsor{y}{\alpha}{x})$ be a representation of a species  $\fk{S}$. 
    If each $\tennsor{x}{M}{}$ is artinian over $\End_{\Rep{\fk{S}}}(M)$  then $M$ is a direct sum of strongly indecomposable representations. 
\end{lem}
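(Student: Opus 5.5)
The plan is to use the theory of $\Sigma$-pure-injective objects in locally finitely presented categories. By \cite{CrawleyBoeveyLocallyFP} and Lemma~\ref{lem-finitely-presented-trivial}, $\Rep{\fk{S}}\simeq\lMod{\tennsor{}{\sr{T}}{\oplus}(\fk{S})}$ is a locally finitely presented Grothendieck category. In such a category, an object whose lattice of definable subgroups (of each sort $F$) satisfies the descending chain condition is $\Sigma$-pure-injective. Moreover, every $\Sigma$-pure-injective object decomposes as a direct sum of indecomposable pure-injective objects, and these have local endomorphism rings. This is the Crawley-Boevey/Herzog/Prest form of the classical results of Gruson--Jensen and Zimmermann-Huisgen--Zimmermann. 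So it suffices to prove that each $\Hom_{\Rep{\fk{S}}}(F,M)$, with $F$ finitely presented, has the descending chain condition on subgroups of the form $M\varepsilon$.

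First, fix a finitely presented $F$ with projective presentation $\bigoplus_{i}\tennsor{?\,}{\underline{A}}{\,s(i)}\to\bigoplus_j\tennsor{?\,}{\underline{A}}{\,t(j)}\to F\to 0$. By Yoneda (Lemma~\ref{lem-finitely-presented-trivial}), $\Hom(F,M)$ is identified, as an $\End_{\Rep{\fk{S}}}(M)$-module, with the kernel $\ker(\eta)\subseteq \tennsor{t(1)}{M}{}\oplus\cdots\oplus\tennsor{t(\ff{n})}{M}{}$, as in the proof of Lemma~\ref{lem-prereq-for-sigma-pure-inj-for-local-fin-pres}. Next, Lemma~\ref{lem-prereq-for-sigma-pure-inj-for-local-fin-pres} shows that each definable subgroup $M\varepsilon$ is $\im(\Gamma|_{\ker\Delta})$, where $\Gamma$ and $\Delta$ are $\End(M)$-linear by Lemma~\ref{lem-elementary-underline-maps}. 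Hence every $M\varepsilon$ is an $\End(M)$-submodule of the finite direct sum $\tennsor{t(1)}{M}{}\oplus\cdots\oplus\tennsor{t(\ff{n})}{M}{}$. By hypothesis each summand is artinian over $\End(M)$, so the finite direct sum is artinian as well. Therefore any descending chain of definable subgroups of sort $F$ stabilises.

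It remains to conclude. The descending chain condition on definable subgroups for every finitely presented sort characterises $\Sigma$-pure-injectivity in locally finitely presented categories; the relevant result is Crawley-Boevey's criterion in \cite{CrawleyBoeveyLocallyFP}, or the functor-category translation through $M\mapsto \Hom(-,M)$ restricted to finitely presented objects. From this, $M$ is a direct sum of indecomposables with local endomorphism rings.

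The main obstacle is citing the correct form of these two facts in this generality: that DCC on definable subgroups gives $\Sigma$-pure-injectivity, and that $\Sigma$-pure-injectives decompose into strongly indecomposables. Here the category is $\lMod{\tennsor{}{\sr{T}}{\oplus}(\fk{S})}$ over a possibly infinite small preadditive category, not a module category over a ring. My first approach is to pass to the ring with enough idempotents $\bigoplus_{h,t}\tennsor{h\,}{\underline{A}}{\,t}$, for which the module-theoretic statements are standard. Alternatively, I would invoke the locally finitely presented versions directly; Lemma~\ref{lem-uniqueness-of-decompositions} already uses the Grothendieck setting.
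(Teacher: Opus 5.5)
Your proposal is correct and follows essentially the same route as the paper. The paper also uses Lemma~\ref{lem-prereq-for-sigma-pure-inj-for-local-fin-pres} to realise each definable subgroup as an $\End_{\Rep{\fk{S}}}(M)$-submodule of a finite direct sum of the artinian modules $\tennsor{\ell}{M}{}$, deduces the descending chain condition on definable subgroups, and then applies the equivalence of (3) and (4) in \cite[Theorem~2,~p.~1661]{CrawleyBoeveyLocallyFP} directly in the locally finitely presented category, so no reduction to a ring with enough idempotents is needed.
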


\begin{proof}
Any definable subgroup of $M$ is an $\End_{\Rep{\fk{S}}}(M)$-submodule, and by Lemma~\ref{lem-prereq-for-sigma-pure-inj-for-local-fin-pres}, is an  $\End_{\Rep{\fk{S}}}(M)$-submodule of a finite direct sum of $\End_{\Rep{\fk{S}}}(M)$-modules of the form $\tennsor{\ell}{M}{}$. 
Each $\tennsor{\ell}{M}{}$ is artinian over $\End_{\Rep{\fk{S}}}(M)$, so any descending chain of definable subgroups of $M$  stabilises. 
Now apply the equivalence of (3) and (4) in \cite[Theorem~2,~p.~1661]{CrawleyBoeveyLocallyFP}. 
\end{proof}

\begin{lem}
    \label{lem-finiteness-cond-for-existence-strong-decompo-application}
    Let $\fk{S}=(\tennsor{}{R}{x},\tennsor{y}{A}{x})$ be a species, $S$ a common central subring of each $\tennsor{}{R}{x}$  that acts centrally on each $\tennsor{y}{A}{x}$, and  $M=(\tennsor{x}{M}{},\tennsor{y}{\alpha}{x})$  a representation  of $\fk{S}$. 
    If each $\tennsor{x}{M}{}$ is artinian over $S$ then $M$ is a direct sum of strongly indecomposable representations. 
\end{lem}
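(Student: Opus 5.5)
The plan is to reduce to Lemma~\ref{lem-finiteness-cond-for-existence-strong-decompo}. It suffices to show that each $\tennsor{x}{M}{}$ is artinian over $E=\End_{\Rep{\fk{S}}}(M)$. For this we produce a ring map $S\to E$ such that the induced $S$-action on each $\tennsor{x}{M}{}$, via $E$, is the given $S$-action.

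\textbf{Construction of the ring map.} For $s\in S$, define $\tennsor{x}{(s\cdot)}{}\colon\tennsor{x}{M}{}\to\tennsor{x}{M}{}$ by $m\mapsto sm$, using the inclusion $S\subseteq\tennsor{}{R}{x}$.

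\emph{Linearity.} Since $S$ is central in $\tennsor{}{R}{x}$, this map is $\tennsor{}{R}{x}$-linear: $s(rm)=(sr)m=(rs)m=r(sm)$.

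\emph{Morphism of representations.} We need $\tennsor{y}{\alpha}{x}(a\otimes sm)=s\,\tennsor{y}{\alpha}{x}(a\otimes m)$ for all $a\in\tennsor{y}{A}{x}$ and $m\in\tennsor{x}{M}{}$. This is where the hypothesis that $S$ acts centrally on $\tennsor{y}{A}{x}$ is used: we have $as=sa$, where the left side is the right $\tennsor{}{R}{x}$-action and the right side is the left $\tennsor{}{R}{y}$-action. Hence
\[
a\otimes sm=as\otimes m=sa\otimes m,
\]
and the $\tennsor{}{R}{y}$-linearity of $\tennsor{y}{\alpha}{x}$ gives the identity.

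\emph{Ring map.} The assignment $s\mapsto(\tennsor{x}{(s\cdot)}{})$ is additive, multiplicative and unital, so it defines a ring homomorphism $S\to E$. Under it, the $E$-action $f\cdot m=\tennsor{x}{f}{}(m)$ restricts along $S\to E$ to the original $S$-action on $\tennsor{x}{M}{}$.

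\textbf{Conclusion.} Every $E$-submodule of $\tennsor{x}{M}{}$ is in particular an $S$-submodule. A descending chain of $E$-submodules is therefore a descending chain of $S$-submodules, and it stabilises since $\tennsor{x}{M}{}$ is artinian over $S$. Thus each $\tennsor{x}{M}{}$ is artinian over $E$, and Lemma~\ref{lem-finiteness-cond-for-existence-strong-decompo} gives the decomposition into strongly indecomposable representations.

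The only point needing care is reading ``acts centrally'' precisely. It must mean that the left $S$-action, via $\tennsor{}{R}{y}$, agrees with the right $S$-action, via $\tennsor{}{R}{x}$, on each $\tennsor{y}{A}{x}$; this is exactly what makes multiplication by $s$ an endomorphism of the representation. Everything else is routine.
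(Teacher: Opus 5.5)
Your proposal is correct and is essentially the paper's proof: the paper also builds the ring map from $S$ to $\End_{\Rep{\fk{S}}}(M)$ sending $s$ to multiplication by $s$. It uses the centrality of $S$ on each $\tennsor{y}{A}{x}$ to check this is a morphism of representations, and then invokes Lemma~\ref{lem-finiteness-cond-for-existence-strong-decompo}. You additionally spell out why being artinian over $S$ implies being artinian over the endomorphism ring (every submodule over the endomorphism ring is an $S$-submodule), a step the paper leaves implicit.
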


\begin{proof}
By assumption, for each  $x,y\in\ff{C}$ and $s\in S$, we have $rs=sr$ for each  $r\in\tennsor{}{R}{x}$, and $sa=as$ for each $a\in\tennsor{y}{A}{x}$. 
In this notation,  if $m\in \tennsor{x}{M}{}$ it follows that $srm=rsm$ and 
\[
\tennsor{y}{\alpha}{x}(a\otimes sm)=
\tennsor{y}{\alpha}{x}(as\otimes m)=
\tennsor{y}{\alpha}{x}(sa\otimes m)=
s\tennsor{y}{\alpha}{x}(a\otimes m).
\]
Thus the map $\rho\colon S\to \End_{\Rep{\fk{S}}}(M)$, sending $s$ to  $[m\mapsto sm]$, defines a ring map. 
Furthermore, the action of $\rho(s)$ on $m$ is $sm$, 
so Lemma~\ref{lem-finiteness-cond-for-existence-strong-decompo} applies. 
\end{proof}

We now establish a different set of sufficient conditions that ensure a decomposition into strongly indecomposable representations. 
We closely follow the proof of \cite[Theorem~1.1]{botnan-crawley-boevey-persistence}, a result of Botnan and Crawley-Boevey, that says that pointwise finite-dimensional persistence modules have such decompositions. 
We repeat the argument for completeness.

Recall that a ring $R$ is \emph{strongly}-$\pi$-\emph{regular} if for each $a\in R$ the descending chain  $Ra \supseteq Ra^{2}\supseteq Ra^{3}\dots $ stabilises. 
It is equivalent, by a result of Dischinger \cite{Disching}, that each descending chain  $aR \supseteq a^{2}R\supseteq a^{3}R\dots $ stabilises. 
By a result of Bass \cite[Theorem~P]{bassfini} a ring is \emph{left}-\emph{perfect} if and only if every descending chain of cyclic right ideals stabilises. Semiprimary rings, and left (right) artinian rings, are left (right) perfect. 

\begin{lem}
\label{lem-strongly-indecomposable}      
Let $M=(\tennsor{x}{M}{},\tennsor{y}{\alpha}{x})$ be an indecomposable $\fk{S}$-representation such that each $\End_{\Rep{\fk{S}}}(\tennsor{x}{M}{})$ is strongly-$\pi$-regular for each $x\in\ff{C}$. 
Then $\End_{\Rep{\fk{S}}}(M)$ is local. 
\end{lem}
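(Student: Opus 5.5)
Note first that $\End_{\Rep{\fk{S}}}(\tennsor{x}{M}{})$ in the statement should be read as $\End_{\tennsor{}{R}{x}}(\tennsor{x}{M}{})$, the endomorphism ring of the $\tennsor{}{R}{x}$-module $\tennsor{x}{M}{}$. The plan is to follow the proof of \cite[Theorem~1.1]{botnan-crawley-boevey-persistence}, via a Fitting-type argument carried out pointwise. Let $E=\End_{\Rep{\fk{S}}}(M)$ and fix $f=(\tennsor{x}{f}{})\in E$. Since $M$ is indecomposable we have $M\neq 0$, so $E\neq 0$. It therefore suffices to show that every $f$ is either nilpotent-free-invertible or in the radical. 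Concretely, we aim to show that each $f$ is either an automorphism or satisfies "$1-f$ is an automorphism". The standard route is to prove that for each $f$ there is a decomposition $M=\im(f^{\infty})\oplus\ker(f^{\infty})$ in $\Rep{\fk{S}}$, on whose first summand $f$ is invertible and on whose second summand $f$ is locally nilpotent. Indecomposability then forces one of the two summands to vanish.

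First, work pointwise. For each $x\in\ff{C}$, the element $\tennsor{x}{f}{}$ lies in the strongly-$\pi$-regular ring $\End_{\tennsor{}{R}{x}}(\tennsor{x}{M}{})$. The chains $(\tennsor{x}{f}{})^{n}E_x$ and $E_x(\tennsor{x}{f}{})^{n}$ both stabilise, the latter by Dischinger's result \cite{Disching}. The standard consequence is that there is $n_x$ with $\tennsor{x}{M}{}=\im((\tennsor{x}{f}{})^{n_x})\oplus\ker((\tennsor{x}{f}{})^{n_x})$, and $\tennsor{x}{f}{}$ restricts to an automorphism of the image. Concretely, stabilisation of the right chain gives $f^{n}=f^{2n}g$, which yields $M=\im f^n+\ker f^n$. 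Stabilisation of the left chain gives $f^n=hf^{2n}$, which yields $\im f^n\cap\ker f^n=0$. Set $\tennsor{x}{I}{}=\bigcap_n\im(\tennsor{x}{f}{})^n$ and $\tennsor{x}{K}{}=\bigcup_n\ker(\tennsor{x}{f}{})^n$. These equal the stabilised image and kernel, so $\tennsor{x}{M}{}=\tennsor{x}{I}{}\oplus\tennsor{x}{K}{}$.

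Second, check that $I=(\tennsor{x}{I}{})$ and $K=(\tennsor{x}{K}{})$ are subrepresentations. This is the key point where we avoid needing a uniform bound on $n_x$: the definitions of $I$ and $K$ as an intersection and a union are intrinsic and do not refer to $n_x$. Since $f$ is a morphism of representations, each $f^n$ is one as well, so $\tennsor{y}{\alpha}{x}$ maps $\tennsor{y}{A}{x}\otimes\ker(\tennsor{x}{f}{})^n$ into $\ker(\tennsor{y}{f}{})^n$, and maps $\tennsor{y}{A}{x}\otimes\im(\tennsor{x}{f}{})^n$ into $\im(\tennsor{y}{f}{})^n$. Taking the union over $n$, and respectively the intersection over $n$ (an element of $\tennsor{y}{A}{x}\otimes\tennsor{x}{I}{}$ lies in each $\tennsor{y}{A}{x}\otimes\im f^n$ via the image of a tensor), gives the required compatibility. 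By Lemma~\ref{lemma-co-limits-in-rep-of-species-cat}, direct sums are computed pointwise, so $M=I\oplus K$ in $\Rep{\fk{S}}$.

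Third, conclude. By indecomposability, either $K=0$ or $I=0$. If $K=0$, then each $\tennsor{x}{f}{}$ is bijective, so $f$ is an isomorphism, since its pointwise inverse is automatically a morphism of representations. If $I=0$, then each $\tennsor{x}{f}{}$ is nilpotent on $\tennsor{x}{M}{}$, with exponent $n_x$. Then $1-f$ is pointwise invertible with inverse $\sum_k(\tennsor{x}{f}{})^k$, a finite sum at each $x$, so $1-f$ is an automorphism. Hence for every $f\in E$, either $f$ or $1-f$ is a unit, so $E$ is local.

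I expect the main obstacle to be the subrepresentation check in the second step when the $n_x$ are unbounded. The intersection-and-union formulation handles it, but one must verify carefully that the image of $\tennsor{y}{A}{x}\otimes\tennsor{x}{I}{}$ lands in $\bigcap_n\im(\tennsor{y}{f}{})^n$.
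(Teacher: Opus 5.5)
Your proof is correct and follows essentially the paper's argument. Both proofs decompose each $M_x=I_x\oplus K_x$ into the stable image and stable kernel of $f_x$, check that these assemble into subrepresentations with $M=I\oplus K$, and use indecomposability to force $f$ or $1-f$ to be invertible; the paper handles unbounded $n(x)$ via $i\geq\max\{n(x),n(y)\}$, where you use the intersection/union description. The only other difference is that you derive the pointwise Fitting decomposition directly from strong $\pi$-regularity and Dischinger's theorem, whereas the paper cites Armendariz--Fisher--Snider \cite[Proposition~2.3]{inj-surj-morphs-of-f-g}.
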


\begin{proof}
A result of Armendariz--Fisher--Snider  \cite[Proposition~2.3]{inj-surj-morphs-of-f-g} says that the endomorphism ring of a left module   is strongly-$\pi$-regular if and only if said module satisfies the conclusion of Fitting's Lemma. 
By assumption, this applies to each $\tennsor{x}{M}{}$. 

Let $f\in \End_{\Rep{\fk{S}}}(M)$ be a non-unit. 
We show $\tennsor{}{\iden}{M}-f$ is a unit. 
As above, for each $x$ there exists $n(x)>0$ such that $\tennsor{x}{M}{}=\im(\tennsor{x}{f}{}^{i})\oplus \ker(\tennsor{x}{f}{}^{i}) $  for each $i\geq n(x)$. 
    Let $\tennsor{x}{I}{}=\im(\tennsor{x}{f}{}^{n(x)})$ and $\tennsor{x}{K}{}=\ker(\tennsor{x}{f}{}^{n(x)})$, and write $\tennsor{x}{\iota}{I}\colon \tennsor{x}{I}{}\to \tennsor{x}{M}{}$, $\tennsor{x}{\iota}{K}\colon \tennsor{x}{K}{}\to \tennsor{x}{M}{}$, $\tennsor{x}{\rho}{I}\colon \tennsor{x}{M}{}\to \tennsor{x}{I}{}$ and $\tennsor{x}{\rho}{K}\colon \tennsor{x}{M}{}\to \tennsor{x}{K}{}$ 
    for the canonical embeddings 
     and projections. 
    Let 
    \[
    \begin{array}{cc}
    I=(\tennsor{x}{I}{},\tennsor{y}{\rho}{I}\circ \tennsor{y}{\alpha}{x}\circ (\tennsor{}{\iden}{\tennsor{y}{A}{x}}\otimes \tennsor{x}{\iota}{I})),
    &
    K=(\tennsor{x}{K}{},\tennsor{y}{\rho}{K}\circ \tennsor{y}{\alpha}{x}\circ (\tennsor{}{\iden}{\tennsor{y}{A}{x}}\otimes \tennsor{x}{\iota}{K})).
    \end{array}
    \] 
    If $x,y\in\ff{C}$ and $i\geq \max\{n(x),n(y)\}$ then 
    \[
    \tennsor{y}{\alpha}{x}\circ (\tennsor{}{\iden}{\tennsor{y}{A}{x}}\otimes  \tennsor{x}{f}{}^{i})
    = 
    \tennsor{y}{\alpha}{x}\circ (\tennsor{}{\iden}{\tennsor{y}{A}{x}}\otimes  \tennsor{x}{f}{})^{i}
    =
    \tennsor{y}{f}{} \circ \tennsor{y}{\alpha}{x}\circ (\tennsor{}{\iden}{\tennsor{y}{A}{x}}\otimes  \tennsor{x}{f}{})^{i-1}
    =
    \cdots
    =
    \tennsor{y}{f}{}^{i} \circ \tennsor{y}{\alpha}{x}. 
    \]
   It follows that, for each $a\in \tennsor{y}{A}{x}$ and $m\in\tennsor{x}{M}{}$, we have 
   \[
   \tennsor{y}{\iota}{I}(\tennsor{y}{\rho}{I}(\tennsor{y}{\alpha}{x}(a\otimes  \tennsor{x}{f}{}^{i}(m))))=\tennsor{y}{\iota}{I}(\tennsor{y}{\rho}{I}(\tennsor{y}{f}{}^{i} ( \tennsor{y}{\alpha}{x}(a\otimes  m))))=\tennsor{y}{f}{}^{i} ( \tennsor{y}{\alpha}{x}(a\otimes  m))=\tennsor{y}{\alpha}{x}(a\otimes  \tennsor{x}{f}{}^{i}(m)).
   \]
   By Lemma~\ref{lemma-co-limits-in-rep-of-species-cat} it follows that $M=I\oplus K$. 
   We claim $I=0$. 
   Since $f$ is not a unit, there is some $x\in\ff{C}$ with $\tennsor{x}{f}{}$ not a unit. 
   Since $\tennsor{x}{M}{}=\tennsor{x}{I}{}\oplus \tennsor{x}{K}{}$, we have $\tennsor{x}{K}{}\neq 0$, for otherwise we also have $\tennsor{x}{M}{}=\tennsor{x}{I}{}$. 
   This shows $K\neq 0$ and so the claim holds since $M$ is indecomposable. 

To see that $\tennsor{}{\iden}{M}-f$ is surjective, given $x\in \ff{C}$ and $m\in \tennsor{x}{M}{}=\tennsor{x}{K}{}$ we have  
\[
m=m-\tennsor{x}{f}{}^{n(x)}(m)=(\tennsor{}{\iden}{\tennsor{x}{M}{}}-\tennsor{x}{f}{})(\tennsor{}{\iden}{\tennsor{x}{M}{}}+\tennsor{x}{f}{}+\dots + \tennsor{x}{f}{}^{n(x)-1})(m).
\]
For injectivity, given   $m\in \ker(\tennsor{}{\iden}{\tennsor{x}{M}{}}-\tennsor{x}{f}{})$ we have $m=\tennsor{x}{f}{}(m)=\dots=\tennsor{x}{f}{}^{n(x)}(m)=0$. 
\end{proof}

To be clear, when we refer to a subrepresentation $N=(\tennsor{x}{N}{},\tennsor{y}{\beta}{x})$ of a representation  $M=(\tennsor{x}{M}{},\tennsor{y}{\alpha}{x})$, we mean that each $\tennsor{x}{N}{}$ is an $\tennsor{}{R}{x}$-submodule (and in particular, a subset) of $\tennsor{x}{M}{}$, and that each $\tennsor{y}{\beta}{x}\colon \tennsor{y}{A}{x}\otimes\tennsor{x}{N}{}\to \tennsor{y}{N}{}$ is the restriction of $\tennsor{y}{\alpha}{x}\colon \tennsor{y}{A}{x}\otimes\tennsor{x}{M}{}\to \tennsor{y}{M}{}$. 
In particular, the collection $\Sigma$ in the lemma below is a subset of the power set. 
\begin{lem}
\label{lem-technical-botnan--crawley-boevey-argument}
    Let $M=(\tennsor{x}{M}{},\tennsor{y}{\alpha}{x})$ be a representation of a species $\fk{S}=(\tennsor{}{R}{x},\tennsor{y}{A}{x})$,  $\Sigma$ be the set  of non-zero subrepresentations of $M$,  $\Delta=\{Z\subseteq \Sigma\mid M=\tennsor{}{\bigoplus }{L\in Z}L\}$, and 
    \[
    \preceq \,=\, \left\{(Z,Y)\in \Delta\times \Delta \left\vert\begin{array}{cc}
       \text{for each }L\in Z\text{ there exists a subset }
       \\
       Y(L)\subseteq Y\text{ such that }L=\tennsor{}{{\bigoplus}}{K\in Y(L)}K
    \end{array}\right.\right\}. 
    \]
    \begin{enumerate}
        \item If $Z \preceq Y $ then the following statements hold. 
        \begin{enumerate}
            \item If $L,N\in Z$ and $L\neq N$ then $Y(L)\cap Y(N)=\emptyset$. 
            \item If  $K\in Y$ then there exists $L\in Z$ unique such that $K\in Y(L)$. 
            \item There is a well-defined surjective function $\mu_{ZY}\colon Y\to Z$. 
        \end{enumerate}
        \item The relation $\preceq $ defines a partial order on $\Delta$, and $\Delta$ has unique minimum $\{M\}$. 
        \item If $X\in \Delta$ is maximal  with respect to $\preceq $, then each $L\in X$ is indecomposable. 
        \item If $\Theta$ is a chain in $\Delta$, and if $\Lambda=\{\underline{L}=(\tennsor{}{L}{Z})\in\tennsor{}{\prod}{Z\in\Theta}Z\mid \mu_{ZY}(\tennsor{}{L}{Y})=\tennsor{}{L}{Z}\} $, then the representations $M[\underline{L}]=\tennsor{}{\bigcap}{Z\in \Theta}\tennsor{}{L}{Z}$ are independent, meaning  $\tennsor{}{\sum}{\underline{L}\in\Lambda}M[\underline{L}]$ is direct. 
    \end{enumerate}
\end{lem}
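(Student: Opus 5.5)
The argument follows the proof of \cite[Theorem~1.1]{botnan-crawley-boevey-persistence}: everything reduces to pointwise statements about direct sum decompositions of the modules $\tennsor{x}{M}{}$, which are then lifted to subrepresentations. A decomposition $M=\bigoplus_{L\in Z}L$ of $M$ into subrepresentations holds exactly when $\tennsor{x}{M}{}=\bigoplus_{L\in Z}\tennsor{x}{L}{}$ for each $x$; this uses Lemma~\ref{lemma-co-limits-in-rep-of-species-cat} and the fact that subrepresentations are honest subsets. We prove the parts in the order (1), (2), (3), (4).

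For (1)(a), suppose $K\in Y(L)\cap Y(N)$ with $L\neq N$. Then $K\subseteq L\cap N$. Since $Z\in\Delta$, the sum $L+N$ is direct, so $L\cap N=0$ pointwise. This forces $K=0$, contradicting $K\in\Sigma$. For (1)(b), use that $M=\bigoplus_{L\in Z}\bigoplus_{K'\in Y(L)}K'$ while also $M=\bigoplus_{K\in Y}K$. If some $K\in Y$ lay in no $Y(L)$, then $\bigcup_{L}Y(L)$ would be a proper subset of $Y$ whose sum is already $M$. Since the sum over $Y$ is direct and $K\neq 0$, we would get $K\cap M=0$, which is absurd. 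Uniqueness then follows from (a). For (1)(c), define $\mu_{ZY}(K)$ to be the $L$ given by (b). This map is surjective because each $L\in Z$ is nonzero, so $Y(L)\neq\emptyset$.

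For (2), the order is reflexive via $Z(L)=\{L\}$. It is transitive by composing the decompositions, $X(L)=\bigcup_{K\in Y(L)}X(K)$. For antisymmetry, suppose $Z\preceq Y\preceq Z$. Each $L\in Z$ is a direct sum of members of $Y$, each of which is in turn a direct sum of members of $Z$. Since these pieces are nonzero and lie in the independent family $Z$, that last collection must be $\{L\}$, so $Y(L)=\{L\}$ and hence $Z=Y$. The family $\{M\}$ lies in $\Delta$; if $M=0$ we take the conventions so that $\Delta=\{\emptyset\}$. It is below every $Y$ via $Y(M)=Y$, and it is unique as a minimum by antisymmetry.

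For (3), suppose some $L\in X$ decomposes as $L=L'\oplus L''$ with both summands nonzero subrepresentations. Replacing $L$ by $L'$ and $L''$ gives $X'\in\Delta$ with $X\preceq X'$ and $X'\neq X$, contradicting maximality.

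Part (4) is the main obstacle. We argue pointwise and then finitely. Fix finitely many distinct $\underline{L}^{1},\dots,\underline{L}^{r}\in\Lambda$, and suppose $\sum_{k} m_{k}=0$ with $m_{k}\in\tennsor{x}{M[\underline{L}^{k}]}{}$. Since the $\underline{L}^{k}$ are distinct compatible threads, for each pair $k\neq k'$ there is some $Z\in\Theta$ with $\tennsor{}{L^{k}}{Z}\neq\tennsor{}{L^{k'}}{Z}$. By compatibility under the maps $\mu$, this inequality persists for every $Y\succeq Z$. Because $\Theta$ is a chain, we may take the maximum $Z_{0}$ of these finitely many $Z$'s. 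In $Z_{0}$ the components $\tennsor{}{L^{k}}{Z_{0}}$ are then pairwise distinct members of an independent family. Moreover each $m_{k}$ lies in $\tennsor{x}{(L^{k}_{Z_{0}})}{}$, since $M[\underline{L}^{k}]\subseteq L^{k}_{Z_{0}}$. Directness of the sum over $Z_{0}$ then gives $m_{k}=0$ for all $k$. The delicate points are the persistence of distinctness upward along the chain, which follows from $\mu_{ZY}$ being well defined, and the reduction to finitely many threads, which is why independence is checked via finite sums.
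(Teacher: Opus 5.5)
Your proposal is correct and follows the paper's proof step for step. The matching steps are: the intersection argument for (1a); the covering argument for (1b)--(1c); composing decompositions and collapsing to $Y(L)=\{L\}$ for (2); splitting a decomposable summand for (3); and, for (4), taking the largest of the finitely many separating members of the chain $\Theta$, using that $\mu_{ZY}$ makes distinctness persist upward. You also check two points the paper leaves implicit: that $\{M\}$ is the minimum of $\Delta$ (via $Y(M)=Y$ and antisymmetry), and that this claim needs the convention $\Delta=\{\emptyset\}$ when $M=0$.
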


\begin{proof}
(1a) 
If $K\in Y(L)$ then $K$ is a summand, so in particular a submodule, of $L$. 
Hence if $K\in Y(L)\cap Y(N)$ then $K\in \Sigma$ and $L,N\in Z$, giving the contradiction $0\neq K\subseteq L\cap N=0$. 

(1b) 
By (1a) the union $X=\tennsor{}{\bigcup}{L\in Z}Y(L)$ is disjoint. 
Assume $K\notin X$. 
Since $X\subseteq Y\ni K$ the sum $K+\tennsor{}{\sum}{J\in X}J$ is direct. 
Since each $L\in Z$ satisfies $L=\tennsor{}{{\bigoplus}}{H\in Y(L)}H$ we have $M=\tennsor{}{\bigoplus}{L\in Z}L=\tennsor{}{{\bigoplus}}{J\in X}J$. 
Since $0\neq K$ is a   subrepresentation of $M$, this is a contradiction.  
So $K\in X$, and so $K\in Y(L)$ for some $L\in Z$. By (1a), $L$ is unique  such that $K\in Y(L)$. 

(1c) 
If $K\in Y$ choose $\mu_{ZY}(K)=L\in Z$, unique with $K\in Y(L)$ by (1b). 
So $\mu_{ZY}$ is well-defined. 
If $L\in Z$ then $Y(L)\neq \emptyset$, as $0\neq L=\bigoplus_{K\in Y(L)}K$. 
So $\mu_{ZY}$ is surjective. 

(2) 
Any $L\in Z$ defines $Z(L)=\{L\}$ such that $L=\tennsor{}{\bigoplus}{K\in Z(L)}K$. 
So $\preceq$ is reflexive. 

Let $ Z\preceq Y\preceq X $ and $L\in Z$. 
For each $K\in Y(L)$ we have $K=\tennsor{}{\bigoplus}{J\in X(K)}J$ for some $X(K)\subseteq X$. 
By (1a) the union $X[L]=\tennsor{}{\bigcup}{K\in Y(L)} X(K)$ is disjoint. 
Thus, 
\[
L=\tennsor{}{{\bigoplus}}{K\in Y(L)}K=\tennsor{}{{\bigoplus}}{K\in Y(L)}\tennsor{}{{\bigoplus}}{J\in X(K)}J=\tennsor{}{{\bigoplus}}{J\in X[L]}J,
\]
and so $\preceq$ is transitive. 
To see $\preceq$ is antisymmetric, we suppose  $Z\preceq Y\preceq Z$. 
By symmetry it suffices to show $Z\subseteq Y$. 
Let $L\in Z$. 
We claim $L\in Y$. 
Take $Z=X$ in the above, so we have a disjoint union $Z[L]=\tennsor{}{\bigcup}{K\in Y(L)} Z(K)$ such that $L=\tennsor{}{\bigoplus}{J\in Z[L]}J$. 
For each $J\in Z[L]$ we have $0\neq J\in Z$ and $J\subseteq L$, and so $J=L$ as this  sum is direct. 
This means $Z[L]=\{L\}$.  
So there is a unique $K\in Y(L)$ with $Z(K)\neq \emptyset $, in which case $Z(K)=\{L\}$. 
This shows $Y(L)=\{K\}$ by (1a), and so $L=K\in Y$.  

(3)
Suppose otherwise, so $L=Q\oplus R$ for $Q\neq 0\neq R$ and let $Y=(X\setminus\{L\}) \cup \{Q,R\}$. 
Now if $N\in X$ either $N=L$ or $N\neq L$. If $N=L$ then let $Y(N)=\{Q,R\}$, and if $N\neq L$ then $N\in Y$ and let $Y(N)=\{N\}$. 
In either case $Y(N)$ is a subset  of $Y$  such that $N=\bigoplus_{K\in Y(N)}K$. 
This contradicts maximality. 

(4) 
Sums and intersections in cocomplete abelian categories  are defined by coproducts, kernels and cokernels. 
By Lemma~\ref{lemma-co-limits-in-rep-of-species-cat}, it suffices to let $x\in\ff{C}$ and show   $\tennsor{}{\sum}{\underline{L}\in\Lambda}\tennsor{x}{M[\underline{L}]}{}$ is direct. 
Let $\tennsor{1}{\underline{L}}{},\dots,\tennsor{\ff{n}}{\underline{L}}{}\in \Lambda$ be distinct. 
Suppose $\sum_{i=1}^{\ff{n}}\tennsor{}{m}{i}=0$ where $\tennsor{}{m}{i}\in \tennsor{x}{{M[\tennsor{i}{\underline{L}}{}]}}{}$. 

Given $i<j$ we have $\tennsor{i}{\underline{L}}{}\neq \tennsor{j}{\underline{L}}{}$, and so  there must exist $Y(i,j)\in \Theta$ such that $\tennsor{i}{L}{Y(i,j)}\neq \tennsor{j}{L}{Y(i,j)}$. 
Since $\Theta$ is chain the finite set $\{Y(i,j)\mid 1 \leq i<j \leq \ff{n}\}$ has a maximal element $Y$. 
It follows that $\tennsor{i}{L}{Y}\neq  \tennsor{j}{L}{Y}$ when $i<j$, for otherwise $\tennsor{i}{L}{Z}=\tennsor{}{\mu}{ZY}(\tennsor{i}{L}{Y})=\tennsor{}{\mu}{ZY}(\tennsor{j}{L}{Y})=\tennsor{j}{L}{Z}$ whenever $Z \preceq Y$, which is false for $Z=Y(i,j)$. 
Hence $\tennsor{i}{L}{Y}\neq \tennsor{j}{L}{Y}$ when $i\neq j$. 
So $\tennsor{i}{L}{Y}\in Y$ for each $i$, and $\tennsor{}{m}{i}\in \tennsor{}{\bigcap}{Y\in \Theta}\tennsor{i}{L}{Y}\subseteq \tennsor{i}{L}{Y}$. 
Since the sum $M=\tennsor{}{\sum}{L\in Y}L$ is direct, so is the sum $\sum_{i=1}^{\ff{n}}\tennsor{i}{L}{Y}$, and this ensures $\tennsor{}{m}{i}=0$ for each $i$. 
\end{proof}

\begin{lem}
\label{lem-finite-ness-conds-for-botnan--crawley-boevey-argument}
    Let $M=(\tennsor{x}{M}{},\tennsor{y}{\alpha}{x})$ be a representation of a species $\fk{S}=(\tennsor{}{R}{x},\tennsor{y}{A}{x})$. 
    Assume that, for each $x\in\ff{C}$, there is a bound $\ff{b}(x)$ on the length $\ff{m}$ of a sequence $(\tennsor{}{N}{1},\dots ,\tennsor{}{N}{\ff{m}})$ of non-zero $\tennsor{}{R}{x}$-submodules  $\tennsor{}{N}{i}$ of  $\tennsor{x}{M}{}$ such that the sum $\sum_{i=1}^{\ff{m}}\tennsor{}{N}{i}$ is direct. 
    Then $M$ is isomorphic to a direct sum of indecomposable representations of $\fk{S}$. 
\end{lem}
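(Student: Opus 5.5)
The plan is to follow the Zorn's lemma argument of Botnan--Crawley-Boevey, applied to the poset $(\Delta,\preceq)$ of Lemma~\ref{lem-technical-botnan--crawley-boevey-argument}. By part (2) of that lemma, $\Delta$ is non-empty (it contains $\{M\}$) and $\preceq$ is a partial order. By part (3), any maximal element $X\in\Delta$ gives a decomposition $M=\bigoplus_{L\in X}L$ into indecomposable representations. So everything reduces to showing that every chain $\Theta$ in $\Delta$ has an upper bound; Zorn's lemma then finishes the proof. We may assume $\Theta$ is non-empty, since $\{M\}$ bounds the empty chain.

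Given a non-empty chain $\Theta$, I would use the set $\Lambda$ of compatible threads from part (4) and the subrepresentations $M[\underline{L}]=\bigcap_{Z\in\Theta}L_Z$, and propose
\[
W=\{M[\underline{L}]\mid \underline{L}\in\Lambda,\ M[\underline{L}]\neq 0\}
\]
as the upper bound. Intersections of subrepresentations are subrepresentations, computed pointwise by Lemma~\ref{lemma-co-limits-in-rep-of-species-cat}, so each element of $W$ lies in $\Sigma$. Part (4) says the sum $\sum_{\underline{L}\in\Lambda}M[\underline{L}]$ is direct. Two things then remain:
\begin{enumerate}
\item[(i)] $M=\bigoplus_{W}M[\underline{L}]$, so that $W\in\Delta$;
\item[(ii)] each $L\in Z\in\Theta$ is the direct sum of those $M[\underline{L}]$ with $L_Z=L$, so that $Z\preceq W$.
\end{enumerate}
Both can be checked pointwise at each $x\in\ff{C}$. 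Moreover, (ii) implies (i) once we know $W$ is independent, since $M=\bigoplus_{L\in Z}L$.

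The key step, and the main obstacle, is (ii), and this is where the bound $\ff{b}(x)$ enters. Fix $x$ and $Z\in\Theta$. For $Y\succeq Z$ in $\Theta$, consider the non-zero components $\tennsor{x}{K}{}$ of the decomposition $\tennsor{x}{M}{}=\bigoplus_{K\in Y}\tennsor{x}{K}{}$. These form a direct sum of non-zero $\tennsor{}{R}{x}$-submodules, so there are at most $\ff{b}(x)$ of them. Refining along $\preceq$ can only split these components further, so the number of non-zero components at $x$ is non-decreasing along the chain and bounded. Hence there is $Y_x\in\Theta$ beyond which the set of non-zero $x$-components stabilises: for $Y'\succeq Y_x$, each $K\in Y_x$ with $\tennsor{x}{K}{}\neq0$ has exactly one $K'\in Y'(K)$ with $\tennsor{x}{K'}{}=\tennsor{x}{K}{}$, and the others vanish at $x$. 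Elements of $\Theta$ below $Y_x$ are handled automatically through the maps $\mu$.

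It follows that for each $K\in Y_x$ with $\tennsor{x}{K}{}\neq 0$ there is a unique thread $\underline{L}\in\Lambda$ passing through $K$ and through these surviving components. For that thread, $\tennsor{x}{M[\underline{L}]}{}=\tennsor{x}{K}{}$. Summing over the components lying below a given $L\in Z$ gives $\tennsor{x}{L}{}=\bigoplus_{L_Z=L}\tennsor{x}{M[\underline{L}]}{}$. Threads not of this kind vanish at $x$. Letting $x$ vary proves (ii), and hence (i), which shows $W$ is an upper bound.

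The care needed is in making the stabilisation argument precise, in particular that the choice of surviving component is coherent via $\mu_{ZY}$ from part (1).
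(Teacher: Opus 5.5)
Your proposal is correct. Its skeleton (Zorn's lemma on $(\Delta,\preceq)$, the candidate upper bound $W$ built from the threads in $\Lambda$, and independence from part (4)) is the same as the paper's. The difference lies in where the bound $\mathsf{b}(x)$ is used.

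The paper argues element by element. For a fixed $0\neq m\in{}_xM$ it chooses $Z\in\Theta$ maximising the number of members of $Z$ in which $m$ has a non-zero component, which is at most $\mathsf{b}(x)$. Beyond $Z$, each of these components of $m$ lies in a single summand, and the paper threads those summands together to get $m\in\sum_{\underline{L}\in\Lambda}{}_xM[\underline{L}]$. This proves your (i) directly. Your (ii), i.e.\ $Z\preceq W$, is left implicit there; it follows by grouping, since $M[\underline{L}]\subseteq L_Z$.

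You instead count the non-zero $x$-components of whole decompositions. This count is monotone under refinement and bounded by $\mathsf{b}(x)$, so the decomposition of ${}_xM$ is frozen beyond some $Y_x\succeq Z$. That gives the exact identification ${}_xM[\underline{L}]={}_xK$ for the thread through each $K\in Y_x$ with ${}_xK\neq 0$. It proves (ii) directly, with (i) as a consequence.

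What each buys: your route makes the upper-bound verification explicit and gives a concrete picture of the limiting decomposition at each vertex. The paper's route is slightly more economical, since it never needs to pin down ${}_xM[\underline{L}]$, only to show every element is covered.

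Two small remarks on your write-up. First, your claim that the remaining threads vanish at $x$ is true but unnecessary: $M[\underline{L}]\subseteq L_Z$ already gives $\bigoplus_{L_Z=L}{}_xM[\underline{L}]\subseteq{}_xL$. Second, the coherence you flag does hold. The member $\mu_{YX}(K)$ is characterised as the unique member of $Y$ containing $K$, so the maps $\mu$ compose. Hence the member of $Y'$ containing the surviving component of $Y''$ lies inside $K$ and is non-zero at $x$, so it is the surviving component of $Y'$.
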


\begin{proof}
We assume the notation from the statement and proof of Lemma~\ref{lem-technical-botnan--crawley-boevey-argument}. 
By part (3) it suffices to find a maximal element of $\Delta$ with respect to the relation $\preceq$, a partial order by part (2). 
By Zorn's lemma it suffices to show any chain $\Theta$ in $\Delta$ has an upper bound. 
Recall $\Lambda$, and the direct sum $\tennsor{}{\bigoplus}{\underline{L}\in\Lambda}M[\underline{L}]$, from part (4). 
We assert that $M=\tennsor{}{\bigoplus}{\underline{L}\in\Lambda}M[\underline{L}]$. 
Given the assertion holds, the set $W=\{M[\underline{L}]\mid \underline{L}\in\Lambda,M[\underline{L}]\neq 0\}$ defines the required upper bound of $\Theta$. 
So it suffices to prove the assertion. 

Let $x\in\ff{C}$ and $0\neq m\in \tennsor{x}{M}{}$. 
By construction, and by Lemma~\ref{lemma-co-limits-in-rep-of-species-cat}, for each $Z\in\Delta$ we have $\tennsor{x}{M}{}=\tennsor{}{\bigoplus}{L\in Z}\tennsor{x}{L}{}$. 
For a contradiction, assume that for each $\ff{n}\geq1$ there exists some $Z(\ff{n})\in \Delta$ and pairwise distinct $\tennsor{}{L}{\ff{n}}(1),\dots,\tennsor{}{L}{\ff{n}}(\ff{n})\in Z(\ff{n})$ such that $m=\sum_{i=1}^{\ff{n}}\tennsor{\ff{n}}{\ell}{i}$ for some $0\neq \tennsor{\ff{n}}{\ell}{i}\in \tennsor{x}{L}{\ff{n}}(i)$ for each $i$. 
So $\tennsor{x}{L}{\ff{n}}(i)\neq 0$ for each $\ff{n}$ and $i$. 
For each $\ff{n}$ let $\tennsor{}{K}{\ff{n}}=\bigoplus_{i=1}^{\ff{n}}\tennsor{}{L}{\ff{n}}(i)$. 
Again using Lemma~\ref{lemma-co-limits-in-rep-of-species-cat}, note that $\tennsor{x}{K}{\ff{n}}=\bigoplus_{i=1}^{\ff{n}}\tennsor{x}{L}{\ff{n}}(i)$, a direct sum of $\ff{n}$ non-zero  modules, and a direct summand of $\tennsor{}{\bigoplus}{L\in Z(\ff{n})}\tennsor{x}{L}{}=\tennsor{x}{M}{}$. 
This contradicts the hypothesis when $\ff{n}=\ff{b}(x)+1$ by considering $\tennsor{}{N}{i}=\tennsor{x}{L}{\ff{n}}(i)$.

So, choose $Z\in \Theta$ and  $\ff{n}\geq 1$ appropriately: such that $m=\sum_{i=1}^{\ff{n}}\tennsor{ }{\ell}{i}$ where $0\neq \tennsor{ }{\ell}{i}\in \tennsor{x}{L}{ }(i)$ for some $\tennsor{}{L}{}(1),\dots,\tennsor{}{L}{}(\ff{n})\in Z$, pairwise distinct; and such that, if $Y\in \Theta$ and  $m=\sum_{i=1}^{\ff{m}}\tennsor{ }{k}{i}$ where $0\neq \tennsor{ }{k}{i}\in \tennsor{x}{K}{ }(i)$ for $\tennsor{}{K}{}(1),\dots,\tennsor{}{K}{}(\ff{m})\in Y$ pairwise distinct, then $\ff{m}\leq \ff{n}$. 

Consider the chain $\Omega=\{Y\in \Theta \mid Z \preceq Y\}$. 
For each $i$ we construct an element $\tennsor{}{\underline{L}}{}(i)=(\tennsor{}{L}{Y}(i))\in \tennsor{}{\prod}{Y\in\Omega}Y$. 
Suppose $Y\in\Theta$ and $Z\preceq Y$. 
It follows that there is a subset $Y(i)\subseteq Y$ such that $\tennsor{x}{L}{ }(i)=\tennsor{}{\bigoplus}{K\in X(i)}\tennsor{x}{K}{}$, meaning we can write $\tennsor{ }{\ell}{i}=\tennsor{}{\sum}{K\in Y(i)}\tennsor{K}{\ell}{i}$ where $\tennsor{K}{\ell}{i}\in \tennsor{x}{K}{}$. 
The maximality of $\ff{n}$ above forces that there exists $\tennsor{}{K}{Y}(i)\in Y(i)$ unique such that $\tennsor{K_{Y}(i)}{\ell}{i}\neq 0$, meaning $\tennsor{ }{\ell}{i}=\tennsor{K(i)}{\ell}{i}$. 
In particular $\tennsor{ }{\ell}{i}\in \tennsor{x}{K}{Y}(i)$ and $\tennsor{x}{K}{X}(i)\in X$.
Now let $\tennsor{}{L}{Y}(i)=\tennsor{}{K}{Y}(i)$. 
By construction we have $\tennsor{}{\mu}{YX}(\tennsor{}{L}{X}(i))=\tennsor{}{L}{Y}(i)$ for each $X,Y\in\Omega$ with $Y \preceq X$.  %
Also by construction, $\tennsor{}{\ell}{i}\in \tennsor{}{L}{X}(i)\cap\tennsor{}{\bigcap}{Z\neq Y\in\Omega}\tennsor{}{K}{Y}(i) = M[\tennsor{}{\underline{L}}{}(i)]$. 
\end{proof}

\begin{lem}\label{lem-finite-length-means-nice-decomposition}
    Let $M=(\tennsor{x}{M}{},\tennsor{y}{\alpha}{x})$ be a representation of a species $\fk{S}=(\tennsor{}{R}{x},\tennsor{y}{A}{x})$. 
    Assume that, for each $x\in\ff{C}$, the $\tennsor{}{R}{x}$-module $\tennsor{x}{M}{}$ has finite length. 
    Then $M$ is a direct sum of strongly indecomposable representations of $\fk{S}$. 
\end{lem}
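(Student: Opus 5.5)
The plan is to combine the two preceding results: Lemma~\ref{lem-finite-ness-conds-for-botnan--crawley-boevey-argument} gives a decomposition into indecomposables, and Lemma~\ref{lem-strongly-indecomposable} upgrades indecomposable to strongly indecomposable. So I would verify the hypotheses of each for representations whose components have finite length.

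\emph{Step 1.} Fix $x\in\ff{C}$ and let $\tennsor{x}{M}{}$ have length $\ff{b}(x)$ over $\tennsor{}{R}{x}$. If $\tennsor{}{N}{1},\dots,\tennsor{}{N}{\ff{m}}$ are non-zero submodules whose sum is direct, then $\bigoplus_{i}\tennsor{}{N}{i}$ is a submodule of $\tennsor{x}{M}{}$. Its length is at least $\ff{m}$, since each $\tennsor{}{N}{i}$ has length at least $1$ and length is additive on direct sums. Hence $\ff{m}\leq \ff{b}(x)$. By Lemma~\ref{lem-finite-ness-conds-for-botnan--crawley-boevey-argument} we get $M\cong\bigoplus_{j\in J}\tennsor{}{M}{j}$ with each $\tennsor{}{M}{j}$ indecomposable.

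\emph{Step 2.} By Lemma~\ref{lemma-co-limits-in-rep-of-species-cat}, direct sums are computed pointwise. So each $\tennsor{x}{{(\tennsor{}{M}{j})}}{}$ is a direct summand of $\tennsor{x}{M}{}$, and in particular has finite length over $\tennsor{}{R}{x}$.

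\emph{Step 3.} I would show that the endomorphism ring of a finite length module $X$ is strongly-$\pi$-regular. This is the standard Fitting argument. For $e\in\End(X)$, the chains $\im(e^{i})$ and $\ker(e^{i})$ both stabilise, by the artinian and noetherian conditions respectively. Then there is some $n$ with $X=\im(e^{n})\oplus\ker(e^{n})$, and $e^{n}$ restricts to an automorphism of $\im(e^{n})$.

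From this, $e^{n}\End(X)=e^{n+1}\End(X)$. Indeed, with $u$ the inverse of $e$ restricted to $\im(e^{n})$, extended by zero on $\ker(e^{n})$, we have $e^{n}=e^{n+1}u e^{n}$ up to the projection. So the descending chain $e\End(X)\supseteq e^{2}\End(X)\supseteq\cdots$ stabilises, and Dischinger's result quoted before Lemma~\ref{lem-strongly-indecomposable} gives strong-$\pi$-regularity. Alternatively, one can invoke the Armendariz--Fisher--Snider criterion directly, since a finite length module satisfies the conclusion of Fitting's Lemma.

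\emph{Step 4.} Apply Lemma~\ref{lem-strongly-indecomposable} to each $\tennsor{}{M}{j}$. The hypothesis there concerns the endomorphism ring of each $\tennsor{x}{{(\tennsor{}{M}{j})}}{}$ as an $\tennsor{}{R}{x}$-module, which is exactly what Step 3 provides. Hence each $\tennsor{}{M}{j}$ has local endomorphism ring, completing the proof.

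The main obstacle is Step 3, specifically matching the precise form of strong-$\pi$-regularity needed to one-sided chain conditions. I would sidestep this by citing the Armendariz--Fisher--Snider equivalence already used in the proof of Lemma~\ref{lem-strongly-indecomposable}, together with Fitting's Lemma for finite length modules.
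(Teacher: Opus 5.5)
Your proposal is correct and follows the paper's proof: you verify the bounded-direct-sum hypothesis of Lemma~\ref{lem-finite-ness-conds-for-botnan--crawley-boevey-argument} (via additivity of length, where the paper cites Krull--Remak--Schmidt), then apply Lemma~\ref{lem-strongly-indecomposable} using Fitting's lemma and Armendariz--Fisher--Snider. Your Step~2 helpfully makes explicit that each summand is again pointwise of finite length. One small slip in your optional direct argument: the identity should be $e^{n}=e^{n+1}u$, because $eu$ is the projection onto $\im(e^{n})$ and $e^{n}$ kills $\ker(e^{n})$; the expression $e^{n+1}ue^{n}$ you wrote actually equals $e^{2n}$.
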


\begin{proof}
Finite length modules satisfy the Krull--Remak--Schmidt property, and so they satisfy the hypothesis of Lemma~\ref{lem-finite-ness-conds-for-botnan--crawley-boevey-argument}. 
By \cite[Proposition~2.3]{inj-surj-morphs-of-f-g} and Fitting's lemma,   Lemma~\ref{lem-strongly-indecomposable} also applies. 
Now combine the conclusions of these results. 
\end{proof}

\section{Ideals, relations, and the  double category of bimodules}
\label{section-relations}
\label{ideals-relations-and-BIMOD-oh-my!}

\begin{defn}\label{defn-ideal}
An \emph{ideal} $\sr{I}=(\tennsor{h}{I}{t})$ of a species $\fk{S}=(\tennsor{}{R}{x},\tennsor{y}{A}{x})$ will mean an ideal of $\sr{T}(\fk{S})$, where $\tennsor{h}{I}{t}$ is the corresponding subgroup of $\Hom_{\sr{T}(\fk{S})}(t,h)=\tennsor{h\,}{\underline{A
}}{\,t}$ for each $t,h\in\ff{C}$.
\end{defn}
\begin{rem}
\label{rem-ideals}
    That $\sr{I}$ is an ideal of $\fk{S}$ says that each $\tennsor{h}{I}{t}$ is an $\tennsor{}{R}{h}$-$\tennsor{}{R}{t}$-subbimodule of $\tennsor{h\,}{\underline{A}}{\,t}$ such that $\tennsor{\ell}{A}{h}\otimes \tennsor{h}{I}{t}\subseteq \tennsor{\ell}{I}{t}$ and   $\tennsor{h}{I}{t}\otimes \tennsor{t}{A}{r}\subseteq \tennsor{h}{I}{r}$ for each $\ell,r\in\ff{C}$. 
To see this, recall that composition in $\sr{T}(\fk{S})$ is tensor concatenation, and that $\tennsor{}{R}{x}=\tennsor{}{A}{(x)}$ and $\tennsor{y}{A}{x}=\tennsor{}{A}{(y,x)}$ define  summands of $\tennsor{y\,}{\underline{A}}{\,x}$ for each $x,y\in\ff{C}$. 
\end{rem}

\begin{lem}
    \label{lem-ideals}
Let $\sr{I}$ be an ideal of a species $\fk{S}=(\tennsor{}{R}{x},\tennsor{y}{A}{x})$. 
An $\fk{S}$-representation $M=(\tennsor{x}{M}{},\tennsor{y}{\alpha}{x})$ lies in the essential image of the restriction of the equivalence from Lemma~\ref{theorem-equivalence-of-reps-of-species-and-tensor-cat} to $\lMod{\sr{T}(\fk{S})/\sr{I}}$, if and only if, $\tennsor{h}{I}{t}\otimes \tennsor{t}{M}{}\subseteq \ker(\tennsor{h\,}{\underline{\alpha}}{\,t})$ for each $t,h\in\ff{C}$. 
\end{lem}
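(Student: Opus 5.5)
The plan is to use the standard identification of modules over a quotient category with modules over the original category annihilated by the ideal. Any additive functor $\sr{T}(\fk{S})/\sr{I}\to\Ab$ gives, by precomposition with the quotient functor $\sr{T}(\fk{S})\to\sr{T}(\fk{S})/\sr{I}$, an additive functor $\sr{T}(\fk{S})\to\Ab$ that kills every morphism in $\sr{I}$. Conversely, an additive functor $F\colon\sr{T}(\fk{S})\to\Ab$ with $F(\underline{a})=0$ for all $\underline{a}\in\tennsor{h}{I}{t}$ factors uniquely through the quotient, by the universal property of the quotient of a preadditive category by an ideal. The quotient functor is full and the identity on objects, so restriction along it is fully faithful. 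Hence $\lMod{\sr{T}(\fk{S})/\sr{I}}$ is identified with the full subcategory of $\lMod{\sr{T}(\fk{S})}$ of functors annihilating $\sr{I}$, and this subcategory is closed under isomorphism. So the essential image in question consists of those $M$ whose corresponding functor annihilates $\sr{I}$.

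It remains to translate this annihilation condition through Lemma~\ref{theorem-equivalence-of-reps-of-species-and-tensor-cat}. Under that equivalence $M$ corresponds to the functor sending $x\mapsto\tennsor{x}{M}{}$ and $\underline{a}\in\tennsor{h\,}{\underline{A}}{\,t}$ to the map $\tennsor{h\,}{\underline{\alpha}}{\,t}(\underline{a}\otimes-)\colon\tennsor{t}{M}{}\to\tennsor{h}{M}{}$. The functor therefore annihilates $\sr{I}$ if and only if $\tennsor{h\,}{\underline{\alpha}}{\,t}(\underline{a}\otimes m)=0$ for all $h,t$, all $\underline{a}\in\tennsor{h}{I}{t}$ and all $m\in\tennsor{t}{M}{}$.

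This is exactly the statement that the image of $\tennsor{h}{I}{t}\otimes\tennsor{t}{M}{}$ lies in $\ker(\tennsor{h\,}{\underline{\alpha}}{\,t})$. Here the image is the subgroup of $\tennsor{h\,}{\underline{A}}{\,t}\otimes_{\tennsor{}{R}{t}}\tennsor{t}{M}{}$ generated by the pure tensors $\underline{a}\otimes m$ with $\underline{a}\in\tennsor{h}{I}{t}$. We read the inclusion in the statement in this sense, since $\tennsor{h}{I}{t}\otimes\tennsor{t}{M}{}$ need not embed. Because $\tennsor{h\,}{\underline{\alpha}}{\,t}$ is additive, vanishing on these generators is equivalent to vanishing on the subgroup they generate.

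The main point needing care is the essential-image bookkeeping. If $M$ is only isomorphic to the image of some $\sr{T}(\fk{S})/\sr{I}$-module, the annihilation condition still transfers along the isomorphism. This holds because the condition is invariant under natural isomorphism of functors: if $\phi$ is a natural isomorphism, then $F(\underline{a})=\phi\circ F'(\underline{a})\circ\phi^{-1}$, so $F(\underline{a})=0$ exactly when $F'(\underline{a})=0$. Everything else is a direct unwinding of definitions.
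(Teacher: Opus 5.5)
Your proposal is correct and follows essentially the same route as the paper. Both arguments use the universal property of the quotient $\sr{T}(\fk{S})\to\sr{T}(\fk{S})/\sr{I}$ together with the description of the functor attached to $M$ in Lemma~\ref{theorem-equivalence-of-reps-of-species-and-tensor-cat} to reduce the claim to $\tennsor{h\,}{\underline{\alpha}}{\,t}(\underline{a}\otimes m)=0$; the paper transports along explicit isomorphisms $\tennsor{x}{\theta}{}$ obtained from density, whereas you use invariance of annihilation under isomorphism, which handles the essential-image step slightly more cleanly.
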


\begin{proof}
The category $\sr{T}(\fk{S})/\sr{I}$ has the same objects as $\sr{T}(\fk{S})$, meaning elements of $\ff{C}$, where as the abelian group of morphisms $t\to h$ in $\sr{T}(\fk{S})/\sr{I}$ is defined by the quotient $\tennsor{h\,}{\underline{A}}{\,t}/\tennsor{h}{I}{t}$. 
The quotient functor $\rho\colon \sr{T}(\fk{S})\to \sr{T}(\fk{S})/\sr{I}$ fixes objects and is defined on morphisms by the projections $\tennsor{h}{\rho}{t}\colon \tennsor{h\,}{\underline{A}}{\,t}\to \tennsor{h\,}{\underline{A}}{\,t}/\tennsor{h}{I}{t}$. 
The functor  $\lMod{\sr{T}(\fk{S})/\sr{I}}\to \Rep{\fk{S}}$ is the composition of the equivalence $\lMod{\sr{T}(\fk{S})}\to \Rep{\fk{S}}$ from Lemma~\ref{theorem-equivalence-of-reps-of-species-and-tensor-cat} with the restriction $\lMod{\sr{T}(\fk{S})/\sr{I}}\to \lMod{\sr{T}(\fk{S})}$ along $\rho$. 

Using density of the equivalence, there is a functor $\sr{F}\colon \sr{T}(\fk{S})\to \Ab$ and isomorphisms  $\tennsor{x}{\theta}{}\colon \tennsor{x}{M}{}\to \sr{F}(x)$ ($x\in\ff{C}$) such that, defining $\tennsor{y}{\varepsilon}{x}\colon \tennsor{y}{A}{x}\otimes \sr{F}(x)\to \sr{F}(y)$ ($x,y\in\ff{C}$) by  $a\otimes m\mapsto\sr{F}(a)(m)$, we have $\tennsor{y}{\theta}{}\circ \tennsor{y}{\alpha}{x}=\tennsor{y}{\varepsilon}{x}\circ (\tennsor{}{\iden}{\tennsor{y}{A}{x}}\otimes \tennsor{x}{\theta}{})$. 

Following the proof of Lemma~\ref{lem-elementary-underline-maps}, for each $t,h\in\ff{C}$, $i\in\tennsor{h}{I}{t}$ and  $m\in\tennsor{t}{M}{}$, we have
\[
\tennsor{h\,}{\underline{\alpha}}{\,t}(i\otimes m)=\tennsor{y}{\theta}{}^{-1}(\tennsor{h\,}{\underline{\varepsilon}}{\,t}(i\otimes \tennsor{x}{\theta}{}(m)))=\tennsor{y}{\theta}{}^{-1}(\sr{F}(i)(\tennsor{x}{\theta}{}(m))). 
\]
This expression is $0$ for each $i$ and $m$ if and only if $\tennsor{y}{\theta}{}^{-1}\sr{F}(i)\tennsor{x}{\theta}{}=0$ for each $i$, if and only if $\sr{F}(i)=0$ for each $i$, if and only if $\tennsor{h}{I}{t}$ lies in the kernel of the map $\Hom_{\sr{T}(\fk{S})}(t,h)\to \Hom_{\Ab}(\sr{F}(t),\sr{F}(h))$. 
This holds for each $(t,h)$ if and  only if $\sr{F}$ factors through $\rho$. 
\end{proof}

Ideals, and representations of species annihilated by said ideals, have been studied before, in work of Assem \cite[\S 2]{Assem-B-and-C}  and Berg \cite[\S 2]{Berg2011}. 
\begin{nota}\label{nota-Rep(S,I)}
    Given an ideal $\sr{I}$ of $\fk{S}$ let $\Rep{\fk{S},\sr{I}}$ be the subcategory of representations  $M$ with  $\tennsor{h}{I}{t}\otimes\tennsor{t}{M}{}\subseteq \ker(\tennsor{h\,}{\underline{\alpha}}{\,t})$ for each $(h,t)\in\ff{C}^2$. 
\end{nota}
\begin{lem}
    \label{rem-decomposition-passes-to-relations}
    If $\sr{I}$ is an ideal of a species $\fk{S}$ then $\Rep{\fk{S},\sr{I}}$ is closed under set-indexed direct sums and summands. 
\end{lem}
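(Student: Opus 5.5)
The plan is to argue directly from the description in Notation~\ref{nota-Rep(S,I)}, combined with the pointwise computation of colimits from Lemma~\ref{lemma-co-limits-in-rep-of-species-cat}. A cleaner alternative is available via Lemma~\ref{lem-ideals}: $\Rep{\fk{S},\sr{I}}$ is the essential image of the fully faithful restriction functor $\lMod{\sr{T}(\fk{S})/\sr{I}}\to\lMod{\sr{T}(\fk{S})}$ along the full quotient functor $\rho$. Restriction along $\rho$ commutes with coproducts, since these are computed pointwise in both module categories. Moreover, a summand of a functor that kills $\sr{I}$ also kills $\sr{I}$. I will present the elementwise version, since it is self-contained, and mention the functorial one as a remark.

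\emph{Direct sums.} Let $M_j=(\tennsor{x}{M}{}(j),\tennsor{y}{\alpha}{x}(j))$, for $j\in J$, lie in $\Rep{\fk{S},\sr{I}}$, and let $M=\bigoplus_j M_j$. By Lemma~\ref{lemma-co-limits-in-rep-of-species-cat}, $\tennsor{x}{M}{}=\bigoplus_j\tennsor{x}{M}{}(j)$, and $\tennsor{y}{\alpha}{x}$ is the map induced on $\tennsor{y}{A}{x}\otimes\bigoplus_j\tennsor{x}{M}{}(j)\cong\bigoplus_j\tennsor{y}{A}{x}\otimes\tennsor{x}{M}{}(j)$ by the $\tennsor{y}{\alpha}{x}(j)$. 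Proceeding by induction on path length in the definition of $\tennsor{}{\alpha}{\underline{x}}$, and using that tensor products commute with direct sums, I will check that $\tennsor{h\,}{\underline{\alpha}}{\,t}$ is likewise the direct sum of the maps $\tennsor{h\,}{\underline{\alpha}}{\,t}(j)$ under the identification $\tennsor{h\,}{\underline{A}}{\,t}\otimes\tennsor{t}{M}{}\cong\bigoplus_j\tennsor{h\,}{\underline{A}}{\,t}\otimes\tennsor{t}{M}{}(j)$. An element $i\otimes m$ with $i\in\tennsor{h}{I}{t}$ and $m=\sum_j m_j$, where the sum is finite, then maps to $\sum_j\tennsor{h\,}{\underline{\alpha}}{\,t}(j)(i\otimes m_j)=0$. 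Hence the image of $\tennsor{h}{I}{t}\otimes\tennsor{t}{M}{}$ lies in the kernel.

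\emph{Summands.} Suppose $M\in\Rep{\fk{S},\sr{I}}$ and $M\cong N\oplus N'$, with split monomorphism $s=(\tennsor{x}{s}{})\colon N\to M$ and retraction $r$ satisfying $rs=\iden_N$. I first note that morphisms of representations intertwine the maps $\tennsor{h\,}{\underline{\alpha}}{\,t}$: for $f\colon N\to M$,
\[
\tennsor{h\,}{\underline{\alpha}}{\,t}^{M}\circ(\iden\otimes\tennsor{t}{f}{})=\tennsor{h}{f}{}\circ\tennsor{h\,}{\underline{\alpha}}{\,t}^{N}.
\]
This is the computation in the proof of Lemma~\ref{lem-elementary-underline-maps}, which never used $N=M$. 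Then for $i\in\tennsor{h}{I}{t}$ and $n\in\tennsor{t}{N}{}$,
\[
\tennsor{h\,}{\underline{\alpha}}{\,t}^{N}(i\otimes n)=\tennsor{h}{r}{}\tennsor{h}{s}{}\,\tennsor{h\,}{\underline{\alpha}}{\,t}^{N}(i\otimes n)=\tennsor{h}{r}{}\,\tennsor{h\,}{\underline{\alpha}}{\,t}^{M}(i\otimes\tennsor{t}{s}{}(n))=0.
\]
The same argument shows that $\Rep{\fk{S},\sr{I}}$ is closed under isomorphism, which is implicit in the statement.

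The main obstacle is mostly bookkeeping. One must be careful that ``$\tennsor{h}{I}{t}\otimes\tennsor{t}{M}{}$'' means the image of $\tennsor{h}{I}{t}\otimes\tennsor{t}{M}{}\to\tennsor{h\,}{\underline{A}}{\,t}\otimes\tennsor{t}{M}{}$, which is generated by pure tensors $i\otimes m$, so it suffices to check the condition on these. The other point requiring care is the compatibility of $\tennsor{h\,}{\underline{\alpha}}{\,t}$ with direct sums; this reduces to the intertwining identity applied to the canonical inclusions $M_j\to M$, together with the fact that the images of these inclusions generate $\tennsor{t}{M}{}$.
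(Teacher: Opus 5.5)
Your proof is correct and follows essentially the same route as the paper. Both directions are checked elementwise on pure tensors, using the pointwise description of direct sums from Lemma~\ref{lemma-co-limits-in-rep-of-species-cat} and the fact that the path maps $\underline{\alpha}$ decompose over the summands. The only difference is that for summands the paper simply restricts the decomposition $\underline{\alpha}=\bigoplus_{j}\underline{\alpha}(j)$, which it asserts without proof. You instead pass through a split monomorphism and retraction via the intertwining identity from the proof of Lemma~\ref{lem-elementary-underline-maps}, which also justifies that decomposition and gives closure under isomorphism for free.
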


\begin{proof}
     Suppose a representation $M=(\tennsor{x}{M}{},\tennsor{y}{\alpha}{x})$ of a species $\fk{S}=(\tennsor{}{R}{x},\tennsor{y}{A}{x})$ decomposes into a direct sum $\bigoplus_{j\in J} M(j)$, for some set $J$.
    Note that $\tennsor{y}{\underline{\alpha}}{x}=\bigoplus_{j\in J} \tennsor{y\,}{\underline{\alpha}}{\,x}(j)$. 

    On the one hand, suppose each $M(j)$ lies in $\Rep{\fk{S},\sr{I}}$. 
    Let $m\in \tennsor{t}{M}{}$. 
    By Lemma~\ref{lemma-co-limits-in-rep-of-species-cat} we have $m=\sum_{j\in J} m_j$ where $m_j\in \tennsor{t}{M}{}(j)$ for each $j$, and $m_j=0$ for all but finitely many $j$. 
    By Lemma~\ref{lem-ideals}, for any $z\in \tennsor{h}{I}{t}$  we have $\tennsor{h\,}{\underline{\alpha}}{\,t}(z\otimes m)=\sum_{j\in J}\tennsor{h\,}{\underline{\alpha}}{\,t}(j)(z\otimes m_j)=0$.  

    On the other hand, if $M$ lies in $\Rep{\fk{S},\sr{I}}$ then $\tennsor{h}{I}{t}\otimes \tennsor{t}{M}{}\subseteq \ker(\tennsor{h\,}{\underline{\alpha}}{\,t})$  for each $(h,t)\in\ff{C}^{2}$ giving, as above, that  $\tennsor{h}{I}{t}\otimes \tennsor{t}{M}{}(j) \subseteq \ker(\tennsor{h\,}{\underline{\alpha}}{\,t}(j))$, so $M(j)$ also lies in $\Rep{\fk{S},\sr{I}}$. 
\end{proof}

\begin{lem}
    \label{lem-when-functors-lift-to-functor-on-species-relations}
    Let  $\fk{S}=(\tennsor{}{R}{x},\tennsor{y}{A}{x})$  and $\fk{T}=(\tennsor{}{S}{x},\tennsor{y}{B}{x})$ be species,   $\sr{I}=(\tennsor{h}{I}{t})$ be an ideal of $\fk{S}$,  $\sr{J}=(\tennsor{h}{J}{t})$ be an ideal of $\fk{T}$, and  suppose there are natural transformations of the form
    \begin{equation}
\label{eqn-suff-nat-trans-for-ptwise-functors-to-lift}
    \begin{tikzcd}[column sep = large]
	{\lMod{\tennsor{}{R}{x}}} & {\lMod{\tennsor{}{S}{x}}} \\
	{\lMod{\tennsor{}{R}{y}}} & {\lMod{\tennsor{}{S}{y}}}
	\arrow["{{\tennsor{x}{\sr{G}}{}}}", from=1-1, to=1-2]
	\arrow["{{\tennsor{y}{A}{x}\otimes_{\tennsor{}{R}{x}}-}\,\,}"', from=1-1, to=2-1]
	\arrow["{{\tennsor{y}{\Omega}{x}}}"{description}, Rightarrow, from=1-2, to=2-1]
	\arrow["{\,\,{\tennsor{y}{B}{x}\otimes_{\tennsor{}{S}{x}}-}}", from=1-2, to=2-2]
	\arrow["{{\tennsor{y}{\sr{G}}{}}}"', from=2-1, to=2-2]
\end{tikzcd}
\quad\left(\forall(y,x)\in\ff{C}^{2}\right)
\end{equation}
as  in Lemma~\ref{lem-functors-between-reps-of-species}. 
If $ \tennsor{}{\left({\tennsor{h\,}{\underline{\Omega}}{\,t}} \right)}{L}
(\tennsor{h}{J}{t}\otimes \tennsor{t}{\sr{G}}{}
(L)
)
\subseteq 
\tennsor{h}{\sr{G}}{}
(\tennsor{h}{I}{t}\otimes L
) $ for each $(h,t)\in\ff{C}^{2}$ and each $\tennsor{}{R}{t}$-module $L$, then $\tennsor{}{\sr{G}}{\Omega}$ restricts to  $ \Rep{\fk{S},\sr{I}}\to \Rep{\fk{T},\sr{J}}$. 
\end{lem}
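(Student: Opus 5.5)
The plan is to use the characterisation of $\Rep{\fk{T},\sr{J}}$ from Notation~\ref{nota-Rep(S,I)} (equivalently Lemma~\ref{lem-ideals}) together with the explicit formula for the path maps of $\tennsor{}{\sr{G}}{\Omega}(M)$ supplied by the second part of Lemma~\ref{lem-functors-between-reps-of-species}. Let $M=(\tennsor{x}{M}{},\tennsor{y}{\alpha}{x})$ lie in $\Rep{\fk{S},\sr{I}}$ and write $\tennsor{}{\sr{G}}{\Omega}(M)=(\tennsor{x}{N}{},\tennsor{y}{\omega}{x})$, so $\tennsor{x}{N}{}=\tennsor{x}{\sr{G}}{}(\tennsor{x}{M}{})$. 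We must show $\tennsor{h}{J}{t}\otimes\tennsor{t}{N}{}\subseteq\ker(\tennsor{h\,}{\underline{\omega}}{\,t})$ for each $(h,t)\in\ff{C}^2$. Since $\Rep{\fk{T},\sr{J}}$ is a full subcategory (defined by a condition on objects), morphisms take care of themselves once objects are handled.

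The key step is the factorisation $\tennsor{h\,}{\underline{\omega}}{\,t}=\tennsor{h}{\sr{G}}{}(\tennsor{h\,}{\underline{\alpha}}{\,t})\circ\tennsor{}{(\tennsor{h\,}{\underline{\Omega}}{\,t})}{\tennsor{t}{M}{}}$ from Lemma~\ref{lem-functors-between-reps-of-species}. Applying the hypothesis with $L=\tennsor{t}{M}{}$, the image of $\tennsor{h}{J}{t}\otimes\tennsor{t}{N}{}$ under $\tennsor{}{(\tennsor{h\,}{\underline{\Omega}}{\,t})}{\tennsor{t}{M}{}}$ lands in $\tennsor{h}{\sr{G}}{}(\tennsor{h}{I}{t}\otimes\tennsor{t}{M}{})$. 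Here I interpret the latter as the image of $\tennsor{h}{\sr{G}}{}$ applied to the map $\tennsor{h}{I}{t}\otimes\tennsor{t}{M}{}\to\tennsor{h\,}{\underline{A}}{\,t}\otimes\tennsor{t}{M}{}$ induced by inclusion; this is the reading under which the hypothesis makes sense, and I would state this convention explicitly. Then it remains to see that $\tennsor{h}{\sr{G}}{}(\tennsor{h\,}{\underline{\alpha}}{\,t})$ kills this image. But the composite $\tennsor{h}{I}{t}\otimes\tennsor{t}{M}{}\to\tennsor{h\,}{\underline{A}}{\,t}\otimes\tennsor{t}{M}{}\xrightarrow{\tennsor{h\,}{\underline{\alpha}}{\,t}}\tennsor{h}{M}{}$ is zero because $M\in\Rep{\fk{S},\sr{I}}$. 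Since $\tennsor{h}{\sr{G}}{}$ is additive, it sends this zero map to zero, so by functoriality $\tennsor{h}{\sr{G}}{}(\tennsor{h\,}{\underline{\alpha}}{\,t})$ vanishes on the image of $\tennsor{h}{\sr{G}}{}$ of the inclusion-induced map. Combining the two steps gives $\tennsor{h\,}{\underline{\omega}}{\,t}(\tennsor{h}{J}{t}\otimes\tennsor{t}{N}{})=0$, as required.

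The main obstacle is the bookkeeping around what ``$\tennsor{h}{J}{t}\otimes\tennsor{t}{\sr{G}}{}(L)$'' and ``$\tennsor{h}{\sr{G}}{}(\tennsor{h}{I}{t}\otimes L)$'' mean. Tensor products need not preserve injections, so these should be read as images of the maps induced by the inclusions $\tennsor{h}{J}{t}\subseteq\tennsor{h\,}{\underline{B}}{\,t}$ and $\tennsor{h}{I}{t}\subseteq\tennsor{h\,}{\underline{A}}{\,t}$, matching the convention already used in Notation~\ref{nota-Rep(S,I)}. Once this is fixed consistently, the argument is the two-line diagram chase above.
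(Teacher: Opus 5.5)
Your proposal is correct and is essentially the paper's argument: you factor $\tennsor{h\,}{\underline{\omega}}{\,t}$ through $\tennsor{}{(\tennsor{h\,}{\underline{\Omega}}{\,t})}{\tennsor{t}{M}{}}$ via Lemma~\ref{lem-functors-between-reps-of-species}, apply the hypothesis with $L=\tennsor{t}{M}{}$, and use that $\tennsor{h}{\sr{G}}{}$ sends the zero composite $\tennsor{h}{I}{t}\otimes\tennsor{t}{M}{}\to\tennsor{h}{M}{}$ to zero. Your explicit reading of $\tennsor{h}{\sr{G}}{}(\tennsor{h}{I}{t}\otimes L)$ as the image under $\tennsor{h}{\sr{G}}{}$ of the inclusion-induced map justifies more cleanly the step that the paper writes loosely as an inclusion into $\tennsor{h}{\sr{G}}{}(\tennsor{h\,}{\underline{\alpha}}{\,t}(\tennsor{h}{I}{t}\otimes\tennsor{t}{M}{}))$.
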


\begin{proof}
By assumption, and by Lemma~\ref{lem-functors-between-reps-of-species}, whenever $\tennsor{}{\sr{G}}{\Omega}(\tennsor{x}{M}{},\tennsor{y}{\alpha}{x})=(\tennsor{x}{N}{},\tennsor{y}{\omega}{x})$ we have
\[
    \tennsor{h\,}{\underline{\omega}}{\,t}\left(\tennsor{h}{J}{t}\otimes \tennsor{t}{\sr{G}}{}
\left(\tennsor{t}{M}{}\right)
\right)=
\tennsor{h}{\sr{G}}{}(\tennsor{h\,}{\underline{\alpha}}{\,t})\left(\tennsor{}{{(\tennsor{h\,}{\underline{\Omega}}{\,t})}}{{\tennsor{t}{M}{}}}\left(\tennsor{h}{J}{t}\otimes \tennsor{t}{\sr{G}}{}
\left(\tennsor{t}{M}{}\right)\right)
\right)
\subseteq 
\tennsor{h}{\sr{G}}{}\left(\tennsor{h\,}{\underline{\alpha}}{\,t}\left(\tennsor{h}{I}{t}\otimes \tennsor{t}{M}{}
\right)\right).
\]
Hence if $\tennsor{h}{I}{t}\otimes \tennsor{t}{M}{}\subseteq \ker\left(\tennsor{h\,}{\underline{\alpha}}{\,t}\right)$ then $\tennsor{h}{J}{t}\otimes \tennsor{t}{M}{}\subseteq \ker\left(\tennsor{h\,}{\underline{\omega}}{\,t}\right)$. 
Now apply   Lemma~\ref{lem-ideals}. 
\end{proof}

\begin{lem}
\label{lem-ring-homs-and-compatible-maps-between-bimodules-ideals}
    Let $\fk{S}=(\tennsor{}{R}{x},\tennsor{y}{A}{x})$ and $\fk{T}=(\tennsor{}{S}{x},\tennsor{y}{B}{x})$ be species,  $\sr{I}=(\tennsor{h}{I}{t})$ an ideal of $\fk{S}$,  $\sr{J}=(\tennsor{h}{J}{t})$ an ideal of $\fk{T}$, and suppose there are ring   and bilinear maps  as in Lemma~\ref{lem-ring-and-bilinear-maps-induce-functor-between-tensor-cat},
    \begin{equation}
        \begin{array}{cc}
    \tennsor{}{\rusEl}{x}\colon \tennsor{}{S}{x}\to \tennsor{}{R}{x}\quad \left(x\in\ff{C}\right),
    &
    \tennsor{y}{\RusEl}{x}\colon \tennsor{y}{B}{x}\to\tennsor{y}{A}{x}\quad \left((y,x)\in\ff{C}^{2}\right).
    \end{array}
    \label{eqn-ring-and-bilinear-maps}
    \end{equation}
    If $\tennsor{h\,}{\underline{\RusEl}}{\,t}(\tennsor{h}{J}{t})\subseteq \tennsor{h}{I}{t}$ for each  $(h,t)\in\ff{C}^{2}$  then  $(\rusEl,\RusEl) $  restricts to  $ \sr{T}(\fk{T})/\sr{J}\to \sr{T}(\fk{S})/\sr{I}$. 
\end{lem}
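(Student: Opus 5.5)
The plan is to use the universal property of the quotient category. Write $\rho_{\fk{S}}\colon \sr{T}(\fk{S})\to \sr{T}(\fk{S})/\sr{I}$ and $\rho_{\fk{T}}\colon \sr{T}(\fk{T})\to\sr{T}(\fk{T})/\sr{J}$ for the quotient functors from the proof of Lemma~\ref{lem-ideals}. These fix objects and act on morphisms by the projections $\tennsor{h\,}{\underline{A}}{\,t}\to\tennsor{h\,}{\underline{A}}{\,t}/\tennsor{h}{I}{t}$, and similarly for $\fk{T}$. Consider the composite $\rho_{\fk{S}}\circ(\rusEl,\RusEl)\colon \sr{T}(\fk{T})\to \sr{T}(\fk{S})/\sr{I}$, where $(\rusEl,\RusEl)$ is the additive functor from Lemma~\ref{lem-ring-and-bilinear-maps-induce-functor-between-tensor-cat}. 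It is identity on objects and additive. I want to show it kills $\sr{J}$, so that it factors uniquely through $\rho_{\fk{T}}$.

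The first step is to recall that $(\rusEl,\RusEl)$ acts on $\Hom_{\sr{T}(\fk{T})}(t,h)=\tennsor{h\,}{\underline{B}}{\,t}$ by the map $\tennsor{h\,}{\underline{\RusEl}}{\,t}$ constructed in that lemma. The hypothesis $\tennsor{h\,}{\underline{\RusEl}}{\,t}(\tennsor{h}{J}{t})\subseteq\tennsor{h}{I}{t}$ then gives $\tennsor{h}{\rho}{t}\circ\tennsor{h\,}{\underline{\RusEl}}{\,t}$ vanishing on $\tennsor{h}{J}{t}$. So for each $(h,t)$ it induces an additive map
\[
\tennsor{h\,}{\overline{\RusEl}}{\,t}\colon \tennsor{h\,}{\underline{B}}{\,t}/\tennsor{h}{J}{t}\to \tennsor{h\,}{\underline{A}}{\,t}/\tennsor{h}{I}{t}.
\]
Together with the identity on objects, these maps are the candidate functor $\sr{T}(\fk{T})/\sr{J}\to\sr{T}(\fk{S})/\sr{I}$.

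The second step is to check functoriality. Composition and identities in the quotient categories are induced from $\sr{T}(\fk{T})$ and $\sr{T}(\fk{S})$: identities are classes of $\tennsor{}{1}{x}$, and composition is tensor concatenation of representatives. Preservation therefore follows from the corresponding properties of $(\rusEl,\RusEl)$ and of the functors $\rho_{\fk{S}},\rho_{\fk{T}}$, by choosing representatives. Independence of the choice of representatives is exactly the well-definedness already established, since $\sr{J}$ is an ideal. Additivity is likewise inherited. By construction the square formed with $\rho_{\fk{T}}$ and $\rho_{\fk{S}}$ commutes, which is what ``restricts'' means here.

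The only point needing genuine care is the first step: that the functor of Lemma~\ref{lem-ring-and-bilinear-maps-induce-functor-between-tensor-cat} really is given on hom-groups by $\tennsor{h\,}{\underline{\RusEl}}{\,t}$ and respects tensor concatenation. Its proof defines $\tennsor{h\,}{\underline{\RusEl}}{\,t}$ on pure tensors by applying each $\RusEl$ factorwise, so compatibility with concatenation is immediate on pure tensors and extends by additivity. Beyond that, the argument is routine bookkeeping with the quotient's universal property.
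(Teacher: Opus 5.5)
Your proposal is correct and matches the paper's proof: the paper also composes $(\rusEl,\RusEl)$ with the quotient functor $\sr{T}(\fk{S})\to\sr{T}(\fk{S})/\sr{I}$. It then notes that the hypothesis $\tennsor{h\,}{\underline{\RusEl}}{\,t}(\tennsor{h}{J}{t})\subseteq\tennsor{h}{I}{t}$ puts each $\tennsor{h}{J}{t}$ in the kernel of the induced map on hom-groups, and invokes the universal property of quotients of preadditive categories by ideals (citing Auslander--Reiten) to get the unique factorisation through $\sr{T}(\fk{T})/\sr{J}$; your explicit check of well-definedness and functoriality just unpacks that universal property.
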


\begin{proof}
   Consider the composition $\sr{Q}\colon  \sr{T}(\fk{T}) \to \sr{T}(\fk{S})/\sr{I}$ of $(\rusEl,\RusEl)\colon \sr{T}(\fk{T})\to \sr{T}(\fk{S})$, from Lemma~\ref{lem-ring-and-bilinear-maps-induce-functor-between-tensor-cat}, with the canonical quotient   $\sr{T}(\fk{S})\to \sr{T}(\fk{S})/\sr{I}$. 

   The inclusion $\tennsor{h\,}{\underline{\RusEl}}{\,t}(\tennsor{h}{J}{t})\subseteq \tennsor{h}{I}{t}$ ensures that $\tennsor{h}{J}{t}$ lies in the kernel of the induced additive map $\Hom_{\sr{T}(\fk{T})}(t,h)\to \Hom_{\sr{T}(\fk{S})/\sr{I}}(t,h)$. 
   By the universal property of ideals in preadditive categories (see for example \cite[Proposition~1.1]{auslander-reiten-stable}), there exists a unique functor $ \sr{T}(\fk{T})/\sr{J}\to \sr{T}(\fk{S})/\sr{I}$ such that precomposing with the quotient  $\sr{T}(\fk{T})\to \sr{T}(\fk{T})/\sr{J}$ yields $\sr{Q}$. 
\end{proof}


\begin{defn}\label{defn-associativity}
\cite[\S3]{Simson1979}  An  $\ff{S}$-\emph{associativity condition} of a species $(\tennsor{}{R}{x},\tennsor{y}{A}{x})$ is a collection $\bff{c}$ of  $\tennsor{}{R}{z}$-$\tennsor{}{R}{x}$-bilinear maps  $\tennsor{}{c}{zyx}\colon \tennsor{z}{A}{y}\otimes \tennsor{y}{A}{x}\to \tennsor{z}{A}{x}$ indexed by a subset $\ff{S}\subseteq\ff{C}^{3}$   such that 
\begin{equation}
\label{eqn-associativity-condition}
    \tennsor{}{c}{zyw} (\tennsor{}{\iden}{\tennsor{z}{A}{y}}\otimes \tennsor{}{c}{yxw})=\tennsor{}{c}{zxw} (\tennsor{}{c}{zyx}\otimes \tennsor{}{\iden}{\tennsor{x}{A}{w}}),\quad((z,y,w),(y,x,w),(z,x,w),(z,y,x)\in\ff{S}).
\end{equation}
In case $\ff{S}=\ff{C}^{3}$ we call  $\bff{c}$ \emph{full}. 
Given also a collection $\bff{\theta}$ of additive maps $\tennsor{z}{\theta}{}\colon \tennsor{}{R}{z}\to \tennsor{z}{A}{z}$, one for each $z\in \ff{C}$, we call the pair $(\bff{c},\bff{\theta})$ an  $\ff{S}$-\emph{commutativity condition} if 
\begin{align}
\label{eqn-commutativity-conditions}
    \begin{array}{ccc}
\tennsor{}{c}{wwv}(\tennsor{w}{\theta}{}(r)\otimes a)=ra,
&
\tennsor{}{c}{vuu}(b\otimes \tennsor{w}{\theta}{}(s))=bs,
&
\tennsor{v}{\theta}{}(pq)=\tennsor{}{c}{vvv}(\tennsor{v}{\theta}{}(p)\otimes \tennsor{v}{\theta}{}(q))
\end{array}
\end{align}
whenever  $(w,w,v),(v,v,v),(v,u,u)\in\ff{S}$, $r\in\tennsor{}{R}{w}$, $a\in \tennsor{w}{A}{v}$, $b\in \tennsor{v}{A}{u}$, $s\in\tennsor{}{R}{w}$, and $p,q\in\tennsor{}{R}{v}$. 
     %

Note that, if each $\tennsor{z}{\theta}{}$ is $\tennsor{}{R}{z}$-bilinear, then the third equation in \eqref{eqn-commutativity-conditions} follows from either the first or second. 
For example, take $w=v$, $r=p$ and $a=\tennsor{v}{\theta}{}(q)$ in the first. 
\end{defn}

    By a \emph{rng} we mean a possibly non-unital ring.
    %
    Modules over rngs are abelian groups equipped with an associative action that distributes over addition. 

\begin{lem}
       \label{lem-justifying-name-ass-condition}
Let $\bff{c}=(\tennsor{}{c}{zyx})$ be an $\ff{S}$-associativity condition on a species  $\fk{S}=(\tennsor{}{R}{x},\tennsor{y}{A}{x})$. 
\begin{enumerate}
    \item If $(x,x,x)\in \ff{S}$ then $\tennsor{x}{A}{x}$ is a rng with multiplication $\tennsor{}{c}{xxx}$. 
%
    %
    %
%
%
%
%
%
\item If   $(y,y,y),(y,y,x)\in \ff{S}$ then    $\tennsor{y}{A}{x}$ is a left $\tennsor{y}{A}{y}$-module with action $\tennsor{}{c}{yyx}$. 
%
\item If    $(y,y,y),(y,y,x),(y,x,x),(x,x,x)\in \ff{S}$ then    $\tennsor{y}{A}{x}$ is an  $\tennsor{y}{A}{y}$-$\tennsor{x}{A}{x}$-bimodule. 
%
%
\end{enumerate}
Additionally, now assume $(\bff{c},\bff{\theta})$ is an   $\ff{S}$-commutativity condition. 
\begin{enumerate}
\setcounter{enumi}{3}
    \item If $(x,x,x)\in\ff{S}$ then $\tennsor{x}{A}{x}$ is a unital ring and $\tennsor{x}{\theta}{}$ is a ring map. 
    \item If   $(y,y,y),(y,y,x)\in \ff{S}$ then the left $\tennsor{y}{A}{y}$-action on $\tennsor{y}{A}{x}$ extends that of $\tennsor{}{R}{y}$. 
\end{enumerate}
In particular, if each $\tennsor{}{\theta}{x}$ is injective, then  $\bff{c}$ is a commutativity condition as in \cite{Simson1979}.  
\end{lem}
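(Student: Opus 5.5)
Each part follows by specialising the associativity equation \eqref{eqn-associativity-condition}, and the commutativity equations \eqref{eqn-commutativity-conditions}, to indices with repeated entries, and reading off the resulting ring or module axioms. All maps $\tennsor{}{c}{zyx}$ are bilinear and defined on tensor products, so distributivity over addition is automatic; only associativity (and, in the later parts, unitality) needs checking. We write the product on $\tennsor{x}{A}{x}$ as $a\ast b=\tennsor{}{c}{xxx}(a\otimes b)$.

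For (1), we take $z=y=x=w$ in \eqref{eqn-associativity-condition}; the four triples required all equal $(x,x,x)\in\ff{S}$. This gives $a\ast(b\ast d)=(a\ast b)\ast d$, so $\tennsor{x}{A}{x}$ is a rng. For (2), we take the indices $(z,y,x,w)=(y,y,y,x)$. The triples required are $(y,y,x)$, $(y,y,x)$, $(y,y,x)$ and $(y,y,y)$, all of which lie in $\ff{S}$. This yields $c_{yyx}(a\otimes c_{yyx}(b\otimes m))=c_{yyx}(c_{yyy}(a\otimes b)\otimes m)$, which is associativity of a left action of the rng $\tennsor{y}{A}{y}$. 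The right action in (3) is dual: we take indices $(y,x,x,x)$, which needs $(y,x,x)$ and $(x,x,x)$. Compatibility of the two actions comes from indices $(y,y,x,x)$, which need $(y,y,x)$, $(y,x,x)$ and $(y,y,x)$. This case is covered by the hypothesis of (3).

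For (4), we show that $\tennsor{x}{\theta}{}(1)$ is a two-sided identity. By the first equation of \eqref{eqn-commutativity-conditions} with $w=v=x$, we get $\tennsor{x}{\theta}{}(1)\ast a=1a=a$. By the second equation with $v=u=x$, we get $a\ast\tennsor{x}{\theta}{}(1)=a$. Additivity of $\tennsor{x}{\theta}{}$ is given, multiplicativity is the third equation, and unit preservation has just been shown, so $\tennsor{x}{\theta}{}$ is a ring map. For (5), the first equation of \eqref{eqn-commutativity-conditions} with $w=y$, $v=x$ reads $c_{yyx}(\tennsor{y}{\theta}{}(r)\otimes a)=ra$. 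Combined with (2), this says that restricting the $\tennsor{y}{A}{y}$-action along the ring map $\tennsor{y}{\theta}{}$ recovers the given $\tennsor{}{R}{y}$-action.

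The main obstacle is the index bookkeeping. We must match, in each specialisation, the four index triples demanded by \eqref{eqn-associativity-condition} against the hypotheses. In particular, \eqref{eqn-commutativity-conditions} is stated with a mislabelled $\tennsor{w}{\theta}{}(s)$ in the second equation, so we read it as $\tennsor{u}{\theta}{}(s)$ with $s\in\tennsor{}{R}{u}$. The final sentence then follows by comparison with Simson's definition. When each $\tennsor{x}{\theta}{}$ is injective, we may identify $\tennsor{}{R}{x}$ with a subring of $\tennsor{x}{A}{x}$ via $\tennsor{x}{\theta}{}$. The conditions above then say exactly that the maps $\tennsor{}{c}{zyx}$ are associative and restrict to the given bimodule actions, which is Simson's commutativity condition in \cite[\S3]{Simson1979}.
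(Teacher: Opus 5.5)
Your proposal is correct and follows the same route as the paper: you obtain parts (1)--(5) by specialising \eqref{eqn-associativity-condition} and \eqref{eqn-commutativity-conditions} at repeated indices, which the paper simply calls straightforward. Your index bookkeeping checks out, including your reading of the second equation of \eqref{eqn-commutativity-conditions} with $\tennsor{u}{\theta}{}(s)$, $s\in\tennsor{}{R}{u}$. As in the paper, you then get the final claim by matching these facts against Simson's items (i)--(iii), with injectivity of each $\tennsor{x}{\theta}{}$ supplying the one remaining requirement.
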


\begin{proof}
   The proof of (1)--(5) is straightforward. 
   Consider items (i)--(iii) of \cite[\S3]{Simson1979}, where Simson introduced the notion of a commutativity condition. 
Item (i)  is the same as \eqref{eqn-associativity-condition}. 
    Item (ii) is the combination of  (1) and (4) above. 
Likewise, (iii) combines (2), (3) and (5). 
The injectivity of each $\tennsor{}{\theta}{x}$ is the only remaining requirement from \cite{Simson1979}, which follows when each $\tennsor{}{R}{x}$ is a division ring, a running assumption throughout \cite{Simson1979}. 
%
    %
    %
\end{proof}
By parts (1)--(3) of Lemma~\ref{lem-justifying-name-ass-condition},  associativity conditions automatically ensure the associativity of various binary operations.
%
%
%
For a given species $\fk{S}$ equipped with a commutativity condition $\bff{c}$, 
 Simson considers a subcategory of representations of $\fk{S}$ that  \emph{satisfy} $\bff{c}$.  
 We recover this category in Lemma~\ref{lem-reps-satisfying-ass-conditions} using an appropriate ideal of 
 %
 $\fk{S}$. 

\begin{nota}
    \label{defn-commutativity-cond-ideal}
Let $\underline{x}\in\ff{C}^{\ff{n}}(h,t)$ be a path of length $\ff{n}\geq 2$. 
For $1\leq \ff{m}\leq \ff{n}-1$ define  
\[
\tennsor{}{\underline{x}}{}( \ff{m})=(h=x_{\ff{n}},\dots,x_{\ff{m}+1},x_{\ff{m}-1},\dots,x_{0}=t)\in\ff{C}^{\ff{n}-1}(h,t).
\]
%
%
For an $\ff{S}$-associativity condition $\bff{c}$  on a species  $\fk{S}=(\tennsor{}{R}{x},\tennsor{y}{A}{x})$, if $(x_{\ff{m}+1},x_{\ff{m}},x_{\ff{m}-1})\in\ff{S}$ let 
\begin{equation}
\label{eqn-c-maps-for-ideal}
    \tennsor{}{c}{\underline{x},\ff{m}}=
    \left\{
\begin{array}{cc}
    \tennsor{}{c}{(x_{\ff{n}},x_{\ff{n}-1},x_{\ff{n}-2})}
    \otimes  \dots \otimes 
    \tennsor{}{\iden}{\tennsor{x_{1}}{A}{t}}
    &
    (\ff{m}=\ff{n-1})
      \\
\tennsor{}{\iden}{\tennsor{h}{A}{x_{\ff{n}-1}}}
    \otimes \dots \otimes 
    \tennsor{}{c}{(x_{2},x_{1},x_{0})}
    &
    (\ff{m}=1)
    \\
\tennsor{}{\iden}{\tennsor{h}{A}{x_{\ff{n}-1}}}
    \otimes \dots \otimes 
    \tennsor{}{c}{(x_{\ff{m}+1},x_{\ff{m}},x_{\ff{m}-1})}
    \otimes  \dots \otimes 
    \tennsor{}{\iden}{\tennsor{x_{1}}{A}{t}}
    &
    (\text{otherwise})
\end{array}
    \right\}.
\end{equation}
So $\tennsor{}{c}{\underline{x},\ff{m}}\colon \tennsor{}{A}{\underline{x}}\to \tennsor{}{A}{\tennsor{}{\underline{x}}{}( \ff{m})}$ is $\tennsor{}{R}{h}$-$\tennsor{}{R}{t}$-bilinear. 
Considering the maps $
\tennsor{}{\iota}{\underline{x}}-\tennsor{}{\iota}{\tennsor{}{\underline{x}}{}( \ff{m})} \tennsor{}{c}{\underline{x},\ff{m}}\colon \tennsor{}{A}{\underline{x}}\to \tennsor{h\,}{\underline{A}}{\,t}$,  where $\tennsor{}{\iota}{\underline{x}}$ and $\tennsor{}{\iota}{\tennsor{}{\tennsor{}{\underline{x}}{}( \ff{m})}{\neg \ff{m}}}$  denote the  inclusions $\tennsor{}{A}{\underline{x}}\subseteq \tennsor{h\,}{\underline{A}}{\,t}$ and $\tennsor{}{A}{\tennsor{}{\underline{x}}{}( \ff{m})}\subseteq \tennsor{h\,}{\underline{A}}{\,t}$, we obtain a map
\begin{equation}
\label{eqn-comm-map}
\tennsor{}{{\bigoplus}}{\ff{n}\geq 2}
{\bigoplus}_{\ff{m}=1}^{\ff{n}-1}
\tennsor{}{{\bigoplus}}{\underline{x}\in\ff{C}^{\ff{n}}(h,t)\colon(x_{\ff{m}+1},x_{\ff{m}},x_{\ff{m}-1})\in\ff{S}}
\tennsor{}{A}{\underline{x}}\to \tennsor{h\,}{\underline{A}}{\,t},
\quad \underline{a}\mapsto  \underline{a}-\tennsor{}{c}{\underline{x},\ff{m}}(\underline{a}),
\end{equation}
for each $t,h\in \ff{C}$, and we let $\tennsor{h}{I}{t}(\bff{c})$ be the image of this $\tennsor{}{R}{h}$-$\tennsor{}{R}{t}$-bilinear map. 
\end{nota}

\begin{lem}
\label{lem-reps-satisfying-ass-conditions}
    Let $\fk{S}=(\tennsor{}{R}{x},\tennsor{y}{A}{x})$ be a species,   $\ff{S}\subseteq \ff{C}^{3}$ and $\bff{c}$   an  $\ff{S}$-associativity condition. 
    %
    %
    Then $\sr{I}(\bff{c})=(\tennsor{h}{I}{t}(\bff{c}))$ is an ideal of $\fk{S}$, and $(\tennsor{x}{M}{},\tennsor{y}{\alpha}{x})$ lies in  $\Rep{\fk{S},\sr{I}(\bff{c})}$ if and only if
    \begin{equation}
    \label{eqn-ass-con-reps-satisfy}
        \begin{array}{cc}
\tennsor{z}{\alpha}{y}\circ (\tennsor{}{\iden}{\tennsor{z}{A}{y}}\otimes \tennsor{y}{\alpha}{x})=\tennsor{z}{\alpha}{x}\circ (c_{zyx}\otimes \tennsor{}{\iden}{\tennsor{x}{M}{}}),&((z,y,x)\in \ff{S}).
\end{array}
    \end{equation} 
Furthermore, in case \eqref{eqn-ass-con-reps-satisfy} holds, $\tennsor{x}{M}{}$ is  a left module over the rng $\tennsor{x}{A}{x}$ for each $(x,x,x)\in\ff{S}$. 
    \end{lem}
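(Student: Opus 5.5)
The plan has three parts: show that $\sr{I}(\bff{c})$ is an ideal, characterise the representations it annihilates, and deduce the module structure.

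\emph{Ideal.} By Remark~\ref{rem-ideals} we need three things. Each $\tennsor{h}{I}{t}(\bff{c})$ should be an $\tennsor{}{R}{h}$-$\tennsor{}{R}{t}$-subbimodule, and it should be closed under left and right tensor concatenation with the $\tennsor{\ell}{A}{h}$ and $\tennsor{t}{A}{r}$. The first is immediate, because \eqref{eqn-comm-map} is bilinear: each $\tennsor{}{c}{\underline{x},\ff{m}}$ is $\tennsor{}{R}{h}$-$\tennsor{}{R}{t}$-bilinear, and so are the inclusions. For closure it suffices to check generators $\underline{a}-\tennsor{}{c}{\underline{x},\ff{m}}(\underline{a})$ with $\underline{a}\in\tennsor{}{A}{\underline{x}}$.

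Concatenating on the left with $b\in\tennsor{\ell}{A}{h}$ gives the path $\underline{x}'=(\ell,\underline{x})$ of length $\ff{n}+1$, and the same index $\ff{m}$ still refers to the triple $(x_{\ff{m}+1},x_{\ff{m}},x_{\ff{m}-1})\in\ff{S}$. From the definition \eqref{eqn-c-maps-for-ideal} we then have $b\otimes \tennsor{}{c}{\underline{x},\ff{m}}(\underline{a})=\tennsor{}{c}{\underline{x}',\ff{m}}(b\otimes\underline{a})$, because identities are tensored on the left. Hence $b\otimes(\underline{a}-\tennsor{}{c}{\underline{x},\ff{m}}(\underline{a}))$ is again a generator. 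The right-hand case is the same, with $\ff{m}$ shifted to $\ff{m}+1$. Note that no use of \eqref{eqn-associativity-condition} is needed for this step.

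\emph{Characterisation.} By Lemma~\ref{lem-ideals}, $M$ lies in $\Rep{\fk{S},\sr{I}(\bff{c})}$ if and only if $\tennsor{h\,}{\underline{\alpha}}{\,t}$ kills every generator tensored with $m\in\tennsor{t}{M}{}$. That is, we need
\[
\tennsor{}{\alpha}{\underline{x}}(\underline{a}\otimes m)=\tennsor{}{\alpha}{\underline{x}(\ff{m})}(\tennsor{}{c}{\underline{x},\ff{m}}(\underline{a})\otimes m)
\]
for every admissible $(\underline{x},\ff{m})$.

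For "only if", take $\ff{n}=2$, $\underline{x}=(z,y,x)$ and $\ff{m}=1$. Then the condition reads exactly as \eqref{eqn-ass-con-reps-satisfy} on pure tensors, which suffices.

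For "if", write $\tennsor{}{\alpha}{\underline{x}}$ as the iterated composite from the Notation. The segment involving positions $\ff{m}+1,\ff{m},\ff{m}-1$ is $\tennsor{x_{\ff{m}+1}}{\alpha}{x_{\ff{m}}}\circ(\tennsor{}{\iden}{}\otimes\tennsor{x_{\ff{m}}}{\alpha}{x_{\ff{m}-1}})$, tensored with identities on the left and precomposed with the action of the lower factors on $m$. We replace this segment using \eqref{eqn-ass-con-reps-satisfy} for $(x_{\ff{m}+1},x_{\ff{m}},x_{\ff{m}-1})\in\ff{S}$, and the remaining composite is $\tennsor{}{\alpha}{\underline{x}(\ff{m})}$ applied to $\tennsor{}{c}{\underline{x},\ff{m}}(\underline{a})\otimes m$.

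The only real bookkeeping, and the main obstacle, is the careful handling of this nested composite. I would do it on pure tensors as in the proof of Lemma~\ref{lem-elementary-underline-maps}. First act on $m$ by the factors $a_{x_1t},\dots,a_{x_{\ff{m}}x_{\ff{m}-1}}$ to get an element $m'\in\tennsor{x_{\ff{m}-1}}{M}{}$. Then apply \eqref{eqn-ass-con-reps-satisfy} to $a_{x_{\ff{m}+1}x_{\ff{m}}}\otimes a_{x_{\ff{m}}x_{\ff{m}-1}}\otimes m'$, and finally act by the remaining factors. The edge cases $\ff{m}=1$ and $\ff{m}=\ff{n}-1$ are handled identically.

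\emph{Module structure.} If $(x,x,x)\in\ff{S}$, Lemma~\ref{lem-justifying-name-ass-condition}(1) makes $\tennsor{x}{A}{x}$ a rng with multiplication $\tennsor{}{c}{xxx}$. Define the action by $a\cdot m=\tennsor{x}{\alpha}{x}(a\otimes m)$. This is biadditive, and \eqref{eqn-ass-con-reps-satisfy} with $z=y=x$ gives $a\cdot(b\cdot m)=\tennsor{}{c}{xxx}(a\otimes b)\cdot m$, which is exactly associativity of the action.
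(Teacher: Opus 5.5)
Your proposal is correct and follows essentially the same route as the paper: $\sr{I}(\bff{c})$ is closed under concatenation because $c_{(\ell,\underline{x}),\ff{m}}=\iden\otimes c_{\underline{x},\ff{m}}$, with the index shifting to $\ff{m}+1$ on the right (the paper writes this out only for the extreme indices, but the identity is the same); necessity is the case $\ff{n}=2$; sufficiency comes from rewriting the middle segment of $\alpha_{\underline{x}}$ using \eqref{eqn-ass-con-reps-satisfy}; and the action of the rng $\tennsor{x}{A}{x}$ is $\tennsor{x}{\alpha}{x}$. One small indexing slip: to get $m'\in\tennsor{x_{\ff{m}-1}}{M}{}$ you should act on $m$ only by the factors up to $a_{x_{\ff{m}-1}x_{\ff{m}-2}}$ (none when $\ff{m}=1$), because $a_{x_{\ff{m}}x_{\ff{m}-1}}$ is the factor used up when you apply \eqref{eqn-ass-con-reps-satisfy}.
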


\begin{proof}
Let $\underline{v}\in\ff{C}^{\ff{n}+1}(h,t)$, $\underline{v}(h)=(v_{\ff{n}},\dots,t)$ and $\underline{v}(t)=(h,\dots,v_{1})$. 
By construction, 
\[
\begin{array}{cc}
(\tennsor{}{v}{2},\tennsor{}{v}{\ff{1}},t)\in\ff{S}
\Rightarrow
\tennsor{}{\iden}{\tennsor{h}{A}{\tennsor{}{v}{\ff{n}}}}\otimes \tennsor{}{c}{\underline{v}(h),1} 
   =\tennsor{}{c}{\underline{v},1},
   &
   (h,\tennsor{}{v}{\ff{n}},\tennsor{}{v}{\ff{n}-1})\in\ff{S}
   \Rightarrow
   \tennsor{{\ff{n}-1}}{c}{\underline{v}(t)}\otimes 
    \tennsor{}{\iden}{\tennsor{v_{1}}{A}{t}}=\tennsor{{\ff{n}}}{c}{\underline{v}}.
\end{array}
\]
By Remark~\ref{rem-ideals}, it follows that $\sr{I}(\bff{c})$ is an ideal of $\fk{S}$. 
Suppose we are given $\underline{x}\in\ff{C}^{\ff{n}}(h,t)$ such that   $(x_{\ff{m}+1},x_{\ff{m}},x_{\ff{m}-1})\in\ff{S}$ for some  $\ff{n}\geq 2$ and some $\ff{m}=1,\dots,\ff{n}-1$. 
%
%
%
Let $\underline{r}=(h,\dots,x_{\ff{m}+1})$, $\underline{u}=(h,\dots,x_{\ff{m}})$, $\underline{v}=(h,\dots,x_{\ff{m}-1})$ and $\underline{w}=(x_{\ff{m}-1},\dots,t)$. 
If $\ff{m}=1$ then $\underline{w}=(t)$, and we ignore the symbols $\tennsor{}{\alpha}{\underline{w}}$ and $\tennsor{}{\iden}{\tennsor{}{A}{\underline{w}}}$. 
Likewise, if $\ff{m}=\ff{n}-1$, we ignore $\tennsor{}{\alpha}{\underline{r}}$ and $\tennsor{}{\iden}{\tennsor{}{A}{\underline{r}}}$. 
Thus, 
\begin{align*}
    \tennsor{h}{\underline{\alpha}}{t}(\tennsor{}{\iota}{\underline{x}}\otimes\tennsor{}{\iden}{\tennsor{t}{M}{}})
=
\tennsor{}{\alpha}{\underline{x}}
=
\tennsor{}{\alpha}{\underline{r}}(
\tennsor{}{\iden}{\tennsor{}{A}{\underline{r}}}
\otimes
\tennsor{x_{\ff{m}+1}}{\alpha}{x_{\ff{m}}})(
\tennsor{}{\iden}{\tennsor{}{A}{\underline{u}}}
\otimes 
\tennsor{x_{\ff{m}}}{\alpha}{x_{\ff{m}-1}}
)
(\tennsor{}{\iden}{\tennsor{}{A}{\underline{v}}}
\otimes \tennsor{}{\alpha}{\underline{w}}
)
\\
=
\tennsor{}{\alpha}{\underline{r}} 
(\tennsor{}{\iden}{\tennsor{}{A}{\underline{r}}}
\otimes(
\tennsor{x_{\ff{m}+1}}{\alpha}{x_{\ff{m}}}(\tennsor{}{\iden}{\tennsor{x_{\ff{m}+1}}{A}{x_{\ff{m}}}}
\otimes 
\tennsor{x_{\ff{m}}}{\alpha}{x_{\ff{m}-1}})))(
\tennsor{}{\iden}{\tennsor{}{A}{\underline{v}}}
\otimes 
\tennsor{}{\alpha}{\underline{w}}).
\end{align*}
Furthermore, we can rewrite the map $\tennsor{h}{\underline{\alpha}}{t}(
   \tennsor{}{\iota}{\underline{x}(\ff{m})}
   \tennsor{}{c}{\underline{x},\ff{m}}
   \otimes
   \tennsor{}{\iden}{\tennsor{t}{M}{}})$ as 
\begin{align*}
   \tennsor{h}{\underline{\alpha}}{t}(
   \tennsor{}{\iota}{\underline{x}(\ff{m})}
   \tennsor{}{c}{\underline{x},\ff{m}}
   \otimes
   \tennsor{}{\iden}{\tennsor{t}{M}{}})
   =
   \tennsor{}{\alpha}{\underline{x}(\ff{m})}(
   \tennsor{}{c}{\underline{x},\ff{m}}
   \otimes\tennsor{}{\iden}{\tennsor{t}{M}{}})
   =
   \tennsor{}{\alpha}{\underline{x}(\ff{m})}(\tennsor{}{\iden}{\tennsor{}{A}{\underline{r}}}
   \otimes
   \tennsor{}{c}{(x_{\ff{m}+1},x_{\ff{m}},x_{\ff{m}-1})}
   \otimes
   \tennsor{}{\iden}{\tennsor{}{A}{\underline{w}}}
   \otimes 
   \tennsor{}{\iden}{\tennsor{t}{M}{}})
   \\
   =
   \tennsor{}{\alpha}{\underline{r}} 
(\tennsor{}{\iden}{\tennsor{}{A}{\underline{r}}}
\otimes
\tennsor{x_{\ff{m}+1}}{\alpha}{x_{\ff{m}-1}})(\tennsor{}{\iden}{\tennsor{}{A}{\underline{r}}}
\otimes\tennsor{}{\iden}{\tennsor{x_{\ff{m}+1}}{A}{x_{\ff{m}-1}}}
\otimes\tennsor{}{\alpha}{\underline{w}})(\tennsor{}{\iden}{\tennsor{}{A}{\underline{r}}}
   \otimes
   \tennsor{}{c}{(x_{\ff{m}+1},x_{\ff{m}},x_{\ff{m}-1})}
   \otimes
   \tennsor{}{\iden}{\tennsor{}{A}{\underline{w}}}
   \otimes 
   \tennsor{}{\iden}{\tennsor{t}{M}{}})
   \\
   =
   \tennsor{}{\alpha}{\underline{r}} 
(\tennsor{}{\iden}{\tennsor{}{A}{\underline{r}}}
\otimes
\tennsor{x_{\ff{m}+1}}{\alpha}{x_{\ff{m}-1}})(\tennsor{}{\iden}{\tennsor{}{A}{\underline{r}}}
   \otimes
   \tennsor{}{c}{(x_{\ff{m}+1},x_{\ff{m}},x_{\ff{m}-1})}
   \otimes
   \tennsor{}{\iden}{\tennsor{x_{\ff{m}-1}}{M}{}})(\tennsor{}{\iden}{\tennsor{}{A}{\underline{v}}}
\otimes\tennsor{}{\alpha}{\underline{w}})
 \\
   =
   \tennsor{}{\alpha}{\underline{r}} 
(\tennsor{}{\iden}{\tennsor{}{A}{\underline{r}}}
\otimes(
\tennsor{x_{\ff{m}+1}}{\alpha}{x_{\ff{m}-1}}(
   \tennsor{}{c}{(x_{\ff{m}+1},x_{\ff{m}},x_{\ff{m}-1})}
   \otimes
   \tennsor{}{\iden}{\tennsor{x_{\ff{m}-1}}{M}{}})))(\tennsor{}{\iden}{\tennsor{}{A}{\underline{v}}}
\otimes\tennsor{}{\alpha}{\underline{w}}).
\end{align*}
Computing $\tennsor{h}{\underline{\alpha}}{t}((\tennsor{}{\iota}{\underline{x}}\otimes\tennsor{}{\iden}{\tennsor{t}{M}{}}-\tennsor{}{\iota}{\underline{x}(\ff{m})}
   \tennsor{}{c}{\underline{x},\ff{m}})
   \otimes
   \tennsor{}{\iden}{\tennsor{t}{M}{}})$ by taking the difference gives the expression
\[
   \tennsor{}{\alpha}{\underline{r}} 
(\tennsor{}{\iden}{\tennsor{}{A}{\underline{r}}}
\otimes(\tennsor{x_{\ff{m}+1}}{\alpha}{x_{\ff{m}}}(\tennsor{}{\iden}{\tennsor{x_{\ff{m}+1}}{A}{x_{\ff{m}}}}
\otimes 
\tennsor{x_{\ff{m}}}{\alpha}{x_{\ff{m}-1}})-
\tennsor{x_{\ff{m}+1}}{\alpha}{x_{\ff{m}-1}}(
   \tennsor{}{c}{(x_{\ff{m}+1},x_{\ff{m}},x_{\ff{m}-1})}
   \otimes
   \tennsor{}{\iden}{\tennsor{x_{\ff{m}-1}}{M}{}})))(\tennsor{}{\iden}{\tennsor{}{A}{\underline{v}}}
\otimes\tennsor{}{\alpha}{\underline{w}}). 
\]
On the one hand, if the inclusions from Lemma~\ref{lem-ideals} hold, then the expression above is $0$  in case $\ff{n}=2$, $x=x_{0}$, $y=x_{1}$ and $z=x_{2}$, and so \eqref{eqn-ass-con-reps-satisfy} holds. 
On the other hand, if \eqref{eqn-ass-con-reps-satisfy}  holds, then taking $x=x_{\ff{m}-1}$, $y=x_{\ff{m}}$ and $z=x_{\ff{m}+1}$ ensures the expression is $0$. 

For the final statement, note that, by Lemma~\ref{lem-justifying-name-ass-condition}, we have that $\tennsor{}{c}{xxx}$ makes $\tennsor{x}{A}{x}$ into a rng. 
The left action of $\tennsor{x}{A}{x}$ on $\tennsor{x}{M}{}$ is defined by the map $\tennsor{x}{\alpha}{x}$. 
The associativity of this action follows from \eqref{eqn-ass-con-reps-satisfy}.  
Distributivity  over addition is automatic. 
\end{proof}

\begin{lem}
    Let $\bff{c}$ be  a full  commutativity condition   on a species $\fk{S}=(\tennsor{}{R}{x},\tennsor{y}{A}{x})$. 
    Then $\Rep{\fk{S},\sr{I}(\bff{c})}$ is equivalent to $\lMod{\cl{A}}$ where $\cl{A}$ is the preadditive category with object set $\ff{C}$, $\End_{\cl{A}}(x)=\tennsor{}{R}{x}$ and $\Hom_{\cl{A}}(x,y)=\tennsor{y}{A}{x}$ when $x\neq y$. 
\end{lem}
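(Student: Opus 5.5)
The plan is to realise $\cl{A}$ as a quotient of $\sr{T}(\fk{S})$ by the ideal $\sr{I}(\bff{c})$. Then $\lMod{\cl{A}}\simeq\lMod{\sr{T}(\fk{S})/\sr{I}(\bff{c})}$, and by Lemma~\ref{lem-ideals} together with Notation~\ref{nota-Rep(S,I)} the latter is equivalent to $\Rep{\fk{S},\sr{I}(\bff{c})}$. So everything reduces to building a full, identity-on-objects additive functor $\sr{E}\colon\sr{T}(\fk{S})\to\cl{A}$ and computing its kernel.

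First I will make $\cl{A}$ precise. The composition of $z\leftarrow y\leftarrow x$ is $\tennsor{}{c}{zyx}$, and the identity at $x$ comes from $\tennsor{x}{\theta}{}$. Associativity is \eqref{eqn-associativity-condition} with $\ff{S}=\ff{C}^3$. The unit laws, and compatibility with the $\tennsor{}{R}{x}$-actions, are the first two equations of \eqref{eqn-commutativity-conditions}; this is exactly the content of Lemma~\ref{lem-justifying-name-ass-condition}(1)--(5). Here I must be careful about how $\End_{\cl{A}}(x)$ sits relative to $\tennsor{x}{A}{x}$. I would read the statement as saying that the ring $\tennsor{}{R}{x}$ is identified with its image under $\tennsor{x}{\theta}{}$ in $\tennsor{x}{A}{x}$, which is a ring map by part (4). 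Making this identification precise is the first thing to nail down.

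Next I define $\sr{E}$ on the summands $\tennsor{}{A}{\underline{x}}$ of $\tennsor{h\,}{\underline{A}}{\,t}$:
\begin{itemize}
\item on length-$0$ paths it is $\tennsor{x}{\theta}{}$;
\item on length-$1$ paths it is the identity;
\item on longer paths it is the iterated composite $\tennsor{}{c}{}$, which is well defined by \eqref{eqn-associativity-condition}.
\end{itemize}
Functoriality follows because composition in $\sr{T}(\fk{S})$ is tensor concatenation, and because $\tennsor{}{c}{}$ and $\theta$ are compatible via \eqref{eqn-commutativity-conditions}. Fullness is clear, since each $\tennsor{y}{A}{x}$ is already a summand of degree one. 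For the kernel, the inclusion $\sr{I}(\bff{c})\subseteq\ker\sr{E}$ holds because each generator $\underline{a}-\tennsor{}{c}{\underline{x},\ff{m}}(\underline{a})$ from \eqref{eqn-comm-map} is killed by associativity. For the reverse inclusion, I would use the generators repeatedly to reduce any element of $\tennsor{h\,}{\underline{A}}{\,t}$ modulo $\sr{I}(\bff{c})$ to an element of degree at most one. On degree at most one, $\sr{E}$ is the map $\tennsor{}{R}{h}\oplus\tennsor{h}{A}{t}\to\tennsor{h}{A}{t}$ (the $\tennsor{}{R}{h}$ term occurring only when $h=t$), so it remains to see what this map kills.

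The main obstacle is this degree $0$ versus degree $1$ issue when $h=t$. The element $\tennsor{}{1}{x}-\tennsor{x}{\theta}{}(\tennsor{}{1}{x})$ lies in the kernel of $\sr{E}$, but it is not visibly in $\sr{I}(\bff{c})$, because the generators only involve paths of length at least two. Correspondingly, on the representation side one must show that $\tennsor{x}{\alpha}{x}(\tennsor{x}{\theta}{}(r)\otimes m)=rm$ for $M\in\Rep{\fk{S},\sr{I}(\bff{c})}$. My first attempt would be to use \eqref{eqn-ass-con-reps-satisfy} together with the unit equations in \eqref{eqn-commutativity-conditions}, showing this identity holds at least on the images of the structure maps. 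If that fails in general, I would instead prove the equivalence at the level of modules. Precisely, I would check directly that a representation satisfying \eqref{eqn-ass-con-reps-satisfy} is the same as an additive functor out of $\cl{A}$, with the length-one data $\tennsor{y}{\alpha}{x}$ giving the action of $\Hom_{\cl{A}}(x,y)$. This route bypasses computing the kernel, and it is where I expect the identification of $\End_{\cl{A}}(x)$ to require the most care.
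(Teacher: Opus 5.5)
You have found exactly the right spot, but neither of your routes gets past it. With $\sr{I}(\bff{c})$ as defined in Notation~\ref{defn-commutativity-cond-ideal}, the identity ${}_{x}\alpha_{x}(\theta_x(r)\otimes m)=rm$ is false in general, and so is the statement.

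\textbf{Why it fails.} The generators of $\sr{I}(\bff{c})$ come only from paths of length at least $2$, and they involve $\bff{c}$ but not $\bff{\theta}$. So nothing ties the degree-$0$ identity $1_x\in R_x$ to the degree-$1$ element $\theta_x(1_x)$. Take $M\in\Rep{\fk{S},\sr{I}(\bff{c})}$ and set $e_x={}_{x}\alpha_{x}(\theta_x(1_x)\otimes -)$. Then:
\begin{itemize}
\item $e_x$ is an $R_x$-linear idempotent of $M_x$, by \eqref{eqn-commutativity-conditions} and \eqref{eqn-ass-con-reps-satisfy};
\item each ${}_{y}\alpha_{x}$ vanishes on ${}_{y}A_{x}\otimes(1-e_x)M_x$ and lands in $e_yM_y$, by the same two equations.
\end{itemize}
Hence $\Rep{\fk{S},\sr{I}(\bff{c})}\simeq\lMod{\cl{A}'}\times\prod_{x}\lMod{R_x}$. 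Here $\cl{A}'$ has $\Hom_{\cl{A}'}(x,y)={}_{y}A_{x}$ for all $x,y$, composition $c_{zyx}$, and identities $\theta_x(1_x)$. The second factor consists of arbitrary families of $R_x$-modules with all structure maps zero; these satisfy \eqref{eqn-ass-con-reps-satisfy} trivially.

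\textbf{Where each route stops.} Your first attempt only gives $e_x=\iden$ on $\sum_y\im({}_{x}\alpha_{y})$, which lies inside $e_xM_x$. Your fallback, checking directly that such a representation is an $\cl{A}$-module, is the same false claim rather than a way around it.

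\textbf{A minimal counterexample.} Take one vertex, $R_x={}_{x}A_{x}=k$ a field, $c_{xxx}$ the multiplication, and $\theta_x=\iden_k$. Then $\End_{\sr{T}(\fk{S})}(x)\cong k[t]$. The ideal $\sr{I}(\bff{c})$ is spanned by the elements $t^{n}-t^{n-1}$ with $n\geq 2$, so it equals $(t^{2}-t)$. Therefore $\Rep{\fk{S},\sr{I}(\bff{c})}\simeq\lMod{k\times k}$, which has two isoclasses of simple objects, whereas $\lMod{\cl{A}}=\lMod{k}$ has one. The same example defeats the ``preserving identity morphisms'' step in the proof of Lemma~\ref{lem-full-comm-condition}.

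\textbf{Comparison with the paper and a repair.} The paper's proof is only a citation of Simson's Theorem~2.1 with the remark that it generalises, so it does not address this point either. A true version needs two changes:
\begin{itemize}
\item add to the ideal the elements $r-\theta_x(r)$ (degree $0$ minus degree $1$) for $r\in R_x$, or equivalently restrict to representations with $e_x=\iden$;
\item take $\End_{\cl{A}}(x)={}_{x}A_{x}$.
\end{itemize}
The second change is needed for $\cl{A}$ to be defined at all. With $\End_{\cl{A}}(x)=R_x$ there is no composition ${}_{x}A_{y}\times{}_{y}A_{x}\to R_x$ unless $\theta_x$ is bijective. Identifying $R_x$ with $\theta_x(R_x)$ does not help, since $c_{xyx}$ need not land there.

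With these corrections your functor $\sr{E}$ argument works as you planned. The length-$\geq 2$ generators reduce every morphism to degree at most $1$, and the new generators reduce degree $0$ to degree $1$. On degree $1$ the functor $\sr{E}$ is the identity, so its kernel is exactly the enlarged ideal.
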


\begin{proof}
    This is a result of Simson \cite[Theorem~2.1]{Simson1979}, who assumes  each $\tennsor{}{R}{x}$ is a division ring, however   the proof  generalises to our setting with no complications. 
\end{proof}

\begin{lem}
\label{lem-full-comm-condition}
Let $\cl{A}$ be a small preadditive category with object set $\ff{C}$, and let 
 \[
    \fk{S}(\cl{A})=(\End_{\cl{A}}(x),\Hom_{\cl{A}}(x,y)).
    \]
Then composition in $\cl{A}$ defines an $\ff{S}$-commutativity condition $\bff{c}(\cl{A})$ on $\fk{S}(\cl{A})$ where $\ff{S}=\ff{C}^{3}$ and such that $\lMod{\cl{A}}$ is equivalent to $\Rep{\fk{S}(\cl{A}),\sr{I}(\bff{c}(\cl{A}))}$. 
\end{lem}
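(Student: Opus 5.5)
We define $\tennsor{}{c}{zyx}\colon \Hom_{\cl{A}}(y,z)\otimes_{\End_{\cl{A}}(y)}\Hom_{\cl{A}}(x,y)\to\Hom_{\cl{A}}(x,z)$ by $g\otimes f\mapsto g\circ f$. This is well defined because composition is biadditive and $\End_{\cl{A}}(y)$-balanced: $(gr)\circ f=g\circ(rf)$ for $r\in\End_{\cl{A}}(y)$, which is associativity of composition. It is $\End_{\cl{A}}(z)$-$\End_{\cl{A}}(x)$-bilinear, again by associativity. We take $\tennsor{z}{\theta}{}\colon \End_{\cl{A}}(z)\to\Hom_{\cl{A}}(z,z)=\End_{\cl{A}}(z)$ to be the identity. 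Note that in $\fk{S}(\cl{A})$ the bimodule $\tennsor{z}{A}{z}$ is $\End_{\cl{A}}(z)$ itself.

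Next we check \eqref{eqn-associativity-condition} and \eqref{eqn-commutativity-conditions} with $\ff{S}=\ff{C}^{3}$. Evaluated on pure tensors, \eqref{eqn-associativity-condition} reads $h\circ(g\circ f)=(h\circ g)\circ f$. The first two equations in \eqref{eqn-commutativity-conditions} say that $\tennsor{}{c}{wwv}(r\otimes a)=r\circ a$, which is the left module action $ra$, and similarly on the right. The third says that $\theta$ is multiplicative, which holds since $\theta$ is the identity. So $(\bff{c}(\cl{A}),\bff{\theta})$ is a full commutativity condition.

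For the equivalence we go through Lemma~\ref{lem-ideals} and Lemma~\ref{lem-reps-satisfying-ass-conditions}. A representation in $\Rep{\fk{S}(\cl{A}),\sr{I}(\bff{c}(\cl{A}))}$ is exactly a family of $\End_{\cl{A}}(x)$-modules $\tennsor{x}{M}{}$ with maps $\tennsor{y}{\alpha}{x}$ satisfying \eqref{eqn-ass-con-reps-satisfy}, that is, $\alpha(g\otimes\alpha(f\otimes m))=\alpha(g\circ f\otimes m)$. Given an additive functor $F\colon\cl{A}\to\Ab$, set $\tennsor{x}{M}{}=F(x)$ and $\tennsor{y}{\alpha}{x}(f\otimes m)=F(f)(m)$. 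These maps are balanced because $F$ preserves composition, and they satisfy \eqref{eqn-ass-con-reps-satisfy} by functoriality. Conversely, given such a representation, define $F(x)=\tennsor{x}{M}{}$ and $F(f)=\tennsor{y}{\alpha}{x}(f\otimes-)$.

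The main point to handle carefully is the diagonal case $x=y$. There $\tennsor{x}{A}{x}=\End_{\cl{A}}(x)$ acts on $\tennsor{x}{M}{}$ in two ways: through the module structure, and through $\tennsor{x}{\alpha}{x}$. These two actions must agree, and the identity of $x$ must act as the identity. Both follow from the commutativity condition. Since $\theta$ is the identity, \eqref{eqn-ass-con-reps-satisfy} for $(x,x,x)$ together with the balancing gives $\tennsor{x}{\alpha}{x}(r\otimes m)=\tennsor{x}{\alpha}{x}(1\cdot r\otimes m)=\tennsor{x}{\alpha}{x}(1\otimes rm)$. So the two actions agree precisely when $\tennsor{x}{\alpha}{x}(1_x\otimes m)=m$. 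This unitality is not implied by \eqref{eqn-ass-con-reps-satisfy} alone, so the plan is to check it as follows. Set $e(m)=\tennsor{x}{\alpha}{x}(1_x\otimes m)$. Then \eqref{eqn-ass-con-reps-satisfy} shows $e$ is an idempotent endomorphism of $\tennsor{x}{M}{}$ compatible with all the $\tennsor{y}{\alpha}{x}$.

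If unitality does fail, there are two ways to proceed. The first is to adjust the ideal by adding the elements $1_x-\tennsor{x}{\theta}{}(1_x)\in\tennsor{x\,}{\underline{A}}{\,x}$, which identify the length-$0$ path with the length-$1$ loop. The second is to invoke the preceding lemma (Simson's theorem), which already gives $\Rep{\fk{S},\sr{I}(\bff{c})}\simeq\lMod{\cl{A}'}$, where $\cl{A}'$ has $\End_{\cl{A}'}(x)=\tennsor{}{R}{x}$ and $\Hom_{\cl{A}'}(x,y)=\tennsor{y}{A}{x}$ for $x\neq y$. In our setting $\cl{A}'=\cl{A}$, so this yields the result directly once $\bff{c}(\cl{A})$ is shown to be a full commutativity condition. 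I would present this second route: verify the commutativity condition in detail as above, then apply the previous lemma, and observe that the resulting category is literally $\cl{A}$.
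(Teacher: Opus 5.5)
Your check that composition gives a full commutativity condition, with each $\tennsor{z}{\theta}{}$ the identity, is correct and is what the paper does. The gap is in the equivalence. The diagonal problem you uncovered is not a technicality: it is a counterexample to the statement as written.

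The ideal $\sr{I}(\bff{c})$ only identifies length-$\ff{n}$ paths with length-$(\ff{n}-1)$ paths for $\ff{n}\geq 2$. It never involves $\bff{\theta}$, so nothing forces $e=\tennsor{x}{\alpha}{x}(1_x\otimes -)$ to be the identity. Your sentence saying both agreements ``follow from the commutativity condition'' is therefore mistaken: that condition constrains $\bff{c}$ and $\bff{\theta}$, not representations. Setting every $\tennsor{y}{\alpha}{x}=0$ already gives objects of $\Rep{\fk{S}(\cl{A}),\sr{I}(\bff{c}(\cl{A}))}$ outside the essential image of $\lMod{\cl{A}}$. Concretely, suppose $\cl{A}$ has one object with $\End_{\cl{A}}(x)=K$ a field. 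Then $\End_{\sr{T}(\fk{S}(\cl{A}))}(x)\cong K[t]$ and $\sr{I}(\bff{c}(\cl{A}))=(t^{2}-t)$, so the representation category is $\lMod{K\times K}$, which is not equivalent to $\lMod{K}$. In general, splitting by your idempotents $e$ shows $\Rep{\fk{S}(\cl{A}),\sr{I}(\bff{c}(\cl{A}))}\simeq\lMod{\cl{A}}\times\prod_{x\in\ff{C}}\lMod{\End_{\cl{A}}(x)}$, where the second factor consists of the representations with all structure maps zero.

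So the route you say you would present does not close the gap. The preceding lemma is stated for the same ideal $\sr{I}(\bff{c})$; applied to $\fk{S}(\cl{A})$ it asserts exactly the claim refuted above, so citing it only relocates the problem. The paper's own proof has the same hole: it asserts that every representation defines an assignment on $\cl{A}$ ``preserving identity morphisms'', which is precisely the unitality that can fail.

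What is true, and what your first option proves, is a corrected statement. Enlarge the ideal by the elements $r-\tennsor{x}{\theta}{}(r)$, with $r\in\End_{\cl{A}}(x)$ read as a length-$0$ path minus a length-$1$ path. Equivalently, restrict to representations with $\tennsor{x}{\alpha}{x}(1_x\otimes -)=\tennsor{}{\iden}{\tennsor{x}{M}{}}$. Then the quotient of $\sr{T}(\fk{S}(\cl{A}))$ is $\cl{A}$ itself, and your direct comparison $F\mapsto(F(x),\,f\otimes m\mapsto F(f)(m))$ is an equivalence. Present that argument, stating the correction explicitly, rather than appealing to the preceding lemma.
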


\begin{proof}
Let $\fk{S}(\cl{A})=(\tennsor{}{R}{x},\tennsor{y}{A}{x})$. 
Let $\tennsor{}{a}{yxw}\colon \tennsor{y}{A}{x}\times \tennsor{x}{A}{w}\to \tennsor{y}{A}{w}$ denote the composition maps in $\cl{A}$. 
%
%
Using that composition is associative and bilinear, we ensure $\tennsor{y}{A}{x}$ becomes an $\tennsor{}{R}{y}$-$\tennsor{}{R}{x}$-bimodule, and it follows that $\tennsor{}{a}{yxw}$ is balanced over $\tennsor{}{R}{x}$. 
Thus the universal property of the tensor product defines the $\tennsor{}{R}{y}$-$\tennsor{}{R}{w}$-bilinear maps  $\tennsor{}{c}{yxw}\colon \tennsor{y}{A}{x}\otimes \tennsor{x}{A}{w}\to \tennsor{y}{A}{w}$.

The associativity of composition gives  \eqref{eqn-associativity-condition}. 
To apply Lemma~\ref{lem-justifying-name-ass-condition}, we take $\tennsor{}{\theta}{z}$ to be the identity on $\tennsor{}{R}{z}=\End_{\cl{A}}(z)=\Hom_{\cl{A}}(z,z)=\tennsor{z}{A}{z}$, and so $\bff{c}$ is an $\ff{S}$-commutativity condition. 
Let $\sr{M}\colon \cl{A}\to \Ab$ be an additive functor. 
For each $x\in\ff{C}$ the abelian group $\tennsor{x}{M}{}=\sr{M}(x)$ is an $\tennsor{}{R}{x}$-module, 
and for each $(x,y)\in\ff{C}^{2}$ the map $\tennsor{y}{A}{x}\times \tennsor{x}{M}{}\to \tennsor{y}{M}{}$ given by $(a,m)\mapsto \sr{M}(a)(m)$ is $\tennsor{}{R}{x}$-balanced and left $\tennsor{}{R}{y}$-linear. 
%
%
Conversley, any representation $M=(\tennsor{x}{M}{},\tennsor{y}{\alpha}{x})$ of $\fk{S}(\cl{A})$ defines an assignment $\sr{M}\colon \cl{A}\to \Ab$ on objects and morphisms, preserving identity morphisms.
To say that $\sr{M}$ preserves composition is to say  that \eqref{eqn-ass-con-reps-satisfy} holds, or equivalently by Lemma~\ref{lem-reps-satisfying-ass-conditions}, that $M$ lies in $\Rep{\fk{S}(\cl{A}),\sr{I}(\bff{c})}$. 

It  is straightforward to define $\lMod{\cl{A}}\to\Rep{\fk{S}(\cl{A}),\sr{I}(\bff{c})}$ fully-faithfully on morphisms, which we have seen is dense on objects. 
\end{proof}

Several examples we consider, in Section~\ref{sec-examples}, follow the same pattern. 
To each object in a given small category one associates some ring,  and to each morphism  one associates a bimodule  over the rings given by the domain and codomain, in a functorial way. 
Definition~\ref{defn-byemod} and Lemma~\ref{small-cat-becomes-species} make this precise. 

 We refer the reader to Definition~\ref{defn-double-category}, Example~\ref{example-cats-are-trivially-double-cats}, Example~\ref{example-double-cat-bimodules} and Definition~\ref{defn-lax-double-func}.  

\begin{defn}
\label{defn-byemod}
        Let $\Rings$ be the category of rings, and let $\Byemod$ be the  double category defined as follows. 
       The category of objects and vertical morphisms is $\Rings$. 
       The horizontal morphisms from $R$ to $S$ are the $S$-$R$-bimodules. 
        A square morphism from a $Q$-$R$-bimodule $A$ to an $S$-$T$-bimodule $ B$ is a pair of ring maps $Q\to S$ and $R\to T$, together with a $Q$-$R$-bilinear map $A\to B$. 
    Horizontal composition is given by tensoring bimodules. 
        %
        %
\end{defn}

\begin{lem}\label{small-cat-becomes-species}
    Let $\cl{D}=(\cl{V},\cl{H},\Delta,\sr{L},\sr{R},\odot,\bff{\alpha},\bff{\lambda},\bff{\rho})$ be a double category where
    %
 $\cl{V}$ and $\cl{H}$ are small, and let $\ff{S}\subseteq \ff{C}^{3}$ where $\ff{C}$ is the object set of $\cl{V}$. 
For each  double functor 
\[
(\tennsor{}{\RusI}{0},\tennsor{}{\RusI}{1},\tennsor{}{\RusI}{\odot},\tennsor{}{\RusI}{\Delta})\colon \cl{D}\to \Byemod
\]
the pair $(\tennsor{}{\RusI}{\odot},\tennsor{}{\RusI}{\Delta})$ defines an $\ff{S}$-commutativity condition $\bff{c}(\RusI)$ on the  species
    \[
    \fk{S}(\RusI)
    =
    (\tennsor{}{\RusI}{0}(x),
    \tennsor{}{{\bigoplus}}{\fk{f}\in \cl{H}\,\colon \sr{L}(\fk{f})=y,\,\sr{R}(\fk{f})=x}
    \tennsor{}{\RusI}{1}(\fk{f}
    )).
    \]
    %
    %
    %
\end{lem}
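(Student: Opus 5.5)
We build the data directly from the double functor and check the axioms of Definition~\ref{defn-associativity} one at a time. Write $\tennsor{}{R}{x}=\tennsor{}{\RusI}{0}(x)$ and $\tennsor{y}{A}{x}=\bigoplus_{\fk{f}}\tennsor{}{\RusI}{1}(\fk{f})$, the sum running over horizontal morphisms $\fk{f}$ with $\sr{L}(\fk{f})=y$ and $\sr{R}(\fk{f})=x$. Each summand is an $\tennsor{}{R}{y}$-$\tennsor{}{R}{x}$-bimodule, because $\RusI$ sends horizontal morphisms to horizontal morphisms of $\Byemod$ with the prescribed source and target. Hence $\fk{S}(\RusI)$ is a species.

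\emph{The maps $\tennsor{}{c}{zyx}$.} Since tensor products commute with direct sums,
\[
\tennsor{z}{A}{y}\otimes_{\tennsor{}{R}{y}}\tennsor{y}{A}{x}\cong\bigoplus_{(\fk{g},\fk{f})}\tennsor{}{\RusI}{1}(\fk{g})\otimes\tennsor{}{\RusI}{1}(\fk{f}),
\]
the sum running over composable pairs with $\sr{R}(\fk{g})=y=\sr{L}(\fk{f})$. On the summand for $(\fk{g},\fk{f})$ we take the globular comparison square
\[
(\tennsor{}{\RusI}{\odot})_{\fk{g},\fk{f}}\colon\tennsor{}{\RusI}{1}(\fk{g})\otimes\tennsor{}{\RusI}{1}(\fk{f})\to\tennsor{}{\RusI}{1}(\fk{g}\odot\fk{f}),
\]
and then include into the summand of $\tennsor{z}{A}{x}$ indexed by $\fk{g}\odot\fk{f}$. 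The comparison square has identity ring maps on its vertical sides, so it is an $\tennsor{}{R}{z}$-$\tennsor{}{R}{x}$-bilinear map, and the assembled maps $\tennsor{}{c}{zyx}$ are bilinear as required.

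\emph{The maps $\tennsor{z}{\theta}{}$.} The unit comparison $(\tennsor{}{\RusI}{\Delta})_{z}\colon\tennsor{}{R}{z}\to\tennsor{}{\RusI}{1}(\Delta(z))$ goes from the identity bimodule on $\tennsor{}{R}{z}$ to the image of the horizontal identity $\Delta(z)$. We let $\tennsor{z}{\theta}{}$ be this map followed by the inclusion of the summand indexed by $\Delta(z)$ into $\tennsor{z}{A}{z}$. It is additive, and in fact $\tennsor{}{R}{z}$-bilinear.

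\emph{Checking the axioms.} Equation \eqref{eqn-associativity-condition} is checked summand-wise on $\tennsor{}{\RusI}{1}(\fk{h})\otimes\tennsor{}{\RusI}{1}(\fk{g})\otimes\tennsor{}{\RusI}{1}(\fk{f})$. It is the associativity coherence axiom for a (lax) double functor: the two composites of $\tennsor{}{\RusI}{\odot}$ agree once one composes with $\tennsor{}{\RusI}{1}(\bff{\alpha})$ and the associator of $\Byemod$. The first two equations of \eqref{eqn-commutativity-conditions} are the left and right unit coherence axioms, combined with $\bff{\lambda},\bff{\rho}$ and the canonical isomorphisms $\tennsor{}{R}{w}\otimes A\cong A$. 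The third equation then follows from the first, by the remark after Definition~\ref{defn-associativity}, since each $\tennsor{z}{\theta}{}$ is bilinear.

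\emph{Main obstacle.} The delicate point is that $\cl{D}$ need not be strict. The composite $\fk{h}\odot(\fk{g}\odot\fk{f})$ need not equal $(\fk{h}\odot\fk{g})\odot\fk{f}$, so the two sides of \eqref{eqn-associativity-condition} land a priori in different summands of $\tennsor{z}{A}{w}$. Likewise $\Delta(w)\odot\fk{f}$ need not equal $\fk{f}$, so the unit equations may also compare different summands. I would first try to resolve this using the conventions of Appendix~\ref{appendix-double-cats}. If the double categories there have identity associator and unitor globular cells (for example, when $\cl{H}$ is a genuine category, as in Example~\ref{example-cats-are-trivially-double-cats}), then the summands coincide and the coherence axioms reduce exactly to the required identities. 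If this fails, I would instead index $\tennsor{y}{A}{x}$ by isomorphism classes of horizontal morphisms under globular isomorphisms, so that $\bff{\alpha},\bff{\lambda},\bff{\rho}$ identify summands canonically. This bookkeeping, rather than any computation, is where I expect the real work to lie.
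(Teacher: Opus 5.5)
Your construction and verification are the same as the paper's proof. The paper defines $(\RusI_{\odot})_{zyx}$ summand-wise by \eqref{eqn-good-compatable-cheers-mate} and takes $\tennsor{z}{\theta}{}$ from $(\RusI_{\Delta})_{z}$. It obtains \eqref{eqn-associativity-condition} from \eqref{eqn-lax-double-functor-comm-conditions-1}, the first two equations of \eqref{eqn-commutativity-conditions} from \eqref{eqn-lax-double-functor-comm-conditions-2} and \eqref{eqn-lax-double-functor-comm-conditions-3}, and the third from bilinearity. Both you and the paper also use that the comparison cells are globular, which is standard but not spelled out in Definition~\ref{defn-lax-double-func}.

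The obstacle you single out is genuine, and the paper's proof passes over it. It writes a single inclusion $\iota_{\fk{hgf}}$ for both bracketings and never uses $\RusI_{1}(\bff{\alpha})$, $\RusI_{1}(\bff{\lambda})$ or $\RusI_{1}(\bff{\rho})$. In fact the lemma is false as stated for non-strict $\cl{D}$. Take the small monoidal category of hereditarily countable sets, with Kuratowski product $\times$ and unit $1=\{\emptyset\}$. Regard it as a double category with one object $\ast$ and only the identity vertical morphism; Definition~\ref{defn-double-category} then reduces to the monoidal axioms. Define $\RusI$ by:
\begin{itemize}
\item $\RusI_{0}(\ast)=K$, a field;
\item $\RusI_{1}(X)=K[X]$, the vector space with basis $X$;
\item $(\RusI_{\odot})_{X,Y}(e_{x}\otimes e_{y})=e_{(x,y)}$;
\item $(\RusI_{\Delta})_{\ast}(k)=ke_{\emptyset}$.
\end{itemize}
One checks directly that \eqref{eqn-lax-double-functor-comm-conditions-1}--\eqref{eqn-lax-double-functor-comm-conditions-3} commute.

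Now take $\ff{S}=\{(\ast,\ast,\ast)\}$. The two sides of \eqref{eqn-associativity-condition} send $e_{\emptyset}\otimes e_{\emptyset}\otimes e_{\emptyset}$ to basis vectors in the summands $K[1\times(1\times 1)]$ and $K[(1\times 1)\times 1]$ of $\tennsor{\ast}{A}{\ast}$. These summands are distinct because $(\emptyset,\emptyset)\neq\emptyset$. For the same reason, $c_{\ast\ast\ast}(\tennsor{\ast}{\theta}{}(1)\otimes e_{\emptyset})=e_{(\emptyset,\emptyset)}$ lies in $K[1\times 1]$, not in $K[1]$, so the unit equation fails too.

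So your first option is the right repair: assume $\bff{\alpha}$, $\bff{\lambda}$ and $\bff{\rho}$ are identities. The paper assumes this tacitly. It holds for every $\cl{D}$ in the paper's examples: categories via Example~\ref{example-cats-are-trivially-double-cats}, path categories of thread quivers, and the concatenation category of the field theory example. With this hypothesis your argument goes through exactly as written.

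Drop the second option. Re-indexing by isomorphism classes changes $\fk{S}(\RusI)$, so it does not prove the stated lemma. It also cannot work in general. Identifying summands needs chosen globular isomorphisms that stay compatible with $\bff{\alpha}$ after applying $\RusI_{1}$. In the example above, the summand for the class of $\bb{N}$ would require a bijection $\phi\colon\bb{N}\times\bb{N}\to\bb{N}$ with $\phi(\phi(a,b),c)=\phi(a,\phi(b,c))$. Injectivity of $\phi$ then forces $\phi(a,b)=a$ for all $a,b$, which contradicts injectivity again.
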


\begin{proof}
    That  $\tennsor{}{\RusI}{0}$ is an object assignment,  to each  $x\in \ff{C}$ we consider a ring  
$\tennsor{}{\RusI}{0}(x)$.  
    %
    
    That  $\tennsor{}{\RusI}{1}$ is an object assignment says, on a horizontal morphism, so a morphism $\fk{f}\in\Hom_{\cl{C}}(x,y)$, we have a horizontal morphism in $\Byemod$, meaning a bimodule $\tennsor{}{\RusI}{1}(\fk{f}
    )$. 
    The compatibility equations between $\tennsor{}{\RusI}{1}$ and $\tennsor{}{\RusI}{0}$ say that $\tennsor{}{\RusI}{1}(\fk{f}
    )$ is an $\tennsor{}{\RusI}{0}(y)$-$\tennsor{}{\RusI}{0}(x)$-bimodule.  
    Hence so is the direct sum over  $\fk{f}\in\cl{H}$ with $\sr{L}(\fk{f})=y$ and $\sr{R}(\fk{f})=x$, so $\fk{S}(\RusI)$ is a species. 

    For each $(u,v)\in\ff{C}^{2}$ let $\tennsor{v}{A}{u}=\tennsor{}{{\bigoplus}}{{\fk{d}\in \cl{H}\,\colon \sr{L}(\fk{d})=v,\,\sr{R}(\fk{d})=u}}\tennsor{}{\RusI}{1}(\fk{d}
    )$, and for each  $\fk{e}\in\cl{H}$ with $\sr{L}(\fk{e})=v$ and $\sr{R}(\fk{e})=u$ write $\tennsor{}{\iota}{\fk{e}}\colon \tennsor{}{\RusI}{1}(\fk{e})\to \tennsor{v}{A}{u}$ for the canonical inclusion. 

    For each $x\in\ff{C}$ and each pair $(\fk{g},\fk{f})$ in $\cl{H}$ with $\sr{R}(\fk{g})=\sr{L}(\fk{f})$, we have bilinear maps 
    \[
    \begin{array}{cc}
    \tennsor{}{{(\tennsor{}{\RusI}{\odot})}}{{\fk{g},\fk{f}}}\colon \tennsor{}{\RusI}{1}(\fk{g}
    )\tennsor{}{\otimes}{{\tennsor{}{\RusI}{0}(y)}}\tennsor{}{\RusI}{1}(\fk{f})\to \tennsor{}{\RusI}{1}(\fk{g}\odot\fk{f}
    ),
         & 
         \tennsor{}{{(\tennsor{}{\RusI}{\Delta})}}{{x}}\colon \tennsor{}{\RusI}{0}(x)\to \tennsor{}{\RusI}{1}(\Delta(x)).
    \end{array}
    \]
    %
    %
    
    %
    Applying universal properties to the bilinear maps $\tennsor{}{\iota}{\fk{gf}}\circ \tennsor{}{{(\tennsor{}{\RusI}{\odot})}}{{\fk{g},\fk{f}}}\colon  \tennsor{}{\RusI}{1}(\fk{g}
    )\tennsor{}{\otimes}{{\tennsor{}{\RusI}{0}(y)}}\tennsor{}{\RusI}{1}(\fk{f})\to\tennsor{z}{A}{x}$,  and using the distributivity of tensor products over direct sums,   there exists a unique bilinear map $\tennsor{}{{(\tennsor{}{\RusI}{\odot})}}{zyx}\colon \tennsor{z}{A}{y}\tennsor{}{\otimes}{\tennsor{}{\RusI}{0}(y)}\tennsor{y}{A}{x}\to \tennsor{z}{A}{x}$ for each $(z,y,x)\in\ff{C}^{3}$ such that
    \begin{equation}
     \tennsor{}{{(\tennsor{}{\RusI}{\odot})}}{zyx}\circ (\tennsor{}{\iota}{\fk{g}}\otimes \tennsor{}{\iota}{\fk{f}})=\tennsor{}{\iota}{\fk{gf}}\circ \tennsor{}{{(\tennsor{}{\RusI}{\odot})}}{{\fk{g},\fk{f}}},\quad(\sr{L}(\fk{g})=z, \,\sr{R}(\fk{g})=y=\sr{L}(\fk{f}),\,\sr{R}(\fk{f})=x).
        \label{eqn-good-compatable-cheers-mate}
    \end{equation}
We assert that $(\tennsor{}{{(\tennsor{}{\RusI}{\odot})}}{zyx})$ is an $\ff{S}$-associativity condition. 
Fix elements $(z,y,w)$, $(y,x,w)$, $(z,x,w)$, $(z,y,x)$ of $\ff{S}$, and fix a triple of horizontal morphisms $(\fk{h},\fk{g},\fk{f})$ such that $\sr{L}(\fk{h})=z$, $\sr{R}(\fk{h})=y=\sr{L}(\fk{g})$, $\sr{R}(\fk{g})=x=\sr{L}(\fk{f})$, and  $\sr{R}(\fk{f})=w$. 

The commutative squares \eqref{eqn-lax-double-functor-comm-conditions-1} each define the outer quadrilateral in 
\[
\begin{tikzcd}[column sep = 0.75cm, row sep = 0.3cm]
 {\tennsor{}{\RusI}{1}(\fk{h})}\otimes {\tennsor{}{\RusI}{1}(\fk{g})}\otimes {\tennsor{}{\RusI}{1}(\fk{f})}
\ar[dd, "\tennsor{}{\iota}{\fk{h}}\,\otimes\, \tennsor{}{\iota}{\fk{g}}\,\otimes\, \tennsor{}{\iota}{\fk{f}}"]
\ar[rr, "\tennsor{}{\iden}{ {\tennsor{}{\RusI}{1}(\fk{h})}}\,\otimes\, {\tennsor{}{{(\tennsor{}{\RusI}{\odot})}}{{\fk{g},\fk{f}}}}"]
\ar[dddddd, bend right = 80, "{\tennsor{}{{(\tennsor{}{\RusI}{\odot})}}{{\fk{h},\fk{g}}}}\,\otimes\,\tennsor{}{\iden}{ {\tennsor{}{\RusI}{1}(\fk{f})}}"', pos = 0.7]
&&
 {\tennsor{}{\RusI}{1}(\fk{h})}\otimes {\tennsor{}{\RusI}{1}(\fk{gf})}
\ar[dd, "\tennsor{}{\iota}{\fk{h}}\,\otimes\, \tennsor{}{\iota}{\fk{g}\fk{f}}"']
\ar[dddddd, bend left = 80, "{\tennsor{}{{(\tennsor{}{\RusI}{\odot})}}{{\fk{h},\fk{gf}}}}", pos = 0.7]
\\
\\
\tennsor{z}{A}{y} \otimes  \tennsor{y}{A}{x} \otimes  \tennsor{x}{A}{w} 
\ar[rr, "\tennsor{}{\iden}{\tennsor{z}{A}{y} }\,\otimes\,  \tennsor{}{{(\tennsor{}{\RusI}{\odot})}}{yxw} "]
\ar[dd, "{\tennsor{}{{(\tennsor{}{\RusI}{\odot})}}{zyx} \,\otimes\, \tennsor{}{\iden}{\tennsor{x}{A}{w} }}"]
&&
\tennsor{z}{A}{y} \otimes  \tennsor{y}{A}{w} 
\ar[dd, "{\tennsor{}{{(\tennsor{}{\RusI}{\odot})}}{zyw}}"']
\\
&(*)&
\\
\tennsor{z}{A}{x} \otimes  \tennsor{x}{A}{w} \ar[rr, "{\tennsor{}{{(\tennsor{}{\RusI}{\odot})}}{zxw} }"']
&&
\tennsor{z}{A}{w} 
\\
\\
 {\tennsor{}{\RusI}{1}(\fk{hg})}\otimes {\tennsor{}{\RusI}{1}(\fk{f})}
\ar[rr, "{\tennsor{}{{(\tennsor{}{\RusI}{\odot})}}{{\fk{hg},\fk{f}}}}"']
\ar[uu, "\tennsor{}{\iota}{\fk{h}\fk{g}}\,\otimes\, \tennsor{}{\iota}{\fk{f}}"']
&&
 {\tennsor{}{\RusI}{1}(\fk{hgf})}
\ar[uu, "\tennsor{}{\iota}{\fk{h}\fk{g}\fk{f}}"]
\end{tikzcd}
\]
To check that \eqref{eqn-associativity-condition} holds, we require that square $(*)$ commutes. 
Firstly, 
\begin{align*}
      (\tennsor{}{\iden}{\tennsor{z}{A}{y} }\otimes \tennsor{}{{(\tennsor{}{\RusI}{\odot})}}{{yxw}})
    \circ 
    (\tennsor{}{\iota}{\fk{h}}\otimes \tennsor{}{\iota}{\fk{g}}\otimes\tennsor{}{\iota}{\fk{f}})
    =
    (\tennsor{}{\iota}{\fk{h}}\otimes \tennsor{}{\iota}{\fk{g}\fk{f}})
    \circ 
    (\tennsor{}{\iden}{ {\tennsor{}{\RusI}{1}(\fk{h})}}\otimes{\tennsor{}{{(\tennsor{}{\RusI}{\odot})}}{{\fk{g},\fk{f}}}}),
     \\
    ( \tennsor{}{{(\tennsor{}{\RusI}{\odot})}}{{zyx}}\otimes \tennsor{}{\iden}{\tennsor{x}{A}{w} })
    \circ 
    (\tennsor{}{\iota}{\fk{h}}\otimes \tennsor{}{\iota}{\fk{g}}\otimes\tennsor{}{\iota}{\fk{f}})
    =
    (\tennsor{}{\iota}{\fk{h}\fk{g}}\otimes \tennsor{}{\iota}{\fk{f}})
    \circ
    ({\tennsor{}{{(\tennsor{}{\RusI}{\odot})}}{{\fk{h},\fk{g}}}}\otimes\tennsor{}{\iden}{ {\tennsor{}{\RusI}{1}(\fk{f})}}),
     \\
     \begin{array}{cc}
     \tennsor{}{{(\tennsor{}{\RusI}{\odot})}}{{zyw}} 
     \circ
     (\tennsor{}{\iota}{\fk{h}}\otimes \tennsor{}{\iota}{\fk{g}\fk{f}})
     =
     \tennsor{}{\iota}{\fk{h}\fk{g}\fk{f}}\circ {\tennsor{}{{(\tennsor{}{\RusI}{\odot})}}{{\fk{h},\fk{gf}}}}
     , 
     &
      \tennsor{}{\iota}{\fk{h}\fk{g}\fk{f}}\circ {\tennsor{}{{(\tennsor{}{\RusI}{\odot})}}{{\fk{hg},\fk{f}}}}
      =
      \tennsor{}{{(\tennsor{}{\RusI}{\odot})}}{{zxw}}
      \circ
      (\tennsor{}{\iota}{\fk{h}\fk{g}}\otimes \tennsor{}{\iota}{\fk{f}}). 
     \end{array}
\end{align*}
Here: each equation is found by considering various specifications of \eqref{eqn-good-compatable-cheers-mate}; and for the first (respectively, second) equation one tensors with $\tennsor{}{\iota}{\fk{h}}$ (respectively, $\tennsor{}{\iota}{\fk{f}}$).   
These equations say that, apart possibly from $(*)$, the internal squares  must commute. 
Altogether,
\begin{align*}
    \tennsor{}{{(\tennsor{}{\RusI}{\odot})}}{{zxw}}
    \circ 
    (\tennsor{}{{(\tennsor{}{\RusI}{\odot})}}{{zyx}} \otimes \tennsor{}{\iden}{\tennsor{x}{A}{w} })
    \circ 
    (\tennsor{}{\iota}{\fk{h}}\otimes \tennsor{}{\iota}{\fk{g}}\otimes\tennsor{}{\iota}{\fk{f}})
    =
    \tennsor{}{{(\tennsor{}{\RusI}{\odot})}}{{zxw}} 
    \circ 
    (\tennsor{}{\iota}{\fk{h}\fk{g}}\otimes \tennsor{}{\iota}{\fk{f}})
    \circ
    ({\tennsor{}{{(\tennsor{}{\RusI}{\odot})}}{{\fk{h},\fk{g}}}\otimes\tennsor{}{\iden}{ {\tennsor{}{\RusI}{1}(\fk{f})}}})
    \\
    =
    \tennsor{}{\iota}{\fk{h}\fk{g}\fk{f}}\circ {\tennsor{}{{(\tennsor{}{\RusI}{\odot})}}{{\fk{hg},\fk{f}}}}
    \circ
    ({\tennsor{}{{(\tennsor{}{\RusI}{\odot})}}{{\fk{h},\fk{g}}}\otimes\tennsor{}{\iden}{ {\tennsor{}{\RusI}{1}(\fk{f})}}})
    =
    \tennsor{}{\iota}{\fk{h}\fk{g}\fk{f}}\circ {\tennsor{}{{(\tennsor{}{\RusI}{\odot})}}{{\fk{h},\fk{gf}}}}
    \circ
    (\tennsor{}{\iden}{ {\tennsor{}{\RusI}{1}(\fk{h})}}\otimes {\tennsor{}{{(\tennsor{}{\RusI}{\odot})}}{{\fk{g},\fk{f}}}})
    \\
    =
    \tennsor{}{{(\tennsor{}{\RusI}{\odot})}}{{zyw}} 
     \circ
     (\tennsor{}{\iota}{\fk{h}}\otimes \tennsor{}{\iota}{\fk{g}\fk{f}})
    \circ
    (\tennsor{}{\iden}{ {\tennsor{}{\RusI}{1}(\fk{h})}}\otimes {\tennsor{}{{(\tennsor{}{\RusI}{\odot})}}{{\fk{g},\fk{f}}}}
    =
    \tennsor{}{{(\tennsor{}{\RusI}{\odot})}}{{zyw}} 
     \circ
    (\tennsor{}{\iden}{\tennsor{z}{A}{y} }\otimes \tennsor{}{{(\tennsor{}{\RusI}{\odot})}}{{yxw}} )
    \circ 
    (\tennsor{}{\iota}{\fk{h}}\otimes \tennsor{}{\iota}{\fk{g}}\otimes\tennsor{}{\iota}{\fk{f}}).
\end{align*}
By the universal property of the coproduct $\tennsor{z}{A}{y}\otimes  \tennsor{y}{A}{x}\otimes  \tennsor{x}{A}{w}$, considering this final equation for each $\fk{f}$, $\fk{g}$ and $\fk{h}$, we have that the central internal square commutes.

    Consider the collection $\tennsor{}{{(\tennsor{}{\RusI}{\Delta})}}{}$ of $\tennsor{}{\RusI}{0}(x)$-bilinear maps $\tennsor{}{{(\tennsor{}{\RusI}{\Delta})}}{x}$.
    As discussed in Definition~\ref{defn-associativity}, since each  is $\tennsor{}{\RusI}{0}(x)$-bilinear, it suffices to observe that the first two equations in \eqref{eqn-commutativity-conditions} hold: and they express the commutativity of \eqref{eqn-lax-double-functor-comm-conditions-2} and \eqref{eqn-lax-double-functor-comm-conditions-3}.
\end{proof}


\begin{lem}
    \label{small-cat-becomes-species-morphisms}
    Let $\cl{D}$ be a double category whose objects form a set $\ff{C}$, let $\ff{S}\subseteq \ff{C}^{3}$ and let $\tennsor{}{\RusI}{}$ and $\tennsorp{}{\RusI}{}$ be  double functors of the form $\cl{D}\to \Byemod$. 
    If $(\tennsor{}{\RusEl}{0},\tennsor{}{\RusEl}{1})\colon \tennsor{}{\RusI}{}\Rightarrow\tennsorp{}{\RusI}{}$ is a double transformation   then  it induces a commutative square of additive functors
    \[\begin{tikzcd}
	{\sr{T}(\fk{S}(\RusI))} & {\sr{T}(\fk{S}(\RusI'))} \\
	{\sr{T}(\fk{S}(\RusI))/\sr{I}(\bff{c}(\RusI))} & {\sr{T}(\fk{S}(\RusI'))/\sr{I}(\bff{c}(\RusI'))}
	\arrow["{(\rusEl,\RusEl)}", from=1-1, to=1-2]
	\arrow[from=1-1, to=2-1, two heads]
	\arrow[from=1-2, to=2-2, two heads]
	\arrow[from=2-1, to=2-2]
\end{tikzcd}\]
where the vertical functors are the canonical quotients. 
\end{lem}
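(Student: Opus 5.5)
The plan is to reduce everything to Lemmas~\ref{lem-ring-and-bilinear-maps-induce-functor-between-tensor-cat} and~\ref{lem-ring-homs-and-compatible-maps-between-bimodules-ideals}.

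First I produce the top horizontal functor. A double transformation $(\tennsor{}{\RusEl}{0},\tennsor{}{\RusEl}{1})\colon\RusI\Rightarrow\RusI'$ gives, for each $x\in\ff{C}$, a vertical morphism in $\Byemod$, that is, a ring map $\tennsor{}{\rusEl}{x}=(\tennsor{}{\RusEl}{0})_{x}\colon \tennsor{}{\RusI}{0}(x)\to\tennsorp{}{\RusI}{0}(x)$. For each horizontal morphism $\fk{f}$ with $\sr{L}(\fk{f})=y$ and $\sr{R}(\fk{f})=x$ it gives a square in $\Byemod$, namely an additive map $(\tennsor{}{\RusEl}{1})_{\fk{f}}\colon \tennsor{}{\RusI}{1}(\fk{f})\to\tennsorp{}{\RusI}{1}(\fk{f})$. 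The boundary of that square is the pair $\tennsor{}{\rusEl}{y},\tennsor{}{\rusEl}{x}$, so this map satisfies the twisted bilinearity \eqref{eqn-ring-and-bilinear-maps-induce-functor-between-tensor-cat}. Summing over $\fk{f}$ with fixed source and target, using the universal property of the coproduct, gives $\tennsor{y}{\RusEl}{x}\colon\tennsor{y}{A}{x}\to\tennsorp{y}{A}{x}$, and this sum still satisfies \eqref{eqn-ring-and-bilinear-maps-induce-functor-between-tensor-cat}. Lemma~\ref{lem-ring-and-bilinear-maps-induce-functor-between-tensor-cat}, applied with the roles of the species arranged so that the functor goes $\sr{T}(\fk{S}(\RusI))\to\sr{T}(\fk{S}(\RusI'))$, then yields the identity-on-objects functor $(\rusEl,\RusEl)$.

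Second, by Lemma~\ref{lem-ring-homs-and-compatible-maps-between-bimodules-ideals}, it suffices to show
\[
\tennsor{h\,}{\underline{\RusEl}}{\,t}(\tennsor{h}{I}{t}(\bff{c}(\RusI)))\subseteq\tennsor{h}{I}{t}(\bff{c}(\RusI')).
\]
That lemma then gives the bottom functor, together with commutativity of the square: the induced functor is unique with the property that precomposing it with the quotient recovers the composite of $(\rusEl,\RusEl)$ with the other quotient. By Notation~\ref{defn-commutativity-cond-ideal}, $\tennsor{h}{I}{t}(\bff{c}(\RusI))$ is generated, as an image, by elements $\underline{a}-\tennsor{}{c}{\underline{x},\ff{m}}(\underline{a})$ with $\underline{a}\in\tennsor{}{A}{\underline{x}}$. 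The map $\tennsor{h\,}{\underline{\RusEl}}{\,t}$ acts componentwise on pure tensors and preserves the path decomposition. It is therefore enough to prove the intertwining identity
\[
\tennsor{}{\RusEl}{\underline{x}(\ff{m})}\circ\tennsor{}{c}{\underline{x},\ff{m}}=\tennsor{}{c'}{\underline{x},\ff{m}}\circ\tennsor{}{\RusEl}{\underline{x}}.
\]
Granting it, $\tennsor{h\,}{\underline{\RusEl}}{\,t}$ sends $\underline{a}-\tennsor{}{c}{\underline{x},\ff{m}}(\underline{a})$ to $\underline{b}-\tennsor{}{c'}{\underline{x},\ff{m}}(\underline{b})$ with $\underline{b}=\tennsor{}{\RusEl}{\underline{x}}(\underline{a})$, which is an element of the target ideal. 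Since $\tennsor{}{c}{\underline{x},\ff{m}}$ is the identity tensored with a single $\tennsor{}{c}{zyx}$, the identity reduces to the length-two statement
\[
\tennsor{z}{\RusEl}{x}\circ\tennsor{}{(\tennsor{}{\RusI}{\odot})}{zyx}=\tennsor{}{(\tennsorp{}{\RusI}{\odot})}{zyx}\circ(\tennsor{z}{\RusEl}{y}\otimes\tennsor{y}{\RusEl}{x}).
\]

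This last identity is the step I expect to carry the real content. To prove it I precompose with the coproduct inclusions $\tennsor{}{\iota}{\fk{g}}\otimes\tennsor{}{\iota}{\fk{f}}$ and use \eqref{eqn-good-compatable-cheers-mate} on both sides. This reduces it to the compatibility of $\tennsor{}{\RusEl}{1}$ with the composition comparison cells, $(\tennsor{}{\RusEl}{1})_{\fk{g}\odot\fk{f}}\circ(\tennsor{}{\RusI}{\odot})_{\fk{g},\fk{f}}=(\tennsorp{}{\RusI}{\odot})_{\fk{g},\fk{f}}\circ((\tennsor{}{\RusEl}{1})_{\fk{g}}\otimes(\tennsor{}{\RusEl}{1})_{\fk{f}})$, which is one of the axioms of a double transformation in Appendix~\ref{appendix-double-cats}. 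The identity then holds on each summand, and the universal property of the coproduct gives it on all of $\tennsor{z}{A}{y}\otimes\tennsor{y}{A}{x}$.

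The main obstacle is bookkeeping, in two places. The first is the direction of the functor in Lemma~\ref{lem-ring-and-bilinear-maps-induce-functor-between-tensor-cat}, which maps from the species whose bimodules are the source of the maps $\RusEl$; I must check that the species are plugged in so that this matches the arrow $\sr{T}(\fk{S}(\RusI))\to\sr{T}(\fk{S}(\RusI'))$. The second is checking that the $\otimes_{\tennsor{}{R}{y}}$ versus $\otimes_{\tennsorp{}{R}{y}}$ tensor maps are well defined, which follows from the balancing given by \eqref{eqn-ring-and-bilinear-maps-induce-functor-between-tensor-cat}.
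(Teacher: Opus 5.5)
Your proposal is correct and follows the paper's proof. The paper likewise obtains $(\rusEl,\RusEl)$ from Lemma~\ref{lem-ring-and-bilinear-maps-induce-functor-between-tensor-cat}, reduces via Lemma~\ref{lem-ring-homs-and-compatible-maps-between-bimodules-ideals} to showing $\tennsor{h\,}{\underline{\RusEl}}{\,t}$ maps $\sr{I}(\bff{c}(\RusI))$ into $\sr{I}(\bff{c}(\RusI'))$, and checks this on summands $\tennsor{}{\RusI}{1}(\fk{g})\otimes\tennsor{}{\RusI}{1}(\fk{f})$ using axiom \eqref{eqn-lax-double-functor-comm-conditions-4}; the one difference is that your intertwining identity explicitly covers generators $\underline{a}-\tennsor{}{c}{\underline{x},\ff{m}}(\underline{a})$ from paths of any length, whereas the paper writes out only the length-two case.
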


\begin{proof}
    Let $\tennsor{}{\RusI}{}=(\tennsor{}{\RusI}{0},\tennsor{}{\RusI}{1},\tennsor{}{\RusI}{\odot},\tennsor{}{\RusI}{\Delta})$ and $\tennsorp{}{\RusI}{}=(\tennsorp{}{\RusI}{0},\tennsorp{}{\RusI}{1},\tennsorp{}{\RusI}{\odot},\tennsorp{}{\RusI}{\Delta})$. 
    To each object $x\in\ff{C}$ we have a ring map $\tennsor{}{{(\tennsor{}{\RusEl}{0})}}{x}\colon \tennsor{}{\RusI}{0}(x)\to \tennsorp{}{\RusI}{0}(x)$. 
    To each horizontal morphism  $\fk{f}$ we have an additive map  $\tennsor{}{{(\tennsor{}{\RusEl}{1})}}{\fk{f}}\colon \tennsor{}{\RusI}{1}(\fk{f})\to \tennsorp{}{\RusI}{1}(\fk{f})$, bilinear with respect to the restricted action along a pair of ring maps. 
    The compatibility equations in the first condition of double transformations, between \eqref{eqn-lax-double-functor-comm-conditions-3} and \eqref{eqn-lax-double-functor-comm-conditions-4}, say that these ring maps must be $(\tennsor{}{{(\tennsor{}{\RusEl}{0})}}{\sr{L}(\fk{f})},\tennsor{}{{(\tennsor{}{\RusEl}{0})}}{\sr{R}(\fk{f})})$, and that the map $\tennsor{}{{(\tennsor{}{\RusEl}{1})}}{\fk{f}}$ is $\tennsor{}{\RusI}{0}(\sr{L}(\fk{f}))$-$\tennsor{}{\RusI}{0}(\sr{R}(\fk{f}))$-bilinear. 
    %

    By Lemma~\ref{lem-ring-and-bilinear-maps-induce-functor-between-tensor-cat}  there is an additive functor $(\rusEl,\RusEl)\colon \sr{T}(\fk{S}(\RusI))\to \sr{T}(\fk{S}(\RusI'))$ which is the identity on objects. 
    By Lemma~\ref{lem-ring-homs-and-compatible-maps-between-bimodules-ideals} it suffices to show $\tennsor{h\,}{\underline{\RusEl}}{\,t}(\tennsor{h}{I}{t})\subseteq \tennsorp{h}{I}{t}$ for each  $(h,t)\in\ff{C}^{2}$ where $\sr{I}(\bff{c}(\RusI))=(\tennsor{h}{I}{t})$  and  $\sr{I}'(\bff{c}(\RusI'))=(\tennsorp{h}{I}{t})$. 
    If $(z,y,x)\in\ff{S}$, $\fk{g},\fk{f}\in\cl{H}$, $\sr{L}(\fk{g})=z$, $\sr{R}(\fk{g})=y=\sr{L}(\fk{f})$, $\sr{R}(\fk{f})=x$,   $b\in\tennsor{}{\RusI}{1}(\fk{g})$, $a\in \tennsor{}{\RusI}{1}(\fk{f})$ and $c=b\otimes a$, then 
    we have 
   \begin{align*}
         \tennsor{h\,}{\underline{\RusEl}}{\,t}
    (c-\tennsor{}{{(\tennsor{}{\RusI}{\odot})}}{{zyx}}(c))
    =
    \tennsor{}{\RusEl}{(z,y,x)}(c)-\tennsor{}{\RusEl}{(z,x)}(\tennsor{}{{(\tennsor{}{\RusI}{\odot})}}{{zyx}}(c))
    \\
    =
    \tennsor{z}{\RusEl}{y}(b)\otimes \tennsor{y}{\RusEl}{x}( a)-\tennsor{z}{\RusEl}{x}(\tennsor{}{{(\tennsor{}{\RusI}{\odot})}}{{zyx}}(c))
    =
    \tennsor{}{{(\tennsor{}{\RusEl}{1})}}{\fk{g}}(b)\otimes \tennsor{}{{(\tennsor{}{\RusEl}{1})}}{\fk{f}}(a)-\tennsor{}{{(\tennsor{}{\RusEl}{1})}}{\fk{g}\odot\fk{f}}(\tennsor{}{{(\tennsor{}{\RusI}{\odot})}}{{\fk{g},\fk{f}}}(c))
    \\
    =
    \tennsor{}{{(\tennsor{}{\RusEl}{1})}}{\fk{g}}(b)\otimes \tennsor{}{{(\tennsor{}{\RusEl}{1})}}{\fk{f}}(a)-\tennsor{}{{(\tennsorp{}{\RusI}{\odot})}}{{\fk{g},\fk{f}}}(\tennsor{}{{(\tennsor{}{\RusEl}{1})}}{\fk{g}}(b)\otimes \tennsor{}{{(\tennsor{}{\RusEl}{1})}}{\fk{f}}(a))\in \tennsorp{h}{I}{t}.
   \end{align*}
    See Definition~\ref{defn-commutativity-cond-ideal}.  
    Note that the final equality is given by the commutativity of \eqref{eqn-lax-double-functor-comm-conditions-4}. 
\end{proof}

\begin{cor}
\label{cor-cool-nice-cheers-ta-nice1}
    For any double category $\cl{D}$ there is a functor $\sr{T}(\fk{S}(-))/\sr{I}(\boldsymbol{c}(-))$ from the category of double functors $\cl{D}\to \Byemod$ to the category of small preadditive categories. 
\end{cor}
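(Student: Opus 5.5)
The plan is to assemble the functor directly from Lemmas~\ref{small-cat-becomes-species} and~\ref{small-cat-becomes-species-morphisms}, and then check functoriality. Fix $\cl{D}$, and take $\ff{C}$ to be its set of objects with $\ff{S}=\ff{C}^{3}$ (any fixed choice of $\ff{S}$ works equally well). On objects, a double functor $\RusI\colon\cl{D}\to\Byemod$ is sent to the small preadditive category $\sr{T}(\fk{S}(\RusI))/\sr{I}(\bff{c}(\RusI))$. This makes sense for three reasons:
\begin{enumerate}
\item Lemma~\ref{small-cat-becomes-species} shows that $\fk{S}(\RusI)$ is a species and that $\bff{c}(\RusI)$ is an $\ff{S}$-commutativity condition, in particular an associativity condition.
\item Lemma~\ref{lem-reps-satisfying-ass-conditions} shows that $\sr{I}(\bff{c}(\RusI))$ is an ideal of $\sr{T}(\fk{S}(\RusI))$.
\item The quotient category has object set $\ff{C}$ and hom-sets given by quotients of abelian groups, so it is small and preadditive.
\end{enumerate}

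On morphisms, a double transformation $(\RusEl_{0},\RusEl_{1})\colon\RusI\Rightarrow\RusI'$ is sent to the bottom horizontal functor of the square in Lemma~\ref{small-cat-becomes-species-morphisms}. This functor exists and is unique, since it is obtained from $(\rusEl,\RusEl)$ of Lemma~\ref{lem-ring-and-bilinear-maps-induce-functor-between-tensor-cat} via the universal property of quotients by ideals (Lemma~\ref{lem-ring-homs-and-compatible-maps-between-bimodules-ideals}). Here $\rusEl_{x}=(\RusEl_{0})_{x}$, and ${}_{y}\RusEl_{x}$ is the direct sum of the maps $(\RusEl_{1})_{\fk{f}}$ over horizontal $\fk{f}$ from $x$ to $y$.

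Functoriality reduces to showing that the assignment $(\RusEl_{0},\RusEl_{1})\mapsto(\rusEl,\RusEl)$ on the unquotiented tensor categories respects identities and composition. Since the quotient functors are full and bijective on objects, uniqueness in the universal property then transfers both equations to the quotients.
\begin{itemize}
\item \emph{Identities.} For the identity transformation, each $\rusEl_{x}$ and each ${}_{y}\RusEl_{x}$ is an identity. The inductive definition in the proof of Lemma~\ref{lem-ring-and-bilinear-maps-induce-functor-between-tensor-cat} then gives identities on every $A_{\underline{x}}$, because a tensor product of identities is an identity.
\item \emph{Composition.} Vertical composition of double transformations composes the components $(\RusEl_{0})_{x}$ and $(\RusEl_{1})_{\fk{f}}$. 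On pure tensors, $\underline{\RusEl}$ is given componentwise by the displayed formula
\[
b_{hx_{\ff{n}-1}}\otimes\cdots\otimes b_{x_{1}t}\mapsto {}_{h}\RusEl_{x_{\ff{n}-1}}(b_{hx_{\ff{n}-1}})\otimes\cdots\otimes {}_{x_{1}}\RusEl_{t}(b_{x_{1}t})
\]
in that proof. Hence the composite of the two induced maps agrees with the map induced by the composite on pure tensors, and so everywhere, because pure tensors generate and both maps are additive. The same argument handles the length-$0$ summands $R_{x}$.
\end{itemize}

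I expect the main obstacle to be bookkeeping rather than substance. One must verify that the identity and composite of double transformations, in the sense of Appendix~\ref{appendix-double-cats}, really have the components described above. One must also check that the category of double functors $\cl{D}\to\Byemod$ has double transformations as its morphisms, so that Lemma~\ref{small-cat-becomes-species-morphisms} applies to every morphism. Once these are confirmed from the definitions, the argument above completes the proof.
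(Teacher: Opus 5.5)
Your proposal is correct and follows the paper's own argument: objects are handled via Lemma~\ref{small-cat-becomes-species} and Lemma~\ref{lem-reps-satisfying-ass-conditions}, and morphisms via the induced functor of Lemma~\ref{small-cat-becomes-species-morphisms}. Your explicit check that identities and vertical composites of double transformations are preserved (first on $\sr{T}(\fk{S}(-))$, then transferred to the quotients by uniqueness in the universal property) spells out what the paper compresses into ``by Lemma~\ref{small-cat-becomes-species-morphisms}, and its proof, the assignment is functorial''.
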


\begin{proof}
    To any object, so double functor $\RusI\colon \cl{D}\to \Byemod$, we associate the species $\fk{S}(\RusI)$ and the commutativity condition $\boldsymbol{c}(\RusI)$ from Lemma~\ref{small-cat-becomes-species}. 
    By Lemma~\ref{lem-reps-satisfying-ass-conditions}, the bimodules given in Definition~\ref{defn-commutativity-cond-ideal} form an ideal $\sr{I}(\boldsymbol{c}(\RusI))$ of the tensor category $\sr{T}(\fk{S}(\RusI))$. 
    By Lemma~\ref{small-cat-becomes-species-morphisms}, and its proof, the assignment $\RusI\mapsto\sr{T}(\fk{S}(\RusI))/\sr{I}(\boldsymbol{c}(\RusI))$ is functorial. 
\end{proof}

\section{Examples and applications}\label{sec-examples}

As discussed above Definition~\ref{defn-byemod}, several situations we consider are constructed via a  double bifunctor into $\Byemod$. 
Of those examples, and in the notation of  Lemma~\ref{small-cat-becomes-species}, many involve defining the $\tennsor{}{\RusI}{0}(y)$-$\tennsor{}{\RusI}{0}(x)$ bimodule $\tennsor{}{\RusI}{1}(\fk{f}
    )$ using a diagram of rings of the form $\tennsor{}{\RusI}{0}(x)\rightarrow \tennsor{}{\RusI}{1}(\fk{f}
    )\leftarrow\tennsor{}{\RusI}{0}(y)$. 
We recall results from work of Davydov--Kong--Runkel \cite{field-theories}.   

\begin{lem}
\label{lem-bilinear-maps}
If the non-dashed arrows below exist and define a  diagram of rings
\begin{equation}
\label{eqn-HI-diagram}
    \begin{tikzcd}[cramped,row sep=tiny]
	&&& J &&& \\
	&& H && I \\
	& E && F && G \\
	A && B && C && D
	\arrow["\lambda", dashed, from=2-3, to=1-4]
	\arrow["\mu"', dashed, from=2-5, to=1-4]
	\arrow["\eta", from=3-2, to=2-3]
	\arrow["\theta"', from=3-4, to=2-3]
	\arrow["\iota", dashed, from=3-4, to=2-5]
	\arrow["\kappa"', dashed, from=3-6, to=2-5]
	\arrow["\alpha", from=4-1, to=3-2]
	\arrow["\beta"', from=4-3, to=3-2]
	\arrow["\gamma", from=4-3, to=3-4]
	\arrow["\delta"', from=4-5, to=3-4]
	\arrow["\varepsilon", dashed, from=4-5, to=3-6]
	\arrow["\zeta"', dashed, from=4-7, to=3-6]
\end{tikzcd}
\end{equation}
that commutes, then the following statements hold. 
\begin{enumerate}
    \item There is a   $ A $-$ C $-bilinear map  $\tennsor{\eta,\alpha}{\nabla}{\theta,\delta}\colon 
    E\tennsor{}{\otimes}{B}F\to H
    $ given by   $e\otimes f\mapsto\eta(e)\theta(f)$. 
    \item If the images of $\beta$ and $\gamma$ are  central then $E\tennsor{}{\otimes}{B}F$ is a ring. 
    %
    If, in addition, the images of $\alpha$ and $\delta$ are central, then so are the images of the ring maps defined by 
    \[
    \begin{array}{cc}
    A\to E\tennsor{}{\otimes}{B}F,\quad a\mapsto\alpha(a)\otimes \tennsor{}{1}{F}
    &
    C\to E\tennsor{}{\otimes}{B}F,\quad a\mapsto\tennsor{}{1}{F}\otimes\delta(c).
    \end{array}
    \]
    \item If the dashed and  non-dashed arrows all exist, and define a commutative diagram of rings, then there is a commutative square of $A$-$D$-bimodules 
\begin{equation}
\label{eqn-comm.square}
     \begin{tikzcd}
        E\tennsor{}{\otimes}{B}F\tennsor{}{\otimes}{C}G
        \ar[rr,"\iden\,\otimes\, \tennsor{\iota,\gamma}{\nabla}{\kappa,\zeta}"]
        \ar[d,"\tennsor{\eta,\alpha}{\nabla}{\theta,\delta}\,\otimes\,\iden "']
        & 
        &
        E\tennsor{}{\otimes }{B}I
        \ar[d,"\tennsor{\lambda\eta,\alpha }{\nabla}{\mu,\kappa\zeta}"]
        \\
        H\tennsor{}{\otimes}{C}G\ar[rr,"\tennsor{\lambda,\eta\alpha }{\nabla}{\mu\kappa,\zeta}"']
        & 
        &
        J
    \end{tikzcd}
\end{equation}
    
\end{enumerate} 
\end{lem}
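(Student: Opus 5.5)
The plan is to verify each part directly from universal properties of tensor products and commutativity of the diagram. Here $E$ is regarded as an $A$-$B$-bimodule via $\alpha,\beta$, $F$ as a $B$-$C$-bimodule via $\gamma,\delta$, and $H$ as an $A$-$C$-bimodule via $\eta\alpha$ and $\theta\delta$.

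For (1), consider the map $E\times F\to H$, $(e,f)\mapsto\eta(e)\theta(f)$. It is biadditive since $\eta,\theta$ are additive and multiplication in $H$ is distributive. It is $B$-balanced because, for $b\in B$,
\[
\eta(e\beta(b))\theta(f)=\eta(e)\,\eta\beta(b)\,\theta(f)=\eta(e)\,\theta\gamma(b)\,\theta(f)=\eta(e)\theta(\gamma(b)f),
\]
which uses the commutativity $\eta\beta=\theta\gamma$. Left $A$-linearity follows from $\eta(\alpha(a)e)=\eta\alpha(a)\eta(e)$, and right $C$-linearity follows similarly from $\theta(f\delta(c))=\theta(f)\,\theta\delta(c)$. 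The universal property of $\otimes_B$ then gives $\tennsor{\eta,\alpha}{\nabla}{\theta,\delta}$.

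For (2), the standard construction applies: when $B$ acts centrally on $E$ and $F$, the ring $E\otimes_B F$ is the tensor product of $B$-algebras. To define $(e\otimes f)(e'\otimes f')=ee'\otimes ff'$, first fix $(e',f')$ and check that $(e,f)\mapsto ee'\otimes ff'$ is $B$-balanced. This uses centrality of $\beta(B)$ in $E$ to move $\beta(b)$ past $e'$, giving $e\beta(b)e'\otimes ff'=ee'\beta(b)\otimes ff'=ee'\otimes\gamma(b)ff'$. One then varies $(e',f')$, again with a balancedness check, to get a biadditive multiplication. Associativity and the unit $1_E\otimes 1_F$ are checked on pure tensors. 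This double well-definedness of the multiplication is the only real obstacle, and the cleanest route is to view $E,F$ as $B$-algebras (possible by centrality) and invoke the usual tensor product of algebras over the commutative ring $\beta(B)$-image; note that $B$ itself need not be commutative, but only its image acts, so I would phrase it via the map $E\times F\times E\times F\to E\otimes_B F$ being balanced in each pair. The maps $A\to E\otimes_BF$ and $C\to E\otimes_BF$ are compositions of ring maps with $e\mapsto e\otimes 1$ (respectively $f\mapsto 1\otimes f$), which are ring maps. Centrality of their images follows from the centrality of $\alpha(A)$ in $E$ (respectively $\delta(C)$ in $F$), checked on pure tensors.

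For (3), all four maps exist by (1) applied to the relevant sub-diagrams, noting for instance that $\lambda\eta\beta=\lambda\theta\gamma=\mu\iota\gamma$ gives the required compatibility. It then suffices to compare the two composites on pure tensors $e\otimes f\otimes g$, since these generate. Going right then down gives $\lambda\eta(e)\,\mu(\iota(f)\kappa(g))=\lambda\eta(e)\,\mu\iota(f)\,\mu\kappa(g)$. Going down then right gives $\lambda(\eta(e)\theta(f))\,\mu\kappa(g)=\lambda\eta(e)\,\lambda\theta(f)\,\mu\kappa(g)$. These agree because $\lambda\theta=\mu\iota$. Both composites are $A$-$D$-bilinear, being composites of bilinear maps (with the identity tensored on), so the square \eqref{eqn-comm.square} commutes as bimodule maps.
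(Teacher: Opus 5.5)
Your proposal is correct and follows essentially the paper's route: balancedness via $\eta\beta=\theta\gamma$ for (1), checking that $(e,f,e',f')\mapsto ee'\otimes ff'$ is balanced in each pair for (2), and the pure-tensor computation using $\lambda\theta=\mu\iota$ for (3); you also check, on pure tensors, the centrality of the images of $A$ and $C$, which the paper delegates to Davydov--Kong--Runkel. The only slip is in an aside you do not rely on: $F$ need not be a module over $\beta(B)$, so the commutative ring to tensor over would instead be the quotient of $B$ by its commutator ideal, through which both $\beta$ and $\gamma$ factor; your direct balancedness argument makes this unnecessary anyway.
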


\begin{proof}
To emphasize when we are regarding a given ring as a bimodule by restricting along rings maps into it, we label the bimodule with the ring maps as subscripts. 
For example, $E$ is a ring, and we write $\tennsor{\alpha}{E}{\beta}$ for the $A$-$B$-bimodule it defines.  

      (1) The map $\Delta\colon\tennsor{\alpha}{E}{\beta} \times \tennsor{\gamma}{F}{\delta}\to \tennsor{\eta\alpha}{H}{\theta\delta}$ given by $\Delta(e, f)=\eta(e)\theta(f)$ is $B$-balanced, since 
\[
      \Delta(e\cdot b ,f)
    =\eta(e\beta( b ))\theta(f)
    =\eta(e)\eta(\beta( b ))\theta(f)
    =\eta(e)\theta(\gamma( b ))\theta(f)
    =\Delta(e, b \cdot f),
\]
Since $\Delta( a \cdot e,f)
    =\eta(\alpha( a )e)\theta(f)
    =\eta(\alpha( a ))\eta(e)\theta(f)
    = a \cdot\Delta(e,f)$, it is also left $A$-linear, and similarly, it is right $C$-linear. 

(2)  
By assuming the images of $\beta$ and $\gamma$ are central, $(eb,f,e',f')-(e,bf,e',f')$ and $(e,f,e'b,f')-(e,f,e',bf')$ lie in  the kernel of the assignment  $(e,f,e',f')\mapsto ee'\otimes ff'$ for each $b\in B$.   
One obtains multiplication on $E\tennsor{}{\otimes}{B}F$ that, by construction, distributes over addition. 
It is straightforward to check $\tennsor{}{1}{E}\otimes \tennsor{}{1}{F}$ serves as the multiplicative unit, where $\tennsor{}{1}{E}$ and $\tennsor{}{1}{F}$ are the units of $E$ and $F$, respectively. 
So, $E\tennsor{}{\otimes}{B}F$ is a ring.

Furthermore, the maps $\tennsor{}{\iden}{E}\otimes \tennsor{}{1}{F}\colon E\to E\tennsor{}{\otimes}{B}F$ and $ \tennsor{}{1}{E}\otimes \tennsor{}{\iden}{F}\colon F\to E\tennsor{}{\otimes}{B}F$, 
    given by $e\mapsto e\otimes \tennsor{}{1}{F}$ and $f\mapsto \tennsor{}{1}{E}\otimes f$, are ring maps, and $\beta(b)\otimes \tennsor{}{1}{F}=\tennsor{}{1}{E}\otimes \gamma(b)$ for each $b\in B$. 
For more details, see for example  \cite[pp.~110--111]{field-theories}.

(3) For the second statement, assuming all arrows exist and the extended diagram commutes, let $\omega=\tennsor{\iota,\gamma}{\nabla}{\kappa,\zeta}$, $\psi=\tennsor{\eta,\alpha}{\nabla}{\theta,\delta}$, $\chi=\tennsor{\lambda\eta,\alpha }{\nabla}{\mu,\kappa\zeta}$ and $\varphi=\tennsor{\lambda,\eta\alpha }{\nabla}{\mu\kappa,\zeta}$, so that
    \begin{align*}
                   \chi(e\otimes \omega(f\otimes g))=
    \chi(e\otimes \iota(f)\kappa(g))=
    \lambda(\eta(e))\mu(\iota(f)\kappa(g))
    =\lambda(\eta(e))\mu(\iota(f))\mu(\kappa(g))
    \\
      =\lambda(\eta(e))\lambda(\theta(f))\mu(\kappa(g))
      =\lambda(\eta(e)\theta(f))\mu(\kappa(g))
      =\varphi(\eta(e)\theta(f)\otimes g )
      =\varphi(\psi(e\otimes f)\otimes g )
    \end{align*}
    for each $e\in \tennsor{\alpha }{E}{\beta}$, $f\in \tennsor{\gamma}{F}{\delta}$ and  $g\in\tennsor{\varepsilon}{G}{\zeta}$. 
\end{proof}

\begin{lem}
\label{lem-lax-doub-func-rings-to-bimod-direct}
     There is a  double functor $(\tennsor{}{\RusZhe}{0},\tennsor{}{\RusZhe}{1},\tennsor{}{\RusZhe}{\odot},\tennsor{}{\RusZhe}{\Delta})\colon \Rings\to \Byemod$ that takes a ring map $\sigma\colon R\to S$ to the $S$-$R$-bimodule $\tennsor{S}{S}{\sigma}$.  
\end{lem}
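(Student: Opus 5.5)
Here $\Rings$ is regarded as a double category via Example~\ref{example-cats-are-trivially-double-cats}: its objects are rings, its vertical morphisms are identities only, and its horizontal morphisms are ring maps, composed by composition of ring maps. The unit $\Delta(R)$ is $\tennsor{}{\iden}{R}$, and the only squares are identities. To build the double functor I will specify the four pieces of data in Definition~\ref{defn-lax-double-func} and check the coherence squares \eqref{eqn-lax-double-functor-comm-conditions-1}--\eqref{eqn-lax-double-functor-comm-conditions-3}. The coherence checks will all come from Lemma~\ref{lem-bilinear-maps}.

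\textbf{Object and morphism data.} Put $\tennsor{}{\RusZhe}{0}(R)=R$, acting as the identity on vertical morphisms. For a ring map $\sigma\colon R\to S$, put $\tennsor{}{\RusZhe}{1}(\sigma)=\tennsor{S}{S}{\sigma}$, the ring $S$ with the left regular action and the right action restricted along $\sigma$. This is an $S$-$R$-bimodule, so it is a horizontal morphism $R\to S$ in $\Byemod$, and the source and target compatibilities hold. Identity squares are sent to identity bilinear maps.

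\textbf{Unitor and compositor.} For composable $\sigma\colon R\to S$ and $\tau\colon S\to T$, take the compositor
\[
\tennsor{}{{(\tennsor{}{\RusZhe}{\odot})}}{\tau,\sigma}\colon \tennsor{T}{T}{\tau}\otimes_{S}\tennsor{S}{S}{\sigma}\to \tennsor{T}{T}{\tau\sigma},\qquad t\otimes s\mapsto t\,\tau(s).
\]
This is $\tennsor{\eta,\alpha}{\nabla}{\theta,\delta}$ from Lemma~\ref{lem-bilinear-maps}(1), applied to the diagram $E=T$, $B=S$, $F=S$, $H=T$ with $\eta=\iden$, $\theta=\tau$, $\beta=\tau$, $\gamma=\iden$, $\alpha=\iden_T$, $\delta=\sigma$. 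So it is a well-defined $T$-$R$-bilinear map. In fact it is an isomorphism, with inverse $t\mapsto t\otimes 1$, although we only need it to be a bimodule map. The unitor $\tennsor{}{{(\tennsor{}{\RusZhe}{\Delta})}}{R}\colon R\to \tennsor{R}{R}{\iden}$ is the identity.

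\textbf{Coherence.} For $\sigma$, $\tau$, $\upsilon$ composable, associativity \eqref{eqn-lax-double-functor-comm-conditions-1} is exactly the commutative square \eqref{eqn-comm.square} of Lemma~\ref{lem-bilinear-maps}(3). We apply it to the diagram built from the three maps, with $J$ the final codomain and every top-level map either an identity or the relevant composite. On elements, both composites send $u\otimes t\otimes s$ to $u\,\upsilon(t)\,\upsilon\tau(s)$. The unit conditions \eqref{eqn-lax-double-functor-comm-conditions-2} and \eqref{eqn-lax-double-functor-comm-conditions-3} reduce to $t\otimes s\mapsto t\,\iden(s)$ and $s\otimes r\mapsto s\,\sigma(r)$ agreeing with the canonical unitors $T\otimes_T T\cong T$ and $S\otimes_R R\cong S$ of $\Byemod$, which is immediate. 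Naturality with respect to squares is trivial, since the only squares in $\Rings$ are identities.

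The main obstacle I expect is bookkeeping rather than mathematics. I must match the orientation conventions of Appendix~\ref{appendix-double-cats}: which side is $\sr{L}$ and which is $\sr{R}$, and the direction of the compositor, lax versus colax. If the definition requires the compositor to run the other way, I would use the inverse isomorphism $t\mapsto t\otimes 1$ instead.
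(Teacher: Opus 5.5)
Your proposal is correct and follows essentially the same route as the paper. You take $\RusZhe_0$ to be the identity, use the compositor $t\otimes s\mapsto t\,\tau(s)$ supplied by Lemma~\ref{lem-bilinear-maps}(1), and derive the associativity coherence \eqref{eqn-lax-double-functor-comm-conditions-1} from the square in Lemma~\ref{lem-bilinear-maps}(3); your element-level check of the unit conditions fills in what the paper leaves as straightforward, and your orientation worry is unnecessary, since Definition~\ref{defn-lax-double-func} has the compositor running in the direction you chose.
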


\begin{proof}
Define $\tennsor{}{\RusZhe}{0}$ on objects (so rings) and vertical morphisms (so identities) by $R\to R$ and $\tennsor{}{\iden}{R}\mapsto \tennsor{}{\iden}{R}$. 
The functor $\tennsor{}{\RusZhe}{1}$ is already defined on horizontal morphisms (so ring maps), and any square morphism in $\Rings$ is an identity. 
Similarly, $\tennsor{}{\RusZhe}{\Delta}$ must $\tennsor{}{\iden}{R}$

For $\tennsor{}{\RusZhe}{\odot}$, let $\zeta\colon D\to C$, $\delta\colon C\to B$ and $\beta\colon B\to A$ be ring maps. 
Let  $G=C$, $I=F=B$, $J=H=E=A$, $\varepsilon=\tennsor{}{\iden}{C}$, $\gamma=\iota=\tennsor{}{\iden}{B}$, $\alpha=\eta=\lambda=\tennsor{}{\iden}{A}$, $\kappa=\delta$  and $\mu=\theta=\beta$. 
It is immediate that \eqref{eqn-HI-diagram} commutes. 
So, the square  \eqref{eqn-comm.square}  becomes 
\[
     \begin{tikzcd}
        \tennsor{}{A}{\beta}\tennsor{}{\otimes }{B} \tennsor{}{B}{\delta}\tennsor{}{\otimes }{C} \tennsor{}{C}{\zeta}
        \ar[rr,"\iden\,\otimes\, \tennsor{}{\nabla}{\delta,\zeta}"]
        \ar[d,"\tennsor{}{\nabla}{\beta,\delta}\,\otimes\,\iden "']
        & 
        &
        \tennsor{}{A}{\beta}\tennsor{}{\otimes }{B} \tennsor{}{C}{\delta\zeta}
        \ar[d,"\tennsor{}{\nabla}{\beta,\delta\zeta}"]
        \\
        \tennsor{}{A}{\beta\delta}\tennsor{}{\otimes}{C}\tennsor{}{C}{\zeta}\ar[rr,"\tennsor{}{\nabla}{\beta\delta,\zeta}"']
        & 
        &
        \tennsor{}{A}{\beta\delta\zeta}
    \end{tikzcd}
\]
This square commutes by Lemma~\ref{lem-bilinear-maps}(3). 
Note that we omitted labels for identity ring maps.  
This ensures \eqref{eqn-lax-double-functor-comm-conditions-1} commutes. 
The remaining details of the proof are straightforward, and we refer the reader to the discussion in work of  B{\'e}nabou \cite[\S2.5]{intro-to-bicats}. 
\end{proof}

\begin{rem}
\label{rem-not-pushout}
    In the category of commutative rings, the tensor product from Lemma~\ref{lem-bilinear-maps}(2) is the pushout of $\beta$ and $\gamma$. 
The key point is that, from the proof, we have 
\[
\tennsor{\eta,\alpha}{\nabla}{\theta,\delta}((e\otimes f)(e'\otimes f'))=\tennsor{\eta,\alpha}{\nabla}{\theta,\delta}(ee'\otimes ff')=\eta(ee')\theta(ff')=\eta(ee')\theta(ff')
\]
which equals $\eta(e)\theta(f)\eta(e')\theta(f')$ when $H$ is commutative, making this bilinear map multiplicative. 
In our generality, however, the tensor product is not the pushout.  
\end{rem}

Before we get on with examples, we define a  double functor $\Rings\to \Byemod$, which factors through a bicategory constructed in \cite[\S4]{field-theories}, which we expose in Lemma~\ref{lem-central-is-double}. 

The aforementioned  bicategory is reminiscent of \emph{cospans} in the category of commutative rings. 
A key difference is that, in our considerations, the ring in the middle of such a cospan need not be commutative. 
Instead we only ask the hypothesis of Lemma~\ref{lem-bilinear-maps}(2) holds, ensuring that we can still consider tensor products of algebras of these noncommutative rings. 
Horizontal composition in a bicategory of cospans is defined via pushouts. 
This, together with Remark~\ref{rem-not-pushout}, warns us to avoid using the language of cospans.  
\begin{nota}
\label{not-rings-cospan-etc}
\cite[p.~114,~(4.24)]{field-theories} The \emph{centre} of a ring map $\sigma\colon R\to S$ is the subring 
    \[
    Z(\sigma)=\{s\in S\mid s\sigma(r)=\sigma(r)s\text{ for each } r\in R\}
    \]
    of $S$. 
    We say $\sigma$ has a \emph{central image} if $Z(\sigma)=S$. 
    Denote the centre of a ring by $Z(R)=\{z\in R\mid rz=zr\text{ for each } r\in R\}$. 
    It follows that $Z(\tennsor{}{\iden}{R})=Z(R)$; see  \cite[Corollary~3.4]{laxfuncs}. 
    %

%
%
Let $\Central_{\Comms}^{\Rings}$ denote a   double category, defined as follows.  

Objects are commutative rings, and the only vertical morphisms   are identity ring maps. 

The horizontal morphisms are quintuples $(A,\alpha,E,\beta,B)$ where  $\alpha\colon A\to E$ and $\beta\colon B\to E$ are ring maps with central images, and $A$ and $B$ are commutative. 
Square  morphisms $(A,\alpha,E,\beta,B)\to (A,\sigma,E',\tau,B)$ are ring maps $\varepsilon\colon E\to E'$  such that $\sigma=\varepsilon\alpha$ and $\tau=\varepsilon\beta$. 
%

The boundary, left, and right functors are defined, in the notation above, on objects by $A\mapsto (\tennsor{}{\iden}{A},\tennsor{}{\iden}{A})$, $(\alpha,\beta)\mapsto A$, and $(\alpha,\beta)\mapsto B$. 
Necessarily,  morphisms are sent to identities.    
The horizontal composition of $(A,\alpha,E,\beta,B)$ and $(B,\gamma,F,\delta,C)$ is 
\begin{equation}
\label{eq-horcompcospans}
    (A,(\tennsor{}{\iden}{E}\otimes \tennsor{}{1}{F})\circ \alpha,E\tennsor{}{\otimes}{B}F,(\tennsor{}{1}{E}\otimes \tennsor{}{\iden}{F})\circ \delta,C).
\end{equation}
%
\end{nota}

\begin{lem}
\label{lem-central-is-double}
    $\Central_{\Comms}^{\Rings}$ is a double category whose vertical morphisms are identities. 
\end{lem}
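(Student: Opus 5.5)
We verify the axioms of a double category (Definition~\ref{defn-double-category}) directly, exploiting that the vertical category is discrete. Since the only vertical morphisms are identities, the category of objects and vertical morphisms is the discrete category on the set of commutative rings. Square morphisms between horizontal morphisms with the same boundary $(A,B)$ are ring maps $\varepsilon\colon E\to E'$ compatible with the structure maps. These compose by composition of ring maps, so the category of horizontal morphisms is a disjoint union, over pairs $(A,B)$, of the coslice-type categories of rings under $A$ and $B$ with central images. The functors $\sr{L}$ and $\sr{R}$ are then automatically functorial, being constant on each component. The unit $\Delta(A)=(A,\iden_A,A,\iden_A,A)$ is a horizontal morphism because $A$ is commutative, so $\iden_A$ has central image.

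Next we check that horizontal composition \eqref{eq-horcompcospans} is well defined and functorial. By Lemma~\ref{lem-bilinear-maps}(2), applied with $\beta,\gamma$ central, $E\otimes_B F$ is a ring. Since $\alpha,\delta$ also have central images, the maps $a\mapsto\alpha(a)\otimes 1_F$ and $c\mapsto 1_E\otimes\delta(c)$ are ring maps with central image, so the composite is again a horizontal morphism. For squares $\varepsilon\colon E\to E'$ and $\phi\colon F\to F'$, the map $\varepsilon\otimes\phi$ is well defined because $\varepsilon\beta=\beta'$ and $\phi\gamma=\gamma'$ make it $B$-balanced. It is multiplicative on pure tensors and unital, and it is compatible with the new structure maps. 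Functoriality in squares, and compatibility with $\sr{L}$ and $\sr{R}$, are immediate.

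Finally we supply the associator and unitors. The associator is the canonical isomorphism $(E\otimes_B F)\otimes_C G\cong E\otimes_B(F\otimes_C G)$, and the unitors are $A\otimes_A E\cong E\cong E\otimes_B B$. One checks that these are ring isomorphisms, which is clear on pure tensors. They commute with the structure maps, so they are invertible globular squares. They are natural in squares because all the maps are defined on pure tensors. The pentagon and triangle identities follow from the corresponding coherence for tensor products of bimodules, as in B\'enabou \cite{intro-to-bicats} or \cite[\S4]{field-theories}, since the underlying additive maps coincide.

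I expect the main obstacle to be bookkeeping rather than substance: confirming that each canonical bimodule isomorphism is multiplicative for the ring structure of Lemma~\ref{lem-bilinear-maps}(2). This requires the centrality hypotheses at each step, for example when moving elements of $B$ across $E\otimes_B F$ inside triple products. I would verify this on pure tensors and extend by additivity.
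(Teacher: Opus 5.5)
Your proposal is correct, but it takes a different route from the paper. The paper verifies nothing by hand. It observes that the data of $\Central_{\Comms}^{\Rings}$ coincide with those of the bicategory $\textbf{CAlg}(\mathbb{Z})$ of Davydov--Kong--Runkel \cite{field-theories}: commutative rings as objects, and cospans of ring maps with central images as $1$-morphisms. Composition there is given by \eqref{eq-horcompcospans} via \cite[Lemma~4.4]{field-theories}, and the associators, unitors and coherence are supplied in that paper. The paper then notes that none of those arguments uses that the base $K$ is a field, and reads the bicategory as a double category with only identity vertical morphisms.

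You instead check each ingredient directly:
\begin{enumerate}
\item well-definedness of composition via Lemma~\ref{lem-bilinear-maps}(2);
\item functoriality of $\varepsilon\otimes\phi$ on squares;
\item the canonical maps as ring isomorphisms compatible with the structure maps;
\item pentagon and triangle, which you reduce to bimodule coherence. This is legitimate, since squares are determined by their underlying functions.
\end{enumerate}
Your route is self-contained and shows exactly where the hypotheses enter, so the reader need not audit \cite{field-theories} for hidden uses of a field. The paper's route is shorter and ties the construction to the source it later invokes for $\RusDe$ in Lemma~\ref{lem-field-theory-stuff}.

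One small correction to your final paragraph concerns where centrality is used. The associator needs no centrality to be multiplicative on pure tensors, because multiplication in these tensor rings is componentwise. Centrality is what makes the unitors multiplicative. For $A\otimes_A E\to E$, $a\otimes e\mapsto\alpha(a)e$, you need $\alpha(a')$ to commute with $e$ in $(\alpha(a)e)(\alpha(a')e')=\alpha(aa')ee'$. Similarly, $E\otimes_B B\to E$, $e\otimes b\mapsto e\beta(b)$, needs the image of $\beta$ to be central.
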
 

\begin{proof}
In \cite{field-theories} a bicategory $\textbf{CAlg}(K)$ is introduced, where $K$ is a field. 
Of the constructions and observations from \cite{field-theories} that we use, none  require that $K$ be a field. 
We explain why the data given by $\Central_{\Comms}^{\Rings}$  is exactly that of ${\textbf{CAlg}(\bb{Z})}$.

Objects of ${\textbf{CAlg}(\bb{Z})}$ are commutative $\bb{Z}$-algebras, so commutative rings; see just after \cite[p.~110,~(4.11)]{field-theories}. 
Objects in the category of morphisms of ${\textbf{CAlg}(\bb{Z})}$, or rather, its horizontal morphisms, are cospans in the category of  $\bb{Z}$-algebras where the domains of the involved rings maps are commutative, and where their images are central; see the first item of \cite[Definition~4.3]{field-theories}, together with \cite[Definition~4.2]{field-theories}. 

By \cite[Lemma~4.4]{field-theories} horizontal composition is defined just as in \eqref{eq-horcompcospans}, and this composition is functorial. 
To each commutative ring one associates the identity cospan, so the diagram formed by $2$ copies of the identity; see \cite[p.~110,~(4.12)]{field-theories}, just after which the authors give natural transformations such that \eqref{eqn-pentagon} and \eqref{eqn-triangle} commute. 
\end{proof} 

\begin{lem}
\label{lem-coolguy}
    There is a  double functor $(\tennsor{}{\RusSha}{0},\tennsor{}{\RusSha}{1},\tennsor{}{\RusSha}{\odot},\tennsor{}{\RusSha}{\Delta})\colon \Central_{\Comms}^{\Rings}\to\Byemod$ that takes a horizontal morphism $(A,\alpha,E,\beta,B)$ to the $A$-$B$-bimodule $\tennsor{\alpha}{E}{\beta}$.  
\end{lem}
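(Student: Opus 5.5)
We construct the double functor componentwise, in the same way as in Lemma~\ref{lem-lax-doub-func-rings-to-bimod-direct}, with Lemma~\ref{lem-bilinear-maps} supplying all the structure maps. On the category of objects and vertical morphisms of $\Central_{\Comms}^{\Rings}$, where only identity maps occur, we let $\tennsor{}{\RusSha}{0}$ send a commutative ring $A$ to $A$ and $\tennsor{}{\iden}{A}$ to $\tennsor{}{\iden}{A}$. On horizontal morphisms we set $\tennsor{}{\RusSha}{1}(A,\alpha,E,\beta,B)=\tennsor{\alpha}{E}{\beta}$, which is an $A$-$B$-bimodule by restriction along $\alpha$ and $\beta$. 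A square morphism $\varepsilon\colon(A,\alpha,E,\beta,B)\to(A,\sigma,E',\tau,B)$ goes to the triple $(\tennsor{}{\iden}{A},\tennsor{}{\iden}{B},\varepsilon)$. This is a square in $\Byemod$ because $\varepsilon(\alpha(a)e\beta(b))=\sigma(a)\varepsilon(e)\tau(b)$, using $\sigma=\varepsilon\alpha$ and $\tau=\varepsilon\beta$. Functoriality of $\tennsor{}{\RusSha}{1}$, and its compatibility with the left and right functors, then hold automatically.

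For the comparison cells we use the multiplication maps of Lemma~\ref{lem-bilinear-maps}(1). Given composable $(A,\alpha,E,\beta,B)$ and $(B,\gamma,F,\delta,C)$, the horizontal composite \eqref{eq-horcompcospans} is the ring $E\otimes_{B}F$, which is a ring by Lemma~\ref{lem-bilinear-maps}(2). We take $H=E\otimes_B F$ with $\eta=\tennsor{}{\iden}{E}\otimes\tennsor{}{1}{F}$ and $\theta=\tennsor{}{1}{E}\otimes\tennsor{}{\iden}{F}$; the square $\eta\beta=\theta\gamma$ commutes because $\beta(b)\otimes 1=1\otimes\gamma(b)$. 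Lemma~\ref{lem-bilinear-maps}(1) then gives the $A$-$C$-bilinear map
\[
\tennsor{}{{(\tennsor{}{\RusSha}{\odot})}}{}\colon \tennsor{\alpha}{E}{\beta}\otimes_B\tennsor{\gamma}{F}{\delta}\to \tennsor{\eta\alpha}{(E\otimes_B F)}{\theta\delta},\quad e\otimes f\mapsto (e\otimes 1)(1\otimes f)=e\otimes f,
\]
which is in fact the identity, or at least an isomorphism. For the unit, the identity horizontal morphism on $A$ is $(A,\tennsor{}{\iden}{A},A,\tennsor{}{\iden}{A},A)$, so we take $\tennsor{}{{(\tennsor{}{\RusSha}{\Delta})}}{A}=\tennsor{}{\iden}{A}$.

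Next we verify the axioms \eqref{eqn-lax-double-functor-comm-conditions-1}--\eqref{eqn-lax-double-functor-comm-conditions-4}. Naturality of $\tennsor{}{\RusSha}{\odot}$ with respect to square morphisms $\varepsilon,\varepsilon'$ amounts to the identity $(\varepsilon\otimes\varepsilon')(e\otimes f)=\varepsilon(e)\otimes\varepsilon'(f)$, which is immediate on pure tensors. The associativity hexagon \eqref{eqn-lax-double-functor-comm-conditions-1} comes from Lemma~\ref{lem-bilinear-maps}(3), which we apply to three composable cospans with $H=E\otimes_BF$, $I=F\otimes_CG$, $J=E\otimes_BF\otimes_CG$ and the evident ring maps $\lambda,\mu$. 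The square \eqref{eqn-comm.square} then commutes, and after composing with the associator of $\Central_{\Comms}^{\Rings}$ and the associator of $\Byemod$ we recover the hexagon. The unit axioms \eqref{eqn-lax-double-functor-comm-conditions-2} and \eqref{eqn-lax-double-functor-comm-conditions-3} reduce to the canonical isomorphisms $A\otimes_A E\cong E\cong E\otimes_B B$, sending $a\otimes e\mapsto\alpha(a)e$; these are precisely the unitors given in \cite[p.~110]{field-theories}.

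We expect the main obstacle to be bookkeeping rather than mathematics. The associator and unitors of $\Central_{\Comms}^{\Rings}$ are only stated by reference to \cite{field-theories}, so we must check that under $\tennsor{}{\RusSha}{1}$ they become exactly the standard associativity and unit isomorphisms of tensor products of bimodules in $\Byemod$. To do this, we will evaluate every structure map on pure tensors $e\otimes f\otimes g$. Each one acts as the obvious rebracketing or the multiplication by a unit, so all the diagrams commute elementwise, and Lemma~\ref{lem-bilinear-maps}(3) covers the only nontrivial composite.
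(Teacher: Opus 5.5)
Your proposal is correct and follows essentially the same route as the paper. There, $\RusSha_{0}$ is the inclusion, a square morphism goes to the same ring map viewed as a bilinear map, $\RusSha_{\odot}$ and $\RusSha_{\Delta}$ are identity maps, \eqref{eqn-lax-double-functor-comm-conditions-1} is obtained from Lemma~\ref{lem-bilinear-maps}, and the remaining checks are routine elementwise verifications. One small slip: \eqref{eqn-lax-double-functor-comm-conditions-4} is a double-transformation axiom, so the double-functor conditions are naturality plus \eqref{eqn-lax-double-functor-comm-conditions-1}--\eqref{eqn-lax-double-functor-comm-conditions-3}, which is what you actually verify.
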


\begin{proof}
The category of objects and vertical morphisms in $\Central_{\Comms}^{\Rings}$ is a subcategory of $\Rings$, and we let  $\tennsor{}{\RusSha}{0}$ be the inclusion of this subcategory. 
Define $\tennsor{}{\RusSha}{1}$ on objects as in the statement. 
A square morphism $(A,\alpha,E,\beta,B)\to (A,\sigma,E',\tau,B)$ is a  ring map $E\to E'$  which $\tennsor{}{\RusSha}{1}$ takes to the same function, which is an $A$-$B$-bilinear map $\tennsor{\alpha}{E}{\beta}\to \tennsorp{\sigma}{E}{\tau}$,  since the diagram formed using the involved rings and maps commutes. 

Define $\tennsor{}{\RusSha}{\odot}$ on a horizontal composition \eqref{eq-horcompcospans} by  $A$-$C$-bimodule $\tennsor{\alpha}{E}{\beta} \otimes \tennsor{\gamma}{F}{\delta}$. 
To each  ring $R$  define  $\tennsor{}{(\tennsor{}{\RusSha}{\Delta})}{R}\colon \tennsor{R}{R}{R}\to \tennsor{\tennsor{}{\iden}{R}}{R}{\tennsor{}{\iden}{R}}$  by the identity on $R$.   
It is straightforward to check that these functors and natural transformations indeed define a  double functor. 
In particular, to see that \ref{eqn-lax-double-functor-comm-conditions-1} commutes, apply Lemma~\ref{lem-bilinear-maps}. 
%
%
\end{proof}


\begin{lem}
\label{lem-field-theory-stuff}
      \cite[Theorem~4.12]{field-theories} There is a  double functor $ (\tennsor{}{\RusDe}{0},\tennsor{}{\RusDe}{1},\tennsor{}{\RusDe}{\odot},\tennsor{}{\RusDe}{\Delta})\colon \Rings\to \Central_{\Comms}^{\Rings}$ taking a ring map $\sigma\colon R\to S$ to  the horizontal morphism
     \[
     (Z(S),\tennsor{}{i}{S},Z(\sigma),\tennsor{\sigma}{i}{R},Z(R)).
     \]   
     where $\tennsor{}{i}{S}(s)=s$ for each $s\in Z(S)$ and $\tennsor{\sigma}{i}{R}(r)=\sigma(r)$ for each $r\in Z(R)$. 
\end{lem}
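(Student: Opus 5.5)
Since the statement is cited from \cite[Theorem~4.12]{field-theories}, the plan is to check that the construction of Davydov--Kong--Runkel transports verbatim to our notation, using the identification of $\Central_{\Comms}^{\Rings}$ with $\textbf{CAlg}(\bb{Z})$ from the proof of Lemma~\ref{lem-central-is-double}. We regard $\Rings$ as a double category as in Example~\ref{example-cats-are-trivially-double-cats}: ring maps are the horizontal morphisms, and vertical morphisms and squares are identities. Accordingly, $\tennsor{}{\RusDe}{0}$ sends a ring $R$ to the commutative ring $Z(R)$ and identities to identities. The assignment $\tennsor{}{\RusDe}{1}$ sends $\sigma\colon R\to S$ to $(Z(S),\tennsor{}{i}{S},Z(\sigma),\tennsor{\sigma}{i}{R},Z(R))$, and sends identity squares to identity ring maps.

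The first step is to check that this quintuple is a horizontal morphism. The map $\tennsor{}{i}{S}$ lands in $Z(\sigma)$, since central elements of $S$ commute with $\sigma(R)$. Also $\sigma(Z(R))\subseteq Z(\sigma)$, because $\sigma(z)\sigma(r)=\sigma(zr)=\sigma(rz)=\sigma(r)\sigma(z)$. For centrality of the images: $Z(S)$ is central in $S$, hence in $Z(\sigma)$. For $z\in Z(R)$ and $s\in Z(\sigma)$ we have $s\sigma(z)=\sigma(z)s$ by the definition of $Z(\sigma)$. Finally, the boundary data match: $\sr{L}$ and $\sr{R}$ give $Z(S)$ and $Z(R)$.

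The second step is the comparison maps. For the unit, $\tennsor{}{(\tennsor{}{\RusDe}{\Delta})}{R}$ should compare the identity horizontal morphism $(Z(R),\iden,Z(R),\iden,Z(R))$ with $\tennsor{}{\RusDe}{1}(\tennsor{}{\iden}{R})=(Z(R),\ldots,Z(\tennsor{}{\iden}{R}),\ldots,Z(R))$. Since $Z(\tennsor{}{\iden}{R})=Z(R)$, this is the identity. For composable $\tau\colon S\to T$ and $\sigma\colon R\to S$, we need a square
\[
Z(\tau)\tennsor{}{\otimes}{Z(S)}Z(\sigma)\to Z(\tau\sigma),\qquad t\otimes s\mapsto t\,\tau(s).
\]
This is the map $\tennsor{\eta,\alpha}{\nabla}{\theta,\delta}$ of Lemma~\ref{lem-bilinear-maps} with $\eta$ the inclusion $Z(\tau)\subseteq Z(\tau\sigma)$ and $\theta=\tau|_{Z(\sigma)}$. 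We must verify three things. First, $Z(\tau)\subseteq Z(\tau\sigma)$ is immediate. Second, $\tau(Z(\sigma))\subseteq Z(\tau\sigma)$: for $s\in Z(\sigma)$, $\tau(s)\tau\sigma(r)=\tau(s\sigma(r))=\tau(\sigma(r)s)$. Third, this map is a ring map and not merely bilinear. For that, in $T$ we need $\tau(s)$ to commute with $t'\in Z(\tau)$, which holds because $t'$ commutes with $\tau(S)$. So the product rule of Lemma~\ref{lem-bilinear-maps}(2) is respected despite Remark~\ref{rem-not-pushout}. Compatibility with the cospan legs, that is, the square morphism condition, is a direct check on units.

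The final step is coherence, which I expect to be the main obstacle. The associativity square \eqref{eqn-lax-double-functor-comm-conditions-1} for triples $\upsilon,\tau,\sigma$ reduces, on pure tensors, to $u\,\upsilon(t)\,\upsilon\tau(s)$ both ways. This is Lemma~\ref{lem-bilinear-maps}(3) applied to the diagram of centres. The unit conditions \eqref{eqn-lax-double-functor-comm-conditions-2} and \eqref{eqn-lax-double-functor-comm-conditions-3} reduce to the observations $z\otimes s\mapsto zs$ and $s\otimes z\mapsto s\sigma(z)$, which agree with the unitors of \cite{field-theories}. Because vertical morphisms are trivial, naturality conditions are vacuous. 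The delicate point is matching precisely with the associators and unitors of the bicategory in Lemma~\ref{lem-central-is-double}. Here I would cite \cite[Theorem~4.12]{field-theories}, noting that the field $K$ is never used.
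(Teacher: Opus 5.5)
Your proposal is correct and agrees with the paper. The paper gives no argument of its own: the lemma rests solely on the citation of \cite[Theorem~4.12]{field-theories}, read through the identification of $\Central_{\Comms}^{\Rings}$ with $\textbf{CAlg}(\bb{Z})$ in the proof of Lemma~\ref{lem-central-is-double}.

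What you add is an explicit verification, and each check is sound:
\begin{enumerate}
\item The quintuple $(Z(S),\tennsor{}{i}{S},Z(\sigma),\tennsor{\sigma}{i}{R},Z(R))$ is a horizontal morphism.
\item The unit comparison is the identity, since $Z(\tennsor{}{\iden}{R})=Z(R)$.
\item The map $t\otimes s\mapsto t\,\tau(s)$ is a ring map $Z(\tau)\otimes_{Z(S)}Z(\sigma)\to Z(\tau\sigma)$ because $Z(\tau)$ commutes with $\tau(S)$. This is exactly how the warning in Remark~\ref{rem-not-pushout} is avoided.
\end{enumerate}

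The point you defer to the citation is not actually delicate. The associator and unitors of $\Central_{\Comms}^{\Rings}$ are the canonical maps $(e\otimes f)\otimes g\mapsto e\otimes(f\otimes g)$, $a\otimes e\mapsto \alpha(a)e$ and $e\otimes b\mapsto e\beta(b)$. So your pure-tensor computations, with Lemma~\ref{lem-bilinear-maps}(3) for associativity, already establish \eqref{eqn-lax-double-functor-comm-conditions-1}--\eqref{eqn-lax-double-functor-comm-conditions-3}. Naturality is vacuous because $\Rings$ has only identity squares. Hence your argument is self-contained and the final appeal to \cite{field-theories} can be dropped.

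The trade-off between the two routes is this. Yours makes explicit that the field $K$ of \cite{field-theories} plays no role, and that the lax direction and left/right boundary conventions match Definition~\ref{defn-lax-double-func}. The paper's bare citation is shorter but leaves exactly these translations to the reader.
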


Note that \cite[Theorem~4.4]{laxfuncs} follows as a corollary of Lemma~\ref{lem-coolguy} and Lemma~\ref{lem-field-theory-stuff}. 
\begin{cor}
\label{cor-double-functors-giving-species}
    There exists a (possibly noncommutative) diagram of double functors 
\[
\begin{tikzcd}
\Rings\arrow[dr, "\RusDe"']
\arrow[rr, "\RusZhe"]
&
&
\Byemod
\\
&
\Central_{\Comms}^{\Rings}\arrow[ur, "\RusSha"']
&
\end{tikzcd}
\]
\end{cor}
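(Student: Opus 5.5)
The statement only asserts the \emph{existence} of the three double functors in the diagram, with commutativity explicitly not claimed. So I would assemble it directly from the three preceding results rather than build anything new. The top arrow $\RusZhe\colon\Rings\to\Byemod$ is the double functor of Lemma~\ref{lem-lax-doub-func-rings-to-bimod-direct}. The arrow $\RusDe\colon\Rings\to\Central_{\Comms}^{\Rings}$ is the one recorded in Lemma~\ref{lem-field-theory-stuff}, quoted from \cite[Theorem~4.12]{field-theories}. The arrow $\RusSha\colon\Central_{\Comms}^{\Rings}\to\Byemod$ is the double functor of Lemma~\ref{lem-coolguy}.

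The only point needing care is that the domains and codomains match. I would check this in two steps:
\begin{enumerate}
\item $\Central_{\Comms}^{\Rings}$ is a genuine double category by Lemma~\ref{lem-central-is-double}, and it is the same one appearing as codomain in Lemma~\ref{lem-field-theory-stuff} and as domain in Lemma~\ref{lem-coolguy}.
\item $\Rings$ is regarded as a double category in the same way, via Example~\ref{example-cats-are-trivially-double-cats}, both as the domain of $\RusZhe$ and as the domain of $\RusDe$. In this convention the vertical morphisms are identities and the horizontal morphisms are ring maps, which agrees with how ring maps are used in both lemmas.
\end{enumerate}
Once these identifications are in place, the three arrows form the claimed triangle.

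I would then add a short remark on why the diagram need not commute, which also explains the parenthetical in the statement. Take a ring map $\sigma\colon R\to S$. The route through $\RusZhe$ sends it to the $S$-$R$-bimodule ${}_{S}S_{\sigma}$. The composite $\RusSha\RusDe$ sends it to the $Z(S)$-$Z(R)$-bimodule $Z(\sigma)$, and even the object parts differ, since $R\mapsto Z(R)$ rather than $R\mapsto R$.

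The main obstacle is purely one of bookkeeping. I need to confirm that the horizontal and vertical conventions for $\Rings$ used in the statement of Lemma~\ref{lem-field-theory-stuff} agree with those used in Lemma~\ref{lem-lax-doub-func-rings-to-bimod-direct}. I also need to confirm that the field-independence remark made in the proof of Lemma~\ref{lem-central-is-double} carries over to \cite[Theorem~4.12]{field-theories}. I would address this by noting that the proof there uses only centres and tensor products over commutative rings, and never uses that the base $K$ is a field.
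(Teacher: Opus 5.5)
Your proposal is correct and is essentially the paper's proof, which consists solely of combining Lemmas~\ref{lem-lax-doub-func-rings-to-bimod-direct}, \ref{lem-coolguy} and~\ref{lem-field-theory-stuff}. Your extra checks that the domains and codomains match are accurate supplements, not a different route. The same holds for your explanation of why the triangle need not commute, namely that already on objects one side sends $R\mapsto R$ while the other sends $R\mapsto Z(R)$.
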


\begin{proof}
 Combine Lemma~\ref{lem-lax-doub-func-rings-to-bimod-direct}, Lemma~\ref{lem-coolguy} and Lemma~\ref{lem-field-theory-stuff}. 
\end{proof}

In examples we consider, we construct a double functor, either to $\Rings$ or to $\Central_{\Comms}^{\Rings}$, compose  it with a functor  from Corollary~\ref{cor-double-functors-giving-species}, and then apply Corollary~\ref{cor-cool-nice-cheers-ta-nice1}.  
For each such construction we then apply Lemma~\ref{small-cat-becomes-species}.  
Before the examples we add a definition. 
\begin{defn}
\label{defn-acyclic-connected}
    We call a species $\fk{S}=(\tennsor{}{R}{x},\tennsor{y}{A}{x})$ \emph{connected} if for each    $(x,z)\in \ff{C}^{2}$ there exists $\tennsor{}{y}{0},\dots,\tennsor{}{y}{n}\in\ff{C}$ such that  $\tennsor{}{y}{0}=x$, $\tennsor{}{y}{n}=z$, and ($\tennsor{y_{i}}{A}{y_{i+1}}\neq 0$ or $\tennsor{y_{i+1}}{A}{y_{i}}\neq 0$) when $i<n$.  
    We call $\fk{S}$ \emph{acylic} if $\tennsor{x}{\underline{A}}{x}=\tennsor{}{R}{x}$ for each $x\in\ff{C}$, i.e.,  $\tennsor{}{A}{\underline{x}}=0$ for each  $\ff{n}>0$ and $\underline{x}\in\ff{C}^{\ff{n}}(x,x)$.  
\end{defn}

\subsection{Pointwise artinian functors}
\label{subsec-ptwise-artinian}
We start with something familiar.
Let $S$ be a commutative ring, $\cl{C}$ a small category and $[\cl{C},\lMod{S}]$ the category of functors $\cl{C}\to \lMod{S}$. %

\textbf{Realisation}. 
Let $S\cl{C}$ be the $S$-\emph{linearisation} of $\cl{C}$. 
It follows that $[\cl{C},\lMod{S}]$ is equivalent to the category of $S$-linear functors $S\cl{C}\to \lMod{S}$; see for example   \cite[p.~198,~Exercise~6(a)]{MacLane-categories-for-the-working-mathematician}. 
As discussed by Auslander--Reiten \cite[p.~310]{auslander-reiten-stable}, this category is equivalent to   $\lMod{S\cl{C}}$. 

\textbf{Application}. 
We recover a decomposition result for  \emph{pointwise-artinian functors}. 

\begin{cor}\label{cor-actually-understandable-decomposition}
     If $\sr{F}$ is an object in $[\cl{C},\lMod{S}]$ such that each $S$-module $\sr{F}(x)$ is artinian,  then $\sr{F}$ decomposes into a direct sum of strongly indecomposable functors. 
\end{cor}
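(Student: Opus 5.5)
The plan is to realise $[\cl{C},\lMod{S}]$ as a category of species representations in which the hypothesis of Lemma~\ref{lem-finiteness-cond-for-existence-strong-decompo-application} is visible. Take the species $\fk{S}(S\cl{C})=(\End_{S\cl{C}}(x),\Hom_{S\cl{C}}(x,y))$ of Lemma~\ref{lem-full-comm-condition}, built from the small preadditive category $S\cl{C}$ on the object set $\ff{C}$ of $\cl{C}$. By that lemma, composition gives a full commutativity condition $\bff{c}=\bff{c}(S\cl{C})$, and $\lMod{S\cl{C}}\simeq\Rep{\fk{S}(S\cl{C}),\sr{I}(\bff{c})}$. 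Combined with the realisation above, $[\cl{C},\lMod{S}]\simeq\lMod{S\cl{C}}$, so $\sr{F}$ corresponds to a representation $M=(\tennsor{x}{M}{},\tennsor{y}{\alpha}{x})$ with $\tennsor{x}{M}{}=\sr{F}(x)$. Here $\tennsor{y}{\alpha}{x}(a\otimes m)=\sr{F}(a)(m)$ for $a$ in the $S$-span of $\Hom_{\cl{C}}(x,y)$.

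Next, check the centrality hypothesis with $S$ as the common subring. The map $S\to\End_{S\cl{C}}(x)$, $s\mapsto s\cdot\tennsor{}{\iden}{x}$, lands in the centre because composition in $S\cl{C}$ is $S$-bilinear. Likewise $S$ acts centrally on each $\Hom_{S\cl{C}}(x,y)$, since $(s\tennsor{}{\iden}{y})a=sa=a(s\tennsor{}{\iden}{x})$. The $S$-module structure on $\tennsor{x}{M}{}$ obtained by restriction along this map agrees with the original one on $\sr{F}(x)$, because $\sr{F}$ is $S$-linear. So each $\tennsor{x}{M}{}$ is artinian over $S$. (If $S$ were not literally a subring, e.g.\ $s\tennsor{}{\iden}{x}=0$ for some $s$, only the induced ring map $\rho\colon S\to\End_{\Rep{\fk{S}}}(M)$ from the proof of Lemma~\ref{lem-finiteness-cond-for-existence-strong-decompo-application} is needed. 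That argument goes through verbatim, and then Lemma~\ref{lem-finiteness-cond-for-existence-strong-decompo} applies directly, since $\End(M)$-submodules are $S$-submodules.)

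Lemma~\ref{lem-finiteness-cond-for-existence-strong-decompo-application} then gives $M\cong\bigoplus_i M_i$ in $\Rep{\fk{S}}$ with each $M_i$ strongly indecomposable. By Lemma~\ref{rem-decomposition-passes-to-relations}, each summand $M_i$ lies in $\Rep{\fk{S},\sr{I}(\bff{c})}$. This subcategory is full, being defined by a condition on objects, so $\End(M_i)$ is the same computed there and remains local. Transporting back along the equivalences yields a decomposition of $\sr{F}$ into strongly indecomposable functors.

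The main obstacle is the bookkeeping that the composite equivalence $[\cl{C},\lMod{S}]\to\Rep{\fk{S},\sr{I}(\bff{c})}$ identifies $\sr{F}(x)$ with $\tennsor{x}{M}{}$ compatibly with the $S$-actions. This is exactly what makes artinianness over $S$ transfer. I would verify it directly from the explicit construction in the proof of Lemma~\ref{lem-full-comm-condition}.
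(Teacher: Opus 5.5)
Your proposal is correct and follows the paper's argument: realise $[\cl{C},\lMod{S}]$ inside $\Rep{\fk{S}(S\cl{C}),\sr{I}(\bff{c}(S\cl{C}))}$ via Lemma~\ref{lem-full-comm-condition}, then apply Lemma~\ref{lem-finiteness-cond-for-existence-strong-decompo-application} and Lemma~\ref{rem-decomposition-passes-to-relations}. The only difference is packaging: the paper builds the species by composing the constant functor $\cl{C}\to\Rings$ at $S$ with $\RusZhe$ and then identifies it with $\fk{S}(S\cl{C})$, whereas you go to $\fk{S}(S\cl{C})$ directly and spell out the centrality of $S$ and the fullness of the subcategory, which the paper leaves implicit.
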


\begin{proof}                 Consider the functor $\cl{C}\to\Rings$ given by sending all objects to $S$ and all morphisms to $\tennsor{}{\iden}{S}$. 
Composing  with $\RusZhe$  from Corollary~\ref{cor-cool-nice-cheers-ta-nice1}, one obtains a double functor $\RusI$, and hence a preadditive category $\sr{T}(\fk{S}(\RusI))/\sr{I}(\boldsymbol{c}(\RusI))$ by Corollary~\ref{cor-double-functors-giving-species}. 
By construction,  in this case we have $\fk{S}(\RusI)=\fk{S}(S\cl{C})$ and $\boldsymbol{c}(\RusI)=\boldsymbol{c}(S\cl{C})$ in the notation of Lemma~\ref{lem-full-comm-condition}, and so, by the discussion above the statement of the Corollary, there is an equivalence $\Rep{\fk{S}(\RusI),\sr{I}(\boldsymbol{c}(\RusI))}\simeq [\cl{C},\lMod{S}]$.           %
Now apply Lemma~\ref{lem-finiteness-cond-for-existence-strong-decompo-application} and Remark~\ref{rem-decomposition-passes-to-relations}. 
\end{proof}


\subsection{Abelian length categories and distributive lattices}
\label{subsec:abelian-length-krause} 
We exhibit  examples  of representations of species with commutativity relations, defined using embeddings of (often strictly noncommutative) division rings. 
These examples arise in the  setting of recent work of Krause \cite{HENNING-ORDERS},  namely abelian \emph{length} categories, meaning each non-zero object admits a finite chain of subobjects whose composition factors are simple.

\textbf{Motivation}. 
An important invariant of abelian length categories is the \emph{ext-quiver}, with isoclasses of simples as the vertices, and  an arrow $[S]\to [T]$ if and only if $\Ext_{\cl{A}}(T,S)\neq 0$.

The set up for \cite[Theorem~1.2]{HENNING-ORDERS} is an essentially small abelian length category $\cl{A}$, whose ext-quiver has no oriented cycles, and an  object $M$ with distributive subobject lattice,  where $\End_{\cl{A}}(N)\to \End_{\cl{A}}(N/\rad(N))$ is onto when $N$ is a join-irreducible subobject of $M$, and where any object is a subquotient of $M^{n}$ for some $n>0$. 
The conclusion of the cited result says that the set of join-irreducible subobjects $\tennsor{}{M}{x}$ ($x\in \ff{P}$) of $M$ forms a complete list of representatives of the isoclasses of indecomposable projectives in $\cl{A}$. 

\textbf{Realisation}. 
As noted in \cite[Lemma~2.4]{HENNING-ORDERS}, the assumptions above ensure that each ring $\tennsor{}{K}{x}=\End_{\cl{A}}(\tennsor{}{M}{})$ is a division ring, and the corresponding simples are, up to isomorphism, of the form $\tennsor{}{S}{x}=\tennsor{}{M}{x}/\rad(\tennsor{}{M}{x})$. 
Furthermore, there is  a partial order on $\ff{P}$, where $x\leq y$ if and only if  $\tennsor{}{M}{x}$ is a subobject of  $\tennsor{}{M}{y}$, in which case we denote the corresponding monomorphism by $\tennsor{}{\iota}{xy}\colon \tennsor{}{M}{x}\to \tennsor{}{M}{y}$. 
When $x\leq y$ there is a ring map $\tennsor{}{\kappa}{xy}\colon \tennsor{}{K}{y} \to \tennsor{}{K}{x} $ such that  
\[
\begin{array}{cc}
((\tennsor{}{\iota}{xy}\circ \tennsor{}{\kappa}{xy}(\lambda))(m)=\lambda\tennsor{}{\iota}{xy}(m),&
(\lambda\in \tennsor{}{K}{y},\,m\in \tennsor{}{M}{x}). 
\end{array}
\]
We take $\tennsor{}{\iota}{xx}=\tennsor{}{\iden}{{\tennsor{}{M}{x}}}$ in case $x=y$, which ensures $\tennsor{}{\kappa}{xx}=\tennsor{}{\iden}{{\tennsor{}{K}{x}}}$. 
Likewise,  $\tennsor{}{\kappa}{xy}\tennsor{}{\kappa}{yz}=\tennsor{}{\kappa}{xz}$ whenever $x\leq y\leq z$. 
In other words, we have a functor
\begin{equation}
    \begin{array}{ccc}
\boldsymbol{\kappa}\colon \cl{P}^{\op}\to \Rings,
&
x\mapsto \tennsor{}{K}{x},
&
([x\leq y]\colon y\to x)\mapsto (\tennsor{}{\kappa}{xy}\colon\tennsor{}{K}{y} \to \tennsor{}{K}{x}).
\end{array}
\label{eqn-functor-henning}
\end{equation}
\textbf{Application}. 
\emph{Representations} of $\boldsymbol{\kappa}$ are defined  by a left  $\tennsor{}{K}{x}$-module $M(x)$ for each $x\in\ff{P}$,  together with a left $\tennsor{}{K}{y}$-linear map $\tennsor{}{\mu}{xy}\colon \tennsor{*}{\kappa}{xy}M(x)
\to M(y)$ whenever $x\leq y$,  where $\tennsor{*}{\kappa}{xy}\colon \lMod{\tennsor{}{K}{x}}\to \lMod{\tennsor{}{K}{y}}$ is the functor given by restriction along $\tennsor{}{\kappa}{xy}$. 
One also requires compatibility equations to hold. 
Namely, whenever $x\leq y\leq z$, meaning $\tennsor{*}{\kappa}{yz}(\tennsor{*}{\kappa}{xy}M(x))=\tennsor{*}{\kappa}{xz}M(x)$ since $\boldsymbol{\kappa}$ is functorial, one requires  $\tennsor{}{\mu}{xz}=\tennsor{}{\mu}{yz}\circ \tennsor{*}{\kappa}{yz}(\tennsor{}{\mu}{xy})$. 
See \cite[p.~2]{HENNING-ORDERS}. 

A representation  $(M(x),\tennsor{}{\mu}{xy})$ is said to be \emph{pointwise finite-dimensional} provided $M(x)$ is finite-dimensional as a $\tennsor{}{K}{x}$-vector space for each $x\in \ff{P}$. 

\begin{cor}
\label{cor-pwfd-decomps}
    Any pointwise finite-dimensional representation of $\boldsymbol{\kappa}=(\tennsor{}{K}{x},\tennsor{}{\kappa}{xy})$ decomposes uniquely into a direct sum of strongly indecomposable representations.  
\end{cor}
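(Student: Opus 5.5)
The plan is to realise representations of $\boldsymbol{\kappa}$ as representations of a species with relations, following the pattern set up before Corollary~\ref{cor-actually-understandable-decomposition}. Then I would apply Lemma~\ref{lem-finite-length-means-nice-decomposition} for existence and Lemma~\ref{lem-uniqueness-of-decompositions} for uniqueness.

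First, regard $\cl{P}^{\op}$ as a double category with trivial vertical morphisms, as in Example~\ref{example-cats-are-trivially-double-cats}. Composing $\boldsymbol{\kappa}$ with $\RusZhe$ from Lemma~\ref{lem-lax-doub-func-rings-to-bimod-direct} gives a double functor $\RusI\colon\cl{P}^{\op}\to\Byemod$. This sends $[x\leq y]$ to the $\tennsor{}{K}{x}$-$\tennsor{}{K}{y}$-bimodule $\tennsor{K_x}{K}{\kappa_{xy}}$.

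A small orientation point needs care here. A representation has maps $\tennsor{*}{\kappa}{xy}M(x)\to M(y)$. These correspond by adjunction to maps out of a tensor with the bimodule $\tennsor{\kappa_{xy}}{(K_x)}{K_x}$, namely ${}_{K_y}(K_x)\otimes_{K_x}M(x)\cong\tennsor{*}{\kappa}{xy}M(x)$. So the correct species is either $\fk{S}(\RusI)$ or its ``opposite''. It has $\tennsor{}{R}{x}=\tennsor{}{K}{x}$ and $\tennsor{y}{A}{x}=\tennsor{}{K}{x}$, viewed as a $\tennsor{}{K}{y}$-$\tennsor{}{K}{x}$-bimodule via $\kappa_{xy}$, when $x\leq y$, and $\tennsor{y}{A}{x}=0$ otherwise. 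It comes with the commutativity condition $\bff{c}(\RusI)$ from Lemma~\ref{small-cat-becomes-species}, given by multiplication in $\tennsor{}{K}{x}$.

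I would then check directly, via Lemma~\ref{lem-reps-satisfying-ass-conditions}, that $\Rep{\fk{S},\sr{I}(\bff{c})}$ is equivalent to the category of representations of $\boldsymbol{\kappa}$. There are two correspondences to verify. The map $\tennsor{y}{\alpha}{x}$ corresponds to $\tennsor{}{\mu}{xy}$ by tensor-restriction adjunction. Equation~\eqref{eqn-ass-con-reps-satisfy} for $(z,y,x)$ becomes $\tennsor{}{\mu}{xz}=\tennsor{}{\mu}{yz}\circ\tennsor{*}{\kappa}{yz}(\tennsor{}{\mu}{xy})$. The unit relations come from $\tennsor{}{\kappa}{xx}=\tennsor{}{\iden}{}$ through $\bff{\theta}$, which identifies $\tennsor{x}{A}{x}$ with $\tennsor{}{R}{x}$ and forces $\tennsor{x}{\alpha}{x}$ to be the action.

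Next, take a pointwise finite-dimensional representation. It corresponds to some $M$ in $\Rep{\fk{S},\sr{I}(\bff{c})}$ with each $\tennsor{x}{M}{}$ of finite length over the division ring $\tennsor{}{K}{x}$. Lemma~\ref{lem-finite-length-means-nice-decomposition} then decomposes $M$ in $\Rep{\fk{S}}$ into strongly indecomposable representations. By Lemma~\ref{rem-decomposition-passes-to-relations} each summand lies in $\Rep{\fk{S},\sr{I}(\bff{c})}$. Since this is a full subcategory, endomorphism rings are unchanged, so the summands remain strongly indecomposable there.

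For uniqueness, apply Lemma~\ref{lem-uniqueness-of-decompositions}. Any two such decompositions in the subcategory are also decompositions in $\Rep{\fk{S}}$, so their summands match up bijectively up to isomorphism.

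I expect the main obstacle to be the identification of the category of representations of $\boldsymbol{\kappa}$ with $\Rep{\fk{S},\sr{I}(\bff{c})}$. The variance conventions and the direction of restriction versus tensoring must be tracked carefully. I would first try writing the bimodule explicitly and verifying the adjunction isomorphism naturally in $M(x)$; the rest of the argument is a direct citation of earlier lemmas.
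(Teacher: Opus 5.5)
Your argument is correct and follows the same skeleton as the paper: realise representations of $\boldsymbol{\kappa}$ inside $\Rep{\fk{S}}$ as $\Rep{\fk{S},\sr{I}(\bff{c})}$, decompose there, and pull the summands back with Lemma~\ref{rem-decomposition-passes-to-relations}. The difference is the decomposition lemma. The paper invokes Lemma~\ref{lem-finiteness-cond-for-existence-strong-decompo-application}, which needs a common central subring $S$ of the $K_x$, acting centrally on the bimodules, over which every $M(x)$ is artinian. That is available if, say, everything is linear over a field $k$ with each $K_x$ finite-dimensional over $k$. It is not automatic for arbitrary division rings: the prime subring does not work in general, since e.g.\ $\mathbb{Q}$ is not artinian over $\mathbb{Z}$. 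Your appeal to Lemma~\ref{lem-finite-length-means-nice-decomposition} needs nothing beyond the hypothesis, because a finite-dimensional vector space over $R_x=K_x$ has finite length, so your route is the more robust one.

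You also make explicit two points the paper's proof passes over. The first is uniqueness, via Lemma~\ref{lem-uniqueness-of-decompositions}, using that the subcategory is full and closed under direct sums. The second is variance: $\RusZhe\circ\boldsymbol{\kappa}$ yields the $K_x$-$K_y$-bimodule ${}_{K_x}(K_x)_{\kappa_{xy}}$, which encodes maps from $M(y)$ to $M(x)$. The maps $\mu_{xy}$ instead need the $K_y$-$K_x$-bimodule ${}_{\kappa_{xy}}(K_x)_{K_x}$, exactly as you say.

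Two small repairs are needed.

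First, your species is not literally $\fk{S}(\RusZhe\circ\boldsymbol{\kappa})$, so do not cite Lemma~\ref{small-cat-becomes-species} for $\bff{c}$. Instead define $c_{zyx}(a\otimes b)=\kappa_{xy}(a)b$ for $x\leq y\leq z$ and check \eqref{eqn-associativity-condition} directly; that is all Lemma~\ref{lem-reps-satisfying-ass-conditions} uses.

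Second, $\sr{I}(\bff{c})$ is built from $\bff{c}$ alone, so $\bff{\theta}$ does not force $\tennsor{x}{\alpha}{x}$ to be the action. The relations on the loops only force $\tennsor{x}{\alpha}{x}(1\otimes -)$ to be an idempotent, which may be $0$. If you want $\mu_{xx}=\iden$, you have two options. You can set $\tennsor{x}{A}{x}=0$ and use only $x<y$. Alternatively, observe that the representations with $\tennsor{x}{\alpha}{x}$ equal to the action form a full subcategory of $\Rep{\fk{S}}$ closed under direct sums and summands. Either way the rest of your argument is unchanged.
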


\begin{proof}
Composing $\boldsymbol{\kappa}$ from \eqref{eqn-functor-henning} with $\RusZhe$ from Corollary~\ref{cor-cool-nice-cheers-ta-nice1}, one obtains a double functor $\underline{\boldsymbol{\kappa}}$, and hence a preadditive category of the form $\sr{T}(\fk{S}(\underline{\boldsymbol{\kappa}}))/\sr{I}(\boldsymbol{c}(\underline{\boldsymbol{\kappa}}))$ by Corollary~\ref{cor-cool-nice-cheers-ta-nice1}. 
By definition, and by Lemma~\ref{lem-reps-satisfying-ass-conditions}, the category of  representations of $\boldsymbol{\kappa}$ is equivalent to $\Rep{\fk{S}(\underline{\boldsymbol{\kappa}}),\sr{I}(\boldsymbol{c}(\underline{\boldsymbol{\kappa}}))}$. 
We can now apply Lemma~\ref{lem-finiteness-cond-for-existence-strong-decompo-application} and Remark~\ref{rem-decomposition-passes-to-relations}. 
\end{proof}

For another link to previous work, one may recall \emph{structures} in the sense of Dlab--Ringel \cite{Dlabringstructures}, which are recovered from the discussion above by taking $\ff{P}$ to be a finite set, but where representations must have a projective socle, or equivalently, the maps $\tennsor{}{\mu}{xy}$ must be injective.  
The main result of \cite{Dlabringstructures} is a generalisation of a theorem of Kleiner, whose work is recovered when all the division rings $\tennsor{}{K}{x}$ coincide with a single field $K$.   

\subsection{Persistent homology with various coefficients and the field choice problem}
\label{sec-field-choice}

We exhibit  examples  of species with commutativity relations, defined using quotient maps between finite rings of the form $\bb{Z}/m\bb{Z}$ for $0\neq m\in\bb{Z}$. %
These examples come from \emph{topological data analysis} (TDA).  
We recall Sections 3.1, 4.1 and 4.2 of work Boissonnat--Maria \cite{Boissonnatmariacomputing}, who studied \emph{multi-field persistence diagrams}; see for example \cite[p.~64,~Figure~1]{Boissonnatmariacomputing}. 
 For a poset $\cl{P}$ the (usual) category of \emph{persistence modules} over a (single) field $K$ is  $[\cl{P},\lMod{K}]$. 

\textbf{Background}. 
Roughly, the classical story of  persistence modules in TDA  is as follows. 

At each point $\texttt{p}$ of a point cloud $\texttt{C}$, centre a ball $B_{\texttt{p}}(\varepsilon)$ of radius $\varepsilon\geq 0$. 
Next, consider the \emph{{\v C}ech  complex} $\tennsor{}{\bb{X}}{\varepsilon}$, the singular nerve complex given by $(B_{\texttt{p}}(\varepsilon) \mid \texttt{p}\in \texttt{C})$. 
So, $\tennsor{}{\bb{X}}{\varepsilon}$ consists of subsets $\Sigma\subseteq \texttt{C}$ such that $\tennsor{}{\bigcap}{\texttt{p}\in\Sigma}B_{\texttt{p}}(\varepsilon)\neq \emptyset$, and  $(\tennsor{}{\bb{X}}{\varepsilon}\mid\varepsilon\in[0,\infty))$ defines a filtered simplicial complex. 
The homology is said to \emph{persist} from $\delta$ to $\zeta$ if $H_{n}(\tennsor{}{\bb{X}}{\varepsilon})\neq 0$ for $\delta\leq \varepsilon\leq \zeta$. 
To see how homology may persist as one varies the radius $\varepsilon$, it can help to consider a picture, say
\vspace{1mm}
\[
\begin{array}{cccc}
\resizebox{0.19\textwidth}{!}{%
\begin{circuitikz}
\tikzstyle{every node}=[font=\normalsize]
\draw [ fill={rgb,255:red,186; green,186; blue,186} , line width=0.2pt ] (3.25,12) circle (0.295cm);
\draw [ fill={rgb,255:red,186; green,186; blue,186} , line width=0.2pt ] (2.5,11.5) circle (0.295cm);
\draw [ fill={rgb,255:red,186; green,186; blue,186} , line width=0.2pt ] (2.5,10.75) circle (0.295cm);
\draw [ fill={rgb,255:red,186; green,186; blue,186} , line width=0.2pt ] (3,10) circle (0.295cm);
\draw [ fill={rgb,255:red,186; green,186; blue,186} , line width=0.2pt ] (4,12) circle (0.295cm);
\draw [ fill={rgb,255:red,186; green,186; blue,186} , line width=0.2pt ] (4.75,12.5) circle (0.295cm);
\draw [ fill={rgb,255:red,186; green,186; blue,186} , line width=0.2pt ] (5.5,12) circle (0.295cm);
\draw [ fill={rgb,255:red,186; green,186; blue,186} , line width=0.2pt ] (5.5,11.25) circle (0.295cm);
\draw [ fill={rgb,255:red,186; green,186; blue,186} , line width=0.2pt ] (6.25,12.25) circle (0.295cm);
\draw [ fill={rgb,255:red,186; green,186; blue,186} , line width=0.2pt ] (7,12) circle (0.295cm);
\draw [ fill={rgb,255:red,186; green,186; blue,186} , line width=0.2pt ] (4,10.75) circle (0.295cm);
\draw [ fill={rgb,255:red,186; green,186; blue,186} , line width=0.2pt ] (3.5,9.5) circle (0.295cm);
\draw [ fill={rgb,255:red,186; green,186; blue,186} , line width=0.2pt ] (4.25,9.5) circle (0.295cm);
\draw [ fill={rgb,255:red,186; green,186; blue,186} , line width=0.2pt ] (5,9.75) circle (0.295cm);
\draw [ fill={rgb,255:red,186; green,186; blue,186} , line width=0.2pt ] (5.5,10.5) circle (0.295cm);
\draw [ fill={rgb,255:red,186; green,186; blue,186} , line width=0.2pt ] (6.25,10.25) circle (0.295cm);
\draw [ fill={rgb,255:red,186; green,186; blue,186} , line width=0.2pt ] (7,10.25) circle (0.295cm);
\draw [ fill={rgb,255:red,186; green,186; blue,186} , line width=0.2pt ] (7.75,10.75) circle (0.295cm);
\draw [ fill={rgb,255:red,186; green,186; blue,186} , line width=0.2pt ] (7.75,11.5) circle (0.295cm);
\draw [ fill={rgb,255:red,186; green,186; blue,186} , line width=0.2pt ] (0.75,12.75) circle (0.295cm);
\draw [ fill={rgb,255:red,186; green,186; blue,186} , line width=0.2pt ] (1,10.25) circle (0.295cm);
\draw [ fill={rgb,255:red,186; green,186; blue,186} , line width=0.2pt ] (5.75,9.75) circle (0.295cm);
\draw [ fill={rgb,255:red,186; green,186; blue,186} , line width=0.2pt ] (7.25,9.25) circle (0.295cm);
\draw [ fill={rgb,255:red,186; green,186; blue,186} , line width=0.2pt ] (1.5,12.75) circle (0.295cm);
\draw [ fill={rgb,255:red,186; green,186; blue,186} , line width=0.2pt ] (4.25,13.75) circle (0.295cm);
\draw [ fill={rgb,255:red,186; green,186; blue,186} , line width=0.2pt ] (6,13.5) circle (0.295cm);
\draw [ fill={rgb,255:red,186; green,186; blue,186} , line width=0.2pt ] (2.25,9.5) circle (0.295cm);
\draw [ fill={rgb,255:red,186; green,186; blue,186} , line width=0.2pt ] (1.5,9.75) circle (0.295cm);
\draw [ fill={rgb,255:red,186; green,186; blue,186} , line width=0.2pt ] (1.25,11) circle (0.295cm);
\draw [ fill={rgb,255:red,186; green,186; blue,186} , line width=0.2pt ] (6.5,9.5) circle (0.295cm);
\draw [ fill={rgb,255:red,186; green,186; blue,186} , line width=0.2pt ] (7.75,12.75) circle (0.295cm);
\draw [ fill={rgb,255:red,186; green,186; blue,186} , line width=0.2pt ] (6.75,13.75) circle (0.295cm);
\draw [ fill={rgb,255:red,186; green,186; blue,186} , line width=0.2pt ] (8.75,10.75) circle (0.295cm);
\end{circuitikz}
}
&
\resizebox{0.19\textwidth}{!}{%
\begin{circuitikz}
\tikzstyle{every node}=[font=\normalsize]
\draw [ fill={rgb,255:red,186; green,186; blue,186} , line width=0.2pt ] (3.25,12) circle (0.5cm);
\draw [ fill={rgb,255:red,186; green,186; blue,186} , line width=0.2pt ] (2.5,11.5) circle (0.5cm);
\draw [ fill={rgb,255:red,186; green,186; blue,186} , line width=0.2pt ] (2.5,10.75) circle (0.5cm);
\draw [ fill={rgb,255:red,186; green,186; blue,186} , line width=0.2pt ] (3,10) circle (0.5cm);
\draw [ fill={rgb,255:red,186; green,186; blue,186} , line width=0.2pt ] (4,12) circle (0.5cm);
\draw [ fill={rgb,255:red,186; green,186; blue,186} , line width=0.2pt ] (4.75,12.5) circle (0.5cm);
\draw [ fill={rgb,255:red,186; green,186; blue,186} , line width=0.2pt ] (5.5,12) circle (0.5cm);
\draw [ fill={rgb,255:red,186; green,186; blue,186} , line width=0.2pt ] (5.5,11.25) circle (0.5cm);
\draw [ fill={rgb,255:red,186; green,186; blue,186} , line width=0.2pt ] (6.25,12.25) circle (0.5cm);
\draw [ fill={rgb,255:red,186; green,186; blue,186} , line width=0.2pt ] (7,12) circle (0.5cm);
\draw [ fill={rgb,255:red,186; green,186; blue,186} , line width=0.2pt ] (4,10.75) circle (0.5cm);
\draw [ fill={rgb,255:red,186; green,186; blue,186} , line width=0.2pt ] (3.5,9.5) circle (0.5cm);
\draw [ fill={rgb,255:red,186; green,186; blue,186} , line width=0.2pt ] (4.25,9.5) circle (0.5cm);
\draw [ fill={rgb,255:red,186; green,186; blue,186} , line width=0.2pt ] (5,9.75) circle (0.5cm);
\draw [ fill={rgb,255:red,186; green,186; blue,186} , line width=0.2pt ] (5.5,10.5) circle (0.5cm);
\draw [ fill={rgb,255:red,186; green,186; blue,186} , line width=0.2pt ] (6.25,10.25) circle (0.5cm);
\draw [ fill={rgb,255:red,186; green,186; blue,186} , line width=0.2pt ] (7,10.25) circle (0.5cm);
\draw [ fill={rgb,255:red,186; green,186; blue,186} , line width=0.2pt ] (7.75,10.75) circle (0.5cm);
\draw [ fill={rgb,255:red,186; green,186; blue,186} , line width=0.2pt ] (7.75,11.5) circle (0.5cm);
\draw [ fill={rgb,255:red,186; green,186; blue,186} , line width=0.2pt ] (0.75,12.75) circle (0.5cm);
\draw [ fill={rgb,255:red,186; green,186; blue,186} , line width=0.2pt ] (1,10.25) circle (0.5cm);
\draw [ fill={rgb,255:red,186; green,186; blue,186} , line width=0.2pt ] (5.75,9.75) circle (0.5cm);
\draw [ fill={rgb,255:red,186; green,186; blue,186} , line width=0.2pt ] (7.25,9.25) circle (0.5cm);
\draw [ fill={rgb,255:red,186; green,186; blue,186} , line width=0.2pt ] (1.5,12.75) circle (0.5cm);
\draw [ fill={rgb,255:red,186; green,186; blue,186} , line width=0.2pt ] (4.25,13.75) circle (0.5cm);
\draw [ fill={rgb,255:red,186; green,186; blue,186} , line width=0.2pt ] (6,13.5) circle (0.5cm);
\draw [ fill={rgb,255:red,186; green,186; blue,186} , line width=0.2pt ] (2.25,9.5) circle (0.5cm);
\draw [ fill={rgb,255:red,186; green,186; blue,186} , line width=0.2pt ] (1.5,9.75) circle (0.5cm);
\draw [ fill={rgb,255:red,186; green,186; blue,186} , line width=0.2pt ] (1.25,11) circle (0.5cm);
\draw [ fill={rgb,255:red,186; green,186; blue,186} , line width=0.2pt ] (6.5,9.5) circle (0.5cm);
\draw [ fill={rgb,255:red,186; green,186; blue,186} , line width=0.2pt ] (7.75,12.75) circle (0.5cm);
\draw [ fill={rgb,255:red,186; green,186; blue,186} , line width=0.2pt ] (6.75,13.75) circle (0.5cm);
\draw [ fill={rgb,255:red,186; green,186; blue,186} , line width=0.2pt ] (8.75,10.75) circle (0.5cm);
\end{circuitikz}
}
&
\resizebox{0.19\textwidth}{!}{%
\begin{circuitikz}
\tikzstyle{every node}=[font=\normalsize]
\draw [ fill={rgb,255:red,186; green,186; blue,186} , line width=0.2pt ] (3.25,12) circle (0.675cm);
\draw [ fill={rgb,255:red,186; green,186; blue,186} , line width=0.2pt ] (2.5,11.5) circle (0.675cm);
\draw [ fill={rgb,255:red,186; green,186; blue,186} , line width=0.2pt ] (2.5,10.75) circle (0.675cm);
\draw [ fill={rgb,255:red,186; green,186; blue,186} , line width=0.2pt ] (3,10) circle (0.675cm);
\draw [ fill={rgb,255:red,186; green,186; blue,186} , line width=0.2pt ] (4,12) circle (0.675cm);
\draw [ fill={rgb,255:red,186; green,186; blue,186} , line width=0.2pt ] (4.75,12.5) circle (0.675cm);
\draw [ fill={rgb,255:red,186; green,186; blue,186} , line width=0.2pt ] (5.5,12) circle (0.675cm);
\draw [ fill={rgb,255:red,186; green,186; blue,186} , line width=0.2pt ] (5.5,11.25) circle (0.675cm);
\draw [ fill={rgb,255:red,186; green,186; blue,186} , line width=0.2pt ] (6.25,12.25) circle (0.675cm);
\draw [ fill={rgb,255:red,186; green,186; blue,186} , line width=0.2pt ] (7,12) circle (0.675cm);
\draw [ fill={rgb,255:red,186; green,186; blue,186} , line width=0.2pt ] (4,10.75) circle (0.675cm);
\draw [ fill={rgb,255:red,186; green,186; blue,186} , line width=0.2pt ] (3.5,9.5) circle (0.675cm);
\draw [ fill={rgb,255:red,186; green,186; blue,186} , line width=0.2pt ] (4.25,9.5) circle (0.675cm);
\draw [ fill={rgb,255:red,186; green,186; blue,186} , line width=0.2pt ] (5,9.75) circle (0.675cm);
\draw [ fill={rgb,255:red,186; green,186; blue,186} , line width=0.2pt ] (5.5,10.5) circle (0.675cm);
\draw [ fill={rgb,255:red,186; green,186; blue,186} , line width=0.2pt ] (6.25,10.25) circle (0.675cm);
\draw [ fill={rgb,255:red,186; green,186; blue,186} , line width=0.2pt ] (7,10.25) circle (0.675cm);
\draw [ fill={rgb,255:red,186; green,186; blue,186} , line width=0.2pt ] (7.75,10.75) circle (0.675cm);
\draw [ fill={rgb,255:red,186; green,186; blue,186} , line width=0.2pt ] (7.75,11.5) circle (0.675cm);
\draw [ fill={rgb,255:red,186; green,186; blue,186} , line width=0.2pt ] (0.75,12.75) circle (0.675cm);
\draw [ fill={rgb,255:red,186; green,186; blue,186} , line width=0.2pt ] (1,10.25) circle (0.675cm);
\draw [ fill={rgb,255:red,186; green,186; blue,186} , line width=0.2pt ] (5.75,9.75) circle (0.675cm);
\draw [ fill={rgb,255:red,186; green,186; blue,186} , line width=0.2pt ] (7.25,9.25) circle (0.675cm);
\draw [ fill={rgb,255:red,186; green,186; blue,186} , line width=0.2pt ] (1.5,12.75) circle (0.675cm);
\draw [ fill={rgb,255:red,186; green,186; blue,186} , line width=0.2pt ] (4.25,13.75) circle (0.675cm);
\draw [ fill={rgb,255:red,186; green,186; blue,186} , line width=0.2pt ] (6,13.5) circle (0.675cm);
\draw [ fill={rgb,255:red,186; green,186; blue,186} , line width=0.2pt ] (2.25,9.5) circle (0.675cm);
\draw [ fill={rgb,255:red,186; green,186; blue,186} , line width=0.2pt ] (1.5,9.75) circle (0.675cm);
\draw [ fill={rgb,255:red,186; green,186; blue,186} , line width=0.2pt ] (1.25,11) circle (0.675cm);
\draw [ fill={rgb,255:red,186; green,186; blue,186} , line width=0.2pt ] (6.5,9.5) circle (0.675cm);
\draw [ fill={rgb,255:red,186; green,186; blue,186} , line width=0.2pt ] (7.75,12.75) circle (0.675cm);
\draw [ fill={rgb,255:red,186; green,186; blue,186} , line width=0.2pt ] (6.75,13.75) circle (0.675cm);
\draw [ fill={rgb,255:red,186; green,186; blue,186} , line width=0.2pt ] (8.75,10.75) circle (0.675cm);
\end{circuitikz}
}
&
\resizebox{0.19\textwidth}{!}{%
\begin{circuitikz}
\tikzstyle{every node}=[font=\normalsize]
\draw [ fill={rgb,255:red,186; green,186; blue,186} , line width=0.2pt ] (3.25,12) circle (1.05cm);
\draw [ fill={rgb,255:red,186; green,186; blue,186} , line width=0.2pt ] (2.5,11.5) circle (1.05cm);
\draw [ fill={rgb,255:red,186; green,186; blue,186} , line width=0.2pt ] (2.5,10.75) circle (1.05cm);
\draw [ fill={rgb,255:red,186; green,186; blue,186} , line width=0.2pt ] (3,10) circle (1.05cm);
\draw [ fill={rgb,255:red,186; green,186; blue,186} , line width=0.2pt ] (4,12) circle (1.05cm);
\draw [ fill={rgb,255:red,186; green,186; blue,186} , line width=0.2pt ] (4.75,12.5) circle (1.05cm);
\draw [ fill={rgb,255:red,186; green,186; blue,186} , line width=0.2pt ] (5.5,12) circle (1.05cm);
\draw [ fill={rgb,255:red,186; green,186; blue,186} , line width=0.2pt ] (5.5,11.25) circle (1.05cm);
\draw [ fill={rgb,255:red,186; green,186; blue,186} , line width=0.2pt ] (6.25,12.25) circle (1.05cm);
\draw [ fill={rgb,255:red,186; green,186; blue,186} , line width=0.2pt ] (7,12) circle (1.05cm);
\draw [ fill={rgb,255:red,186; green,186; blue,186} , line width=0.2pt ] (4,10.75) circle (1.05cm);
\draw [ fill={rgb,255:red,186; green,186; blue,186} , line width=0.2pt ] (3.5,9.5) circle (1.05cm);
\draw [ fill={rgb,255:red,186; green,186; blue,186} , line width=0.2pt ] (4.25,9.5) circle (1.05cm);
\draw [ fill={rgb,255:red,186; green,186; blue,186} , line width=0.2pt ] (5,9.75) circle (1.05cm);
\draw [ fill={rgb,255:red,186; green,186; blue,186} , line width=0.2pt ] (5.5,10.5) circle (1.05cm);
\draw [ fill={rgb,255:red,186; green,186; blue,186} , line width=0.2pt ] (6.25,10.25) circle (1.05cm);
\draw [ fill={rgb,255:red,186; green,186; blue,186} , line width=0.2pt ] (7,10.25) circle (1.05cm);
\draw [ fill={rgb,255:red,186; green,186; blue,186} , line width=0.2pt ] (7.75,10.75) circle (1.05cm);
\draw [ fill={rgb,255:red,186; green,186; blue,186} , line width=0.2pt ] (7.75,11.5) circle (1.05cm);
\draw [ fill={rgb,255:red,186; green,186; blue,186} , line width=0.2pt ] (0.75,12.75) circle (1.05cm);
\draw [ fill={rgb,255:red,186; green,186; blue,186} , line width=0.2pt ] (1,10.25) circle (1.05cm);
\draw [ fill={rgb,255:red,186; green,186; blue,186} , line width=0.2pt ] (5.75,9.75) circle (1.05cm);
\draw [ fill={rgb,255:red,186; green,186; blue,186} , line width=0.2pt ] (7.25,9.25) circle (1.05cm);
\draw [ fill={rgb,255:red,186; green,186; blue,186} , line width=0.2pt ] (1.5,12.75) circle (1.05cm);
\draw [ fill={rgb,255:red,186; green,186; blue,186} , line width=0.2pt ] (4.25,13.75) circle (1.05cm);
\draw [ fill={rgb,255:red,186; green,186; blue,186} , line width=0.2pt ] (6,13.5) circle (1.05cm);
\draw [ fill={rgb,255:red,186; green,186; blue,186} , line width=0.2pt ] (2.25,9.5) circle (1.05cm);
\draw [ fill={rgb,255:red,186; green,186; blue,186} , line width=0.2pt ] (1.5,9.75) circle (1.05cm);
\draw [ fill={rgb,255:red,186; green,186; blue,186} , line width=0.2pt ] (1.25,11) circle (1.05cm);
\draw [ fill={rgb,255:red,186; green,186; blue,186} , line width=0.2pt ] (6.5,9.5) circle (1.05cm);
\draw [ fill={rgb,255:red,186; green,186; blue,186} , line width=0.2pt ] (7.75,12.75) circle (1.05cm);
\draw [ fill={rgb,255:red,186; green,186; blue,186} , line width=0.2pt ] (6.75,13.75) circle (1.05cm);
\draw [ fill={rgb,255:red,186; green,186; blue,186} , line width=0.2pt ] (8.75,10.75) circle (1.05cm);
\end{circuitikz}
}
\\
\text{\footnotesize{No homology.}}
&
\text{\footnotesize{Homology appears...}}
&
\text{\footnotesize{...persists, and more appears...}}
&
\text{\footnotesize{...and disappears.}}\\
\vspace{-6mm}
\end{array}
\]
One considers homology $H_{n}(\tennsor{}{\bb{X}}{\varepsilon};S)$ with coefficients in a principal ideal domain $S$, and doing so defines a functor $H_{n}(\bb{X}_{-};S)\colon [0,\infty)\to \lMod{S}$. 
Often, but not always, one takes $S=K$ a field, in which case the functor $H_{n}(\bb{X}_{-};K)\colon [0,\infty)\to \lMod{K}$ is pointwise finite-dimensional, since each $\bb{Z}$-module $H_{n}(\tennsor{}{\bb{X}}{\varepsilon})$ is finitely generated. 
Note that $[\cl{P},\lMod{K}]$ was seen in Sections \ref{subsec-ptwise-artinian} and \ref{subsec:abelian-length-krause}. 
Corollary~\ref{cor-actually-understandable-decomposition} and Corollary~\ref{cor-pwfd-decomps} each recover the  decomposition theorem for persistence modules of Botnan--Crawley-Boevey \cite[Theorem~1.1]{botnan-crawley-boevey-persistence}. 
The indecomposable persistence modules occurring in such a decomposition define the \emph{persistence barcode}. 

In case $\cl{P}=[0,\infty)$, by work of Crawley-Boevey \cite{crawley-boevey-intervals},  indecomposable pointwise finite-dimensional persistence modules have the form $\tennsor{}{K}{I}$ where $\tennsor{}{K}{I}(\delta\leq \varepsilon)=\tennsor{}{\iden}{K}$ if $\delta, \varepsilon\in I$ and $\tennsor{}{K}{I}( \zeta)=0$ if $\zeta\notin I$. 
The \emph{persistence diagram} of a persistence module $M$ is the set of pairs $(\inf(I),\sup(I))$ such that $\tennsor{}{K}{I}$ is a direct summand of $M$.  

\textbf{Motivation}. 
A subtlety in the construction above is that the structure of  $H_{n}(\tennsor{}{\bb{X}}{-};K)$ depends on $K$. 
Explicit examples are found in work of Obayashi--Yoshiwaki  \cite[Examples~1.5,~3.1,~3.2,~3.3]{ObayashiYoshiwakiFieldChoice}, who studied  the \emph{field choice problem} in TDA. 
For simplicity, assume the potential radii run through a discrete set, indexed by $\bb{N}$, and so we have functors of the form $H_{n}(\bb{X}_{-};\bb{Z}/m\bb{Z})\colon\bb{N}\to \lMod{\bb{Z}/m\bb{Z}}$ for each $m\in\bb{Z}$. 
Given distinct primes $\tennsor{}{q}{1},\dots,\tennsor{}{q}{\ell}$, consider the groups $H_{n}(\bb{X}_{i};\bb{Z}/\tennsor{}{q}{S}\bb{Z})$, where $S$ is an \emph{indexing} set, meaning a  non-empty  subset  of $ \{1,\dots,\ell\}$, and $\tennsor{}{q}{S}$ is the product over $i\in S$ of the $\tennsor{}{q}{i}$.

One  problem looked at in \cite{Boissonnatmariacomputing} is, roughly speaking, to relate the functors $H_{n}(\bb{X}_{-};\bb{Z}/\tennsor{}{q}{S}\bb{Z})$ as $S$ varies among indexing sets. 
To do this, these authors combine  the Chinese remainder theorem  with the universal coefficient theorem for simplicial homology. 
For $m\in\bb{Z}$ let $\tennsor{}{\bb{M}}{rc}(\bb{Z}/m\bb{Z})$ be the set of matrices in $\bb{Z}/m\bb{Z}$ with $r$ rows and $c$ columns. 
%
%
Let  $q=\tennsor{}{q}{1}\dots \tennsor{}{q}{\ell}$, suppose we are given the output of the functor  $H_{n}(\bb{X}_{-};\bb{Z}/q\bb{Z})\colon \bb{N}\to \bb{Z}/q\bb{Z}$, and assume for each $i\in\bb{N}$ that the $\bb{Z}$-module $H_{n}(\bb{X}_{i})$ is free, say of rank $\tennsor{}{d}{i}\geq 0$. 
Due to the negation of  \cite[Conjecture~1.4]{ObayashiYoshiwakiFieldChoice}, the structure of $H_{n}(\bb{X}_{i};\bb{Z}/m\bb{Z})$  may still depend on $m$, even though we assume each $H_{n}(\bb{X}_{i})$ is free. 
This assumption implies 
\[
\Hom_{}(H_{n}(\bb{X}_{i};\bb{Z}/m\bb{Z}),H_{n}(\bb{X}_{i+1};\bb{Z}/m\bb{Z}))
\cong
\Hom_{}((\bb{Z}/m\bb{Z})^{d_{i}},(\bb{Z}/m\bb{Z})^{d_{i+1}})\
\cong \tennsor{}{\bb{M}}{d_{i+1}d_{i}}(\bb{Z}/m\bb{Z}).
\]
Let $\tennsor{i}{M}{q}$ be the image of the  map $H_{n}(\bb{X}_{i};\bb{Z}/\tennsor{}{q}{}\bb{Z})\to H_{n}(\bb{X}_{i+1};\bb{Z}/\tennsor{}{q}{}\bb{Z})$ under this isomorphism. 
For an indexing set $S$ we define  $\tennsor{i}{M}{S}\in \tennsor{}{\bb{M}}{d_{i+1}d_{i}}(\bb{Z}/\tennsor{}{q}{S}\bb{Z})$ to be the image of $\tennsor{i}{M}{q}$ under 
\[
\begin{array}{cc}
\tennsor{}{\bb{M}}{d_{i+1}d_{i}}(\bb{Z}/\tennsor{}{q}{}\bb{Z})\to \tennsor{}{\bb{M}}{d_{i+1}d_{i}}(\bb{Z}/\tennsor{}{q}{S}\bb{Z}),
&
(\tennsor{}{m}{hj}+\tennsor{}{q}{}\bb{Z})\mapsto (\tennsor{}{m}{hj}+\tennsor{}{q}{S}\bb{Z}).
\end{array}
\] 
%
%
To understand the isoclass of  $H_{n}(\bb{X}_{-};\bb{Z}/\tennsor{}{q}{S}\bb{Z})$ as a persistence module, one uses row and column operations, to perform on  the $\tennsor{i}{M}{S}$ as $i$ varies. 
One can perform any column operations on $\tennsor{0}{M}{S}$, but, crucially, the row operations performed on $\tennsor{i}{M}{S}$ must be opposite to the column operations performed on $\tennsor{i+1}{M}{S}$. 
As in \cite{Boissonnatmariacomputing}, when $\tennsor{i}{M}{S}$ has finite support over $i$, one can instead consider a block matrix built from combining the matrices $\tennsor{i}{M}{S}$. 

To relate this to $H_{n}(\bb{X}_{-};\bb{Z}/\tennsor{}{q}{T}\bb{Z})$ for $T\subseteq S$, one carefully chooses operations to perform. 
One requires that, if $\tennsor{i}{L}{S}\in \tennsor{}{\bb{M}}{d_{i+1}d_{i}}(\bb{Z}/\tennsor{}{q}{S}\bb{Z})$  is the result of operating on $\tennsor{i}{M}{S}$, then there exist invertible $\tennsor{i}{E}{S}\in \tennsor{}{\bb{M}}{d_{i}d_{i}}(\bb{Z}/\tennsor{}{q}{S}\bb{Z})$  such that  the diagram below, whose curved arrows are products of the projections $\tennsor{T}{\pi}{S}\colon \bb{Z}/\tennsor{}{q}{S}\bb{Z}\to \bb{Z}/\tennsor{}{q}{T}\bb{Z}$, commutes
\[
    \begin{tikzcd}[column sep = 2.1em]	{(\bb{Z}/\tennsor{}{q}{S}\bb{Z})^{d_{0}}} & {(\bb{Z}/\tennsor{}{q}{S}\bb{Z})^{d_{1}}} & {(\bb{Z}/\tennsor{}{q}{S}\bb{Z})^{d_{2}}} & \cdots && \\
	& {(\bb{Z}/\tennsor{}{q}{S}\bb{Z})^{d_{0}}} & {(\bb{Z}/\tennsor{}{q}{S}\bb{Z})^{d_{1}}} & {(\bb{Z}/\tennsor{}{q}{S}\bb{Z})^{d_{2}}} & \cdots \\
	& {(\bb{Z}/\tennsor{}{q}{T}\bb{Z})^{d_{0}}} & {(\bb{Z}/\tennsor{}{q}{T}\bb{Z})^{d_{1}}} & {(\bb{Z}/\tennsor{}{q}{T}\bb{Z})^{d_{2}}} & \cdots \\
	&& {(\bb{Z}/\tennsor{}{q}{T}\bb{Z})^{d_{0}}} & {(\bb{Z}/\tennsor{}{q}{T}\bb{Z})^{d_{1}}} & {(\bb{Z}/\tennsor{}{q}{T}\bb{Z})^{d_{2}}} & \cdots
	\arrow["{\tennsor{0}{M}{S}}", from=1-1, to=1-2]
	\arrow["{\tennsor{0}{E}{S}}"', from=2-2, to=1-1]
	\arrow[two heads, bend right = 10, from=1-1, to=3-2]
	\arrow["{\tennsor{1}{M}{S}}", from=1-2, to=1-3]
	\arrow["{\tennsor{1}{E}{S}}"', from=2-3, to=1-2]
	\arrow[two heads, bend right = 10, from=1-2, to=3-3]
	\arrow[from=1-3, to=1-4]
	\arrow["{\tennsor{2}{E}{S}}"', from=2-4, to=1-3]
	\arrow[two heads, bend right = 10, from=1-3, to=3-4]
	\arrow["{\tennsor{0}{L}{S}}"{pos=0.6}, from=2-2, to=2-3]
	\arrow[two heads, bend right = 10, from=2-2, to=4-3]
	\arrow["{\tennsor{1}{L}{S}}"{pos=0.6}, from=2-3, to=2-4]
	\arrow[two heads, bend right = 10, from=2-3, to=4-4]
	\arrow[from=2-4, to=2-5]
	\arrow[two heads, bend right = 10, from=2-4, to=4-5]
	\arrow["{\tennsor{0}{M}{T}}", from=3-2, to=3-3]
	\arrow["{\tennsor{0}{E}{T}}"{pos=0.4}, from=4-3, to=3-2]
	\arrow["{\tennsor{1}{M}{T}}", from=3-3, to=3-4]
	\arrow["{\tennsor{1}{E}{T}}", from=4-4, to=3-3]
	\arrow[from=3-4, to=3-5]
	\arrow["{\tennsor{2}{E}{T}}", from=4-5, to=3-4]
	\arrow["{\tennsor{0}{L}{T}}"'{pos=0.6}, from=4-3, to=4-4]
	\arrow["{\tennsor{1}{L}{T}}"'{pos=0.6}, from=4-4, to=4-5]
	\arrow[from=4-5, to=4-6]
\end{tikzcd}
\]
%
The previously discussed row and column  operations, and their compatibilities, are detailed in  \cite[p.~67]{Boissonnatmariacomputing} and \cite[\S4.3]{Boissonnatmariacomputing}, using terminology such as \emph{partial column operations}. 

For example, suppose  one adds, to each column $c$ of $\tennsor{i}{M}{\{q_{t}\}}$,  a multiple $\tennsor{}{\lambda}{t}\in\bb{Z}/\tennsor{}{q}{t}\bb{Z}$ of column $d\neq c$. 
Write $\tennsor{}{\lambda}{T}\in \bb{Z}/\tennsor{}{q}{T}\bb{Z}$ for the image of $(\tennsor{}{\lambda}{t})\in\tennsor{}{\prod}{t\in T}\bb{Z}/\tennsor{}{q}{t}\bb{Z}$ under the ring isomorphism coming from the Chinese remainder theorem. 
The \emph{partial inverse} operation adds, to column $c$ of $\tennsor{i}{M}{T}$, the multiple   $\tennsor{}{\lambda}{T}$ of column $d$. 
See \cite[Definition~1,~Proposition~1]{Boissonnatmariacomputing}. 
\textbf{Realisation}. 
Let $\cl{P}$ be the product of $\bb{N}$ with the poset of indexing sets related via reverse inclusion. 
So, $\cl{P}$ is the poset consisting of pairs $(i,S)$ where $(i,S)\leq (j,T)$ if and only if $i\leq j$ and $T\subseteq S$.  
If $U\subseteq T\subseteq S$ then $\tennsor{U}{\pi}{T}\circ \tennsor{T}{\pi}{S}=\tennsor{U}{\pi}{S}$. 
So there is a functor
\begin{equation}
\label{eqn-field-choice-functor}
    \begin{array}{ccc}
\boldsymbol{z}\colon \cl{P}\to \Rings,
&
(i,S)\mapsto \bb{Z}/\tennsor{}{q}{S}\bb{Z},
&
[(i,S)\leq (j,T)]\mapsto \tennsor{T}{\pi}{S}.
\end{array}
\end{equation}
Composing $\boldsymbol{z}$ with $\RusZhe$  from Corollary~\ref{cor-cool-nice-cheers-ta-nice1}, one obtains a double functor $\underline{\boldsymbol{z}}=\RusZhe\circ \boldsymbol{z}$, and hence a preadditive category of the form $\sr{T}(\fk{S}(\underline{\boldsymbol{z}}))/\sr{I}(\boldsymbol{c}(\underline{\boldsymbol{z}}))$ by Corollary~\ref{cor-cool-nice-cheers-ta-nice1}. 
One can observe that the commutative diagram above defines an isomorphism in $\Rep{\fk{S}(\underline{\boldsymbol{z}}),\sr{I}(\underline{\boldsymbol{z}})}$.

\subsection{Precursor to new Dynkin diagrams: thread quivers}\label{sec-thread-quivers}

Thread quivers were introduced in work of Berg--Van Roosmalen \cite{berg-vanroosmalen-thread1},  and then generalized in work of Paquette--Rock--Yildrim \cite{paquette-rock-yildirim-thread2}.
Intuitively, one replaces each arrow  in a quiver  with a totally-ordered set. 
We warn the reader that  Section~\ref{sec-thread-quivers} serves mostly as a precursor to Section~\ref{sec-infinite-Dynkin}.

A \emph{thread quiver} $(Q,T)$ is given by a quiver $Q$ and a disjoint union $T=\tennsor{}{\bigsqcup}{\alpha\in Q_{1}}\tennsor{}{T}{\alpha}$ of possibly empty totally-ordered sets $\tennsor{}{T}{\alpha}$. 
Assume, for simplicity here, that $Q$ is connected and acyclic. 
Let $\tennsor{}{\overline{T}}{\alpha}=\{h(\alpha),t(\alpha)\}\sqcup \tennsor{}{T}{\alpha}$ and $t(\alpha)<x<h(\alpha)$ for   $x\in \tennsor{}{T}{\alpha}$. 
The \emph{path category} $\cl{C}(Q,T)$   has object set  $\tennsor{}{\bigsqcup}{\alpha} \tennsor{}{\overline{T}}{\alpha}$. 
Define $\Hom_{\cl{C}(Q,T)}(c,d)$ in cases, by  
\[
\begin{array}{cc}
 \{\tennsor{}{\eta}{dc} \}
    &
    (c,d\in\tennsor{}{\overline{T}}{\alpha},(c,d)\neq (t(\alpha),h(\alpha)),
    \\
    \{p\mid p\in \tennsor{}{Q}{\geq 0},h(p)=d,t(p)=c\}
    &
    (c,d\in\tennsor{}{Q}{0}),
    \\
    \{p\tennsor{}{\eta}{t(p)c}\mid p\in \tennsor{}{Q}{>0},h(p)=d,t(p)= h(\alpha)\}
    &
    (c\in\tennsor{}{T}{\alpha},d\in\tennsor{}{Q}{0}), 
    \\
    \{\tennsor{}{\eta}{dh(p)}p\mid p\in \tennsor{}{Q}{\geq 0},t(p)=c,h(p)=t(\beta) \}
    &
    (d\in\tennsor{}{T}{\beta},c\in\tennsor{}{Q}{0}),
    \\
    \{\tennsor{}{\eta}{dh(p)}p\tennsor{}{\eta}{t(p)c} \mid p\in \tennsor{}{Q}{>0},h(p)=t(\beta),t(p)=h(\alpha)\}
    &
    (c\in\tennsor{}{T}{\alpha},\,d\in\tennsor{}{T}{\beta},), 
\end{array}
\]
where $\tennsor{}{Q}{\geq0}$ (respectively, $\tennsor{}{Q}{>0}$) is the set of all (respectively, non-trivial) paths in $Q$. 
For composition, one extends   the  rules of  path composition,   by setting
\[
\begin{array}{cccc}
\tennsor{}{\eta}{cb}\tennsor{}{\eta}{ba}=\tennsor{}{\eta}{ca},
&
\tennsor{}{\eta}{h(\alpha)c}\tennsor{}{\eta}{ct(\alpha)}=\alpha,
&
\tennsor{}{e}{h(\alpha)}\tennsor{}{\eta}{h(\alpha)c}=\tennsor{}{\eta}{h(\alpha)c},
&
\tennsor{}{\eta}{ct(\alpha)}\tennsor{}{e}{t(\alpha)}=\tennsor{}{\eta}{ct(\alpha)},
\end{array}
\] for any $\alpha\in \tennsor{}{Q}{1}$ and $a\leq b\leq c$ in $\tennsor{}{T}{\alpha}$. 
So,   $\tennsor{}{\iden}{c}=\tennsor{}{\eta}{cc}$ if $c\in \tennsor{}{T}{\alpha}$, and $\tennsor{}{\iden}{c}=\tennsor{}{e}{c}$ if $c\in \tennsor{}{Q}{0}$.

\textbf{Background}. 
For a field $K$, a $K$-category $\cl{A}$ is a \emph{finite $K$-variety} provided it is hom-finite (over $K$), additive and idempotent complete. 
Writing $\tennsor{}{\lMod{\cl{A}}}{\operatorname{fd}}$ for the subcategory of $\sr{F}$ in $\lMod{\cl{A}}$ with each $\sr{F}(a)$ finite-dimensional, in this case, one obtains a duality
\[
\begin{array}{cc}
\tennsor{}{\lMod{\cl{A}}}{\operatorname{fd}}\to \tennsor{}{\lMod{\cl{A}^{\op}}}{\operatorname{fd}},
&
\sr{F}\mapsto \sr{F}^{*}=\Hom_{\lmod{K}}(\sr{F}(-),K). 
\end{array}
\]
 Auslander--Reiten \cite{auslander-reiten-stable} define a finite $K$-variety  $\cl{A}$ to be  \emph{dualizing} if this duality restricts to a duality  $\lmod{\cl{A}}\to\lmod{\cl{A}^{\text{op}}}$ where $\lmod{\cl{A}}$ is the full subcategory  of  $\lMod{\cl{A}}$  consisting of finitely presented functors. 
 For an  abelian, hom-finite, Ext-finite $K$-category $\cl{C}$, a  \emph{Serre functor}  is an auto-equivalence $\bb{S}$ of the bounded derived category $\cl{D}^{b}(\cl{C})$ together with a natural isomorphism  $\Hom_{\cl{D}^{b}(\cl{C})}(-,?)\to (\Hom_{\cl{D}^{b}(\cl{C})}(?,\bb{S}(-)))^{*}$. 
 %
 %
 Van Roosmalen \cite{vanroosmalenhereditarydirected} defines a preadditive category $\cl{A}$ to be \emph{semihereditary} provided $\lmod{\cl{A}}$ is abelian and hereditary. 
By \cite[Proposition~4.2]{vanroosmalenhereditarydirected}, this is equivalent to asking  each full subcategory  of  $\cl{A}$, with finitely many objects, is semihereditary. 
By \cite[Corollary~4.9]{berg-vanroosmalen-thread1}, a finite $K$-variety $\cl{A}$ is dualizing and semihereditary if and only if $\lmod{\cl{A}}$ is  abelian, hereditary, as has a Serre functor. 

For a finite $K$-variety $\cl{A}$, recall a morphism $A\to B$   is \emph{left almost split} if it is not split, and any nonsplit $A\to C$ factors through it. 
By \cite[Proposition~4.30]{berg-vanroosmalen-thread1}, it is equivalent that the simple  $\Hom_{\cl{A}}(A,-)/\rad(\Hom_{\cl{A}}(A,-))$ in $\lmod{\cl{A}}$ has a finite projective presentation.

\textbf{Motivation}.
Reiten--Van den Bergh \cite[Theorem~B]{reitenvandenberghnoetherian} classified  connected, noetherian, hereditary categories with a Serre functor. 
\cite[Theorem~1]{berg-vanroosmalen-thread1} says any  hereditary category  with enough projectives and a Serre functor is equivalent to $\lmod{K\cl{C}(Q,T)}$. 
One defines the thread quiver $(Q,T)$ from a 
semihereditary dualizing $K$-variety $\cl{A}$, as follows.

For objects $A$ and $B$  let 
$[A,C]$ be the additive closure of the indecomposables $B$ with $\Hom_{\cl{A}}(A,B)\neq 0 \neq \Hom_{\cl{A}}(B,C)$. 
By \cite[Proposition~4.30]{berg-vanroosmalen-thread1}, each  indecomposable object $A$ in $\cl{A}$ admits morphisms $A^{-}\to A\to A^{+}$ with  $A^{-}\to A$ left almost split and $A\to A^{+}$ right almost split. 
One calls $A$  \emph{thread} if one can choose $A^{-}$ and $A^{+}$ both indecomposable. 
One calls $[A,C]$ \emph{thread} if each indecomposable that it contains is thread. 
In this case, by  \cite[Corollaries~6.2,~6.3]{berg-vanroosmalen-thread1}, we have an equivalence $[A,C]\simeq K\cl{T}$ for $\cl{T}$ a locally discrete totally-ordered set with a maximum and a minimum, and if $B\in \operatorname{Ind}\cl{A}\setminus \operatorname{Ind}[A,C]$, then any morphism $A\to B$  (respectively, $B\to C$) factors through $A\to C^{+}$ (respectively, $B\to A^{-}$).

Let  $\{[\tennsor{}{A}{i},\tennsor{}{C}{i}]\mid i\in I\}$ denote the set of maximal infinite  threads in  $\cl{A}$. 
Define the full subcategory $\cl{Q}$ of $\cl{A}$ such that no indecomposable of $\cl{Q}$ lies in some $[\tennsor{}{A}{i},\tennsor{}{C}{i}]$.  
By \cite[Propositions~6.7,~6.8,~7.2]{berg-vanroosmalen-thread1}, $\cl{Q}$ is again a semihereditary dualizing $K$-variety, such that any thread $[X,Z]$ in $\cl{Q}$ must contain only finitely many isoclasses of indecomposables. 
By \cite[Proposition~7.1]{berg-vanroosmalen-thread1}, it follows that there is an equivalence  $\cl{Q}\simeq KQ$ where the (possibly infinite) quiver $Q$ has isoclasses $[X]$ of indecomposables $X$ in $\cl{Q}$ as vertices, and 
\[
\tennsor{}{\dim}{K}(\rad(\Hom_{\cl{Q}}(X,Y))/\rad^{2}(\Hom_{\cl{Q}}(X,Y)))
\]
distinct arrows $[X]\to [Y]$. 
One defines a totally ordered set $\tennsor{}{T}{\alpha}$, for each arrow $\alpha$ in $Q$, to be empty  unless $\alpha\in\Hom_{\cl{A}}(A_{i}^{-},C_{i}^{+})$, in which case one takes  the set of $[X]$ with $X$ indecomposable in $[\tennsor{}{A}{i},\tennsor{}{C}{i}]$, totally-ordered by $[X]\leq [Y]$ if and only  $\Hom_{\cl{A}}(X,Y)\neq 0$.

\textbf{More motivation}.
The path category $\cl{C}(Q,T)$ of a thread quiver $(Q,T)$ is a $2$-pushout,
\[\begin{tikzcd}[cramped,column sep = small, row sep=0.4em]
	\tennsor{}{\bigsqcup}{\alpha}(t\to h) &&&&&&&&& Q \\
	& t &&&&& {t(\alpha)} \\
	&&& h &&&&& {h(\alpha)} \\
	\\
	\\
	& {t(\alpha)} &&&&& {t(\alpha)} \\
	&&& {h(\alpha)} &&&&& {h(\alpha)} \\
	\\
	&& c &&&&& c \\
	{\tennsor{}{\bigsqcup}{\alpha}\tennsor{}{\overline{T}}{\alpha}} &&&&&&&&& {\cl{C}(Q,T)}
	\arrow["\sr{Q}", dashed, from=1-1, to=1-10]
	\arrow["\sr{T}"', dashed, from=1-1, to=10-1]
	\arrow[dashed, from=1-10, to=10-10]
	\arrow["{\sr{Q}^{\alpha}}", dashed, maps to, from=2-2, to=2-7]
	\arrow[from=2-2, to=3-4]
	\arrow["{\sr{T}^{\alpha}}"', dashed, maps to, from=2-2, to=6-2]
	\arrow["\alpha", from=2-7, to=3-9]
	\arrow[dashed, from=2-7, to=6-7]
	\arrow[dashed, maps to, from=3-4, to=3-9]
	\arrow[dashed, maps to, from=3-4, to=7-4]
	\arrow[dashed, from=3-9, to=7-9]
	\arrow[dashed, maps to, from=6-2, to=6-7]
	\arrow[from=6-2, to=7-4]
	\arrow[from=6-2, to=9-3]
	\arrow["{\alpha}", from=6-7, to=7-9]
	\arrow["{\tennsor{}{\eta}{ct(\alpha)}}"', from=6-7, to=9-8]
	\arrow[dashed, maps to, from=7-4, to=7-9]
	\arrow[from=9-3, to=7-4]
	\arrow[dashed, maps to, from=9-3, to=9-8]
	\arrow["{\tennsor{}{\eta}{h(\alpha)c}}"', from=9-8, to=7-9]
	\arrow[dashed, from=10-1, to=10-10]
\end{tikzcd}\]
See \cite[p.~255,~Example~2.11]{berg-vanroosmalen-thread1} for details. 
We now recall \cite[Theorem~A]{paquette-rock-yildirim-thread2}. 
Let $M$ in $\tennsor{}{\lMod{K\cl{C}(Q,T)}}{\operatorname{fd}}$ be indecomposable. 
Either there exists some $v\in \tennsor{}{Q}{0}$ with $M(v)\neq 0$, or there exists a unique  interval  $I\subseteq \tennsor{}{T}{\alpha}$ such that $M(c)=0$ for $c\notin I$, and such that the restriction of $M$ to $\tennsor{}{T}{\alpha}$ is $K_{I}$.  
Assuming the former, for each $\alpha\in\tennsor{}{Q}{1}$ we have 
\[
\tennsor{}{T}{\alpha}=\tennsor{1}{T}{\alpha}\sqcup \cdots \sqcup \tennsor{n(\alpha)}{T}{\alpha} \text{ with $M(\tennsor{}{\eta}{dc})$ an isomorphism whenever $c,d\in \tennsor{i}{T}{\alpha}$ and  $c\leq d$.}
\]
For each set $C$, either a  cell in the partition above, or of the form $C=\{v\}$  with $v\in\tennsor{}{Q}{0}$, choose $\tennsor{}{z}{C}\in C$. 
Consider the finite   totally ordered subset $\tennsor{}{T}{\alpha}(M)$ consisting of the  $\tennsor{}{z}{C}$. 

It follows that there exists  indecomposable $N$ in $\tennsor{}{\lMod{K(Q,\tennsor{}{T}{}(M))}}{\operatorname{fd}}$ such that
\[
\begin{array}{c}
        \begin{array}{ccc}
M(\tennsor{}{\eta}{dc})
=N(\tennsor{}{\eta}{z_{D} z_{C}} ),
&
M(p)
=N(p),
&
M(p\tennsor{}{\eta}{t(p)c})
=N(p\tennsor{}{\eta}{t(p)z_{C}}),
\end{array}
\\
\begin{array}{cc}
M(\tennsor{}{\eta}{dh(p)}p)
=N(\tennsor{}{\eta}{z_{D}h(p)}p),
&
M(\tennsor{}{\eta}{dh(p)}p\tennsor{}{\eta}{t(p)c})
=N(\tennsor{}{\eta}{z_{D}h(p)}p\tennsor{}{\eta}{t(p)z_{C}}),
\end{array}
\end{array}
\]
whenever $c\in C$ and $d\in D$. 
Thus, to classify indecomposables in  $\tennsor{}{\lMod{K\cl{C}(Q,T)}}{\operatorname{fd}}$, it suffices to classify indecomposables $N$ in  $\tennsor{}{\lMod{K\cl{C}(Q,T')}}{\operatorname{fd}}$ where each  $\tennsorp{}{T}{\alpha}$ is finite. 

\textbf{Realisation and application}. 
Let $(Q,T)$ be a thread quiver. 
As in Section~\ref{subsec-ptwise-artinian}, the category $K\tennsor{}{\lMod{\cl{C}(Q,T)}}{\operatorname{fd}}$ is equivalent to the category of representations of a species, satisfying a commutativity relation. 
An immediate consequence of Corollary~\ref{cor-actually-understandable-decomposition}, in this context, is \cite[Theorem~A(1)]{paquette-rock-yildirim-thread2}. 
Note that the proof of Corollary~\ref{cor-actually-understandable-decomposition} involved a (straightforward) functor of the form $\cl{C}(Q,T)\to \Rings$. 
To introduce new examples, in Section~\ref{sec-infinite-Dynkin}  below, we allow any choice of such a functor.



In \cite{paquette-rock-yildirim-thread2} one also allow ideals $\cl{I}$ in $\cl{C}(Q,T)$ satisfying  mild admissibility conditions.
We can replace $\cl{C}(Q,T)$ with $\cl{C}(Q,T)/\cl{I}$ in the above construction, and obtain a similar realisation and result. 
Another interesting example of a thread quiver is the circle, seen by thread quiver  quiver $(Q,T)$ where $Q$ is (possibly cyclic, and) of type $\tennsor{}{\widetilde{\bb{A}}}{n}$. 
In this case, \cite[Theorem~1.2(2)]{repsofcircle} follows by combining \cite[Theorem~1.2(1)]{repsofcircle} and Corollary~\ref{cor-actually-understandable-decomposition}. 

 Igusa--Rock--Todorov \cite{Igusa-Rock-Todorov-continuous-i} defined  \emph{continuous quivers of type $\bb{A}$}, and studied their representations, meaning functors valued in vector spaces.  
    As we have seen in Section~\ref{subsec-ptwise-artinian}, any such a functor category may be realised using a  species with a commutativity condition. 
    By Corollary~\ref{cor-yay-hereditary}, the category of representations of such a species is hereditary. 
    The presence of relations voids this property: to see that one loses the hereditary property for continuous type $\bb{A}$ quivers, see \cite[Section 3.5]{Igusa-Rock-Todorov-continuous-i}.

\begin{defn}
Let $\RusI\colon \cl{C}(Q,T)\to\Byemod$ be a double functor for a thread quiver $(Q,T)$.
We call $(\fk{S}(\RusI),\boldsymbol{c}(\RusI))$, the pair  coming from Lemma~\ref{small-cat-becomes-species}, the \emph{threaded species}  of $\RusI$. 
\end{defn}

Recall Definition~\ref{defn-acyclic-connected}. 
We leave the proof of the next result as an exercise for the reader. 

\begin{lem}
    Let $\fk{S}(\RusI)$ be the threaded species of a double functor $\RusI\colon \cl{C}(Q,T)\to\Byemod$. 
    \begin{enumerate}
        \item 
    If $Q$ is acyclic  then $\fk{S}(\RusI)$ is acylic. 
    \item If $Q$ is connected and $\RusI(\fk{f})\neq 0$ for each morphism $\fk{f}$, then $\fk{S}(\RusI)$ is connected. 
    \end{enumerate}
    
\end{lem}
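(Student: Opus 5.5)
The plan is to unwind the construction in Lemma~\ref{small-cat-becomes-species}, viewing $\cl{C}(Q,T)$ as a double category with trivial vertical morphisms, as in Example~\ref{example-cats-are-trivially-double-cats}. The vertices of the species are then the objects $c\in\tennsor{}{\bigsqcup}{\alpha}\tennsor{}{\overline{T}}{\alpha}$. The bimodules are
\[
\tennsor{d}{A}{c}=\tennsor{}{\bigoplus}{\fk{f}\in\Hom_{\cl{C}(Q,T)}(c,d)}\tennsor{}{\RusI}{1}(\fk{f}).
\]
The key observation is that $\tennsor{d}{A}{c}\neq 0$ forces $\Hom_{\cl{C}(Q,T)}(c,d)\neq\emptyset$. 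The converse holds when each $\tennsor{}{\RusI}{1}(\fk{f})\neq 0$. So both parts reduce to combinatorial statements about the path category $\cl{C}(Q,T)$.

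For (1), I fix $x$ and a path $\underline{x}=(x_{\ff{n}},\dots,x_{0})\in\ff{C}^{\ff{n}}(x,x)$ with $\ff{n}>0$. Each factor $\tennsor{x_{i+1}}{A}{x_{i}}$ of $\tennsor{}{A}{\underline{x}}$ vanishes unless there is a morphism $x_{i}\to x_{i+1}$ in $\cl{C}(Q,T)$. So it suffices to show that $\cl{C}(Q,T)$ is directed: there is a preorder on objects, with $c\preceq d$ whenever $\Hom(c,d)\neq\emptyset$, and it is antisymmetric.

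I will get this by projecting objects to $Q$. I send $v\in Q_0$ to $v$, and $c\in T_\alpha$ to the ``point on the arrow $\alpha$''. Reading off the five cases in the definition of $\Hom_{\cl{C}(Q,T)}(c,d)$, every nonempty hom set corresponds to either a path in $Q$ or a segment $t(\alpha)\leq c\leq d\leq h(\alpha)$ inside a single $\tennsor{}{\overline{T}}{\alpha}$. A cycle $x_{0}\to x_{1}\to\cdots\to x_{\ff{n}}=x_{0}$ with not all steps identities therefore yields, after composing, a nontrivial morphism $x_0\to x_0$. It must either involve a nontrivial path $p$ in $Q$ with $h(p)=t(p)$, contradicting acyclicity, or stay inside one $\tennsor{}{\overline{T}}{\alpha}$ where only identities are endomorphisms.

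One subtlety needs care. Steps $x_{i}=x_{i+1}$ with the non-identity summands are the issue: $\tennsor{x}{A}{x}$ itself must vanish, since acyclicity in Definition~\ref{defn-acyclic-connected} requires $\tennsor{}{A}{\underline{x}}=0$ already for $\ff{n}=1$. But $\Hom(x,x)=\{\iden_x\}$, so $\tennsor{x}{A}{x}=\tennsor{}{\RusI}{1}(\iden_{x})$, which is nonzero in general; for instance $\RusZhe$ gives $R$. I expect this to be the main obstacle. The statement presumably intends the species to omit identity morphisms, so that $\tennsor{x}{A}{x}$ is the sum over non-identity endomorphisms. I would therefore check how Definition~\ref{defn-acyclic-connected} is meant to interact with $\Delta$, and state the convention that identity horizontal morphisms are excluded, or absorbed into $\tennsor{}{R}{x}$ via $\tennsor{}{\RusI}{\Delta}$. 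With that convention the directedness argument above finishes (1).

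For (2), I take objects $c,d$ and connect each to a vertex of $Q$. If $c\in T_\alpha$ then $\eta_{h(\alpha)c}\colon c\to h(\alpha)$ is a morphism, so $\tennsor{h(\alpha)}{A}{c}\supseteq\tennsor{}{\RusI}{1}(\eta_{h(\alpha)c})\neq 0$. Since $Q$ is connected, any two vertices are joined by a walk of arrows. Each arrow $\beta\colon u\to v$ is a morphism in $\cl{C}(Q,T)$, so $\tennsor{v}{A}{u}\neq 0$. Concatenating these steps gives the required sequence $y_0,\dots,y_n$, which finishes (2).
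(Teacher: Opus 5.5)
The paper does not prove this lemma (it is left as an exercise), so there is no argument to compare with. Judged on its own, your proposal is essentially right.

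\textbf{Part (2).} This is correct as written. Since $\eta_{h(\alpha)c}$ and the arrows of $Q$ are not identities, it survives any change of convention.

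\textbf{Part (1), the statement itself.} Your objection is correct, and it is stronger than ``nonzero in general''. Suppose $\RusI_{1}(\fk{f})\neq 0$ for some $\fk{f}$ with $\sr{L}(\fk{f})=y$. Then \eqref{eqn-lax-double-functor-comm-conditions-2} factors the isomorphism $R_{y}\otimes_{R_{y}}\RusI_{1}(\fk{f})\to\RusI_{1}(\fk{f})$ through $\RusI_{1}(\iden_{y})\otimes_{R_{y}}\RusI_{1}(\fk{f})$. Hence $\RusI_{1}(\iden_{y})\neq 0$, and the length-one path $(y,y)$ already violates Definition~\ref{defn-acyclic-connected}. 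Read literally, $\fk{S}(\RusI)$ is acyclic only if all of its bimodules vanish, and under the hypothesis of (2) the conclusion of (1) always fails.

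So rather than hedging, commit to the corrected statement: $\tennsor{y}{A}{x}$ is the sum of the $\RusI_{1}(\fk{f})$ over non-identity $\fk{f}\colon x\to y$. Then $\RusI_{\odot}$ survives only as an associativity condition, and the unit equations involving $\RusI_{\Delta}$ must be dropped. Your other option, absorbing $\RusI_{1}(\iden_{x})$ into $R_{x}$ via $\RusI_{\Delta}$, does not work in general, because $(\RusI_{\Delta})_{x}$ need not be surjective.

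\textbf{Part (1), the cycle argument.} In the corrected setting, the sentence ``a cycle with not all steps identities yields, after composing, a nontrivial morphism $x_{0}\to x_{0}$'' is not a valid inference in an arbitrary category. In $\cl{C}(Q,T)$ it is equivalent to what you are proving. Replace it by a direct check, through the five cases defining $\Hom_{\cl{C}(Q,T)}(c,d)$, that all endomorphisms are identities and that $\Hom(c,d)\neq\emptyset\neq\Hom(d,c)$ forces $c=d$.
\begin{enumerate}
\item The case $c,d\in Q_{0}$ is immediate.
\item For $c\in T_{\alpha}$ and $d\in Q_{0}$, the two hom-sets give (possibly trivial) paths from $h(\alpha)$ to $d$ and from $d$ to $t(\alpha)$. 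These close up with $\alpha$ to an oriented cycle.
\item For $c\in T_{\alpha}$ and $d\in T_{\beta}$ with $\alpha\neq\beta$, one obtains an oriented cycle consisting of $\alpha$, a path from $h(\alpha)$ to $t(\beta)$, then $\beta$, then a path from $h(\beta)$ to $t(\alpha)$.
\item Inside a single $\overline{T}_{\alpha}$, the fifth case contributes nothing, since it would need a path from $h(\alpha)$ to $t(\alpha)$. You also need the first case of the definition to include the condition $c\leq d$, which the paper omits. Read literally, $\eta_{t(\alpha)h(\alpha)}$ would be a morphism $h(\alpha)\to t(\alpha)$, so say explicitly that you use this reading.
\end{enumerate}
Now take a closed path $(x_{\ff{n}},\dots,x_{0})$ with every factor $\tennsor{x_{i+1}}{A}{x_{i}}$ nonzero. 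The first factor gives a non-identity morphism $x_{0}\to x_{1}$, hence $x_{0}\neq x_{1}$. The remaining factors compose to a morphism $x_{1}\to x_{0}$, contradicting antisymmetry.
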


\subsection{Infinite versions of Dynkin diagrams of type $\bb{B}$ and $\bb{C}$}
\label{sec-infinite-Dynkin} 
The usual Dynkin diagrams,  of type $\bb{A}$, $\bb{B}$, $\bb{C}$, $\bb{D}$,  $\bb{E}$, $\bb{F}$ and $\bb{G}$, are finite, and 
characterise  species of finite representation type. 
Their extended versions characterise those of tame representation type (in a suitable sense). 
This is the Dlab--Ringel \cite{Dlab-Ringel-graphs-1976}  generalisation of Gabriels theorem. 

Infinite versions of Dynkin diagrams of type $\bb{A}$ and $\bb{D}$ have already appeared; see for example \cite{strongly-locally-finite-quivers,crawley-boevey-intervals,GallupSawin,repsofcircle,Igusa-Rock-Todorov-continuous-i}. 
Each family of type $\bb{E}$, $\bb{F}$ or $\bb{G}$ is finite. 
We suggest a definition for those of type $\bb{B}$ and $\bb{C}$, and their extended versions, $\widetilde{\bb{B}}$, $\widetilde{\bb{C}}$ and $\widetilde{\bb{BC}}$. 

For brevity, we consider the following condition on a given species $\fk{S}=(\tennsor{}{R}{x},\tennsor{y}{A}{x})$. 
\[
(*)\quad\text{Each $\tennsor{}{R}{x}$ is a division ring, and each $\tennsor{y}{A}{x}$ is finite-dimensional over  $\tennsor{}{R}{y}$ and over $\tennsor{}{R}{x}$.}
\]
\textbf{Motivation}. 
Following \cite{Dlab-Ringel-graphs-1976}, we recall how the species that we want to generalise arise.  
By a \emph{valuation} on a graph with vertex set $\ff{V}$ and edge set $\ff{E}\subseteq \{\{x,y\}\mid x,y\in\ff{V}\}$, we mean maps $\ff{d}\colon\ff{V}^{2}\to \tennsor{}{\bb{N}}{\geq 0}\times \tennsor{}{\bb{N}}{\geq 0}$  and $\ff{f}\colon\ff{V}\to \tennsor{}{\bb{N}}{>0}$ such that $\ff{d}(x,y)\ff{f}(y)=\ff{d}(y,x)\ff{f}(x)$ for each $\{x,y\}\in\ff{E}$. 
By a \emph{modulation} of a valuation $(\ff{d},\ff{f})$ on the graph of an acyclic connected quiver $Q$, we mean a species $\fk{S}=(\tennsor{}{R}{x},\tennsor{y}{A}{x})$ satisfying $(*)$, and 
\[
\begin{array}{ccc}
\tennsor{}{\dim}{\tennsor{}{R}{x}}(\tennsor{y}{A}{x})=\ff{d}(y,x),
&
\Hom_{\lMod{\tennsor{}{R}{y}}}(\tennsor{y}{A}{x},\tennsor{}{R}{x})\cong \tennsor{x}{A}{y}\cong\Hom_{\rMod{\tennsor{}{R}{x}}}(\tennsor{y}{A}{x},\tennsor{}{R}{x}) 
\end{array}
\] 
for each $(x,y)\in\ff{V}^{2}$,  where the isomorphisms are $\tennsor{}{R}{x}$-$\tennsor{}{R}{y}$-bilinear. 
\cite[p.~2,~Theorem]{Dlab-Ringel-graphs-1976} says that the category $\Rep{\fk{S}}$ has finitely many isoclasses of indecomposables if and only if the valued graph is  of type $\bb{A}$, $\bb{B}$, $\bb{C}$, $\bb{D}$,  $\bb{E}$, $\bb{F}$ and $\bb{G}$. 

\begin{defn}
\label{defn-type-A}
Let $Q$ be a connected subquiver of a quiver with underlying graph 
\[\begin{tikzcd}
	\triangle & \boldsymbol{\ocircle} & \boldsymbol{\ocircle} & \boldsymbol{\ocircle} & \boldsymbol{\ocircle} & \cdots & (\square)
	\arrow[no head, from=1-2, to=1-1]
	\arrow[no head, from=1-3, to=1-2]
	\arrow[no head, from=1-4, to=1-3]
	\arrow[no head, from=1-5, to=1-4]
	\arrow[no head, from=1-6, to=1-5]
\end{tikzcd}\]
The square on the right in brackets denotes, in case $Q$ is finite, the right-most vertex of $Q$. 

%
%
%
Let $\tennsor{}{T}{a}$ be a possibly empty totally ordered set for each arrow $a$ in $Q$. 
As in Section~\ref{sec-thread-quivers}, consider the path category $\cl{C}(Q,T)$  of the thread quiver $(Q,T)$, with object set $\ff{C}$. 

Let $\fk{S}(\RusI)=(\tennsor{}{R}{x},\tennsor{y}{A}{x})$ be the threaded  species of a a  double functor $\RusI\colon\cl{C}\to \Byemod$. 
Assume $\fk{S}(\RusI)$ satisfies $(*)$ and $(**)$. 
For each $y,x\in \ff{C}^{2}$ let $\tennsor{y}{d}{x}=(\dim_{\tennsor{}{R}{y}}(\tennsor{y}{A}{x}), \dim_{\tennsor{}{R}{x}}(\tennsor{y}{A}{x}))$. 
%

    %
    %
$\RusI$ \emph{has type} $\bb{B}$ if $\tennsor{y}{A}{x}\neq 0$ implies  ($y=\triangle$ and $\tennsor{\triangle}{d}{x}=(1,2)$) or ($y\neq \triangle$ and $\tennsor{y}{d}{x}=(1,1)$).  

$\RusI$ \emph{has type} $\bb{C}$ if $\tennsor{y}{A}{x}\neq 0$ implies  ($y=\triangle$ and $\tennsor{\triangle}{d}{x}=(2,1)$) or ($y\neq \triangle$ and $\tennsor{y}{d}{x}=(1,1)$). 

Now assume in addition that $Q$ is finite (note $T$ may still be infinite). 

$\RusI$ \emph{has type} $\widetilde{\bb{B}}$ if  $\tennsor{y}{A}{x}\neq 0$ implies  ($y=\triangle$, $x\neq \square$ and $\tennsor{\boldsymbol{\ocircle}}{d}{x}=(1,2)$) or ($y\neq \triangle$, $x= \square$ and $\tennsor{y}{d}{\square}=(2,1)$) or ($(y,x)=(\triangle,\square),(\boldsymbol{\ocircle},\boldsymbol{\ocircle})$ and $\tennsor{y}{d}{x}=(1,1)$). 

$\RusI$ \emph{has type} $\widetilde{\bb{C}}$ if   $\tennsor{y}{A}{x}\neq 0$ implies ($y=\triangle$, $x\neq \square$ and $\tennsor{\boldsymbol{\ocircle}}{d}{x}=(2,1)$) or ($y\neq \triangle$, $x= \square$ and $\tennsor{y}{d}{\square}=(1,2)$) or ($(y,x)=(\triangle,\square),(\boldsymbol{\ocircle},\boldsymbol{\ocircle})$ and $\tennsor{y}{d}{x}=(1,1)$). 

\noindent $\RusI$ \emph{has type} $\widetilde{\bb{BC}}$ if    $\tennsor{y}{A}{x}\neq 0$ implies ($y=\triangle$, $x\neq \square$ and $\tennsor{\boldsymbol{\ocircle}}{d}{x}=(1,2)$) or ($y\neq \triangle$, $x= \square$ and $\tennsor{y}{d}{\square}=(1,2)$) or ($(y,x)=(\triangle,\square)$ and $\tennsor{y}{d}{x}=(1,4)$) or ($(y,x)=(\boldsymbol{\ocircle},\boldsymbol{\ocircle})$ and $\tennsor{y}{d}{x}=(1,1)$).

    
    
 \end{defn}

\subsection{Topological field theories with defects}\label{sec-top-field-theory}

    We exhibit  examples  of species defined using algebra maps between Frobenius algebras. 
These examples come from mathematical physics, more specifically, the study of \emph{topological field theories}. 
We recall, and refer the reader to, Sections 2.1, 2.3, 2.4 and 3.4 of \cite{field-theories}, for explicit details. 

\textbf{Motivation}. 
Let $K$ be any field. 
A (finite-dimensional, associative, unital) Frobenius $K$-algebra $A$ is said to have \emph{trace pairing} if the image of the associated linear functional $A\to K$ on each $a\in A$ is given by the trace of the map $A\to A$ given by $b\mapsto ab$. 

By \cite[Lemma~3.3]{field-theories}, if $A$ is Frobenius with trace pairing, then using a $K$-basis of $A$ and its dual, one can define a $K$-linear map $A\to Z(A)$ whose kernel is $[A,A]=\tennsor{}{\sum}{a,b\in A}K(ab-ba)$.  

Let $\tennsor{}{\ff{D}}{0}$, $\tennsor{}{\ff{D}}{1}$ and $\tennsor{}{\ff{D}}{2}$ be sets, the elements of which  are called \emph{topological junctions}, \emph{topological domain walls}, and \emph{world sheet phases}, respectively. 
Let $\ff{s},\ff{t}\colon \tennsor{}{\ff{D}}{1}\to \tennsor{}{\ff{D}}{2}$ be functions. 

We emphasize that $\tennsor{}{\ff{D}}{1}$ need not be finite,  and  $\tennsor{}{\ff{D}}{2}$ need not be finite. 
For example, in the footnote on \cite[p.~85]{field-theories}, the authors explicitly allow the possibility of a groupoid, in this context meaning  $\tennsor{}{\ff{D}}{1}$ is a small category with object set $\tennsor{}{\ff{D}}{2}$ such that every morphism $d$ is an isomorphism, with domain $\ff{s}(d)$ and codomain $\ff{t}(d)$.

We recall the input data for a \emph{lattice topological field theory} from \cite[\S3.4]{field-theories}. 
\begin{enumerate}
    \item A Frobenius algebra  with trace pairing $\tennsor{}{A}{p}$ for each world phase sheet $p\in\tennsor{}{\ff{D}}{2}$. 
    \item A finite-dimensional $\tennsor{}{A}{\ff{t}(d)}$-$\tennsor{}{A}{\ff{s}(d)}$-bimodule $\tennsor{}{B}{d}$ for each domain wall condition $d\in\tennsor{}{\ff{D}}{1}$.
\end{enumerate}
Using this data, we see how a species arises using further details from \cite{field-theories}. 

We recall detail from \cite[\S2.4]{field-theories}. 
Define a category $\cl{D}$ by taking object set $\tennsor{}{\ff{D}}{2}$, and for any $p,q\in \tennsor{}{\ff{D}}{2}$, defining  $\Hom_{\cl{D}}^{1}(p,q)$ to consist of symbols of the form 
\[
\begin{array}{cc}
(\tennsor{}{{\underline{d}}}{},\tennsor{}{{\underline{\varepsilon}}}{})=((\tennsor{}{d}{1},\tennsor{}{\varepsilon}{1}),\dots, (\tennsor{}{d}{\ff{n}},\tennsor{}{\varepsilon}{\ff{n}})),
     & 
     (\tennsor{}{d}{i}\in\tennsor{}{\ff{D}}{1},\,\tennsor{}{\varepsilon}{i}\in\{+,-\}),
\end{array}
\]
 such that  $\ff{t}(\tennsor{}{d}{1})=q$, $\ff{s}(\tennsor{}{d}{\ff{n}})=p$, and, whenever $1\leq i<\ff{n}$,  the following implications hold
 \[
 \begin{array}{cc}
(\tennsor{}{\varepsilon}{i},\tennsor{}{\varepsilon}{i+1})=(+,-)\Rightarrow\ff{s}(\tennsor{}{d}{i})=\ff{s}(\tennsor{}{d}{i+1})
&
(\tennsor{}{\varepsilon}{i},\tennsor{}{\varepsilon}{i+1})=(-,+)\Rightarrow\ff{t}(\tennsor{}{d}{i})=\ff{t}(\tennsor{}{d}{i+1}),
\\
(\tennsor{}{\varepsilon}{i},\tennsor{}{\varepsilon}{i+1})=(-,-)\Rightarrow\ff{t}(\tennsor{}{d}{i})=\ff{s}(\tennsor{}{d}{i+1}),
&
(\tennsor{}{\varepsilon}{i},\tennsor{}{\varepsilon}{i+1})=(+,+)\Rightarrow\ff{s}(\tennsor{}{d}{i})=\ff{t}(\tennsor{}{d}{i+1}).
 \end{array}
 \]
Define composition by concatenation, and maps $\tennsor{q}{*}{p}\colon \Hom_{\cl{D}}(p,q)\to \Hom_{\cl{D}}(q,p)$ by 
\[
((\tennsor{}{d}{1},\tennsor{}{\varepsilon}{1}),\dots, (\tennsor{}{d}{\ff{n}},\tennsor{}{\varepsilon}{\ff{n}}))^{*}=((\tennsor{}{d}{\ff{n}},-\tennsor{}{\varepsilon}{\ff{n}}),\dots, (\tennsor{}{d}{1},-\tennsor{}{\varepsilon}{1})),
\]
so that $\tennsor{q}{*}{p}$ is bijective with inverse $\tennsor{p}{*}{q}$. 
Using the category of $2$-\emph{dimensional bordisms with defects}, from \cite[p.~76]{field-theories}, one obtains a vector space for each pair of parallel morphisms, making $\cl{D}$ into a $2$-category. 
We unpack the algebraic details below.

\textbf{Realisation}. 
As in \cite[p.~88,~(3.6)]{field-theories}, for any $K$-algebra $A$ and any $A$-$A$-bimodule $X$  let $\circlearrowleft_{A}X$  be the cokernel of the left $Z(A)$-linear map $A\tennsor{}{\otimes}{A}X\to X$ given by $a\otimes x\mapsto ax-xa$. 

The $Z(\tennsor{}{A}{q})$-module of $2$-morphisms between $(\tennsor{}{{\underline{e}}}{},\tennsor{}{{\underline{\delta}}}{}),(\tennsor{}{{\underline{d}}}{},\tennsor{}{{\underline{\varepsilon}}}{})\in \Hom_{\cl{D}}(p,q)$  is given by
\[
\Hom_{\cl{D}}^{2}((\tennsor{}{{\underline{e}}}{},\tennsor{}{{\underline{\delta}}}{}),(\tennsor{}{{\underline{d}}}{},\tennsor{}{{\underline{\varepsilon}}}{}))=\,\circlearrowleft_{\tennsor{}{A}{q}}\tennsor{}{B}{(\tennsor{}{{\underline{d}}}{},\tennsor{}{{\underline{\varepsilon}}}{})}\tennsor{}{\otimes}{{\tennsor{}{A}{p}}} \tennsor{}{B}{(\tennsor{}{{\underline{e}}}{},\tennsor{}{{\underline{\delta}}}{})^{*}}
\]
where $\tennsor{}{B}{(\tennsor{}{{\underline{d}}}{},\tennsor{}{{\underline{\varepsilon}}}{})}=\tennsor{}{B}{(\tennsor{}{d}{1},\tennsor{}{\varepsilon}{1})}\otimes \cdots \otimes \tennsor{}{B}{(\tennsor{}{d}{\ff{n}},\tennsor{}{\varepsilon}{\ff{n}})}$ where $\tennsor{}{B}{(d,+)}=\tennsor{}{B}{d}$ and $\tennsor{}{B}{(d,-)}=\Hom_{\lMod{K}}(\tennsor{}{B}{d},K)$. 
Using that $(\tennsor{}{{\underline{e}}}{},\tennsor{}{{\underline{\delta}}}{})^{*}\in \Hom_{\cl{D}}(q,p)$ ensures $\tennsor{}{B}{(\tennsor{}{{\underline{d}}}{},\tennsor{}{{\underline{\varepsilon}}}{})}\tennsor{}{\otimes}{{\tennsor{}{A}{p}}} \tennsor{}{B}{(\tennsor{}{{\underline{e}}}{},\tennsor{}{{\underline{\delta}}}{})^{*}}$ is an $\tennsor{}{A}{q}$-$\tennsor{}{A}{q}$-bimodule. 

For horizontal composition, consider \cite[Figure~6]{field-theories} in the context of the identification
\[
\tennsor{}{B}{(\tennsor{}{{\underline{d}'}}{},\tennsor{}{{\underline{\varepsilon}'}}{})}\tennsor{}{\otimes}{{\tennsor{}{A}{q}}}\tennsor{}{B}{(\tennsor{}{{\underline{d}}}{},\tennsor{}{{\underline{\varepsilon}}}{})}\tennsor{}{\otimes}{{\tennsor{}{A}{p}}} \tennsor{}{B}{(\tennsor{}{{\underline{e}}}{},\tennsor{}{{\underline{\delta}}}{})^{*}}
\tennsor{}{\otimes}{{\tennsor{}{A}{q}}}\tennsor{}{B}{(\tennsor{}{{\underline{e}'}}{},\tennsor{}{{\underline{\delta}'}}{})^{*}}
=
\tennsor{}{B}{(\tennsor{}{{\underline{d}'}}{},\tennsor{}{{\underline{\varepsilon}'}}{}\mid\tennsor{}{{\underline{d}}}{},\tennsor{}{{\underline{\varepsilon}}}{})}\tennsor{}{\otimes}{{\tennsor{}{A}{p}}} \tennsor{}{B}{(\tennsor{}{{\underline{e}'}}{},\tennsor{}{{\underline{\delta}'}}{}\mid \tennsor{}{{\underline{e}}}{},\tennsor{}{{\underline{\delta}}}{})^{*}}
\]
where the bar  $\mid$ indicates the vertical composition in $\cl{D}$.

Write $1(p)$ for the identity on $p\in\tennsor{}{\ff{D}}{2}$. 
As observed  \cite[p.~107,~(4.2)]{field-theories},  $\End_{\cl{D}}^{2}(1(p))\cong Z(\tennsor{}{A}{q})$ is a commutative Frobenius algebra, and for each $(\underline{d},\underline{\varepsilon})\in\Hom_{\cl{D}}^{1}(p,q)$, the algebra $\End_{\cl{D}}^{2}(\underline{d},\underline{\varepsilon})$ is a possibly noncommutative Frobenius algebra. 
Furthermore, horizontal composition gives rise to $K$-algebra maps with central images, of the form
\[
\End_{\cl{D}}^{2}(1(q))\rightarrow \End_{\cl{D}}^{2}(\underline{d},\underline{\varepsilon})\leftarrow \End_{\cl{D}}^{2}(1(p))
\]
By \cite[Remarks~4.5, 4.10]{field-theories} this assignment defines a lax functor $\cl{D}\to \Central_{\Comms}^{\Rings}$. 
By composing with the  double functor $\RusSha$ from Corollary~\ref{cor-double-functors-giving-species}, we obtain a  double functor $\cl{D}\to \Byemod$, and hence a species with commutativity condition by Corollary~\ref{cor-cool-nice-cheers-ta-nice1}.

\appendix
\section{Additive tensor categories}
\label{appendix-tensor-cat}


\subsection{Bimodules over a preadditive category}
\label{subsection:additive-tensor-categories:bimodules-over-a-preadditive-category}

\begin{setup}
    Now and forever let $\cl{C}$  be a preadditive small category with object set $\ff{C}$, 
    $\lMod{\cl{C}}$ be the category of additive functors $\cl{C}\to \Ab$, and 
    $\Bimod{\cl{C}}$ be the category of biadditive functors  $\cl{C}^{\op}\times \cl{C}\to \Ab$.
    (Recall that a right $\cl{C}$-module is a left $\cl{C}^{\op}$-module.)
\end{setup}

We recall a construction going back to Yoneda \cite[\S4,~p.~548]{yoneda-ext}. 
The \emph{coend}  is the functor 
\begin{equation*}
    \int^{\cl{C}}\colon \Bimod{\cl{C}} \to \Ab,\,\sr{H}\mapsto \Coeq \left(\bigoplus_{a,b\in\ff{C},\,\gamma \in \Hom_{\cl{C}}(a,b)} \sr{H}(b,a)\rightrightarrows\bigoplus_{c\in\ff{C}}\sr{H}(c,c)\right),
\end{equation*}
where the parallel maps are uniquely defined by universal properties of the involved coproducts, with respect to   $\sr{H}(b,\gamma)$ and  $\sr{H}(\gamma,a)$. 
Consider the \emph{evaluation} functors
\begin{align*}
   \cl{C}^{\op}\times\Bimod{\cl{C}} & \stackrel{\tennsor{\cl{C}}{\text{ev}}{}}{\longrightarrow} \lMod{\cl{C}},
    &
    \cl{C}\times\Bimod{\cl{C}} & \stackrel{\tennsor{}{\text{ev}}{\cl{C}}}{\longrightarrow} \lMod{\cl{C}^{\op}},
    \\
    (x,\sr{G})&\mapsto \sr{G}(x,-),
    &
    (y,\sr{F})&\mapsto \sr{F}(-,y).
\end{align*}
Notice $\tennsor{\cl{C}}{\text{ev}}{}$ yields a left $\cl{C}$-module and $\tennsor{}{\text{ev}}{\cl{C}}$ yields a right $\cl{C}$-module.

We introduce a new functor ${\bowtie_\bb{Z}}:\lmod{\cl{C}^{\op}}\times\lmod{\cl{C}}\to\Bimod{\cl{C}}$.
Given a pair of functors $\sr{F}$ in $\lmod{\cl{C}^{\op}}$ and $\sr{G}$ in $\lmod{\cl{C}}$, we define $(\sr{F}{\ \bowtie_\bb{Z}\ } \sr{G} )(a,b)$ to be $\sr{F}(a)\otimes_\bb{Z} \sr{G}(b)$, for a pair of objects $(a,b)$ in $\cl{C}^{\op}\times\cl{C}$.
For a pair of morphisms $\gamma:a\to b$ and $\delta:c\to d$ in $\cl{C}$, we define the top row in the following display to be the bottom row:
\begin{align*}
 (\sr{F}{\ \bowtie_\bb{Z}\ }\sr{G})(\gamma, \delta) : \, (\sr{F}{\ \bowtie_\bb{Z}\ }\sr{G})(b,c) &\to (\sr{F}{\ \bowtie_\bb{Z}\ }\sr{G})(a,d)
    \\
 \sr{F}(\gamma) \otimes_{\bb{Z}} \sr{G}(\delta): \sr{F}(b)\otimes_{\bb{Z}}\sr{G}(c) &\to \sr{F}(a)\otimes_{\bb{Z}} \sr{G}(d).
\end{align*}
Considering for simplicity when $\cl{C}$ is a ring $R$ we are taking $M\otimes_{\bb{Z}} N$ for a right $R$-module $M$ and a left $R$-module $N$, and considering $M\otimes_{\bb{Z}} N$ as a left $R$-module by acting on $N$ and a right $R$-module by acting on $M$.

Combining these functors together one obtains the composition
\[
\begin{tikzcd}[column sep = 0.8cm]
\cl{C}\times\Bimod{\cl{C}}\times {\cl{C}^{\op}}\times \Bimod{\cl{C}}
  \arrow[rr, "\tennsor{}{\text{ev}}{\cl{C}}\times \tennsor{\cl{C}}{\text{ev}}{}"]
  &
  &  \lMod{\cl{C}^{\op}}\times\lMod{\cl{C}}
  \arrow[r, "{\bowtie_\bb{Z}}"]
  &
  \Bimod{\cl{C}}
  \arrow[r, "\int^{\cl{C}}"]
  &
  \Ab.
\end{tikzcd}
\]

\subsection{Additive tensor category of a biadditive functor}
\label{subsection:additive-tensor-categories:additive-tensor-category-of-a-biadditive-functor}

\begin{defn}\label{def-super-mega-tensor-product}
By un/currying the composition $\int^{\cl{C}} \circ {\bowtie_{\bb{Z}}} \circ \tennsor{}{\text{ev}}{\cl{C}}\times \tennsor{\cl{C}}{\text{ev}}{}$, we uniquely define the \emph{tensor product} on $\Bimod{\cl{C}}$, a functor $\otimes_{\cl{C}}\colon \Bimod{\cl{C}}\times \Bimod{\cl{C}}\to \Bimod{\cl{C}}$. 
Note that $\otimes_{\cl{C}}$ sends $(\sr{F},\sr{G})$ to the biadditive functor $\cl{C}^{\op}\times \cl{C}\to \Ab$ defined by
\[
(\sr{F}\otimes_{\cl{C}}\sr{G})(x,y)=\int^{\cl{C}}\sr{F}(-,y) \ \bowtie_\bb{Z}\ \sr{G}(x,-)=
\dfrac{\bigoplus_{c\in\ff{C}}\sr{F}(c,y) \otimes_{\bb{Z}}\sr{G}(x,c)}{\bigoplus_{\substack{a,b\in\ff{C},\,\gamma \in \Hom_{\cl{C}}(a,b)
\\
 f\in \sr{F}(b,y)g\in \sr{G}(x,a)}}\left\langle f\otimes \gamma g -f \gamma\otimes g 
\right\rangle},
\]
where we let $f \gamma= \sr{F}(\gamma,y)(f)$ and $\gamma g=\sr{G}(x,\gamma)(g)$.
\end{defn}

For a morphism $(\tennsor{}{\iden}{y},\delta):(y,x)\to (y,y)$  the morphism $(\sr{F}\otimes_{\cl{C}}\sr{G})(1_y,\delta)$  has the form
\[
\dfrac{\bigoplus_{c\in\ff{C}}\sr{F}(c,x) \otimes_{\bb{Z}}\sr{G}(y,c)}{\bigoplus_{\substack{a,b\in\ff{C},\,\gamma \in \Hom_{\cl{C}}(a,b)
\\
 f\in \sr{F}(b,x)g\in \sr{G}(y,a)}}\left\langle f\otimes \gamma g -f \gamma\otimes g 
\right\rangle}
\longrightarrow
\dfrac{\bigoplus_{c\in\ff{C}}\sr{F}(c,y) \otimes_{\bb{Z}}\sr{G}(y,c)}{\bigoplus_{\substack{a,b\in\ff{C},\,\gamma \in \Hom_{\cl{C}}(a,b)
\\
 f\in \sr{F}(b,y)g\in \sr{G}(y,a)}}\left\langle f\otimes \gamma g -f \gamma\otimes g 
\right\rangle}.
\]
The category $\Bimod{\cl{C}}$ equipped with $\otimes_{\cl{C}}$ is monoidal with unit $\Hom_{\cl{C}}$; see for example \cite[Proposition~1]{ElKaoutit-bocses:2020}, and also work of  Fisher  \cite{Fisher-tensor-product-satellites-derived-functors},  Mitchell \cite{Mitchell-rings-with-several-objects}, and Oberst and R{\"o}hrl \cite{Oberst-Rohrl-flatness:1970}. 
In particular, there are natural associator isomorphisms $(\sr{F}\otimes_{\cl{C}}\sr{G})\otimes_{\cl{C}}\sr{H}\cong \sr{F}\otimes_{\cl{C}}(\sr{G}\otimes_{\cl{C}}\sr{H})$, and natural unitor isomorphisms  $\Hom_{\cl{C}}\otimes_{\cl{C}}\sr{F}\cong\sr{F}\cong\sr{F}\otimes_{\cl{C}}\Hom_{\cl{C}}$. 
%

%
Likewise one defines a functor $\otimes_{\cl{C}}\colon \Bimod{\cl{C}}\times \lMod{\cl{C}}  \to \lMod{\cl{C}}$  by considering
\[
\begin{tikzcd}[column sep = 0.9cm]
  \Bimod{\cl{C}}\times \lMod{\cl{C}}
  \arrow[rr, "\tennsor{}{\text{ev}}{\cl{C}}\times\tennsor{}{\iden}{\lMod{\cl{C}}}"]
  &
  &
  \lMod{\cl{C}^{\op}}\times\lMod{\cl{C}}
  \arrow[r, "{\bowtie_\bb{Z}}"]
  &
  \Bimod{\cl{C}}
  \arrow[r, "\int^{\cl{C}}"]
  &
  \Ab.
\end{tikzcd}
\]
For $\sr{A}$ in $\Bimod{\cl{C}}$ let $\sr{A}^{\otimes [0]}=\Hom_{\cl{C}}$ and $\sr{A}^{\otimes [n]}=\sr{A}^{\otimes [n-1]}\otimes_{\cl{C}} \sr{A}$ for $n>0$. 
%

\begin{defn}
    \label{defn-add-tensor-cat-of-bifunctor}
    The \emph{additive tensor category} $\tennsor{}{\sr{T}}{\cl{C}}(\sr{A})$ of  $\sr{A}$ in $\Bimod{\cl{C}}$ has the same object set $\ff{C}$ as $\cl{C}$, and has morphism sets   $\Hom_{\tennsor{}{\sr{T}}{\cl{C}}(\sr{A})}(x,y)=\bigoplus_{n\geq 0}\sr{A}^{\otimes [n]}(x,y)$ for each $x,y\in\ff{C}$. 
    Composition is given by extending the maps  $\sr{A}^{\otimes [m]}(y,z)\times \sr{A}^{\otimes [n]}(x,y)\to \sr{A}^{\otimes [m+n]}(x,z)$. 
\end{defn}

\begin{rem}
    \label{remark-describing-functor-cat-of-add-tens-cat}
By  \cite[Proposition~1.1]{Simson1979} $\lMod{\sr{T}_{\cl{C}}(\sr{A})}$ is equivalent to  a subcategory of the morphism category $\lMod{\cl{C}}^{\rightarrow}$. 
Objects in this subcategory have the form $ \sr{A}\otimes_{\cl{C}}\sr{M}\to \sr{M}$ with $\sr{M}$ in $\lMod{\cl{C}}$. 
Morphisms in this subcategory, from $\Phi\colon \sr{A}\otimes_{\cl{C}}\sr{M}\to \sr{M}$ to $\Psi\colon \sr{A}\otimes_{\cl{C}}\sr{N}\to \sr{N}$, have the form $(\tennsor{}{\iden}{\sr{A}}\otimes_{\cl{C}}\Theta,\Theta)$ with $\Theta\in \Hom_{\lMod{\cl{C}}}(\sr{M},\sr{N})$ and $\Theta\Phi=\Psi(\tennsor{}{\iden}{\sr{A}}\otimes_{\cl{C}}\Theta)$.

We unpack this. 
Consider the sections $\iota_{n}\colon \sr{A}^{\otimes [n]}\to \Hom_{\lMod{\sr{T}_{\cl{C}}(\sr{A})}}(x,y)$ equipping the coproduct. 
Recall that  $\Hom_{\cl{C}}(x,y)=\sr{A}^{\otimes [0]}(x,y)$ and $\sr{A}(x,y)=\sr{A}^{\otimes [1]}(x,y)$.

Hence any $\sr{T}_{\cl{C}}(\sr{A})$-module $\sr{V}$ defines a $\cl{C}$-module  $\sr{M}_{\sr{V}}$ by $\sr{M}_{\sr{V}}(x)=\sr{V}(x)$ on objects $x$ and by $\sr{M}_{\sr{V}}(\alpha)=\sr{V}(\iota_{0}(\alpha))$ on morphisms $\alpha\colon x\to y$. 
For any objects  $c,x$ in $\cl{C}$ there is a map $\tennsor{x}{\alpha}{c}\colon  \sr{A}(c,x)\otimes \sr{V}(c)\to \sr{V}(x)$ sending $\alpha\otimes v$ to $\iota_{1}(\alpha)(v)$ for $\alpha\in \sr{A}(c,x)$ and $v\in \sr{V}(x)$. 
Letting $c$ vary, these maps combine to define a map $\tennsor{x}{\alpha}{}\colon  (\sr{A}\otimes_{\cl{C}}\sr{M}_{\sr{V}})(x)\to \sr{M}_{\sr{V}}(x)$. 
Letting $x$ vary, they define a natural transformation $\Phi_{\sr{V}}\colon \sr{A}\otimes_{\cl{C}}\sr{M}_{\sr{V}}\to \sr{M}_{\sr{V}}$. 
\end{rem}

\begin{rem}
\label{remark:modules-over-additive-tensor-category-for-a-discrete-category}
    Assume that $\Hom_{\cl{C}}(x,y)=0$ whenever the   objects $x,y$ in $\cl{C}$ are distinct, and so $\cl{C}$ is uniquely defined by the rings $\End_{\cl{C}}(x)$. 
    It follows that an object $\sr{A}$ in $\Bimod{\cl{C}}$ is uniquely defined by the  $\End_{\cl{C}}(y)$-$\End_{\cl{C}}(x)$-bimodules $\sr{A}(x,y)$.
    Furthermore, we have 
    \[
    \begin{array}{c}
     \sr{A}^{\otimes[n]}(x,y)=\bigoplus_{w_{1},\dots,w_{n-1}\in \ff{C}}\sr{A}(w_{n-1},w_{n})\otimes_{\End_{\cl{C}}(w_{n-1})}
    \dots \otimes_{\End_{\cl{C}}(w_{1})}\sr{A}(w_0,w_{1})
    \end{array}
    \]
    for $n\geq 1$, where $w_0=x$ and $w_n=y$.
    %
    Likewise, an object $\sr{M}$ in $\lMod{\cl{C}}$ is uniquely defined by the left $\End_{\cl{C}}(x)$-modules $\sr{M}(x)$, and as above $(\sr{A}\tennsor{}{\otimes}{\cl{C}}\sr{M})(x)=\bigoplus_{c\in\ff{C}}\sr{A}(c,x)\otimes_{\End_{\cl{C}}(c)}\sr{M}(c)$. 
    Moreover, given $\cl{C}$-modules $\sr{M}$ and $\sr{N}$,  additive maps $\Theta_{x}\colon \sr{M}(x)\to \sr{N}(x)$ form a natural transformation $\Theta\colon \sr{M}\to \sr{N}$ if and only if each $\Theta_{x}$ is $\End_{\cl{C}}(x)$-linear.

    In particular, by the universal property of the coproduct, a natural transformation of the form the form $ \sr{A}\tennsor{}{\otimes}{\cl{C}}\sr{M}\to \sr{M}$  is uniquely defined by specifying, for each $(c,x)\in \ff{C}^{2}$, an $\End_{\cl{C}}(x)$-module homomorphism of the form $\sr{A}(c,x)\otimes_{\End_{\cl{C}}(c)}\sr{M}(c)\to \sr{M}(x)$.

    Note that the equation $\Theta\Phi=\Psi(\tennsor{}{\iden}{\sr{A}}\tennsor{}{\otimes}{\cl{C}}\Theta )$ in Remark~\ref{remark-describing-functor-cat-of-add-tens-cat} now becomes $\Theta_{x}\Phi_{x}=\Psi_{x}(\bigoplus_{c}\tennsor{}{\iden}{\sr{A}(c,x)}\otimes \Theta_{c} )$ for each $x\in \ff{C}$.  
\end{rem}

\section{Double categories}
\label{appendix-double-cats}

%
%

\begin{defn} 
\label{defn-double-category}
  Firstly, assume we have a pair of categories $\cl{V}$ and $\cl{H}$, and functors
    \[
    \begin{array}{cccc}
\Delta\colon \cl{V}\to \cl{H},
&
\sr{L}\colon \cl{H}\to \cl{V},     
&
\sr{R}\colon \cl{H}\to \cl{V},
&
\odot \colon \cl{H}\tennsor{}{\times}{{\cl{V}}}\cl{H}\to \cl{H}
    \end{array}
    \]
    where $\cl{H}\tennsor{}{\times}{{\cl{V}}}\cl{H}$ is the ordinary  pullback of $\sr{L}$ and $\sr{R}$, whose objects are pairs $(\fk{g},\fk{f})$ for objects $\fk{g}$ and $\fk{f}$ of $\cl{H}$ such that $\sr{R}(\fk{f})=\sr{L}(\fk{g})$, and whose morphisms are defined by
    \[
\Hom_{\cl{H}\tennsor{}{\times}{{\cl{V}}}\cl{H}}((\fk{g},\fk{f}),(\fk{i},\fk{h}))=\{(\delta,\varepsilon) \in \Hom_{\cl{H}}(\fk{g},\fk{i})\times \Hom_{\cl{H}}(\fk{f},\fk{h})\mid \sr{R}(\delta)=\sr{L}(\varepsilon)\}. 
    \]
    Secondly assume, for each object $x$ in $\cl{V}$ and $(\fk{g},\fk{f})$ in $\cl{H}\tennsor{}{\times}{{\cl{V}}}\cl{H}$, that we have 
    \[
    \begin{array}{ccc}
    \sr{L}(\Delta(x))=x=\sr{R}(\Delta(x)),
    &
    \sr{L}(\fk{g}\odot \fk{f})=\sr{L}(\fk{g}),
    &
    \sr{R}(\fk{g}\odot \fk{f})=\sr{R}(\fk{f}). 
    \end{array}
    \]
    Thirdly assume, for each object $x$ in $\cl{V}$, that  we  have isomorphisms in $\cl{H}$ of the form 
       \[
    \begin{array}{cccc}
\tennsor{}{\bff{\alpha}}{\fk{h},\fk{g},\fk{f}}\colon (\fk{h}\odot \fk{g}) \odot \fk{f}\to \fk{h}\odot (\fk{g} \odot \fk{f} )
&
\tennsor{}{\bff{\lambda}}{\fk{i}}\colon \Delta(x)\odot \fk{i}\to \fk{i},
&
\tennsor{}{\bff{\rho}}{\fk{j}}\colon \fk{j}\odot \Delta(x)\to \fk{j},      
    \end{array}
    \]
    natural in $(\fk{h},\fk{g},\fk{f})$ in $\cl{H}\tennsor{}{\times}{{\cl{V}}}\cl{H}\tennsor{}{\times}{{\cl{V}}}\cl{H}$ and in $\fk{i}$ and $\fk{j}$ in $\cl{H}$ with $\sr{L}(\fk{i})=x=\sr{R}(\fk{j})$. 

    We call $\cl{D}=(\cl{V},\cl{H},\Delta,\sr{L},\sr{R},\odot,\bff{\alpha},\bff{\lambda},\bff{\rho})$ a \emph{double category} if, in the notation above, we have the following equalities of morphisms in $\cl{V}$:
    \[
    \begin{array}{ccc}
    \sr{L}(\tennsor{}{\bff{\alpha}}{\fk{h},\fk{g},\fk{f}})
    =
    \tennsor{}{\iden}{\sr{L}(\fk{h})},
&
    \sr{R}(\tennsor{}{\bff{\alpha}}{\fk{h},\fk{g},\fk{f}})
    =
    \tennsor{}{\iden}{\sr{R}(\fk{f})},
&
    \sr{L}(\tennsor{}{\bff{\lambda}}{\fk{i}})
    =
    \tennsor{}{\iden}{x},
\\
    \sr{R}(\tennsor{}{\bff{\lambda}}{\fk{i}})
    =
    \tennsor{}{\iden}{\sr{R}(\fk{i})},
&
   \sr{L}(\tennsor{}{\bff{\rho}}{\fk{j}})
    =
    \tennsor{}{\iden}{\sr{L}(\fk{j})},
&
    \sr{R}(\tennsor{}{\bff{\rho}}{\fk{j}})
    =
    \tennsor{}{\iden}{x},
    \end{array}
    \]
    and if each of the following pentagonal and triangular diagrams commutes
    \begin{align}
                           {\begin{tikzcd}[ampersand replacement = \&]
   \& ((\fk{i} \odot \fk{h}) \odot \fk{g}) \odot \fk{f} \ar[dl, "\tennsor{}{\bff{\alpha}}{\fk{i},\fk{h},\fk{g}} \odot \tennsor{}{\iden}{\fk{f}}"'] \ar[dr, "\tennsor{}{\bff{\alpha}}{\fk{i}\odot \fk{h},\fk{g},\fk{f}}"] \& 
    \\
    (\fk{i} \odot (\fk{h} \odot \fk{g})) \odot \fk{f} \ar[d, "\tennsor{}{\bff{\alpha}}{\fk{i},\fk{h}\odot \fk{g},\fk{f}}"'] \&\& (\fk{i} \odot \fk{h}) \odot (\fk{g}\odot \fk{f}) \ar[d, "\tennsor{}{\bff{\alpha}}{\fk{i},\fk{h},\fk{g}\odot \fk{f}}"]
    \\
    \fk{i} \odot ((\fk{h} \odot \fk{g}) \odot \fk{f}) \ar[rr, "\tennsor{}{\iden}{\fk{i}}\odot \tennsor{}{\bff{\alpha}}{\fk{h},\fk{g},\fk{f}}"] 
    \&\& \fk{i} \odot (\fk{h} \odot (\fk{g} \odot \fk{f}))
    \end{tikzcd}}\label{eqn-pentagon}
    \\
   {\begin{tikzcd}[ampersand replacement = \&]
    (\fk{j} \odot \Delta(x)) \odot \fk{i} \ar[rr, "\tennsor{}{\bff{\alpha}}{\fk{j},\Delta(x),\fk{i}}"] \ar[dr, "\tennsor{}{\bff{\rho}}{\fk{j}} \odot \tennsor{}{\iden}{\fk{i}}"'] \&\& \fk{j} \odot (\Delta(x) \odot \fk{i}) \ar[dl, "\tennsor{}{\iden}{\fk{j}} \odot \tennsor{}{\bff{\lambda}}{\fk{i}}"]  \\
    \& \fk{j} \odot \fk{i} \& 
    \end{tikzcd}}\label{eqn-triangle}
    \end{align}
    Given such a double category, we use the terms \emph{objects}, \emph{vertical morphisms}, \emph{horizontal morphisms} and \emph{square morphisms} to refer to the objects of $\cl{V}$, the morphisms of $\cl{V}$, the objects of $\cl{H}$, and the morphisms of $\cl{H}$, respectively. 
    We refer to $\odot$, $\Delta$, $\sr{L}$ and $\sr{R}$ as the  \emph{horizontal composition}, \emph{boundary}, \emph{left} and \emph{right} functors. 
\end{defn}

\begin{example}
\label{example-cats-are-trivially-double-cats}
    Let $\cl{D}$ be a category.
    Let $\cl{V}$ be the category with same object set as $\cl{D}$, with only identity morphisms. 
    Let $\cl{H}$ be the category whose objects are morphisms $\fk{f}\colon d\to e$ in $\cl{D}$, with only identity morphisms. 
    Let $\Delta(d)=\tennsor{}{\iden}{d}$ for each object $d$ in $\cl{D}$, $\sr{L}(\fk{f})=e$ and  $\sr{R}(\fk{f})=d$ for each object $\fk{f}\colon x\to y$ in $\cl{D}^{\to}=\cl{D}_1$ and define $\odot$ by composition in $\cl{D}$. 
Define $\bff{\alpha}$, $\bff{\lambda}$ and $\bff{\rho}$ pointwise, by the equalities of composition, corresponding to associativity. 
%
%
%
\end{example}

\begin{example}
\label{example-double-cat-bimodules}
    \cite[Example~2.2]{Shulman2008} Consider $\Byemod$ from Definition~\ref{defn-byemod}.
    Let $\cl{V}$ be the category of (possibly noncommutative) rings and ring maps. 
    Define an object  in $\cl{H}$ to be a $Q$-$R$-bimodule  $\tennsor{Q}{A}{R}$ for some rings $Q$ and $R$.  
    Define a morphism of the form $\tennsor{Q}{A}{R}\to \tennsor{S}{B}{T}$ in $\cl{H}$ to be a triple $(\theta,\Gamma,\varphi)$ where $\theta\colon Q\to S$ and $\varphi\colon R\to T$ are ring maps and $\Gamma\colon A\to B$ is an additive map such that $\Gamma(qar)=\theta(q)\Gamma(a)\varphi(r)$ for each $q\in Q$, $a\in A$ and $r\in R$.

     Let $\Delta(R)=\tennsor{R}{R}{R}$ for each ring $R$, and let $\sr{L}(\tennsor{Q}{A}{R})=Q$ and  $\sr{R}(\tennsor{Q}{A}{R})=R$ for each $Q$-$R$-bimodule  $\tennsor{Q}{A}{R}$. 
 Note that an object in the ordinary pullback is a pair of bimodules of the form $(\tennsor{Q}{A}{R},\tennsor{R}{B}{S})$, whose image under $\odot$ we define to be $\tennsor{Q}{A}{ }\tennsor{}{\otimes}{R}\tennsor{}{B}{S}$.  

 A morphism $(\theta,\Gamma,\varphi,\Omega,\psi)\colon(\tennsor{L}{A}{M},\tennsor{M}{B}{N})\to (\tennsor{Q}{C}{R},\tennsor{R}{D}{S})$ consists of ring maps $\theta\colon L\to Q$, $\varphi\colon M\to R$ and $\psi\colon N\to S$, together with an $L$-$M$-bilinear map $\Gamma\colon A\to C$ and $M$-$N$-bilinear map $\Omega\colon B\to D$. 
 The image of this morphism under $\odot$, where $\tennsor{\theta}{C}{\varphi}$ is the $L$-$M$-bimodule induced by $\theta$ and $\varphi$, and similarly for $\tennsor{\varphi}{D}{\psi}$, is  $(\theta,\Lambda,\psi)$  where $\Lambda$ is the composition of $\Gamma\,\otimes\, \Omega$ with the universal morphism $\tennsor{\theta}{C}{\varphi}\tennsor{}{\otimes}{M} \tennsor{\varphi}{D}{\psi}\to \tennsor{Q}{C}{}\tennsor{}{\otimes}{R}\tennsor{}{D}{S}$. 
 %
%
%
\end{example}

\begin{defn}
\label{defn-lax-double-func}
    A \emph{ double functor} between double categories of the form 
    \[
    \tennsor{}{\RusI}{}=(\tennsor{}{\RusI}{0},\tennsor{}{\RusI}{1},\tennsor{}{\RusI}{\odot},\tennsor{}{\RusI}{\Delta})\colon (\cl{V},\cl{H},\Delta,\sr{L},\sr{R},\odot,\bff{\alpha},\bff{\lambda},\bff{\rho})\to (\tennsorp{}{\cl{D}}{0},\tennsorp{}{\cl{D}}{1},\Delta',\sr{L}',\sr{R}',\odot',\bff{\alpha}',\bff{\lambda}',\bff{\rho}')
    \]
    is defined by the following data. 
    Firstly,  $\tennsor{}{\RusI}{0}\colon \cl{V}\to \tennsorp{}{\cl{D}}{0}$ and $\tennsor{}{\RusI}{1}\colon \cl{H}\to \tennsorp{}{\cl{D}}{1}$ are functors, such that $ \sr{L}'\circ \tennsor{}{\RusI}{1} =  \tennsor{}{\RusI}{0}\circ \sr{L}$ and $\sr{R}'\circ \tennsor{}{\RusI}{1} =  \tennsor{}{\RusI}{0}\circ \sr{R}$. 
    Secondly, we have morphisms
    \[
    \begin{array}{cc}
    \tennsor{}{{(\tennsor{}{\RusI}{\odot})}}{\fk{g},\fk{f}}\colon \tennsor{}{\RusI}{1}(\fk{g})\odot'\tennsor{}{\RusI}{1}(\fk{f})\to \tennsor{}{\RusI}{1}(\fk{g}\odot \fk{f}),
    &
    \tennsor{}{{(\tennsor{}{\RusI}{\Delta})}}{x}\colon \Delta'(\tennsor{}{\RusI}{0}(x))\to \tennsor{}{\RusI}{1}(\Delta(x)),
    \end{array}
    \]
    natural in $\fk{g}$, $\fk{f}$ and $x$, such that each diagram   of the following form commutes 
    \begin{align}
          \begin{tikzcd}[column sep = 2cm, row sep = 0.5cm, ampersand replacement = \&]
    (\tennsor{}{\RusI}{1}(\fk{h}) \odot' \tennsor{}{\RusI}{1}(\fk{g})) \odot' \tennsor{}{\RusI}{1}(\fk{f}) \ar[rr, "\tennsorp{}{\bff{\alpha}}{\tennsor{}{\RusI}{1}(\fk{h}),\tennsor{}{\RusI}{1}(\fk{g}),\tennsor{}{\RusI}{1}(\fk{f})}"] \ar[d, "\tennsor{}{{(\tennsor{}{\RusI}{\odot})}}{\fk{h},\fk{g}} \odot \tennsor{}{\iden}{{\tennsor{}{\RusI}{1}(\fk{f})}}"'] \& \&\tennsor{}{\RusI}{1}(\fk{h}) \odot' (\tennsor{}{\RusI}{1}(\fk{g}) \odot' \tennsor{}{\RusI}{1}(\fk{f})) \ar[d, "{\tennsor{}{\iden}{{\tennsor{}{\RusI}{1}(\fk{h})}} \odot \tennsor{}{{(\tennsor{}{\RusI}{\odot})}}{\fk{g},\fk{f}}}"] \\
    \tennsor{}{\RusI}{1}(\fk{h} \odot \fk{g}) \odot' \tennsor{}{\RusI}{1}(\fk{f}) \ar[d, "\tennsor{}{{(\tennsor{}{\RusI}{\odot})}}{\fk{h}\odot \fk{g},\fk{f}}"'] \& \& \tennsor{}{\RusI}{1}(\fk{h}) \odot' \tennsor{}{\RusI}{1}(\fk{g} \odot \fk{f})\ar[d, "\tennsor{}{{(\tennsor{}{\RusI}{\odot})}}{\fk{h},\fk{g}\odot \fk{f}}"]
    \\
    \tennsor{}{\RusI}{1}((\fk{h} \odot \fk{g}) \odot \fk{f}) \ar[rr, "\tennsor{}{\RusI}{1}(\tennsor{}{\bff{\alpha}}{\fk{h},\fk{g},\fk{f}})"'] \& \& \tennsor{}{\RusI}{1}(\fk{h} \odot (\fk{g} \odot \fk{f}))
    \end{tikzcd}\label{eqn-lax-double-functor-comm-conditions-1}
    \\
  \begin{tikzcd}[column sep = 2cm, row sep = 0.5cm, ampersand replacement = \&]
	{\Delta'(\tennsor{}{\RusI}{0}(x))\odot' \tennsor{}{\RusI}{1}(\fk{f})} \&\& {\tennsor{}{\RusI}{1}(\Delta(x))\odot' \tennsor{}{\RusI}{1}(\fk{f})} \\
	{\tennsor{}{\RusI}{1}(\fk{f})} \&\& {\tennsor{}{\RusI}{1}(\Delta(x)\odot \fk{f})}
	\arrow["{\tennsor{}{{(\tennsor{}{\RusI}{\Delta})}}{x}\odot'\tennsor{}{\iden}{{\tennsor{}{\RusI}{1}(\fk{f})}}}", from=1-1, to=1-3]
	\arrow["{\tennsor{}{\bff{\lambda}\prime}{\tennsor{}{\RusI}{1}(\fk{f})}}"', from=1-1, to=2-1]
	\arrow["{\tennsor{}{{(\tennsor{}{\RusI}{\odot})}}{\Delta(x),\fk{f}}}", from=1-3, to=2-3]
	\arrow["{\tennsor{}{\RusI}{1}(\tennsor{}{\bff{\lambda}}{\fk{f}})}", from=2-3, to=2-1]
\end{tikzcd}\label{eqn-lax-double-functor-comm-conditions-2}
\\
\begin{tikzcd}[column sep = 2cm, row sep = 0.5cm, ampersand replacement = \&]
	{\tennsor{}{\RusI}{1}(\fk{g})\odot' \tennsor{}{\RusI}{1}(\Delta(x))} \&\& {\tennsor{}{\RusI}{1}(\fk{g})\odot' \Delta'(\tennsor{}{\RusI}{0}(x))} \\
	{\tennsor{}{\RusI}{1}(\fk{g}\odot \Delta(x))} \&\& {\tennsor{}{\RusI}{1}(\fk{g})}
	\arrow["{\tennsor{}{{(\tennsor{}{\RusI}{\odot})}}{\fk{g},\Delta(x)}}"', from=1-1, to=2-1]
	\arrow["{\tennsor{}{\iden}{{\tennsor{}{\RusI}{1}(\fk{g})}}\odot'\tennsor{}{{(\tennsor{}{\RusI}{\Delta})}}{x}}"', from=1-3, to=1-1]
	\arrow["{\tennsor{}{\bff{\rho}\prime}{\tennsor{}{\RusI}{1}(\fk{g})}}", from=1-3, to=2-3]
	\arrow["{\tennsor{}{\RusI}{1}(\tennsor{}{\bff{\rho}}{\fk{g}})}"', from=2-1, to=2-3]
\end{tikzcd}\label{eqn-lax-double-functor-comm-conditions-3}
    \end{align}
    %
    %
    A \emph{double transformation} of the form below is given by the following data.  
    \[\begin{tikzcd}
	{(\cl{V},\cl{H},\Delta,\sr{L},\sr{R},\odot,\bff{\alpha},\bff{\lambda},\bff{\rho})} &&&& {(\tennsorp{}{\cl{D}}{0},\tennsorp{}{\cl{D}}{1},\Delta',\sr{L}',\sr{R}',\odot',\bff{\alpha}',\bff{\lambda}',\bff{\rho}')}
	\arrow[""{name=0, anchor=center, inner sep=0}, "{(\tennsorp{}{\RusI}{0},\tennsorp{}{\RusI}{1},\tennsorp{}{\RusI}{\odot},\tennsorp{}{\RusI}{\Delta})}"', bend left = -10, from=1-1, to=1-5]
	\arrow[""{name=1, anchor=center, inner sep=0}, "{(\tennsor{}{\RusI}{0},\tennsor{}{\RusI}{1},\tennsor{}{\RusI}{\odot},\tennsor{}{\RusI}{\Delta})}", bend left = 10, from=1-1, to=1-5]
	\arrow[Rightarrow, from=1, to=0, "{\RusEl=(\tennsor{}{\RusEl}{0},\tennsor{}{\RusEl}{1})}" description]
\end{tikzcd}\]
Firstly,  $\tennsor{}{\RusEl}{0}\colon \tennsor{}{\RusI}{0}\Rightarrow \tennsorp{}{\RusI}{0}$ and $\tennsor{}{\RusEl}{1}\colon \tennsor{}{\RusI}{1}\Rightarrow \tennsorp{}{\RusI}{1}$ are natural transformations, such that  $ \sr{L}'(\tennsor{}{{(\tennsor{}{\RusEl}{1})}}{\fk{f}} )=  \tennsor{}{{(\tennsor{}{\RusEl}{0})}}{\sr{L}(\fk{f})} $ and $ \sr{R}'(\tennsor{}{{(\tennsor{}{\RusEl}{1})}}{\fk{f}} )=  \tennsor{}{{(\tennsor{}{\RusEl}{0})}}{\sr{R}(\fk{f})}$.
Secondly, each diagram below commutes 
 \begin{align}
          \begin{tikzcd}[column sep = 1.5cm, row sep = 0.5cm, ampersand replacement = \&]
    {{\tennsor{}{\RusI}{1}(\fk{g})\odot' \tennsor{}{\RusI}{1}(\fk{f})}} \&\&\& {{\tennsorp{}{\RusI}{1}(\fk{g})\odot' \tennsorp{}{\RusI}{1}(\fk{f})}} \\
	\\
	{{\tennsor{}{\RusI}{1}(\fk{g}\odot \fk{f})}} \&\&\& {{\tennsorp{}{\RusI}{1}(\fk{g}\odot \fk{f})}}
	\arrow["{\tennsor{}{{(\tennsor{}{\RusEl}{1})}}{\fk{g}} \odot' \tennsor{}{{(\tennsor{}{\RusEl}{1})}}{\fk{f}} }", from=1-1, to=1-4]
	\arrow["{\tennsor{}{{(\tennsor{}{\RusI}{\odot})}}{\fk{g},\fk{f}}}"', from=1-1, to=3-1]
	\arrow["{\tennsor{}{{(\tennsorp{}{\RusI}{\odot})}}{\fk{g},\fk{f}}}", from=1-4, to=3-4]
	\arrow["{\tennsor{}{{(\tennsor{}{\RusEl}{1})}}{\fk{g}\odot \fk{f}} }"', from=3-1, to=3-4]
    \end{tikzcd}\label{eqn-lax-double-functor-comm-conditions-4}
    \\
  \begin{tikzcd}[column sep = 1.5cm, row sep = 0.5cm, ampersand replacement = \&]
	{\Delta'(\tennsor{}{\RusI}{0}(x))} \&\&\& {\Delta'(\tennsorp{}{\RusI}{0}(x))} \\
	\\
	{\tennsor{}{\RusI}{1}(\Delta(x))} \&\&\& {\tennsorp{}{\RusI}{1}(\Delta(x))}
	\arrow["{\Delta'(\tennsor{}{({\tennsor{}{\RusEl}{0}})}{x})}", from=1-1, to=1-4]
	\arrow["{\tennsor{}{{(\tennsor{}{\RusI}{\Delta})}}{x}}"', from=1-1, to=3-1]
	\arrow["{\tennsor{}{{(\tennsorp{}{\RusI}{\Delta})}}{x}}", from=1-4, to=3-4]
	\arrow["{\tennsor{}{({\tennsor{}{\RusEl}{1}})}{\Delta(x)}}"', from=3-1, to=3-4]
\end{tikzcd}\label{eqn-lax-double-functor-comm-conditions-5}
    \end{align}
\end{defn}

\bibliographystyle{nicebst}
\bibliography{references}

\end{document}